
\documentclass[letter,12pt]{article}
\usepackage[T1]{fontenc}
\setlength{\textwidth}{6in} \setlength{\textheight}{8.5in}
\setlength{\topmargin}{-0.25in} \setlength{\evensidemargin}{.25in}
\setlength{\oddsidemargin}{.25in}
\setlength{\parskip}{.65ex}
\usepackage{graphicx,color}
\usepackage{float}
\usepackage{subfigure}

\usepackage{verbatim,comment}
\usepackage{amsmath,amsthm}
\usepackage{xspace}
\usepackage{pifont}
\usepackage{amssymb}
\usepackage{epic, eepic}
\usepackage{dsfont}
\usepackage{amssymb}
\usepackage{makeidx}
\usepackage{mathrsfs,enumerate}

\usepackage{amsmath,afterpage}
\usepackage{epsf}

\usepackage{bm}

\newcommand{\zz}[1]{\mathbb{#1}}
\def\q{\quad}

\def\x{\mathbf{x}}
\def\y{\mathbf{y}}
\def\b{\mathbf{b}}

\def\z{\mathbf{z}}
\def\0{\mathbf{0}}

\def\vep{\varepsilon}
\def\HS{\mathcal{H}}
\def\SHS{\mathcal{Q}}
\def\K{\mathcal{K}}

\def\DI{\mathcal{I}}
\def\DR{\mathcal{R}}
\def\DS{\mathcal{S}}
\def\DD{\mathcal{D}}
\def\Dd{D}

\def\eps{\varepsilon}

\def \< {\langle}
\def \> {\rangle}

\def\ul{\underline}

\def\S1{\mathcal{R}^1}
\def\SN1{\mathcal{Q}^1}

\def\beqa{\begin{eqnarray}}
\def\eeqa{\end{eqnarray}}
\def\beqas{\begin{eqnarray*}}
\def\eeqas{\end{eqnarray*}}

\newtheorem{theorem}{Theorem}
\newtheorem{thm}{Theorem}

\newtheorem{lemma}[theorem]{Lemma}

\newtheorem{prop}[theorem]{Proposition}

\newtheorem{cor}[theorem]{Corollary}

\newtheorem{remark}[theorem]{Remark}

%\numberwithin{equation}{section}
\newcommand{\hatd}[1]{{}}

%\newenvironment{proof}{\noindent{\bf Proof:}}{\endpf}

%\addtocounter{footnote}{0}

%\newcommand{\qed}{\hfill\rule{2mm}{2mm}}

\setcounter{section}{0}
%\pagestyle{myheadings} \markright{{\sl }}

%%%%%%%%%%%%%%%%%%%%% program macros %%%%%%%%%%%%%%%%%

\newcommand{\bd}{\begin{displaymath}}
\newcommand{\ed}{\end{displaymath}}
\newcommand{\be}{\begin{equation}}
\newcommand{\ee}{\end{equation}}
\newcommand{\bq}{\begin{eqnarray}}
\newcommand{\eq}{\end{eqnarray}}
\newcommand{\bn}{\begin{eqnarray*}}
\newcommand{\en}{\end{eqnarray*}}

\newcommand{\re}{\zz{R}}
\newcommand{\ze}{\zz{Z}}
\newcommand{\qe}{\zz{Q}}

\newcommand{\cL}{\mathcal{L}}
\newcommand{\PP}{\mathbb{P}}
\newcommand{\toop}{\stackrel{\PP}{\longrightarrow}}

\def\wt{\widetilde}

\newcommand{\rN}{\zz{Z}_{\geq 0}}

%%%%%%%%%%%%%%%%%%%%%%%
%%%% THIS PAPER SPECIAL CO\nSTA\nTS
%%%%%%%%%%%%%%%%%%%%%%%
				       	   %%%
		       	   %%%
%%%%%%%%%%%%%%%%%%%%%%%

				       	   %%%

%\thispagestyle{empty}\varrho

\graphicspath{{./figs/}}

 %%%%%%%%End Macros %%%%%%%%%%%%%%%%%%%%%%%%%%%%%%%%%%%
\begin{document}
\title{Supercritical Spatial SIR Epidemics: Spreading Speed and Herd Immunity  }
\author{Xinghua Zheng\thanks{Department of ISOM, Hong Kong University of Science and Technology, Clear Water Bay, Kowloon, Hong Kong. Email: xhzheng@ust.hk}, Qingsan Zhu\thanks{HKUST Jockey Club Institute for Advanced Study, Hong Kong University of Science and Technology, Clear Water Bay, Kowloon, Hong Kong. Email: iaszhuqs@ust.hk}}

\date{\today}
%\date{September 2015}

\maketitle

\abstract{We study supercritical spatial SIR epidemics on $\mathbb{Z}^2\times \{1,2,\ldots, N\}$, where each site in $\mathbb{Z}^2$ represents a village and $N$ stands for the village size. We establish several key asymptotic results as $N\to\infty$. In particular, we derive the probability that the epidemic will last forever  if the epidemic is started by one infected individual. Moreover,
conditional on that the epidemic lasts forever, we show that the epidemic spreads out linearly in all directions and derive an explicit formula for the spreading speed. Furthermore, we prove that the ultimate proportion of infection converges to a number that is constant over space and find its explicit value. An important message is that if there is no vaccination, then the ultimate proportion of population who will be infected can be \emph{much higher} than the vaccination proportion that is needed in order to prevent sustained spread of the infection. }

\section{Introduction}
\subsection{SIR Model}
The Susceptible, Infected and Recovered (SIR) epidemic model is a fundamental model in epidemiology. In the usual SIR model, there is a fixed population, and at any time, individuals in the population fall in one of the three categories: susceptible, infected and recovered (\cite{KM27}). Infected individuals remain infected for one unit of time, then recover and gain immunity. The disease is spread from an infected individual to a susceptible individual with a fixed probability.

In the afore-mentioned model, spatial information is not considered. This is not suitable for various applications because many epidemics, including COVID-19, only transmit \emph{locally}.

In this paper, we focus on a spatial version of SIR model.

\subsection{Spatial SIR Model}
We consider the spatial SIR epidemic model introduced in \cite{lalley09}. The epidemic takes place on ${\zz{Z}^2}\times\{1,2,\ldots,N\}$. The space dimension can be more general, but in this paper we focus on the two dimensional case  because of its practical relevance.  In the model, each site  $\x\in\zz{Z}^2$ represents a village, which hosts $N$ individuals, which are labeled as $(\x,1), (\x,2),\ldots,(\x,N)$. The general rule is the same as the usual SIR model, except that the disease can only be spread from an infected individual to a susceptible individual who is either at the same site or in a nearest neighbor site. We assume that the infection probability is the same for all such pairs of infected and susceptible individuals, and the infection probability is given by
\begin{equation}\label{eq:infect_prob}
P_{N}^{\theta}= \frac{1+\theta}{5N}.
\end{equation}

From \eqref{eq:infect_prob}, we see that the basic reproduction number is $R_0=(5N-1)\cdot P_{N}^{\theta}$, which is approximately $1+\theta$ when the village size $N$ is large. The epidemic is hence supercritical, critical, or subcritical according to whether $\theta>0$, $\theta=0$, or $\theta <0$.

The critical case when $\theta = 0$, or more generally, the near-critical case when $\theta =\theta_0/N^{1/2}$ for some fixed constant $\theta_0$, has been studied in \cite{lz10} and \cite{LPZ14}. In the first paper, the authors prove that the process, suitably scaled, converges to a measure-valued super-process. In \cite{LPZ14}, the authors further establish a survival-extinction phase transition for the limiting process.

COVID-19, as well as many other epidemics, are supercritical. This is the case we focus on in this paper. Henceforth, we assume that $\theta$ is a fixed positive constant.

\subsection{Main Results}

We answer the following three  fundamental questions:
\begin{enumerate}
\item[Q1.] How big is the probability that the epidemic will last forever?
\item[Q2.] How fast does the epidemic spread out?
\item[Q3.] What is the ultimate proportion of individuals who will be infected?
\end{enumerate}

We start with Q1. Note that because the total population size is infinity, there \emph{is} a positive probability that the epidemic will last forever.

We first define related random variables:
\[
\left\{
\aligned
I_t(\x) &= I_t^N(\x)= \#\mbox{infected individuals at site } \x \mbox{ at time } t,\\
%%%\wt{X}_t(\x) & = \sum_{y\sim x} X_t(y), \q \mbox{ and}\\
R_t(\x) &=R_t^N(\x) = \#\mbox{recovered individuals at site } \x \mbox{ at time } t, \mbox{and}\\
S_t(\x) &=S_t^N(\x) = \#\mbox{susceptible individuals at site } \x \mbox{ at time } t\\
& = N - I_t(\x) - R_t(\x).\\
\endaligned
\right.
\]
The evolution of the SIR process  can be described as the following:
\begin{equation}\label{eqn:X_R}
\aligned
I_{t+1}(\x)|(I_t(\x), R_t(\x)) &\stackrel{\mathrm{d}}{=} \mathrm{Bin}\left(S_t(\x),1-\left(1-P_{N}^{\theta}\right)^{\wt{I}_t(\x)}\right),\\
\mbox{ where }\wt{I}_t(\x) & = \sum_{||\y- \x||_1\leq 1} I_t(\y), \q \mbox{ and}\\
\mbox{  } R_{t+1}(\x) &= I_t(\x) + R_t(\x),\q \mbox{ for all } t\geq 0.
\endaligned
\end{equation}
Here, $\stackrel{\mathrm{d}}{=}$ means ``equal in distribution'', $\mathrm{Bin}(n,p)$ represents the binomial distribution with parameters $n$ and $p$, and $||\cdot||_1$ denotes the $\ell_1$ norm, namely, $||(a,b)||_1=|a| + |b|$ for any $(a,b)\in\zz{R}^2.$
The way we define the function $\wt{I}$ will be used throughout the rest of the paper, namely, for any function $\phi(\cdot)$ on $\zz{Z}^2$, $\wt{\phi}(\cdot)$ is a function defined as follows:
\[
  \wt{\phi}(\x)= \sum_{||\y- \x||_1\leq 1} \phi(\y),\q\mbox{for all } \x\in\zz{Z}^2.
\]

We focus on the following two initial conditions:
\begin{itemize}
\item IC1:
\begin{equation}\label{eq:IC_0}
I_0^N(\mathbf{0}) = 1, \mbox{ and } I_0^N(\x) = 0\mbox{ for all } \x\neq \mathbf{0};
\end{equation}
\item IC2: for some fixed $\gamma\in (0,1]$,
\begin{equation}\label{eq:IC_1}
I_0^N(\mathbf{0}) \sim \gamma N, \mbox{ and } I_0^N(\x) = 0\mbox{ for all } \x\neq \mathbf{0},
\end{equation}
where for any two sequences $(a_n)$ and $(b_n)$, we write $a_n\sim b_n$ if $a_n /b_n\to 1$.
\end{itemize}
{The default setting is that $R_0^N\equiv0$, unless otherwise stated.}

 IC1 corresponds to the situation where the epidemic starts with one infected individual, so-called ``patient zero''. IC2 can be considered as the situation where there is a virus outbreak, which causes an instant infection of a significant proportion of population at the  outbreak point.

We now state the result about the probability that the epidemic lasts forever. Let us recall that for a Galton-Watson process started by one particle and with offspring distribution Poisson$(1+\theta)$, its survival probability, denoted by $\iota$, is the unique solution to the following equation:
\begin{equation}\label{eq:iota}
1-\iota=\exp(-(1+\theta)\iota), \q \iota \in (0,1).
\end{equation}
More general results can be found in, e.g., \cite{athreya72}.
\begin{thm}\label{thm:surv_prob}
Let $q_N$ be the survival probability of the SIR process with village size $N$, namely, $q_N=P(I_t^N(\cdot)\not\equiv 0 \mbox{ for all } t>0)$. Then, we have
\[
%%%\begin{equation}\label{eq:surv_prob}
       \lim_{N\rightarrow\infty}q_N=
       \left\{
       \aligned
       \iota, & \q \mbox{under the initial condition IC1};\\
       1,   &\q  \mbox{under the initial condition IC2}.
       \endaligned
       \right.
\]
%%%\end{equation}
\end{thm}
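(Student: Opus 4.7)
The plan is to compare the early phase of the SIR epidemic with a Galton--Watson branching process (BP) of Poisson$(1+\theta)$ offspring, whose survival probability is $\iota$. Representing each infected individual's action as $5N-1$ independent Bernoulli$(P_N^\theta)$ attempts on the other residents of its $\ell_1$-neighborhood, the number of attempts per infected is $\mathrm{Bin}(5N-1,P_N^\theta)$, which converges weakly to $\mathrm{Poisson}(1+\theta)$. The only discrepancy between SIR and a BP comes from \emph{collisions}---two attempts targeting the same individual, or an attempt on an already-removed individual---which are rare while the total infected population is much smaller than $\sqrt{N}$.

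For the upper bound under IC1, I couple $I_t$ to a dominating BP $Z_t^+$ with offspring $\mathrm{Bin}(5N-1,P_N^\theta)$ by counting attempts with multiplicity; then $I_t\le Z_t^+$, so $q_N\le P(Z^+\text{ survives})\to \iota$. For the lower bound, fix $\eps>0$, write $\iota_\eps$ for the survival probability of a Poisson$((1+\theta)(1-\eps))$ BP (so $\iota_\eps\to\iota$ as $\eps\to 0$), and couple $I_t$ from below to a minorant BP $Z_t^-$ with that offspring law up to $T_N:=\lfloor c\log N\rfloor$ with $c<1/(2\log(1+\theta))$. A union bound shows the expected number of collisions in this window is $O((1+\theta)^{2T_N}/N)=o(1)$, so $I_{T_N}\ge Z_{T_N}^-$ with probability $1-o(1)$. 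Since $Z^-$ is supercritical for small $\eps$, the Kesten--Stigum theorem gives $Z_{T_N}^-\to\infty$ a.s.\ on survival, hence $P(I_{T_N}\ge K)\ge \iota_\eps-o(1)$ for any fixed $K$. Combining with the analogous lower bound $P(\text{survive}\mid I_{T_N}\ge K)\ge 1-(1-\iota_\eps)^K-o(1)$ from $K$ infecteds yields $\liminf_N q_N\ge \iota_\eps\,(1-(1-\iota_\eps)^K)$; letting $K\to\infty$ and then $\eps\to 0$ gives $\liminf_N q_N\ge \iota$.

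The case IC2 reduces to IC1 after one step: a direct binomial calculation gives $I_1(\mathbf{e})=(1-e^{-\gamma(1+\theta)/5})N(1+o_p(1))$ at each of the four neighbors $\mathbf{e}$ of the origin, and with $\Omega(N)$ infecteds in hand a concentration/fluid-limit argument shows that each subsequent generation places a positive fraction of $N$ infecteds at a new lattice site with probability $1-o(1)$, so the extinction probability tends to $0$.

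The main obstacle is the step ``$P(\text{survive}\mid I_{T_N}\ge K)\ge 1-(1-\iota_\eps)^K-o(1)$'' in the IC1 lower bound. The $K$ late-generation infecteds may sit inside a single $O(\log N)$-radius ball, so their sub-epidemics share susceptibles and cannot literally be coupled to $K$ independent BPs. The resolution is either a spatial thinning step---retain only one descendant per well-separated village, so that the constant-factor loss in the offspring mean is absorbed by the slack $\eps$---or an appeal to the spreading-speed results proved later in the paper, which guarantee that sub-epidemics from distinct starting points separate at a deterministic rate and therefore do not meaningfully interact during the comparison window.
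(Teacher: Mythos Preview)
Your upper bound under IC1 (domination by a $\mathrm{Bin}(5N-1,P_N^\theta)$ Galton--Watson process) coincides with the paper's. For IC2, the paper simply invokes its speed result (Theorem~\ref{thm:speed}(ii)), which already forces $q_N\to 1$; your fluid-limit sketch is a reasonable stand-in but would duplicate work done elsewhere.

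The substantive divergence is in the IC1 \emph{lower bound}. The paper does not attempt the ``$K$ independent sub-epidemics'' step at all. Instead it uses the percolation representation and a symmetry trick: under the initial condition $I_0^N=N\bm{\delta}_{x+y=0}$, the recovery-proportion result \eqref{eq:R_final} gives $E[R_\infty^N(n,0)/N]>\iota-\eps$ for large $N$ and all $n$. In percolation language this reads $P^N\big(((n,0),1)\leftrightarrow \mathcal{L}(0)\big)>\iota-\eps$, and by translation invariance this equals $P^N\big(((\mathbf 0),1)\leftrightarrow \mathcal{L}(n)\big)$. Since the latter is monotone in $n$, letting $n\to\infty$ yields $q_N\ge\iota-\eps$. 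No branching-process minorant, and no independence of sub-epidemics, is needed.

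The gap you flag in your own argument is genuine, and neither proposed fix closes it as stated. Spatial thinning cannot produce well-separated seeds after the first coupling window: every descendant at time $T_N=\lfloor c\log N\rfloor$ lies within $\ell_1$-distance $T_N$ of the origin, so no two of your $K$ survivors can be farther than $2T_N=O(\log N)$ apart, and their sub-epidemics will overlap within $O(\log N)$ further steps. To make thinning work you would need an additional spreading phase that pushes surviving mass out to distance $\gg\log N$ while keeping removed density $o(1)$ per village---essentially re-deriving a quantitative version of the paper's spreading/recovery machinery. As for ``appealing to spreading-speed results'': those results do not assert that sub-epidemics from distinct seeds decouple; what they deliver (through \eqref{eq:R_final}) is precisely the direct percolation-symmetry route the paper takes, which obviates the $K$-seed step altogether. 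If you want to salvage the BP route, the cleanest completion is probably to continue the minorant coupling not for $O(\log N)$ steps but until the \emph{current} generation size first exceeds some $M_N\to\infty$ with $M_N^2=o(N)$, then observe that from $M_N$ simultaneous infecteds the probability of extinction within the next coupling window is at most $(1-\iota_\eps)^{M_N}+o(1)$ by a single further minorant comparison---but making ``single further comparison'' precise still requires controlling shared susceptibles and is not free.
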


Theorem \ref{thm:surv_prob} states that as the village size gets larger, the survival probability of the SIR process under the initial condition IC1 approaches the survival probability of a supercritical Galton-Watson process. The intuition is the following. During the initial period, the population of people who are immune to the disease is small, hence if the village size $N$ is large, the epidemic evolves like a supercritical Galton-Watson process, causing the number of infected individuals to   blow up quickly. Afterwards, due to the fixed village size and the accumulation of people who are immune to the disease, the SIR process starts to evolve differently than the Galton-Watson process. However, because the number of infected individuals is already large, with a high probability, the epidemic will last forever. Whether the epidemic lasts forever or not hence depends mainly the initial period, during which time the process behaves similarly to a Galton-Watson process.

Next, we turn to Q2, the spreading speed of the epidemic.
Because the disease can only be spread between neighboring sites, the speed, under the $\ell_1$ norm, is at most one. Intuitively, the bigger the $\theta$, the more likely the disease spreads out, hence the higher the speed is. Our second main result states that there is a double phase transition, according to how $\theta$ compares with $1.5$ or $4$, and the speed can be strictly smaller than one in all directions, or  smaller than one in some but not all directions, or equal to one in all directions.

To give the explicit formula of the spreading speed, we need to define several functions. Let
\[
h(t)=t\log t+(1-t)\log(1-t),\q t\in [0,1],
\]
\[
a(\phi) = \frac{\min(|\sin(\phi)|,|\cos(\phi)|)}{|\sin(\phi)|+|\cos(\phi)|}, \q \phi\in[0,2\pi),
\]
and
\begin{equation}\label{eq_def_G}
\aligned
 G(v,\phi)&=\inf_{t\in[v,1]} h(t)+t\left(h\left(\frac{1}{2}-\frac{v}{2t}\right)+h\left(\frac{1}{2}-\frac{(1-2a(\phi))v}{2t}\right)\right),\\
 & \q\mbox{ for } v\in (0,1] \mbox{ and } \phi\in[0,2\pi).
 \endaligned
\end{equation}
The function $G$  is strictly decreasing in $v\in(0,1]$ with
$$
 \lim_{v\rightarrow 0}G(v,\phi)=\log \frac{1}{5},\q \mbox{and}\q G(1,\phi)=h(a(\phi)).
$$
It follows that for any $\theta\in (0,1.5)$, there exits a unique $\upsilon=\upsilon(\theta,\phi)\in(0,1)$ such that
\begin{equation}\label{eq_def_v}
    G(\upsilon,\phi)=\log \left(\frac{1+\theta}{5}\right).
\end{equation}
When $\theta\geq 1.5,$ it may occur that $h(a(\phi)) \leq \log( (1+\theta)/5)$, in which case we define
\begin{equation}\label{eq_def_v_1}
    \upsilon(\theta,\phi) = 1.
\end{equation}
Finally, we define for any $\0\neq (x,y)\in\zz{R}^2$, a function $\arg(x,y)\in[0,2\pi)$ to be the angle  from the positive real axis to the vector representing the complex number $x+\mathbf{i}y$.

\begin{thm}\label{thm:speed}
For any $\theta\in(0,\infty)$, either under the initial condition~IC1 and conditional on that the epidemic lasts forever, or under the initial condition~IC2, the following results hold:
\begin{enumerate}[(i)]
 \item for any $\eps>0$, any village size $N$, along any sequence $(i_k,j_k)_k \subset \zz{Z}^2$ satisfying $|i_k|+|j_k|\rightarrow \infty$ and $\arg(i_k,j_k)\rightarrow \phi$  for some $\phi\in[0,2\pi)$, we have
\[
%%% \begin{equation}\label{eq:speed_IC1_upper}
 \lim_{k\to\infty}P(R_{\lfloor ({|i_k|+|j_k|})(\upsilon(\theta,\phi)^{-1}-\eps)\rfloor}(i_k,j_k)>0)=0;
\]%%% \end{equation}
 \item
 For any $\eps>0$, when the village size $N$ is sufficiently large,  along any sequence $(i_k,j_k)_k \subset \zz{Z}^2$ satisfying $|i_k|+|j_k|\rightarrow \infty$ and  $\arg(i_k,j_k)\rightarrow \phi$ for some $\phi\in[0,2\pi)$, we have
\[%%% \begin{equation}\label{eq:speed_IC1_lower}
    \limsup_{k\to\infty}P\left({\left|\frac{R_{\lfloor ({|i_k|+|j_k|})(\upsilon(\theta,\phi)^{-1}+\eps)\rfloor}(i_k,j_k)}{N}-\iota\right|>\eps}\right)<\eps.
\]%%%\end{equation}
\end{enumerate}
\end{thm}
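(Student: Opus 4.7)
The plan is to translate the spreading-speed question into an analysis of infection chains and to identify the exponent $G(v,\phi)$ as the Cram\'er rate for a $5$-valued lazy random walk on $\zz{Z}^2$. An \emph{infection chain} is a sequence $(v_0,a_0),(v_1,a_1),\ldots,(v_s,a_s)$ of distinct individuals with $\|v_{k+1}-v_k\|_1\le 1$ in which the $(k+1)$-th is infected by the $k$-th; each such chain succeeds with probability $(P_N^{\theta})^s$. Writing $n=|i_k|+|j_k|$, the event $\{R_t(i_k,j_k)>0\}$ implies that some chain of length $s\le t$ reaches village $(i_k,j_k)$ from the initial seed.

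For part~(i) I would apply a first-moment bound. Summing over lattice paths and individual labels, the expected number of chains of length $s$ is at most $\mathcal N(s,i_k,j_k)\cdot ((1+\theta)/5)^s$, where $\mathcal N(s,i_k,j_k)$ counts $\zz{Z}^2$-paths of length $s$ from $\0$ to $(i_k,j_k)$ with step set $\{\0,\pm e_1,\pm e_2\}$. A Stirling/Sanov analysis gives $\mathcal N(s,i_k,j_k)=\exp\bigl(s(\log 5 - I(i_k/s,j_k/s))+o(s)\bigr)$ with Cram\'er rate $I(u,w)=\log 5+\inf_{t,\alpha}F(t,\alpha;u,w)$, where $t$ is the fraction of non-stay moves and $\alpha$ the split of those between the two coordinate axes. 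Minimising over $\alpha$ collapses the expression to exactly $G(v,\phi)$ with $v=(|i_k|+|j_k|)/s$, the factor $1-2a(\phi)$ encoding $(|i_k|-|j_k|)/(|i_k|+|j_k|)$. Hence the expected chain count decays like $\exp\bigl(s(\log((1+\theta)/5)-G(v,\phi))\bigr)$; since $\upsilon$ solves $G(\upsilon,\phi)=\log((1+\theta)/5)$ and $G$ is monotone in $v$, the choice $t=\lfloor n(\upsilon^{-1}-\eps)\rfloor$ forces $v=n/s$ uniformly on the wrong side of $\upsilon$ for all $s\le t$, so Markov's inequality yields $P(R_t(i_k,j_k)>0)\to 0$. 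Under IC2 the bound gains a factor $O(N)$ from the $\gamma N$ seeds, easily absorbed by the exponential decay.

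For part~(ii) I would proceed in two stages. First, show that with probability tending to $1$ the infection reaches village $(i_k,j_k)$ by time $s^{\ast}=\lfloor n(\upsilon^{-1}+\eps/2)\rfloor$. Theorem~\ref{thm:surv_prob} allows one to assume (on the survival event under IC1, or unconditionally under IC2) that after $O(\log N)$ steps there are $\Theta(N)$ infected individuals concentrated in a ball of radius $O(\log N)$ around the origin, providing a near-deterministic source. From this source I would run a second-moment computation on the number of length-$s^{\ast}$ chains terminating at $(i_k,j_k)$: the first moment is exponentially large because $v<\upsilon$ now puts us on the positive side of the exponent computed above, and pairs of chains interact only through the rare event that they share an individual, so Paley--Zygmund produces at least one surviving chain with uniformly positive probability. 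This is boosted to probability $1-o(1)$ either by considering the $\Theta(N)$ possible landing individuals at $(i_k,j_k)$ or, more robustly, by a block renormalisation in which time is partitioned into epochs of length $T_N=C\log N$ and an infected cluster at epoch $k$ is shown to seed an infected cluster at epoch $k+1$ advanced by $\upsilon T_N$ in the chosen direction. Second, once any individual at $(i_k,j_k)$ is infected, an additional $O(\log N)\ll \eps n$ units of time suffice, by the same within-village coupling with a Poisson$(1+\theta)$ Galton--Watson process that underlies Theorem~\ref{thm:surv_prob}, for the local SIR dynamics to reach its deterministic final size, which by the classical identity $1-\iota=\exp(-(1+\theta)\iota)$ gives $R(i_k,j_k)/N\to\iota$ in probability.

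The main obstacle is the second-moment / block step in part~(ii): as the epidemic depletes susceptibles along earlier chains, distinct chains are no longer independent, and a priori these correlations could distort the Cram\'er exponent. The natural resolution is the block renormalisation described above, using blocks of spatial and temporal scale $O(\log N)$ to decouple successive stages; the quantitatively delicate point is to verify that the block-level speed matches the sharp variational value $\upsilon(\theta,\phi)$ rather than a strictly smaller surrogate, and this matching is the step I expect to require the bulk of the work.
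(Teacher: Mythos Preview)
Your treatment of part~(i) is essentially the paper's argument: both bound $E(I_n^N(\x))/N$ by $((1+\theta)/5)^n$ times the number of lazy-random-walk paths, then identify the exponential rate with $G(v,\phi)$ via Stirling, and conclude by monotonicity of $G$ in $v$.

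For part~(ii) your plan diverges from the paper in two respects, and in each case there is a real gap. First, the paper does \emph{not} use a second-moment argument; it builds a renormalized oriented site percolation (the ``$\eta_t$-system'') on blocks of \emph{fixed} spatial scale $L$ and temporal scale $(\upsilon^{-1}+\eps)L$, independent of $N$. The block event is that $\lceil\delta_0 N\rceil$ infected at the corner $C(\mathbf{b})$ produce $\lceil\delta_0 N\rceil$ infected at both $C(\mathbf{b}\pm(1,1))$ via edges inside a slightly fattened rectangle. The crucial point---exactly the one you flag as delicate---is resolved by passing to the deterministic large-$N$ limit at this fixed $L$: one differentiates the block dynamics in $\gamma$ at $\gamma=0$, obtains again a restricted LRW path count, and shows by a reflection-principle estimate that the restriction to the rectangle does not change the exponential rate. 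Hence for $L$ large the derivative exceeds $1$, so for some small $\delta_0$ the deterministic block output exceeds $\delta_0$, and then $P(\eta_t(\mathbf{b})=1)\to1$ as $N\to\infty$. This is how the sharp speed $\upsilon(\theta,\phi)$ is matched; with your $O(\log N)$ blocks the deterministic-limit shortcut is unavailable and it is not clear how you would avoid losing a constant in the speed.

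Second, your final step---``once one individual at $(i_k,j_k)$ is infected, a within-village Galton--Watson coupling gives $R/N\to\iota$''---is not correct as stated. A single arrival survives only with probability $\approx\iota$, and even on survival the ultimate proportion at $(i_k,j_k)$ is determined by the spatial difference equation $f(\x)=1-\exp(-\tfrac{1+\theta}{5}\wt f(\x))$, not by a single-village final-size identity. The paper handles this by first arranging (via the $\eta_t$-system) that $\Theta(N)$ particles are infected at a nearby site using only edges in a lower half-space, and then running a separate half-space deterministic system $\SHS_n$ to show that from $\delta N$ seeds the proportion at a forward site converges to $\iota$; the two pieces are glued using independence across the separating line. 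So you need to replace the within-village coupling by a genuine spatial argument of this kind.
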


Results like Theorem \ref{thm:speed} are known as ``Shape Theorem''. A number of such theorems have been proved for other models; see, e.g., \cite{Richardson73,DL81,CD88,ZY93,Lalley03}. However, in all these situations, the exact form of the limiting shape is unknown. To the best of our knowledge, Theorem \ref{thm:speed} is the first such result for which the exact shape is known.

Let us explain Theorem \ref{thm:speed} in words. For any direction $\phi\in[0,2\pi)$, any site~$(i,j)$ that is far away from the origin and  on this direction, namely, $(i,j)\approx (|i|+|j|)(\cos\phi,\sin\phi)$, before generation $\lfloor (|i| + |j|)(\upsilon(\theta,\phi)^{-1}-\eps)\rfloor$, with a high probability, there is no recovered individual, in other words, the epidemic has not reached the site. However, after a short period of time, at generation $\lfloor (|i| + |j|)(\upsilon(\theta,\phi)^{-1}+\eps)\rfloor$, conditional on that the epidemic lasts forever,  with a high probability, there is
roughly a constant proportion of recovered individuals. The constant proportion, as we shall see from Theorems \ref{thm:ult_inf_prop_2} and \ref{thm:ult_inf_prop_1} below, turns out to {be} the ultimate proportion of recovered individuals over the whole period. To sum up, the epidemic spreads at the speed of $\upsilon(\theta,\phi)$ along  direction~$\phi$, and most infections occur during a short period of time.

Let us visualize the statement about speed. Theorem \ref{thm:speed} asserts that  the ``frontier'' of the epidemic, after scaled by the generation, converges to the curve $(\upsilon(\theta,\phi): \phi \in[0,2\pi))$.
The following pictures plot the curve for three different values of $\theta:$~$ \theta = 1, 2$ and $5$.

\begin{figure}[H]	
\centering
\includegraphics[width=0.3\textwidth]{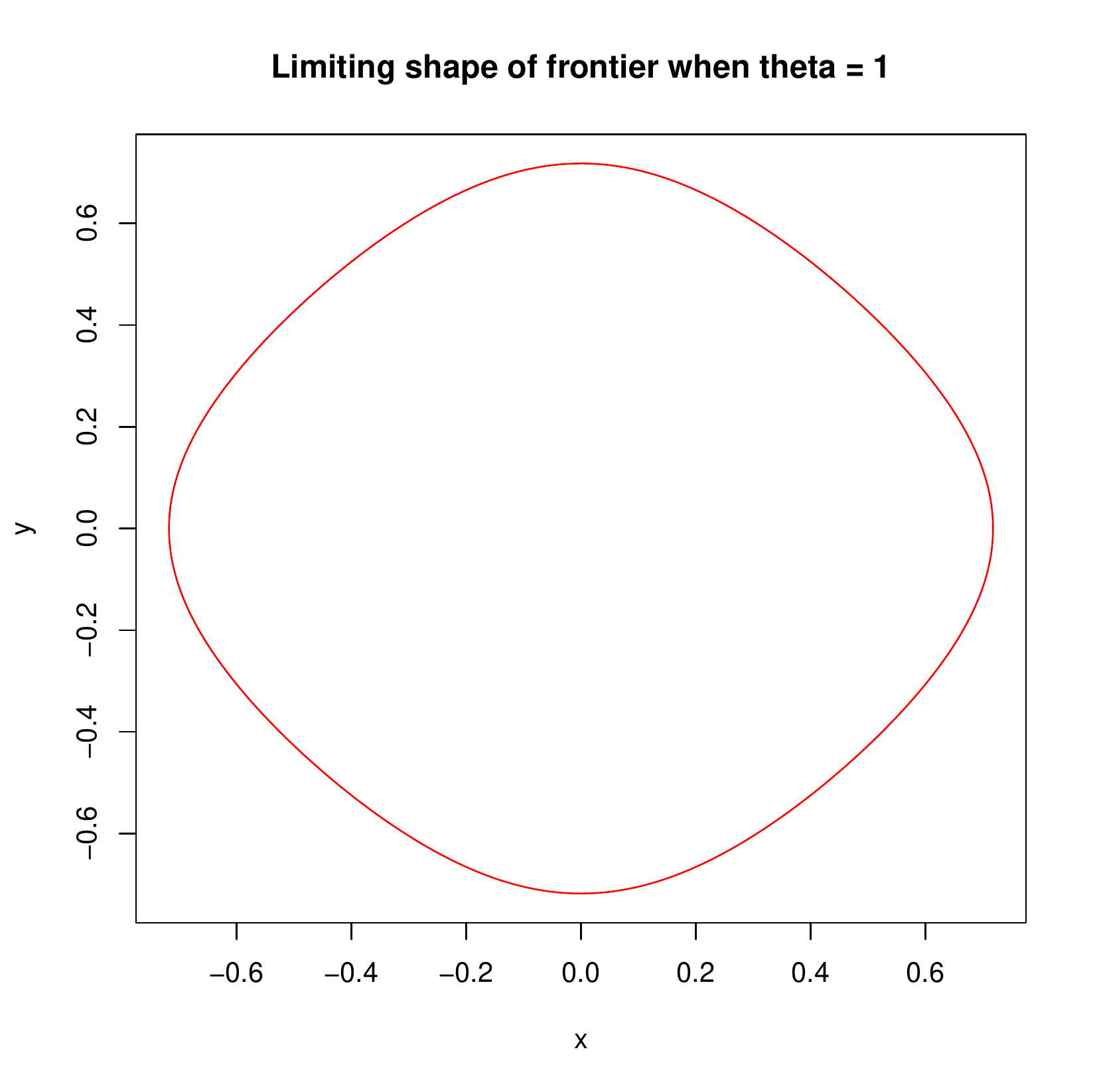}
\includegraphics[width=0.3\textwidth]{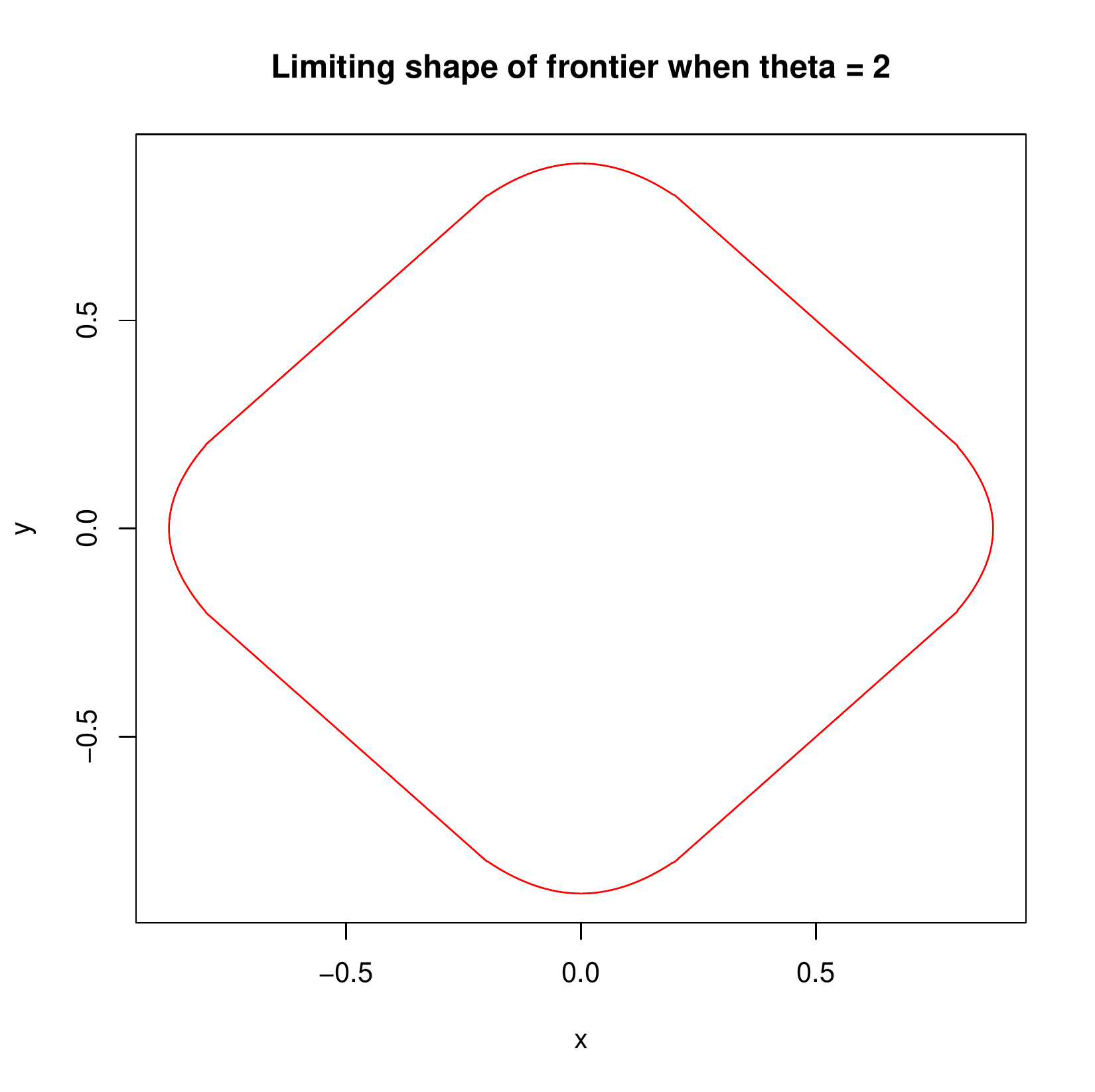}
\includegraphics[width=0.3\textwidth]{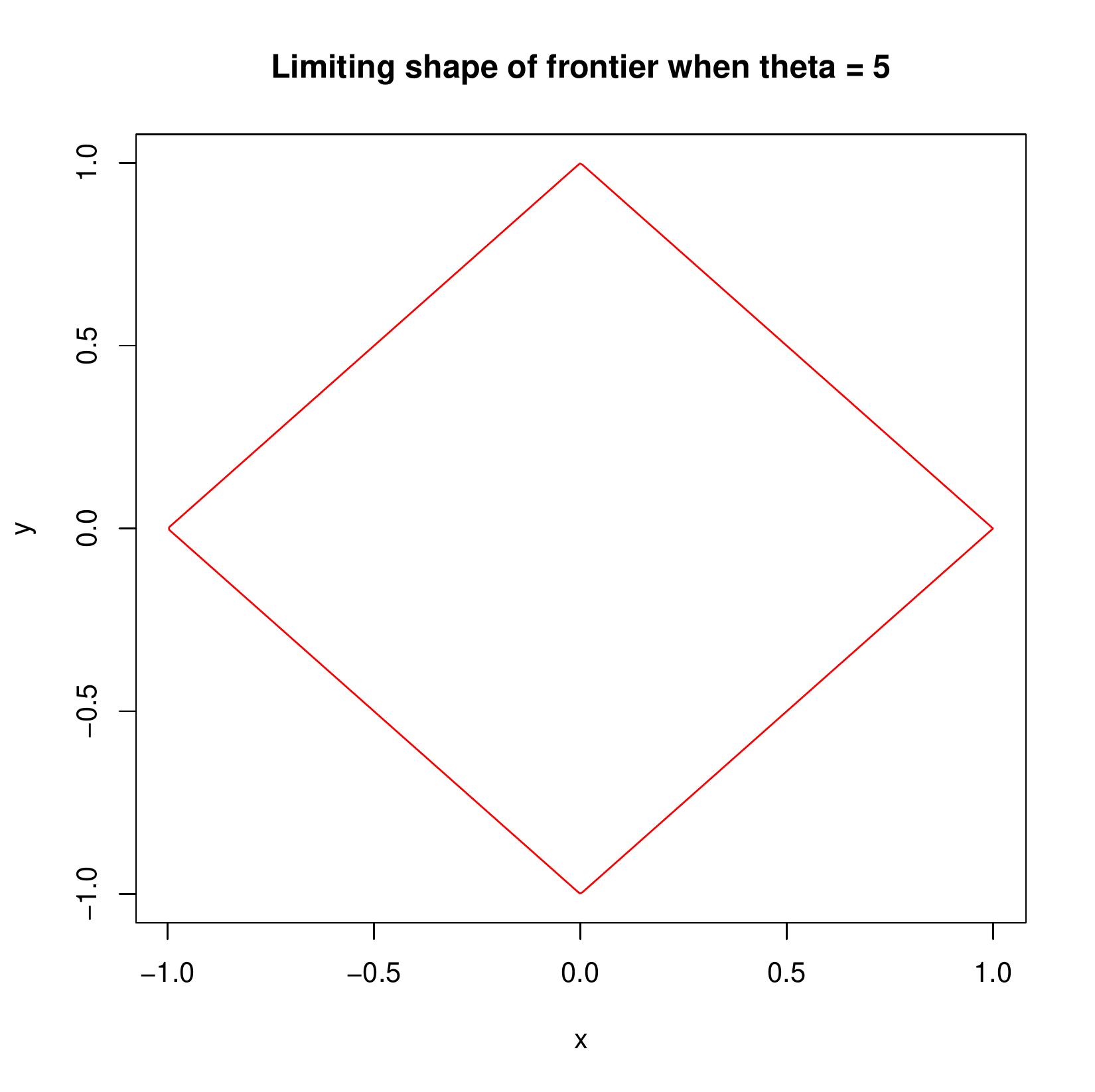}
\caption{Limiting shape of the frontier of the epidemic when $ \theta = 1, 2$ and $5$.}\label{fig:shape}
\end{figure}

We note that there is an interesting phase transition:
\begin{itemize}
  \item when $ \theta < 1.5$, the speed is smaller than one in all directions;
  \item when $ \theta \in [1.5,4)$, there are  four cones along the $x$ and $y$ axes, inside of which
the speed is smaller than one while the speed equals one outside;
 \item when $ \theta \geq 4$, the speed equals one in all directions.
\end{itemize}

To see how well the curve $(\upsilon(\theta,\phi): \phi \in[0,2\pi))$  describes the frontier, we run simulations for $\theta = 1, 2$ and $5$ with village size $N=1,000$ and up to generation $T=1,000$ under the initial condition IC1. The following pictures show the population of infected/recovered individuals.

\begin{figure}[H]	
\centering
\includegraphics[width=0.3\textwidth]{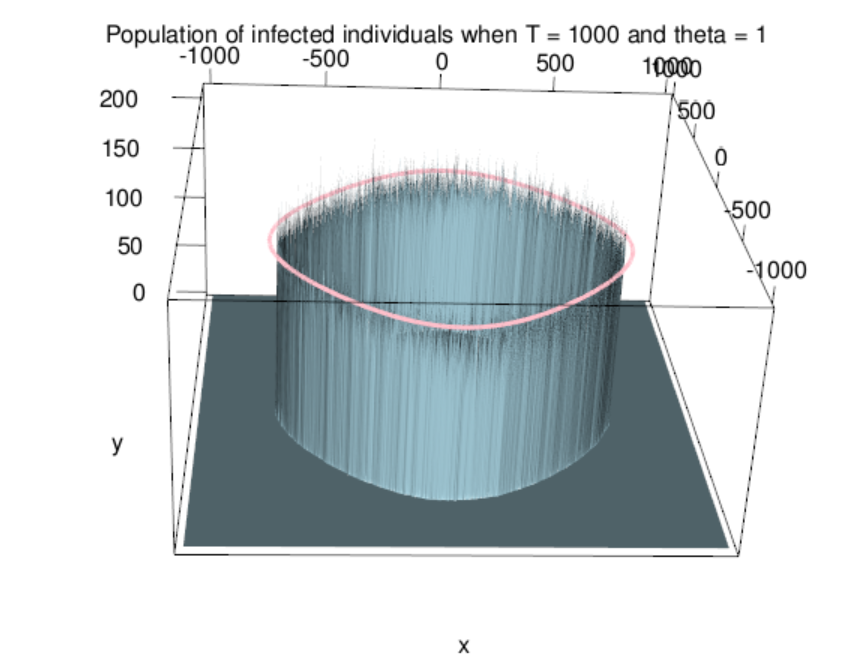}
\includegraphics[width=0.3\textwidth]{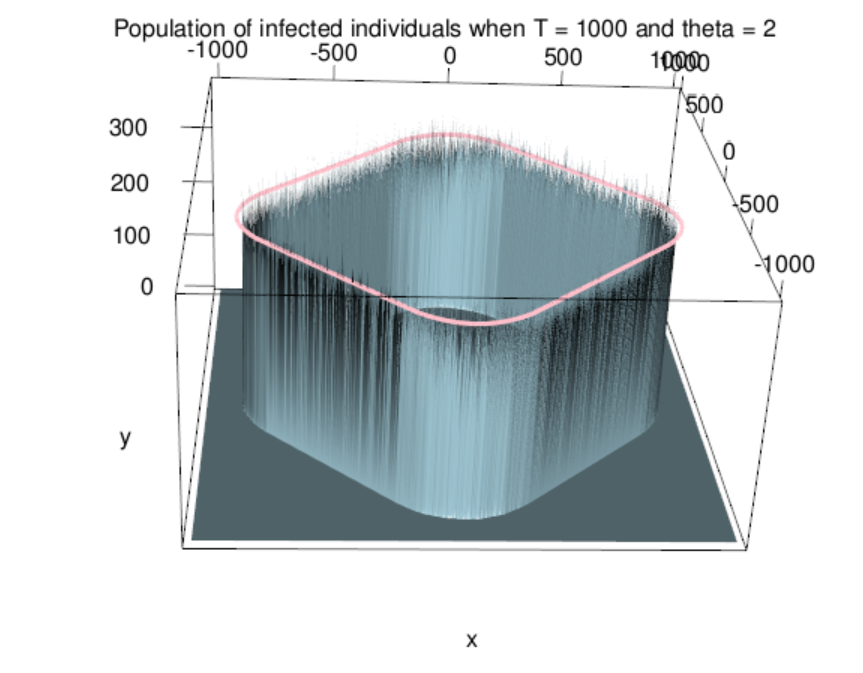}
\includegraphics[width=0.3\textwidth]{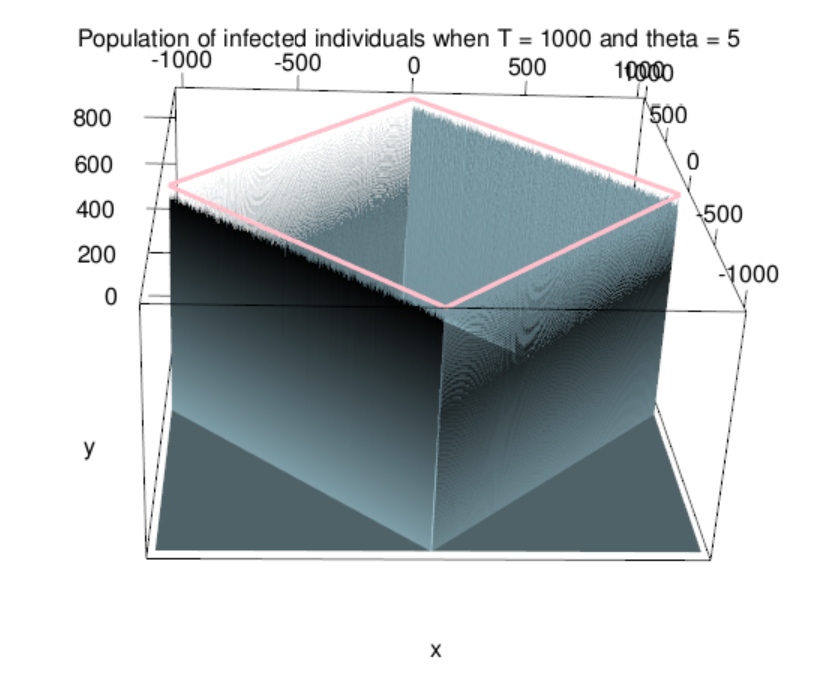}\\
\includegraphics[width=0.3\textwidth]{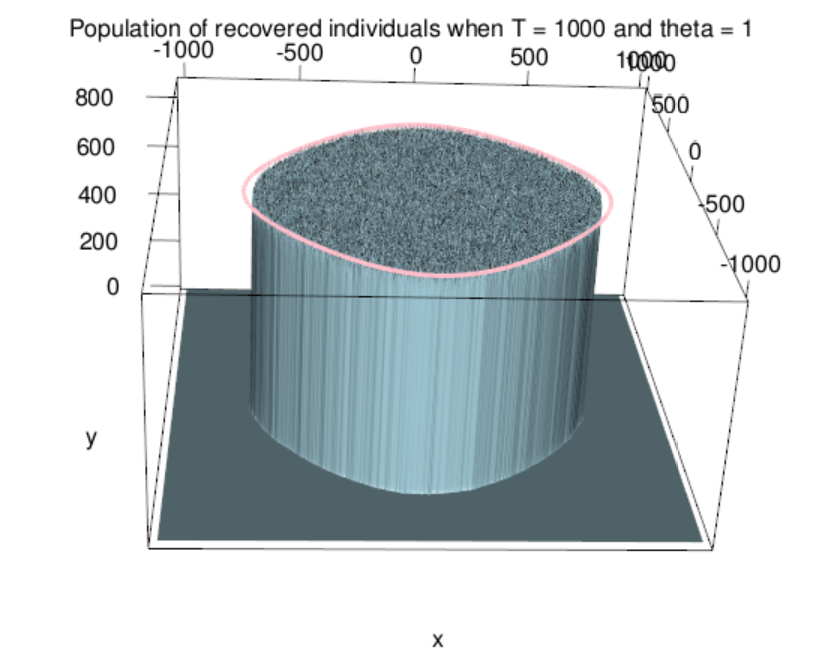}
\includegraphics[width=0.3\textwidth]{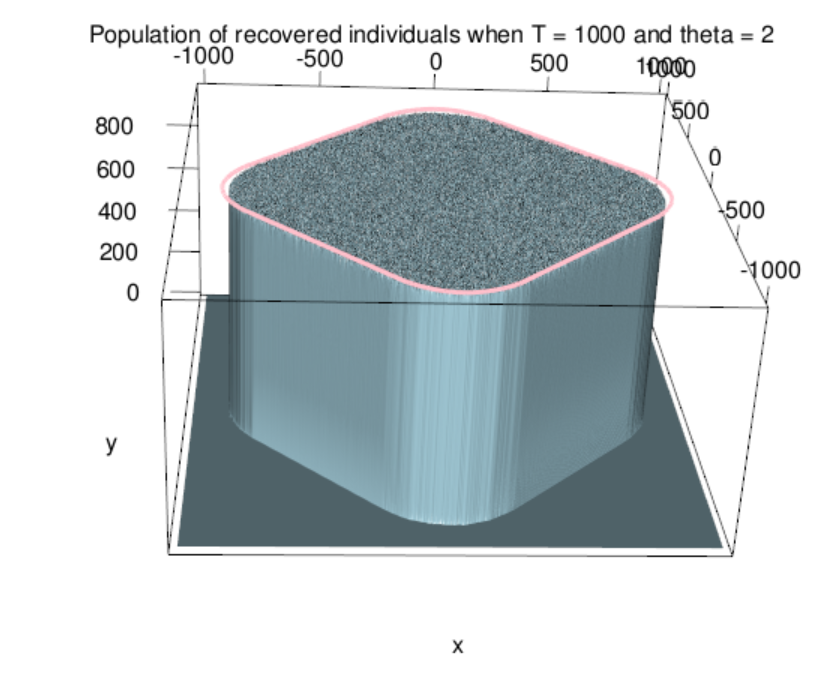}
\includegraphics[width=0.3\textwidth]{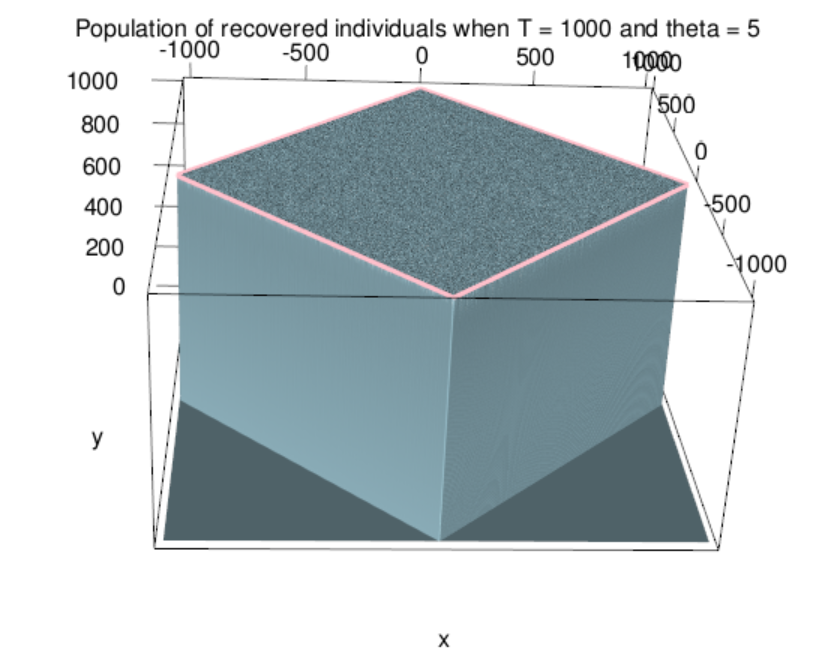}
\caption{Population of infected/recovered individuals from random simulations when $\theta = 1, 2$ and $5$ with village size $N=1,000$ and at generation $T=1,000$ under the initial condition  IC1. Above: infected population; bottom: recovered population. The pink curve is $(T\upsilon(\theta,\phi): \phi \in[0,2\pi))$.}\label{fig:shape_sim}
\end{figure}

We see that the curves provide remarkably accurate description of the frontier in all three cases.

Now we turn to Q3, the ultimate proportion of individuals who will be infected. In fact, the three pictures in the bottom panel of Figure~\ref{fig:shape_sim} already shed some light on the proportion. We see that the proportion is almost constant behind the frontier. Our next result confirms this observation and gives the explicit value for the constant. Recall that $\iota$ is defined in \eqref{eq:iota}. Note that for any fixed~$N$ and $\x$, $R^N_n(\x)$ is increasing in $n$, so we can define $R^N_\infty(\x)=\lim_{n\to\infty} R^N_n(\x)$.

\begin{thm}\label{thm:ult_inf_prop_2}
For any $\eps>0$, there exists  $N_0\in\zz{N}$ such that for all~$N\geq N_0$, under the initial condition~IC1, conditional on that the epidemic lasts forever, we have
    \begin{equation}\label{eq:R_final_IC1}
    \sup_{\x\in\ze^2} P\left(\left|\frac{R^N_\infty(\x)}{N}-\iota\right|>\eps\right)<\eps.
    \end{equation}
\end{thm}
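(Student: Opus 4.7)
Proof proposal. The plan is to combine Theorem~\ref{thm:speed} with a self-consistency relation for the final recovery proportions. Conditional on the entire infection history, each of the $N$ individuals at $\x$ avoids all future infections independently with probability $(1-P_N^\theta)^{T(\x)}$, where $T(\x):=\sum_{t\ge 0}\wt I_t(\x)=\sum_{\|\y-\x\|_1\le 1}R^N_\infty(\y)$ is the total infection pressure at $\x$. A Bernstein-type concentration then gives, on an event of high probability,
\[
\frac{R^N_\infty(\x)}{N}\approx 1-\exp\Bigl(-\frac{1+\theta}{5}\sum_{\|\y-\x\|_1\le 1}\frac{R^N_\infty(\y)}{N}\Bigr),
\]
which reduces to the fixed-point equation \eqref{eq:iota} exactly when all five neighboring proportions equal $\iota$. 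The proof therefore splits according to whether $\x$ is far from or near the origin.

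For sites with $\|\x\|_1\ge M(\eps)$ the two bounds are immediate from Theorem~\ref{thm:speed}. The lower bound $R^N_\infty(\x)/N\ge\iota-\eps/2$ follows from part (ii) of that theorem at time $n(\x)=\lfloor\|\x\|_1(\upsilon(\theta,\arg\x)^{-1}+\eps')\rfloor$ together with the monotonicity $R^N_\infty(\x)\ge R^N_{n(\x)}(\x)$. The matching upper bound follows by applying Theorem~\ref{thm:speed}(ii) to each of the four neighbors of $\x$ (also far from the origin) to deduce $T(\x)/N\le 5(\iota+\eps')$ with high probability, and then inserting this bound into the displayed approximation.

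For sites with $\|\x\|_1<M(\eps)$ the plan is to reduce to an IC2-type configuration by a regeneration argument. Conditional on survival under IC1, the early dynamics couple (as in the proof of Theorem~\ref{thm:surv_prob}) to a surviving supercritical Galton--Watson process, so the total infected mass near the origin grows exponentially until, at some random time $\tau_N$, a bounded neighborhood of the origin carries $\Theta(N)$ infected individuals in aggregate. The post-$\tau_N$ dynamics can then be sandwiched between two IC2-type processes with slightly different $\gamma$, and the IC2 counterpart (Theorem~\ref{thm:ult_inf_prop_1}) yields the result for any site near the origin. The main obstacle is this regeneration step: pinpointing $\tau_N$ and converting the IC1 survival event into a usable IC2 configuration with only $O(\eps)$ loss. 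A secondary technical point is justifying the Bernstein-type concentration for $S^N_\infty(\x)$ uniformly in $\x$, since both the infection history and the within-site Bernoulli trials contribute randomness.
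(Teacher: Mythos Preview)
Your upper-bound step for far sites has a genuine gap. Theorem~\ref{thm:speed}(ii) controls $R^N_{n(\x)}(\y)/N$ at a \emph{specific finite time} $n(\x)$; by monotonicity in $n$ this yields only the lower bound $R^N_\infty(\y)/N\ge\iota-\eps'$. It does \emph{not} give $R^N_\infty(\y)/N\le\iota+\eps'$, hence does not give $T(\x)/N\le 5(\iota+\eps')$. Your self-consistency display therefore cannot be closed the way you describe: you are using the desired upper bound on $R^N_\infty$ at the neighbours to derive it at $\x$, which is circular. (A salvage is possible by \emph{iterating} the self-consistency relation from the trivial bound $R^N_\infty/N\le 1$, since $v_{k+1}=1-e^{-(1+\theta)v_k}$ with $v_0=1$ decreases to $\iota$; but this is not what you wrote, and making it rigorous for the random process requires controlling the approximation error uniformly over a growing family of sites.)

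The near-site step has the same defect on the upper side. Your regeneration produces, after time $\tau_N$, an IC2-type configuration with $\Theta(N)$ infecteds near the origin. Sandwiching from below is fine, but sandwiching from above by IC2 gives only $R^N_\infty(\x)/N\lesssim\DR_\infty(\x)$, and $\DR_\infty(\x)>\iota$ strictly for $\x$ near the origin (Proposition~\ref{prop:uniq_sol_eq}). So the regeneration does not yield the required upper bound $\iota+\eps$ at those sites.

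The paper takes an entirely different route that sidesteps both issues. Via the $N$-percolation representation and uniqueness of the infinite cluster, $R^N_\infty(\x)$ under IC1 conditioned on survival equals $\sum_{i}\mathbf{1}_{(\x,i)\leftrightarrow\infty}$ conditioned on $(\0,1)\leftrightarrow\infty$. A first/second moment computation then suffices: the first moment is $\ge\iota$ by FKG; the second moment is $\le\iota^2+o(1)$ by showing $P((\x,1),(\x,2),(\0,3)\leftrightarrow\infty)\le\iota^3+o(1)$, which is obtained by replacing ``$\leftrightarrow\infty$'' with ``connected to a far line $B$'' and invoking the IC2 estimate \eqref{eq:D_to_R}. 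This argument is uniform in $\x$ without any near/far split and never needs an a priori upper bound on $R^N_\infty$.
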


Under the initial condition IC2, the ultimate proportion of infection  is not constant over the space, rather, it is specified by a difference equation. More precisely, let $\DR_\infty(\x):\ze^2\rightarrow [0,1]$ be the solution to the following difference equation:
\begin{equation}\label{eq:ult_prop_IC_2}
   {\aligned
       f(\x)&=1-\exp(-\frac{1+\theta}{5}\wt{f}(\x)),\, \mbox{ for all } \x\in \ze^2\setminus \{\0\}, \mbox{ and }\\
%%%       (f(\0)-\gamma)/(1-\gamma)&=&1-\exp(-\frac{1+\theta}{5}\wt{f}(\0))\CB{,}&\mbox{ when }\gamma\neq 1 \CB{;}\\
%%%       f(\0)&=&1, &\mbox{ when }\gamma= 1.
       f(\0)&=\gamma +(1-\gamma)\left(1-\exp(-\frac{1+\theta}{5}\wt{f}(\0))\right).
   \endaligned
   }
\end{equation}
We will show in Proposition~\ref{prop:uniq_sol_eq} that there is a unique solution to the above equation. Moreover, the function is symmetric and satisfies that $\DR_\infty(\x)\searrow\iota$ in the sense that $\DR_\infty(\x)\geq\max\{\DR_\infty(\x+(1,0)),\DR_\infty(\x+(0,1))\}$ for all $\x\in\rN\times \rN$, and $\DR_\infty(\x)\to\iota$ as $||\x||_1\rightarrow\infty$.

\begin{thm}\label{thm:ult_inf_prop_1}
Under the initial condition~IC2, we have
\[
   \frac{R^N_\infty(\x)}{N}\toop  \DR_\infty(\x),  \mbox{ for all } \x\in \ze^2 \mathrm{~as~} N\rightarrow \infty.
\]
Moreover, for any $\vep>0$, when $N$ is sufficiently large, we have
\[
  \limsup_{\x\rightarrow\infty}P\left(\left|\frac{R^N_\infty(\x)}{N}-\iota\right|>\vep\right)<\vep.
\]
\end{thm}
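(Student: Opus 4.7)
The plan is to prove, in the limit $N\to\infty$, a self-consistency identity for the scaled ultimate proportions $R_\infty^N(\x)/N$, and then invoke the uniqueness of the solution to \eqref{eq:ult_prop_IC_2} from Proposition~\ref{prop:uniq_sol_eq} to identify the limit with $\DR_\infty(\x)$. The far-field assertion will then follow by combining the stated boundary behaviour $\DR_\infty(\x)\to\iota$ with a coupling that compares the IC2 epidemic at distant sites to an IC1 epidemic conditioned to survive, to which Theorem~\ref{thm:ult_inf_prop_2} can be applied.

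For the self-consistency identity, I would work with the coupling in which an independent $\mathrm{Bernoulli}(P_N^\theta)$ variable is pre-assigned to every ordered pair of individuals sitting at $\ell_1$-neighboring sites; this Bernoulli encodes the would-be success of the transmission at the unique time at which the prospective infector is itself infected. Under this coupling, a fixed initially-susceptible individual $j$ at $\x$ escapes infection forever iff all Bernoullis pointing at $j$ from ever-infected individuals in $\x$'s neighborhood take the value $0$. Conditioning on the ``who-ever-gets-infected'' field, those escape events are conditionally independent across the $N-I_0^N(\x)$ initially-susceptible individuals at $\x$, each with the same conditional probability $(1-P_N^\theta)^{\tilde R_\infty^N(\x)}$ up to a negligible $O(1)$ self-inclusion correction. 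A Chernoff bound together with the expansion $(1-P_N^\theta)^{\tilde R_\infty^N(\x)}\approx \exp(-\tfrac{1+\theta}{5}\tilde R_\infty^N(\x)/N)$ then yields
\[
\frac{R_\infty^N(\x)}{N}\;=\;1-(1-\gamma\,\mathbf{1}_{\x=\0})\exp\!\Bigl(-\tfrac{1+\theta}{5}\cdot\tfrac{\tilde R_\infty^N(\x)}{N}\Bigr)+o_{\PP}(1).
\]
By tightness in $[0,1]^{\zz{Z}^2}$, I extract a subsequential joint limit; the same LLN argument bounds the variance of each coordinate and forces any subsequential limit to be deterministic and to satisfy \eqref{eq:ult_prop_IC_2}. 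Proposition~\ref{prop:uniq_sol_eq} then forces that limit to coincide with $\DR_\infty$.

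For the second assertion, pointwise convergence from the first together with $\DR_\infty(\x)\to\iota$ gives $R_\infty^N(\x)/N\to\iota$ in probability along any sequence $\x\to\infty$, but we need this in the form of a small-$\eps$ bound that is uniform in $\x$ for each fixed large $N$. To upgrade pointwise convergence to the required form, I would invoke Theorems~\ref{thm:surv_prob} and~\ref{thm:speed}: under IC2 the epidemic survives with probability tending to $1$, and on that event the infection only reaches a site $\x$ near time $\lfloor ||\x||_1\upsilon(\theta,\arg(\x))^{-1}\rfloor$; restarting at that time couples the local process near $\x$ with an IC1 epidemic conditioned to survive, to which Theorem~\ref{thm:ult_inf_prop_2} applies with a bound that is uniform in $\x$.

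The main technical obstacle is passing to the $N\to\infty$ limit jointly over the infinite lattice $\zz{Z}^2$. My plan is a two-step truncation: first establish a finite-volume LLN inside a large ball $B_L$ and up to a large time $T$, where the conditional-independence structure of the Bernoulli coupling cleanly applies; then use the bounded spreading speed from Theorem~\ref{thm:speed} to dominate the tail $\tilde R_\infty^N(\x)-\tilde R_T^N(\x)$ by a quantity that vanishes as $L,T\to\infty$ uniformly in $N$, ensuring no significant far-away contribution to the self-consistency identity at any fixed $\x$.
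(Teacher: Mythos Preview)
Your overall strategy for the first assertion---self-consistency plus uniqueness from Proposition~\ref{prop:uniq_sol_eq}---is different from the paper's, which proceeds by finite-time convergence $D_n^N(\x)/N\toop\DD_n(\x)$ together with a tail bound $E[(R_\infty^N(\x)-D_n^N(\x))/N]<\eps$ for $n$ large (Lemma~\ref{prop:D_to_R}). Two steps in your plan fail as written. First, the escape events are \emph{not} conditionally independent given the who-gets-infected field: $R_\infty^N(\x)$ is exactly the sum of those indicators, and even after conditioning only on the neighbouring $R_\infty^N(\y)$, infection paths \emph{through} $\x$ link the escape events of different individuals at $\x$. One can recover a first-moment identity by an ``extra vertex'' trick (this is precisely Lemma~\ref{lm:S_recur}), but that only gives an expectation relation; to get the random identity $R_\infty^N(\x)/N=f(\tilde R_\infty^N(\x)/N)+o_{\PP}(1)$ you still need concentration of $R_\infty^N(\x)/N$, and the same circularity reappears in any second-moment calculation. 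Second, your proposed tail control is aimed at the wrong phenomenon: the spreading speed in Theorem~\ref{thm:speed} tells you when infection \emph{arrives} at a site, not when it \emph{finishes} there. The quantity $\tilde R_\infty^N(\x)-\tilde R_T^N(\x)$ counts infections at the neighbours of $\x$ \emph{after} time $T$; controlling it uniformly in $N$ requires the paper's key observation that once the deterministic $\DD_n$ exceeds $\theta/(1+\theta)$ on a large ball, the residual process there is subcritical in expectation and decays geometrically.

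For the far-field assertion, your appeal to Theorem~\ref{thm:ult_inf_prop_2} is circular: in the paper the IC1 result is proved \emph{after} the present theorem, and its proof uses the far-field bound \eqref{eq:R_final}; likewise the full Theorem~\ref{thm:speed}(ii) (with $\iota$ rather than just some $\delta>0$) relies on \eqref{eq:R_final}. Moreover, when the epidemic reaches a distant site $\x$ under IC2 the local configuration is not a single infected particle, so ``restart as IC1'' gives neither an upper nor a lower coupling directly. The paper instead gets the upper bound by monotonicity against the initial condition $N\bm{\delta}_{x+y=0}$ combined with Lemma~\ref{lm:IC_extreme}, and the lower bound by introducing a half-space system (Proposition~\ref{prop:conv_SHS}) and, in the percolation picture, concatenating an open path in $\{x+y<m+l\}$ that reaches a site carrying $\delta N$ infections (from the weak speed result Theorem~\ref{thm_speed}) with an open path in $\{x+y\geq m+l\}$ that converts those $\delta N$ infections into $\approx\iota N$ recovered individuals nearby.
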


Theorems \ref{thm:ult_inf_prop_2} and  \ref{thm:ult_inf_prop_1} have a significant implication on  ``herd immunity''.  In general, for an epidemic with a basic reproduction number $R_0$, the minimum proportion of the population that needs to be  vaccinated  to prevent sustained spread of the infection is $1-1/R_0$. In our case, $R_0\sim 1 +\theta$, hence the required vaccination proportion is $\sim 1-1/(1+\theta)$. Theorem \ref{thm:ult_inf_prop_2}, however, states that under the initial condition~IC1, if there is no vaccination (hence the only way for an individual to gain immunity is via recovery from infection), then on the event that the the epidemic lasts forever, the proportion of individuals who will  be infected throughout the whole process is around~$\iota$. Under the  initial condition~IC2, the proportion of individuals who will  be infected on site far away from the origin is also around~$\iota$. A natural question is:
How do the two proportions compare with each other? The comparison is illustrated in Figure~\ref{fig:iota_vs_NC}.

\begin{figure}[H]	
\centering
\includegraphics[width=0.6\textwidth]{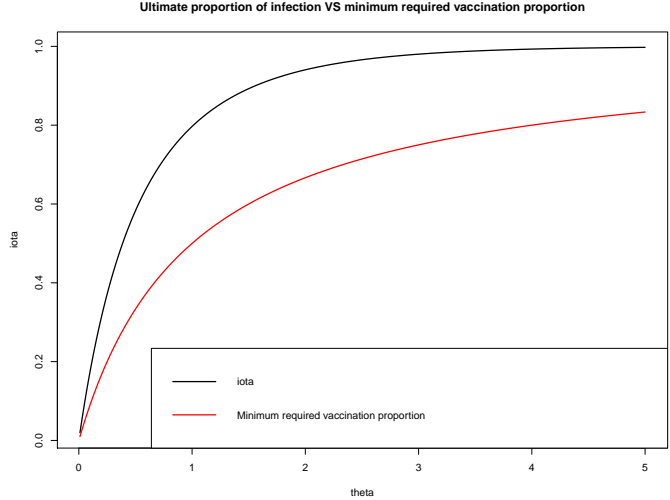}
\caption{Comparison between the ultimate proportion of infection and the minimum required vaccination proportion for different values of $\theta$.}\label{fig:iota_vs_NC}
\end{figure}

Figure \ref{fig:iota_vs_NC} reveals a sharp difference between ``natural immunization'', namely, recovery from infection, and vaccination. Via  natural immunization, the ultimate proportion of population who will be infected can be \emph{much higher} than the required vaccination proportion to prevent sustained spread of the infection. For an epidemic with a basic reproduction number of 3, the two proportions are 0.94 and~$2/3$, respectively. In other words, without vaccination, ultimately, there will be $94\%$ of population who will be infected! This is  substantially higher than the required vaccination level of $2/3.$ The difference between the two proportions highlights the huge benefit of vaccination.

Finally, we give a theorem to describe how  infection evolves inside the ``speed one'' cone.
Recall that by Theorem \ref{thm:speed}, in order to have such a cone,  $\theta$ needs to be larger than $1.5$. When $\theta\in(1.5,4)$, let $\kappa$ be  the unique solution in $(0,0.5]$ to
\begin{equation} \label{eq:kappa}
\kappa^\kappa(1-\kappa)^{1-\kappa}=\frac{1+\theta}{5}.
\end{equation}
When $\theta\geq4$, let $\kappa=0$. Also note that
$\upsilon(\theta,\phi)= 1$ if and only if $\kappa\leq|\cos(\phi)|/(|\cos(\phi)|+|\sin(\phi)|)\leq 1-\kappa$.

\begin{remark}\label{rmk_kappa}
The value $\kappa$ equals to the velocity of the leftmost position of a supercritical branching random walk. Specifically, consider a branching random walk started by one particle at the origin. At every generation, each particle produces a random number of offspring according to $\mathrm{Bin}(2N,(1+\theta)/(5N))$  and dies out; each offspring independently moves to the upper neighbour or the right neighbour with equal probability. It is easy to see that such defined process stochastically dominates $(I_n^N(m,n-m))_{m\in\zz{Z}}$. Write $(m(n), n-m(n))$ as the leftmost position of the branching random walk in the $n$-th generation. Then, the classical results in branching random walk (see, e.g., \cite{Hammersley74}, \cite{Kingman75}, \cite{Biggins76} and \cite{Bramson78}) state that, conditional on the event that the branching random walk survives forever, one has
$$
\frac{m(n)}{n}\rightarrow \kappa.
$$
%%%Note that the constant $\kappa$ is independent of $N$.
\end{remark}

Before giving our next theorem, we define  a sequence  $\{\ell^{(k)}\}_{k\geq -1}$ recursively as follows:  $\ell^{(-1)}=\ell^{(0)}=0,$ and for $i\geq 1$,  $\ell^{(i)}$ is the unique solution in $(0,1)$ to
\begin{equation}\label{eq:ell_i}
\ell^{(i)} = \left(1-\sum_{1\leq j<i} \ell^{(j)}\right) \left(1-\exp\left(-\frac{1+\theta}{5}\cdot(2 \ell^{(i)} + \ell^{(i-1)}+ 2\ell^{(i-2)})\right)\right).
\end{equation}
We will show in
%%%Lemma~\ref{lm:bd_sum_ell} (the proof is after the proof of
Lemma~\ref{lm:sum_ell} that $\iota=\sum_{i=1}^{\infty}\ell^{(i)}$ when $\theta>1.5$. Further define a sequence of random variables $\{\K(n)=\K^N(n)\}$ as
\[
\K(n)=\K^N(n)=\inf\left\{k:\sum_{(m,l)\in\ze^2:m+l=n}I^N_{n+k}(m,l)>0\right\},
\]
with the convention that $\inf\emptyset=\infty$. In words, $\K(n)$ measures after how many generations the epidemic reaches the line $\{(m,l)|m+l=n\}$. It is easy to see that $\K^N(n)$ is increasing, and we can define
\[
\K^N=\lim_{n\rightarrow \infty}\K^N(n).
\]

\begin{thm}\label{thm:all_layer_SIR}
Suppose that $\theta>1.5$, then the following results hold:
\begin{itemize}
  \item Under the initial condition~IC2,  for any $\vep>0$ and $i\geq 0$, when $N$ is sufficiently large, we have
%%%\begin{equation}\label{eq:conv_SIR_all_layer}
\[
     \limsup_{n\rightarrow\infty}\sup_{m:(\kappa+\eps)n<m<(1-\kappa-\eps)n}P\left(\left|\frac{I_{n+i}^N(m,n-m)}{N}-\ell^{(i+1)}\right|>\eps\right)<\eps.
\]
%%%    \end{equation}
    \item Under the initial condition~IC1, we have
\begin{enumerate}[(i)]
    \item For any $i\geq 0$,
      \[
      %%%  \begin{equation}\label{eq:Delay_time}
         \lim_{N\rightarrow \infty } P(\K^N=i)=\ell^{(i+1)}.
      \]
      %%%  \end{equation}

    \item For any $i,j\geq 0$, $\vep>0$, when $N$ is sufficiently large, we have
       \begin{multline} \nonumber  %%%\label{eq:layer_IC_0}
         \limsup_{n\rightarrow \infty} \sup_{m:(\kappa+\vep)n<m<(1-\kappa-\vep)n}\\
         P\left(\left|\frac{I^N_{n+j+i}(m,n-m)}{N}-\ell^{(i+1)}\right|>\vep\ |\ {\K^N}=j\right)<\vep.
        \end{multline}
\end{enumerate}
\end{itemize}
\end{thm}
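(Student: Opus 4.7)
The strategy is to prove the IC2 statement first by induction on $i$, then reduce IC1 part (ii) to IC2 by shifting time, and finally establish IC1 part (i) through a coupling with a Galton--Watson process. Writing $Y_n^{(i)}(m):=I^N_{n+i-1}(m,n-m)/N$, so that the IC2 claim becomes $Y_n^{(i)}(m)\to \ell^{(i)}$ uniformly in $m$ on the cone as $n\to\infty$, the base case $i=1$ exploits Theorem \ref{thm:speed}: in the speed-$1$ cone, the infection reaches layer $n$ exactly at time $n$, so at time $n-1$ the site $(m,n-m)$ and its two layer-$(n+1)$ neighbors are still uninfected. The evolution rule \eqref{eqn:X_R} together with a standard binomial LLN (for $N$ large) then reduces to the one-dimensional recursion
\[
Y_n^{(1)}(m)\;\approx\; 1-\exp\!\Bigl(-\tfrac{1+\theta}{5}\bigl(Y_{n-1}^{(1)}(m-1)+Y_{n-1}^{(1)}(m)\bigr)\Bigr),
\]
whose unique nontrivial fixed point $\ell^{(1)}$ exists and is globally attracting on $(0,1]$ precisely because $2(1+\theta)/5>1$, i.e.\ $\theta>1.5$.

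For the inductive step $i\to i+1$, assume $Y_n^{(j)}(m)\to\ell^{(j)}$ uniformly in $m$ for all $j\le i$. A layer-bookkeeping in $\wt I^N_{n+i-1}/N$ at $(m,n-m)$ gives: the site itself contributes $Y_n^{(i)}(m)\to\ell^{(i)}$ and the two layer-$(n+1)$ neighbors each contribute $Y_{n+1}^{(i-1)}\to\ell^{(i-1)}$ (all known by IH), whereas the two layer-$(n-1)$ neighbors contribute $Y_{n-1}^{(i+1)}$, which is precisely the quantity we are trying to study. Inserting the IH limits into \eqref{eqn:X_R} yields a one-dimensional map in $n$,
\[
Y_n^{(i+1)}(m)\;\approx\; \Bigl(1-\sum_{j=1}^i \ell^{(j)}\Bigr)\Bigl(1-\exp\!\bigl(-\tfrac{1+\theta}{5}\bigl(\ell^{(i)}+2\ell^{(i-1)}+Y_{n-1}^{(i+1)}(m-1)+Y_{n-1}^{(i+1)}(m)\bigr)\bigr)\Bigr),
\]
whose fixed-point equation is exactly \eqref{eq:ell_i} at index $i+1$. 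Concavity of the right-hand side in $Y_{n-1}^{(i+1)}$ combined with the fact that the map strictly crosses the diagonal at $\ell^{(i+1)}$ gives derivative $2(1+\theta)(1-\sum_{j=1}^{i+1}\ell^{(j)})/5<1$ at $\ell^{(i+1)}$, so the iteration contracts toward $\ell^{(i+1)}$. Combining this contraction with uniform LLN fluctuation bounds along the width of the cone completes the induction.

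For IC1 part (ii), on $\{\K^N=j\}$ the infection first reaches layer $n$ at time $n+j$; the spatial Markov property of the SIR process lets us view the configuration on layers $\geq n$ at time $n+j$ as a fresh initial condition of traveling-wave type, and the IC2 argument applied with a time shift of $j$ yields the stated concentration of $I^N_{n+j+i}/N$ around $\ell^{(i+1)}$. For part (i), I would couple the initial evolution near the origin with a Poisson$(1+\theta)$ Galton--Watson process, running it until the total infected first exceeds a threshold such as $\sqrt N$. On extinction of this GW coupling (probability $1-\iota$) the SIR epidemic dies out and $\K^N=\infty$; on survival the epidemic locks into the traveling-wave regime. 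The identification $P(\K^N=i)\to\ell^{(i+1)}$ would then follow from a duality argument: $\ell^{(i+1)}/\iota$ is the conditional probability, given survival under IC2, that a typical individual at layer $n$ is infected exactly $i$ generations after the wave front reaches its layer, and the random delay $\K^N$ under IC1 should inherit this distribution via a time-reversal of the initial burst.

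The main obstacle is the inductive step for IC2: since the unknown $Y_{n-1}^{(i+1)}$ is embedded inside the map defining $Y_n^{(i+1)}$, one must simultaneously establish the contraction near $\ell^{(i+1)}$ and control stochastic fluctuations through the many iterations needed to reach equilibrium, all while $m$ varies uniformly over the (widening) cone. A secondary but still delicate obstacle is making the duality underlying IC1(i) rigorous, since the coupling between the Galton--Watson initial burst and the traveling wave must preserve the delay $\K^N$ through the transition to the deterministic regime.
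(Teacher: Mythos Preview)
Your inductive contraction argument for the higher layers under IC2 is in spirit the same as the paper's (see its Lemma~\ref{lm:SIR_second_layer}), and the fixed-point/derivative computation you give is correct. However, there is a genuine gap at the \emph{base case} $i=1$. You write that a ``binomial LLN'' reduces the first layer to the deterministic recursion $Y_n^{(1)}(m)\approx 1-\exp(-\tfrac{1+\theta}{5}(Y_{n-1}^{(1)}(m-1)+Y_{n-1}^{(1)}(m)))$ and then appeal to global attraction of $\ell^{(1)}$. But the statement is for fixed large $N$ and $n\to\infty$: you cannot iterate an LLN over an unbounded number of steps without the errors accumulating, and in particular nothing in your outline prevents $Y_n^{(1)}$ from dying out (hitting $0$) before reaching a neighborhood of $\ell^{(1)}$. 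The paper handles this by a renormalization to a finite-range dependent oriented site percolation (its Lemma~\ref{lm_rect}): one chooses a block scale $L$ and a small threshold $\gamma$ so that, by the \emph{deterministic} limit applied for $L$ steps only, a block containing $\gamma N$ infected particles produces $\gamma N$ infected particles in the two neighboring blocks with probability $\to 1$ as $N\to\infty$; standard results on $k$-dependent oriented percolation then give survival and the desired lower bound uniformly in $n$. Your appeal to Theorem~\ref{thm:speed} for the base case is also circular, since in the paper Theorem~\ref{thm:speed} is proved after, and using, Theorem~\ref{thm:all_layer_SIR}.

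Your approaches to the IC1 parts are also both significantly weaker than the paper's. For part~(i), the paper does not use any Galton--Watson coupling or duality; it observes via the percolation representation that $P((\mathbf{0},1)\stackrel{n+k-1}{\longleftrightarrow}\mathcal{L}(n))=E\bigl[\sum_{i}\mathbf{1}_{(\mathbf{0},i)\stackrel{n+k-1}{\longleftrightarrow}\mathcal{L}(n)}\bigr]/N$, which by symmetry equals $E\bigl[\sum_{i=0}^{k-1}I^N_{n+i}(n,0)\bigr]/N$ under the maximal initial condition $N\bm{\delta}_{x+y=0}$, and the latter is controlled by the IC2 result together with the deterministic Proposition~\ref{prop:conv_front_layers_const_IC}. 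Your proposed ``time-reversal of the initial burst'' is not a proof, and I do not see how to make it one. For part~(ii), ``the spatial Markov property lets us view the configuration \ldots\ as a fresh initial condition'' is not immediate: on $\{\K^N=j\}$ the configuration at the moment the wave reaches layer $n$ need not be of IC2 type, and monotonicity alone is not enough because you need both upper and lower bounds. The paper's Lemma~\ref{lm:reduction} supplies exactly the missing ingredient: with high probability on $\{\K^N=j\}$, there is a time in $[n,2n)$ at which some site on the current frontier carries at least $\gamma_0 N$ infected particles, which then allows a sandwich between two IC2-type processes.
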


\begin{remark}\label{rmk:frontier}
Theorem \ref{thm:all_layer_SIR} is stated for the first quadrant. Analogous results hold for the three other quadrants.
\end{remark}

Let us give some explanations to the above theorem. Under the initial condition IC2, when $N$ is sufficiently large, inside the ``speed one'' cone, Theorem~\ref{thm:all_layer_SIR} asserts that the epidemic spreads out exactly at speed one in the sense that as the generation $n$ increases, with a high probability, the epidemic would reach the diamond $\{(x,y):|x|+|y|=n\}$, and the infection proportion at sites on the diamond inside the cone is approximately $\ell^{(1)}$. Moreover, for any $i\geq 1$, at $i$ generations afterwards, the infection proportion would be approximately~$\ell^{(i+1)}$.  Under the initial condition IC1, similar conclusions hold except that  the frontier of the epidemic in the first quadrant is  $\K(n)$ layers behind  the line $\{(x,y):x+y=n\}$. Analogous results hold for other three quadrants.

Figure \ref{fig:ell_vs_sim} visualizes Theorem \ref{thm:all_layer_SIR}. We take $\theta=2$ with village size $N=1,000$ and run the process up to generation $T=1,000$ under IC1. The left picture in Figure \ref{fig:ell_vs_sim} plots the distribution of infected population at time $T=1,000$. We then compute the infection proportions on sites along the direction $y=x$  near the frontier. The results are shown as the black curve in the right picture in Figure \ref{fig:ell_vs_sim}. The red curve, on the other hand, plots the values of $\{\ell^{(i)}\}$. We can see that $\{\ell^{(i)}\}$ approximates the (random)  infection proportions well.

\begin{figure}[H]	
\centering
\includegraphics[width=0.45\textwidth]{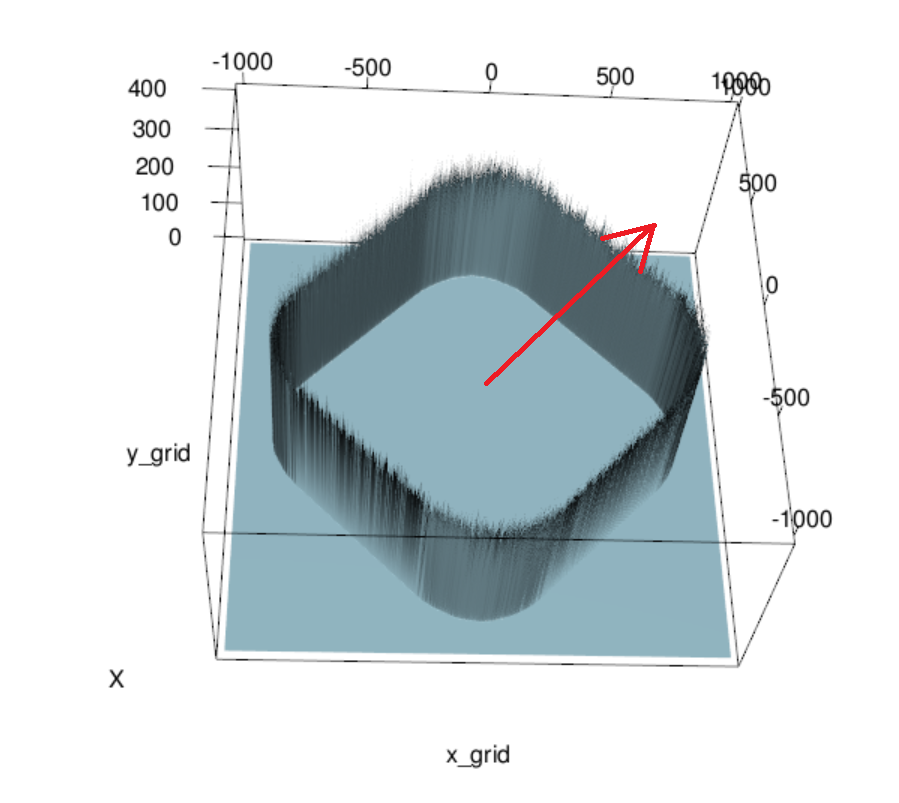}\includegraphics[width=0.45\textwidth] {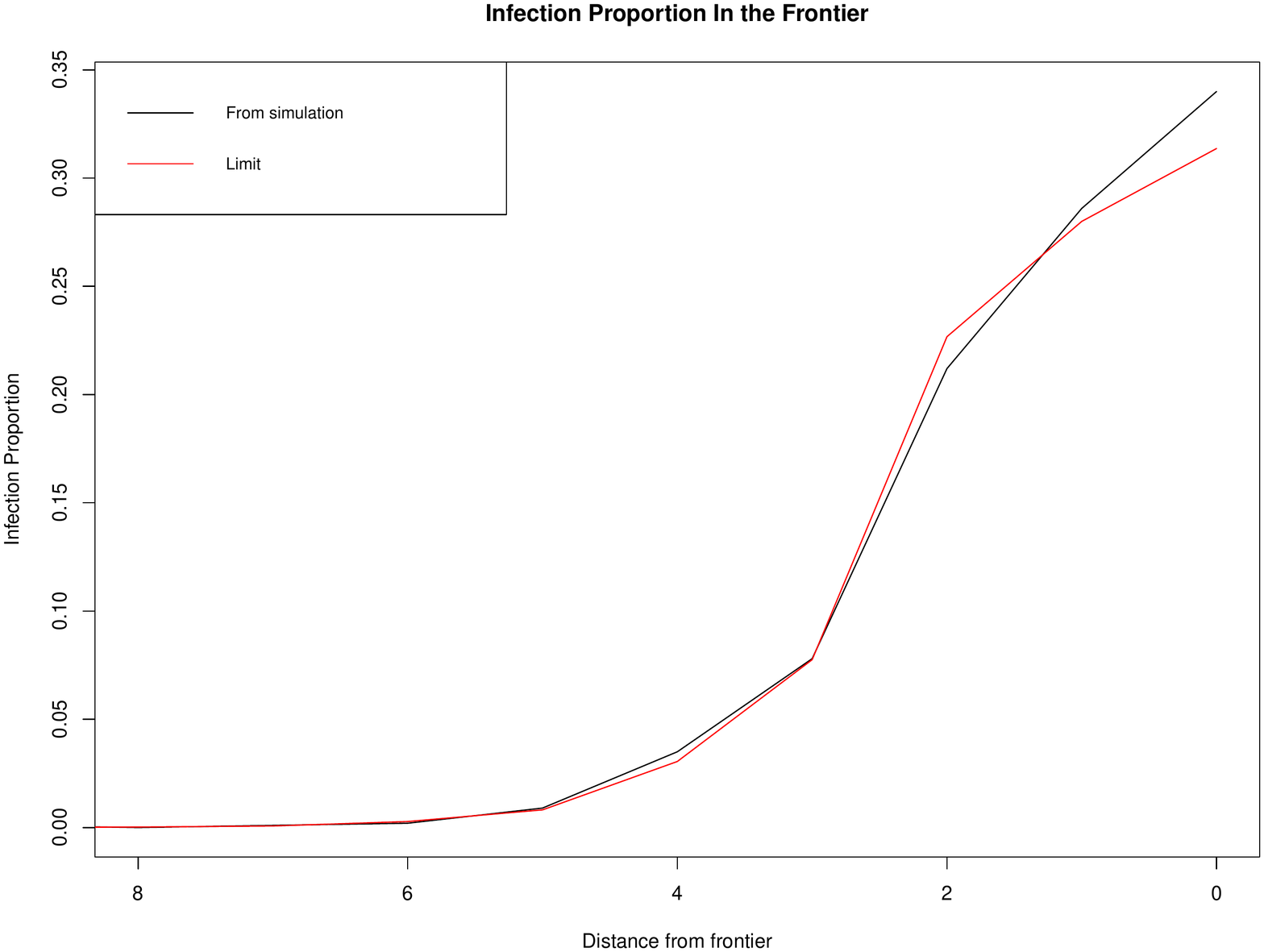}
\caption{Comparison between  $\{\ell^{(i)}\}$ and the infection proportions near the frontier from a random simulation when $\theta = 2$ with village size $N=1,000$ and at generation $T=1,000$ under the initial condition IC1. The random variable $\K^N(998)=2$ for this simulation.}\label{fig:ell_vs_sim}
\end{figure}

\subsection{Connection to percolation}\label{ssec:conn_perc}
Let us recall a connection  between our SIR process and a percolation model (\cite{lalley09}). We will call the percolation the $N$-percolation, which is defined as follows. Consider the graph with vertex set $\ze^2\times [N]$, where we write $[N]$ for $\{1,2\dots,N\}$, and edge set $\{(\x,i)-(\y,j): ||\x-\y||_1\in\{0,1\}\}$.  Each edge is open with probability $P_N^\theta$. For any initial condition $(I^N_0(\cdot), R^N_0(\cdot)\equiv0)$, let $A$ be any subset of $\ze^2\times [N]$ such that $\# (A\cap \x\times [N])=I^N_0(\x)$ for any $\x\in\ze^2$. Then, $I^N_n(\x)$ ($R^N_n(\x)$, respectively) equals the number of vertices at site $\x$ that are of graph distance $n$ to $A$ (of graph distance $\leq n-1$, respectively). Moreover, $R^N_\infty(\x)=\lim_{n\rightarrow \infty}R^N_n(\x)$ is just the number of  vertices at site $\x$ that are connected to $A$.  From this, we see that $R^N_t$(but not $I^N_t$) is increasing in the initial condition $I_0^N$. Based on such a connection, Theorems \ref{thm:surv_prob}-\ref{thm:all_layer_SIR} can be interpreted correspondingly for the percolation model.

\subsection{Generalization to other dimensions and distributions}
Our methods of proof are robust enough to extend to other dimensions and infection distributions. We state the assumptions and results below without giving detailed proofs.

Suppose that the dimension $d\geq 2$, and $p=\{p(\x)\}_{\x\in\ze^d}$ is a probability measure on $\ze^d$ that satisfies the following regularity assumptions:
\begin{enumerate}[(i)]
    \item $p$ has a finite support, namely, $\mathrm{Supp}(p):=\{\x:p(\x)>0\}$ is finite;
    \item $p$ is symmetric, i.e., $p(\x)=p(-\x)$ for all $\x$;
    \item $p$ is irreducible, i.e., $\mathrm{Supp}(p)$ is not contained in any strict subgroup of $\ze^d$;
    \item $p$ is aperiodic, i.e., there exist an odd $n$ and $\x_1,\dots,\x_n\in\ze^d$ such that $\sum_{i=1}^n\x_i=\0$ and $\prod_{i=1}^n p(\x_i)\neq 0$.
\end{enumerate}
The general SIR process on $\ze^d\times [N]$ evolves as follows:
\begin{equation*}%\label{eqn:X_general}
\aligned
I_{t+1}(\x)|(I_t(\x), R_t(\x)) &\stackrel{\mathrm{d}}{=} \mathrm{Bin}\left(S_t(\x),1-\prod_{\y\in\mathrm{Supp}(p)}\left(1-(1+\theta)\frac{p(\y)}{N}\right)^{I_t(\x+\y)}\right),\\
\mbox{  } R_{t+1}(\x) &= I_t(\x) + R_t(\x),\q \mbox{ for all } t\geq 0.
\endaligned
\end{equation*}
The initial conditions are still IC1 and IC2.
%%%%Note that the case when $d=2$ and the corresponding random walk is the Simple Lazy Random Walk, i.e.,  $p(\x)=(1/5)\bm{\delta}_{||\x||_1\leq 1}$ is just the case we are interested.

Under the above setting, Theorems \ref{thm:surv_prob} and \ref{thm:ult_inf_prop_2} still hold. Theorem \ref{thm:ult_inf_prop_1} hold as well except that $\DR_\infty(\cdot)$ is defined via the following difference equation:
\begin{equation*}%\label{eq:ult_prop_IC_2_general}
   {\aligned
       f(\x)&=1-\exp(-(1+\theta)\mathcal{A}f(\x)),\, \mbox{ for all } \x\in \ze^d\setminus \{\0\}, \mbox{ and }\\
%%       (f(\0)-\gamma)/(1-\gamma)&=&1-\exp(-(1+\theta)\mathcal{A}f(\0)){,}&\mbox{ when }\gamma\neq 1,\\
%%       f(\0)&=&1, &\mbox{ when }\gamma= 1,
       f(\0)&=\gamma+(1-\gamma)\left(1-\exp(-(1+\theta)\mathcal{A}f(\0))\right),
%%%       f(\0)&=&1, &\mbox{ when }\gamma= 1,
   \endaligned
   }
\end{equation*}
where $\mathcal{A}$ stands for the Markov operator with respect to $p$, namely, $\mathcal{A}f(\x)=\sum_{\y\in\mathrm{Supp}(p)}p(\y)f(\x+\y)$.

As to the spreading speed, we need to define several functions.
%%%As before, we equip $\zz{R}^d$ and $\ze^d$ with the $\ell_1$-norm, i.e., $||\x||_1=\sum_{i=1}^d|x_i|$ for $\x=(x_1,\dots,x_d)\in\zz{R}^d$.
Write $\S1$ and $\SN1$ for the unit real and rational spheres:
\[
\S1:=\{\x\in \re^d|\,||\x||_1=1\},\quad
\SN1:=\S1\cap \{(x_1,\dots,x_d)\,|\, x_i\in \qe,\ i=1,\ldots,d \},
\]
where we write $\qe$ for the set of rational numbers. For any $\x\in \ze^d\setminus \{\0\},u\in (0,+\infty)\cap \qe$, for all $n\geq 1$ with $nu\in\ze$, define
\[
b_n=b_n(\x,u)=P(S_{n||\x||_1}=nu\x),
\]
where $(S_m)$ is a random walk starting at $\0$ with jump distribution $p$. Clearly, $b_{n+m}\geq b_nb_m$, hence the sequence $(-\log b_n)$ is subadditive.
Therefore, by Fekete's Lemma,  $b_n\approx \varrho^n$ for some $\varrho=\varrho(\x,u)\in[0,1]$. Note that $\varrho(\x,u)=\varrho(q\x,u)$ for any $q\in\qe\setminus \{0\}$ and for fixed $\x$, $\varrho(\x,u)$ is strictly decreasing in $u$ for $u\in\qe\cap (0,\sup\{u:b(\x,u)>0\})$.
{Therefore, $\varrho$ can be extended to be a function in $\SN1\times ((0,\infty)\cap \qe)$. }
For any fixed $\theta>0$ and $\x\in\SN1$, define
\begin{equation*}%\label{func:speed_general}
\upsilon(\x)=\upsilon(\theta,\x):=\inf\left\{u:\varrho(\x,u)\leq \frac{1}{1+\theta}\right\}.
\end{equation*}
It can be easily verified that $\upsilon(\cdot)$ is a concave function on $\SN1$ and therefore can be extended to be a continuous concave function in $\S1$. {Moreover, it can be shown that $\upsilon(\theta,\x)>0$ for all $\theta>0$ and $\x\in\S1$.}

We now state the generalization of Theorem~\ref{thm:speed}.
\begin{thm}
For any $\theta\in(0,\infty)$, either under the initial condition~IC1 conditional on that the epidemic lasts forever, or under the initial condition~IC2, the following results hold:
\begin{enumerate}[(i)]
 \item for any $\eps>0$, any village size $N$, along any sequence $\x_k \subset \zz{Z}^d$ satisfying $||\x_k||_1\rightarrow \infty$ and $\x_k/||\x_k||_1\rightarrow \y$ for some $\y\in\S1$, we have
 \[
 \lim_{k\to\infty}P(R_{\lfloor ||\x_k||_1(\upsilon(\theta,\y)^{-1}-\eps)\rfloor}(\x_k)>0)=0;
 \]
 \item For any $\eps>0$, when the village size $N$ is sufficiently large, along any sequence $\x_k \subset \zz{Z}^d$ satisfying $||\x_k||_1\rightarrow \infty$ and $\x_k/||\x_k||_1\rightarrow \y$ for some $\y\in \S1$, we have

 \[
 \limsup_{k\to\infty}P\left({\left|\frac{R_{\lfloor ||\x_k||_1(\upsilon(\theta,\y)^{-1}+\eps)\rfloor}(\x_k)}{N}-\iota\right|>\eps}\right)<\eps.
\]
\end{enumerate}
\end{thm}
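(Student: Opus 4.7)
The plan is to mimic the two-step proof of Theorem~\ref{thm:speed}: the upper bound on the spreading speed comes from a branching random walk (BRW) comparison, and the matching lower bound is obtained by coupling the SIR process to a supercritical BRW inside a tube aimed at $\x_k$, followed by an appeal to the generalised Theorem~\ref{thm:ult_inf_prop_2} to pin down the local density. Throughout, the explicit large-deviation rate $G(v,\phi)$ from the two-dimensional setting is replaced by the abstract subadditive exponent $\log\varrho(\y,u)$.

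For part~(i), stochastically dominate the SIR process by the BRW on $\ze^d$ in which each particle produces $\mathrm{Bin}(|\mathrm{Supp}(p)|\cdot N,(1+\theta)/N)$ children and each child is independently displaced according to $p$, the natural $d$-dimensional version of Remark~\ref{rmk_kappa}. Since $R_n(\x_k)$ is bounded by the total number of BRW particles that ever visit $\x_k$ by time $n$,
\begin{equation*}
E[R_n(\x_k)]\;\le\;\sum_{m=0}^{n}(1+\theta)^m\,P(S_m=\x_k),
\end{equation*}
where $(S_m)$ is the $p$-random walk starting at $\0$. For $n=\lfloor||\x_k||_1(\upsilon(\y)^{-1}-\eps)\rfloor$, the effective velocity $||\x_k||_1/m$ is bounded below by $\upsilon(\y)+\delta$ for some $\delta=\delta(\eps)>0$ and all $m\le n$. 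The subadditive definition of $\varrho$ combined with Fekete's lemma yields $P(S_m=\x_k)\le\varrho(\y,||\x_k||_1/m)^{(1+o(1))||\x_k||_1}$, and since $(1+\theta)\varrho(\y,u)\le 1-\eta$ uniformly for $u\ge\upsilon(\y)+\delta$, the right-hand side is exponentially small in $||\x_k||_1$. Markov's inequality then yields~(i); the extension from rational directions to arbitrary $\y\in\S1$ uses the concavity and continuity of $\upsilon$ noted in the excerpt together with assumptions~(ii)--(iii) on $p$.

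For part~(ii) I argue in two stages. \emph{Stage~A:} Fix a rational direction $\y'\in\SN1$ close to $\y$ and rational $u'<\upsilon(\y')$ close to $\upsilon(\y')$, so that $(1+\theta)\varrho(\y',u')>1$. Using the lower bound $P(S_{n||\y'||_1}=nu'\y')\ge\varrho(\y',u')^{(1+o(1))n}$ and concatenating along the ray $\{jnu'\y':j\ge1\}$ produces an embedded supercritical BRW heading toward $\x_k$. A block/Paley--Zygmund argument, in the spirit of the proof of Theorem~\ref{thm:all_layer_SIR}, shows that, conditional on overall survival under~IC1 (or unconditionally under~IC2), this embedded BRW deposits at least one infected particle within bounded distance of $\x_k$ at some random time $t_k^\star$ with $t_k^\star/||\x_k||_1\to u^\star$ for some $u^\star<\upsilon(\y)^{-1}+\eps/2$, with probability tending to~$1$. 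Aperiodicity~(iv) rules out parity obstructions along the concatenation, and the depletion of susceptibles inside the thin tube leading to $\x_k$ is negligible because only $o(N)$ infected ever appear per site in the tube before time $t_k^\star$, so the BRW coupling holds with vanishing error. \emph{Stage~B:} Once an infected individual lies within bounded distance of $\x_k$, restart the process locally and apply the generalised Theorem~\ref{thm:ult_inf_prop_2} (whose validity under assumptions~(i)--(iv) is asserted in the excerpt) to conclude $R_\infty(\x_k)/N\to\iota$ in probability on survival. Since $R_t$ is nondecreasing in $t$ and $\lfloor||\x_k||_1(\upsilon(\y)^{-1}+\eps)\rfloor-t_k^\star$ far exceeds the $N$-dependent time needed by the local SIR dynamics to approach its final density, the density at the target time is already within $\eps$ of $\iota$ on the survival event, giving~(ii).

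The main obstacle is clearly Stage~A. Two points are delicate: (a) obtaining a sharp lower bound matching the subadditive upper bound $\varrho(\y,u)^{||\x||_1}$ for $P(S_m=\x)$, which requires local-CLT/large-deviation estimates for finitely supported walks satisfying~(i)--(iv); and (b) propagating this BRW lower bound through the true SIR dynamics via coupling inside a tube of width $o(||\x_k||_1)$ while controlling the accumulated loss of susceptibles. The percolation interpretation of Section~\ref{ssec:conn_perc} offers an alternative route: once a supercritical open cluster in the $N$-percolation is established, arrival of the epidemic frontier at $\x_k$ becomes a first-passage-percolation statement amenable to block renormalisation.
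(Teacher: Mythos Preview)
First, note that the paper does not give a self-contained proof of this generalized statement; it asserts that the two-dimensional nearest-neighbor arguments in Sections~\ref{Sec:spread_speed}--\ref{Sec:prop_recover} carry over. So the relevant comparison is between your proposal and the paper's proof of Theorem~\ref{thm:speed}.

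Your treatment of part~(i) is correct and matches the paper's. The paper bounds $E(I_n^N(\x)/N)$ by $g_n(\x)=((1+\theta)/5)^n\cdot\sharp_{\mathrm{L}}(\x;n)$, which is exactly your BRW first-moment bound, and then uses the large-deviation asymptotics for $\sharp_{\mathrm{L}}$ (their Lemma~\ref{lm:g_asym}) in place of your $\varrho(\y,u)$; Markov's inequality finishes it in both cases.

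For part~(ii), however, your primary route (Stage~A) has a genuine gap, and the paper takes a different path. Your claim that ``only $o(N)$ infected ever appear per site in the tube before time $t_k^\star$, so the BRW coupling holds with vanishing error'' is not justified and is in fact the heart of the difficulty: once your embedded supercritical BRW survives, its population grows geometrically, and after $O(\log N)$ further generations the number of particles per visited site is $\Theta(N)$, at which point susceptible depletion is of order one and the BRW domination from below collapses precisely when you need it. A Paley--Zygmund second-moment argument for the BRW does not transfer to SIR for the same reason.

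The paper sidesteps this entirely by block renormalization (what you mention only as an ``alternative route'' in your last sentence). It never couples SIR to BRW from below. Instead, it fixes a small level $\delta_0>0$ and declares a block $\mathbf{b}$ open in an auxiliary oriented site percolation $\eta_t$ if $\lceil\delta_0 N\rceil$ infected at $C(\mathbf{b})$ produce $\lceil\delta_0 N\rceil$ infected at both $C(\mathbf{b}\pm(1,1))$ within time $\lfloor tL\rfloor$, using only edges inside a box $A(\mathbf{b})$. The crucial Lemma~\ref{lm_rect2} shows $P(\eta_t(\mathbf{b})=1)\to1$ as $N\to\infty$ by analyzing the \emph{deterministic} limit process restricted to $A(\mathbf{b})$ (via the derivative $g'_n$ and LRW-path counting), and then invoking Proposition~\ref{prop:lim_N}. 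Because $\eta_t$ is $k$-dependent with marginals close to~$1$, standard oriented-percolation results give linear spread at the correct speed. This approach is robust precisely because the renormalized event ``$\geq\delta_0 N$ infected'' is preserved by the true SIR dynamics, not merely by an idealized BRW.

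Your Stage~B is also looser than the paper's argument. The paper does not simply ``restart locally and wait''; it proves a strengthened version (Remark~\ref{rk:speed_strong}) in which the $\delta_0 N$ particles at $\x_k$ are reached using only edges strictly between the hyperplanes through $\0$ and $\x_k$, and then applies the half-space process estimate (Proposition~\ref{prop:conv_SHS} and equation~\eqref{eq_v2}) on the far side. The two events are then independent, which is what allows the probabilities to be combined cleanly. Your appeal to the generalized Theorem~\ref{thm:ult_inf_prop_2} at a restarted time $t_k^\star$ does not by itself control $R$ at the specific time $\lfloor||\x_k||_1(\upsilon^{-1}+\eps)\rfloor$, since the convergence in that theorem is for $R_\infty$, not for $R$ at a time comparable to $||\x_k||_1$.
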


\subsection{Plan of the paper}
The remainder of the paper is organized  as follows.
In Section~\ref{sec:D_process}, we introduce a deterministic process, which arises as the limiting process of our SIR process as the village size $N\to\infty$.  In Section~\ref{Sec:Front_det}, we focus on the deterministic process and prove the analogous version of Theorem~\ref{thm:all_layer_SIR} for this deterministic process. In Section~\ref{Sec:Front_SIR},  we prove Theorem~\ref{thm:all_layer_SIR}. {In Section~\ref{Sec:spread_speed}, we study the spreading speed for both the deterministic process and the SIR process, and prove Theorem~\ref{thm:speed}(i) and a weak version of Theorem~\ref{thm:speed}(ii), see Theorem~\ref{thm_speed}.} In Section~\ref{Sec:prop_recover}, we consider the ultimate proportion of recovered particles and prove Theorems \ref{thm:surv_prob}, \ref{thm:ult_inf_prop_2}, \ref{thm:ult_inf_prop_1} and Theorem~\ref{thm:speed}(ii).

\section{The Deterministic Large $N$ Limit Process}\label{sec:D_process}
It is well known that the mean-field supercritical SIR model admits a deterministic limit process as the population size gets to infinity. Similarly, our spatial SIR model admits a deterministic large $N$ limit process. It will play a fundamental role in the analysis of the original SIR process.

The deterministic process $(\DS_t(\x),\DI_t(\x),\DR_t(\x))_{\{t\in \rN,\x\in \ze^2\}}$ is defined as follows.  For any  initial condition $\DI_0, \DR_0\in [0,1]^{\ze^2}$ with $\DS_0(\x)=1-\DI_0(\x)-\DR_0(\x)\geq~0$ for all $\x\in \ze^2$, the process evolves as
\begin{equation}\label{eq:lim_X}
\aligned
\DI_{t+1}(\x) & =\DS_t(\x)\left(1-\exp\left(-\frac{1+\theta}{5}\wt{\DI}_t(\x)\right)\right), \mbox{ where }\\
%%%%\wt{\DI}_t(\x) & = \sum_{||\y-\x||_1\leq 1} \DI_t(\y),\\
\DS_{t}(\x) &=1-\DI_{t}(\x)-\DR_{t}(\x)\q \mbox{ and}\\
\DR_{t+1}(\x) &= \DI_t(\x) + \DR_t(\x), \q \mbox{ for all } t\geq 0 \mbox{ and } \x\in \ze^2.
\endaligned
\end{equation}

The following proposition states that $(\DS_t(\x),\DI_t(\x),\DR_t(\x))$ is the scaling limit of the original SIR process as $N\to\infty$. Note that since $\DS+\DI+\DR\equiv 1$, we can drop $\DS$ and only consider  $(\DI,\DR)$.
\begin{prop}\label{prop:lim_N} Suppose that the initial conditions $(I_0^N(\x),R_0^N(\x))$ and $(\DI_0(\x),\DR_0(\x))$ satisfy
\begin{equation}\label{eq:ini_conv}
  \frac{I_0^N(\x)}{N}\rightarrow \DI_0(\x),  \frac{R_0^N(\x)}{N}\rightarrow \DR_0(\x), \mbox{ for all }\x\in \ze^2 \q \mbox{as }N\rightarrow \infty.
\end{equation}
Then the process $(I_t^N(\x)/N, R_t^N(\x)/N)$ converges, under finite dimensional distribution, to the process $(\DI_t(\x), \DR_t(\x))$.
\end{prop}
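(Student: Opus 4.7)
The plan is to proceed by induction on $t$. The base case $t = 0$ is exactly the hypothesis \eqref{eq:ini_conv}, which being deterministic convergence certainly implies finite-dimensional distributional convergence at time $0$. For the inductive step, assume that for every finite $F \subset \ze^2$ one has $(I_t^N(\x)/N, R_t^N(\x)/N)_{\x \in F} \toop (\DI_t(\x), \DR_t(\x))_{\x \in F}$. The recursion $R_{t+1} = I_t + R_t$ together with its deterministic analogue $\DR_{t+1} = \DI_t + \DR_t$ handles the recovered coordinate instantly via the continuous mapping theorem, so the task reduces to showing $I_{t+1}^N(\x)/N \toop \DI_{t+1}(\x)$ jointly over $\x \in F$.

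For the infected coordinate, let $\mathcal{F}_t^N$ denote the natural filtration of the process up to time $t$. Conditional on $\mathcal{F}_t^N$, the variables $\{I_{t+1}^N(\x)\}_{\x \in F}$ are independent binomials with parameters $(S_t^N(\x), p_t^N(\x))$, where $p_t^N(\x) := 1 - (1 - P_N^\theta)^{\wt{I}_t^N(\x)}$. Applying the induction hypothesis to the five-point neighbourhood of $\x$, one obtains $\wt{I}_t^N(\x)/N \toop \wt{\DI}_t(\x)$, and since $(1 - c/N)^{yN} \to e^{-cy}$ uniformly for $y$ in compact sets,
\[
p_t^N(\x) \toop 1 - \exp\left(-\frac{1+\theta}{5}\,\wt{\DI}_t(\x)\right).
\]
Combined with $S_t^N(\x)/N \toop \DS_t(\x)$, the conditional mean $S_t^N(\x) p_t^N(\x)/N$ converges in probability to $\DI_{t+1}(\x)$, while the conditional variance of $I_{t+1}^N(\x)/N$ is bounded above by $S_t^N(\x) p_t^N(\x)(1 - p_t^N(\x))/N^2 \leq 1/N \to 0$. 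Chebyshev's inequality, applied conditional on $\mathcal{F}_t^N$ and then integrated, yields $I_{t+1}^N(\x)/N \toop \DI_{t+1}(\x)$.

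For joint convergence over $\x \in F$, the conditional independence given $\mathcal{F}_t^N$ combined with the marginal convergence in probability to a deterministic vector automatically gives joint convergence in probability; since the limit is deterministic this is equivalent to joint convergence in distribution, and the induction propagates this to all $t \geq 0$ (and thus to arbitrary finite collections of space-time points). The only mildly delicate step is the passage to the limit inside the binomial law when the success probability $p_t^N(\x)$ is itself random through $\wt{I}_t^N(\x)$. This is handled cleanly by conditioning on $\mathcal{F}_t^N$ and exploiting the deterministic bound $0 \leq \wt{I}_t^N(\x)/N \leq 5$, so that the uniform convergence $(1 - (1+\theta)/(5N))^{yN} \to \exp(-(1+\theta)y/5)$ on $y \in [0, 5]$ applies throughout; no further subtlety arises.
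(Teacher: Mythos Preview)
Your proof is correct and follows essentially the same approach as the paper's: both exploit the Markovian structure to reduce to a one-step transition and then invoke the law of large numbers (you via a conditional Chebyshev argument with vanishing variance, the paper simply citing ``the law of large numbers''). Your version is more detailed but not genuinely different in strategy.
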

\begin{proof}
By the Markovian property, to prove finite dimensional convergence, it suffices to prove the convergence for {$t=1$}, which follows from \eqref{eqn:X_R} and the law of large numbers.
\end{proof}
\begin{remark}
The initial condition IC1, after scaling, converges to the trivial initial condition $\DI_0\equiv 0$. For this reason, it requires additional techniques to study the asymptotics under the initial condition IC1.
\end{remark}

\section{Distribution Near the Frontier: the Deterministic Case}\label{Sec:Front_det}

In this section, we derive the distribution on the frontier for the deterministic limit process. We only focus on the first quadrant; identical arguments apply to the other three quadrants.

We assume that  that $\DR_0\equiv0$ and $\DI_0$ is supported by the line $x+y=0$. Hence, the frontier at time $n$ is on the lines $x+y=\pm n$.
We remark that the frontier process is monotone, i.e., when the initial condition $\{\DI_0(-m,m)\}_{\{m\in \ze\}}$ gets larger, the frontier $\{\DI_n(n-m,m)\}_{\{m\in \ze\}}$ also becomes larger. This motivates us to consider the following two extreme initial conditions,
which serve as the upper and lower bounds: for some  $\gamma\in (0,1]$,
\begin{equation}\label{eq:det_ini_Upper}
\DI_0(m,-m)=\gamma \q\mbox{for all } m \in\zz{Z},\mbox{ and } \DI_0(\x)=0 \mbox{ otherwise, }
\end{equation}
or
\begin{equation} \label{eq:det_ini_Lower}
\DI_0(0,0)=\gamma, \mbox{ and } \DI_0(\x)=0 \mbox{ otherwise. }
\end{equation}

For notational ease, we write
\begin{equation}\label{eq:value_alpha}
\alpha=\exp((1+\theta)/5),
\end{equation}
and
\begin{equation}\label{eq:psi_function}
\psi(x)=1-\alpha^{-x}, \q\mbox{for } x\in [0,1].
\end{equation}

This function plays a fundamental role in the analysis below. Note that it is  increasing and concave . Moreover, when $\alpha\leq e$, namely, when $\theta\leq 4$,  $\psi(x)<x$ for $x\in(0,1]$ and it has no fixed point in $(0,1]$. On the other hand, when $\alpha >e$, namely, when $\theta > 4$, $\psi(\cdot)$ has a unique fixed point in $(0,1)$, denoted by {$\gamma_1$},  to the left (right, respectively) of which $\psi(x)>x$ ($\psi(x)<x$, respectively).

\subsection{Convergence on the frontier}\label{ssec:1st_det}

\begin{lemma}\label{lem:conv_front_const_IC}
Under the initial condition \eqref{eq:det_ini_Upper}, for all $n$, $\DI_n(n-k,k)$ is constant in $k$. Moreover, as $n\to\infty$, the constant converges to 0 when $\theta\leq 1.5$ and to the unique positive solution $\ell=\ell(\theta)=\ell^{(1)}$ to
\begin{equation}\label{eq:ell}
 \ell=1-\exp\left(-\frac{2\ell(1+\theta)}{5}\right)
\end{equation} when $\theta > 1.5$.
\end{lemma}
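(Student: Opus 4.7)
The plan is to exploit the translation invariance of the initial condition along the direction $(1,-1)$ in order to reduce everything to a one-dimensional scalar recursion. Since the initial condition $\DI_0(m,-m)=\gamma$ is invariant under the shift $\x\mapsto \x+(1,-1)$ and the dynamics \eqref{eq:lim_X} commute with every translation of $\ze^2$, the whole process $(\DI_n,\DR_n)$ must be invariant under this shift for every $n\geq 0$. Since the shift acts transitively on each anti-diagonal line $\{(m,l):m+l=j\}$, both $\DI_n$ and $\DR_n$ are constant along every such line; applied to the line $x+y=n$ this already gives that $\DI_n(n-k,k)$ is constant in $k$, proving the first assertion. Writing $a_n(j)$ and $b_n(j)$ for the common values on the line $x+y=j$, a direct count of the five $\ell_1$-neighbours of any point on this line gives $\wt{\DI}_n=a_n(j)+2a_n(j-1)+2a_n(j+1)$, and \eqref{eq:lim_X} becomes a coupled scalar recursion in $(j,n)$.

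Next, I isolate the recursion on the frontier. By induction on $n$ one checks that $a_n(j)=0$ whenever $|j|>n$; hence $b_n(j)=\sum_{t<n}a_t(j)=0$ whenever $|j|\geq n$. In particular, at a site on the frontier line $j=n+1$ at time $n$ one has $a_n(n+1)=a_n(n+2)=0$ and $b_n(n+1)=0$, so $\wt{\DI}_n$ collapses to $2a_n(n)$ and the susceptible fraction equals $1$. Setting $u_n:=a_n(n)$, the dynamics reduces to
\begin{equation*}
u_{n+1}=1-\exp\!\left(-\tfrac{2(1+\theta)}{5}\,u_n\right)=:\phi(u_n),\qquad u_0=\gamma.
\end{equation*}
The map $\phi:[0,1]\to[0,1)$ is smooth, strictly increasing, strictly concave, with $\phi(0)=0$ and $\phi'(0)=2(1+\theta)/5$.

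It remains to analyse this scalar iteration. When $\theta\leq 1.5$, $\phi'(0)\leq 1$ together with strict concavity gives $\phi(x)<\phi'(0)\,x\leq x$ for all $x\in(0,1]$; therefore $u_n$ decreases monotonically to $0$. When $\theta>1.5$, $\phi'(0)>1$ while $\phi(1)<1$, so by strict concavity the equation $\phi(x)=x$ has a unique positive solution $\ell\in(0,1)$, which is exactly \eqref{eq:ell} and coincides with $\ell^{(1)}$ from \eqref{eq:ell_i}; moreover $\phi(x)>x$ on $(0,\ell)$ and $\phi(x)<x$ on $(\ell,1]$, so for any $\gamma\in(0,1]$ the sequence $u_n$ is monotone and converges to $\ell$. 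The only mildly technical point is the bookkeeping that isolates the frontier recursion; once that is in hand the convergence is a standard fixed-point argument for a one-dimensional concave map.
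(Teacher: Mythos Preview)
Your proof is correct and follows essentially the same route as the paper's. Both derive the scalar recursion $u_{n+1}=1-\exp\!\left(-\tfrac{2(1+\theta)}{5}u_n\right)$ from translation invariance along the anti-diagonal and then analyse the fixed points via monotonicity; your write-up simply makes the translation-invariance argument and the concavity-based convergence analysis more explicit than the paper's terse version.
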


\begin{proof}
By symmetry, $\DI_n(n-k,k)$ is constant in $k$. Denote the constant value by $y_n$. By \eqref{eq:lim_X}, we have
\begin{equation}\label{eq:y_1}
y_0=\gamma,\quad y_{n+1}=1-\alpha^{-2y_n}.
\end{equation}

When $\theta\leq 1.5$, it is easy to see that $y_n$ is decreasing in $n$, and the limit will solve \eqref{eq:ell}. The only nonnegative solution to \eqref{eq:ell} is zero, so $\lim_{n\to\infty} y_n=0.$

On the other hand, when $\theta>1.5$, $y_n$ is
increasing when $\gamma\leq \ell(\theta)$ and
decreasing otherwise, so the limit again exists and solves \eqref{eq:ell}. The conclusion follows.
\end{proof}

Now we consider the initial condition {\eqref{eq:det_ini_Lower}}. In this case, there are two phase transitions: one at $\theta=1.5$ and the other at $\theta=4$. Precisely, we have

\begin{thm}\label{thm_LIM_front}
Under the initial condition {\eqref{eq:det_ini_Lower}}, the process $\{\DI_n(m,n-m)\}$ behaves differently depending on the value of $\theta$.
\begin{enumerate}[(i)]
    \item When $\theta\leq 1.5$, the process converges to zero uniformly:
    \[
    \lim_{n\rightarrow\infty}\sup_{m\in\{0,\dots,n\}}\DI_n(m,n-m)=0;
    \]
    \item When $\theta\in(1.5,4]$, there is a cone along the diagonal $y=x$ such that inside the cone the frontier converges to $\ell(\theta)$ while outside the cone the process converges to zero. Moreover, as $\theta$ increases, the cone grows continuously from the diagonal line to the first quadrant.
    %%%%See Proposition~\ref{prop:middle_conv_front_origin_IC} for the precise description of the cone and the statement;
    \item when $\theta\in(4,\infty)$, the process restricted to the first quadrant converges to $\ell(\theta)$ uniformly:
    \[
    \lim_{n,k\rightarrow\infty:k\leq n/2}\sup_{m:k\leq m\leq  n-k}|\DI_n(m,n-m)-\ell(\theta)|=0.
    \]
\end{enumerate}
\end{thm}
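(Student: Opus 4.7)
The plan is to combine three ingredients: monotonicity of the (reduced $1$D) frontier recursion in its initial condition (which reduces the single-point IC to Lemma~\ref{lem:conv_front_const_IC}), a linearization around zero (which pinpoints the cone and handles directions outside it), and a block-IC saturation argument (which drives the interior of the cone to $\ell(\theta)$).

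On the frontier $x+y=n$ in the first quadrant one has $\DS_n\equiv 1$, so writing $f_n(m):=\DI_n(m,n-m)$ the evolution reduces to the $1$D recursion $f_{n+1}(m)=\psi(f_n(m-1)+f_n(m))$ with $\psi(x):=1-\alpha^{-x}$. Since $\psi$ is increasing, this is monotone in $f_0$; thus the single-point IC $f_0=\gamma\mathbf{1}_{\{m=0\}}$ is pointwise dominated by the constant-line IC $f_0\equiv\gamma$, giving $\sup_m f_n(m)\le y_n$ with $y_n$ from Lemma~\ref{lem:conv_front_const_IC}. That limit is $0$ when $\theta\le 1.5$ (proving (i)) and $\ell(\theta)$ when $\theta>1.5$ (the upper bound needed for (ii) and (iii)). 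For the outside-the-cone claim in (ii) I would use $\psi(x)\le\tfrac{1+\theta}{5}x$ and induct from $f_0=\gamma\delta_0$ to obtain
\begin{equation*}
f_n(m)\;\le\;\gamma\binom{n}{m}\left(\tfrac{1+\theta}{5}\right)^{n}\;\asymp\;\exp\!\left(n\!\left[\log\tfrac{1+\theta}{5}-h(m/n)\right]\right);
\end{equation*}
this decays exponentially, uniformly, whenever $m/n$ stays at positive distance outside $[\kappa,1-\kappa]$, with $\kappa$ from~\eqref{eq:kappa}. The cone is thereby correctly identified: it is empty when $\theta\le 1.5$, shrinks to the diagonal at $\theta=1.5$, grows continuously with $\theta$ (since $h$ is strictly monotone on $[0,\tfrac12]$), and fills $(0,1)$ when $\theta\ge 4$.

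For the matching lower bound inside the cone I would run two stages. \emph{Stage 1 (reach a threshold).} Using the reverse bound $\psi(x)\ge\tfrac{1+\theta}{5}x-O(x^2)$ together with a truncation that keeps $f_n\le\delta$, the frontier iterate is bounded below by $(1-O(\delta))$ times its linearization $\gamma\binom{n}{m}(\tfrac{1+\theta}{5})^n$; Stirling then yields $f_{n_0}(m)\ge\epsilon_0$ uniformly for $m\in[p_1 n_0,p_2 n_0]$, for any fixed $[p_1,p_2]\subset(\kappa,1-\kappa)$ and some $\epsilon_0\in(0,\delta)$. \emph{Stage 2 (saturate to $\ell$).} By monotonicity, $f_{n_0+t}$ dominates $f'_{n_0+t}$, the evolution of the block IC $f'_{n_0}(m)=\epsilon_0\mathbf{1}_{[p_1 n_0,p_2 n_0]}(m)$. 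Whenever the past light cone $[m-t,m]$ of the target site $(m,n_0+t-m)$ lies inside the block, the recursion for $f'$ coincides with that of the constant-line IC of height $\epsilon_0$, whose $t$-th iterate increases to $\ell(\theta)$ by Lemma~\ref{lem:conv_front_const_IC} (because $\psi(2c)>c$ for $c\in(0,\ell)$ when $\theta>1.5$). Fixing $t=t(\epsilon)$ so that $y_t\ge\ell(\theta)-\epsilon$ and then letting $n_0\to\infty$ with the past-cone condition maintained across a prescribed compact subinterval of $(p_1,p_2)$ closes the gap with the upper bound, establishing (ii) and (iii). The strengthening in (iii) to $\min(m,n-m)\to\infty$ at arbitrary rate follows by taking $p_1\to 0$ and $p_2\to 1$ with $p_1 n_0,(1-p_2)n_0\to\infty$, which is admissible because the whole interval $(0,1)$ lies inside the cone when $\theta>4$.

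The hardest step is Stage~1: turning the first-moment binomial linearization into a nonlinear lower bound that is \emph{uniform in $m$} across an interval of length $\Theta(n_0)$, rather than at a single direction. The linearization error $\psi(x)-\tfrac{1+\theta}{5}x=O(x^2)$ must stay negligible simultaneously for all $m$ in the range, which is what the $\delta$-truncation, combined with the sharp concentration of the binomial profile on $m/n\in(\kappa,1-\kappa)$, delivers.
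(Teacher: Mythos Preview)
Your treatment of (i), of the upper bound $\sup_m f_n(m)\le y_n$, and of the exponential decay outside the cone via $\psi(x)\le\frac{1+\theta}{5}x$ and the binomial linearization is exactly what the paper does. The identification of $\kappa$ and the continuity of the cone in $\theta$ are also the same.

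The divergence is in the lower bound inside the cone, and there your Stage~1/Stage~2 scheme has a genuine gap at the phrase ``letting $n_0\to\infty$''. The truncation that makes the quadratic error in $\psi(x)\ge\frac{1+\theta}{5}x-O(x^2)$ negligible is only valid while $\sup_m f_n(m)\le\delta$; but $\sup_m f_n(m)$ grows at the fastest rate (direction $m/n=1/2$), so the truncation window closes at a \emph{fixed} time $n_\delta$ determined by $\gamma$ and $\delta$. Stage~1 therefore produces a block of height $\epsilon_0$ at one specific time $n_0\le n_\delta$, and Stage~2 saturates that block to $\ell(\theta)-\epsilon$ on $[p_1 n_0+t,\,p_2 n_0]$ at the single time $n_0+t$. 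From there the block can only shrink: the recursion $f_{n+1}(m)=\psi(f_n(m-1)+f_n(m))$ preserves the level $\ell-\epsilon$ on $[a+1,b]$ if it held on $[a,b]$, losing one site on the left and gaining nothing on the right (since $\psi(c)<c$ for $\theta\le 4$). Hence for large $n$ the surviving block cannot cover $[(\kappa+\eta)n,(1-\kappa-\eta)n]$, whose width grows linearly in $n$. You never supply a mechanism that regenerates or widens the block, so the argument does not establish the lower bound for all large $n$.

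The paper closes this gap with a different idea, a translational monotonicity that substitutes for your propagation step. Writing $Y(m,n)=\DI_{m+n}(m,n)$, one checks by induction that if $Y(\mathbf{a})>\gamma=Y(\mathbf{0})$ for some $\mathbf{a}\in\rN^2$, then $Y(\x+\mathbf{a})>Y(\x)$ for every $\x\in\rN^2$. Your linearization (at a single time) furnishes such $\mathbf{a}$'s inside the cone; in fact one can pick $\mathbf{b}_1,\mathbf{b}_2$ in any narrow subcone near the lower edge with $g(\mathbf{b}_i)>1$, $\mathbf{b}=\mathbf{b}_1+\mathbf{b}_2$, and $\{\mathbf{b}_1,\mathbf{b}_2\}$ generating $\ze^2$. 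Monotonicity along $\mathbf{b}$ gives existence of $h(\x)=\lim_k Y(\x+k\mathbf{b})$; equality $h(\x)=h(\x+\mathbf{b}_i)$ then forces $h$ to be constant, and passing to the limit in $Y(m,n)=\psi(Y(m-1,n)+Y(m,n-1))$ identifies the constant as $\ell(\theta)$. Combined with the simple fact that $Y(m,n)$ is increasing in $\min(m,n)$ for fixed $m+n$, this yields the uniform lower bound over the cone for all large $n$, which your block argument cannot reach without an additional input of this type.
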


Note that the result for the case when $\theta\in(0,1.5]$ is implied by Lemma~\ref{lem:conv_front_const_IC} and the monotonicity of  $\{\DI_n(m,n-m)\}$ with respect to the initial condition.

We first address the case when  $\theta\in(4,\infty)$, which is relatively easier. We will show a little bit more than the content of the above theorem. Recall that~$\gamma_1$ represents the unique fixed point of $\psi(\cdot)$ {defined in \eqref{eq:psi_function}}.

\begin{prop}\label{prop:conv_front_origin_IC}  Let $y_n(k)=\DI_n(n-k,k)$. When $\theta>4$, under the initial condition {\eqref{eq:det_ini_Lower}},
if $\gamma\leq \gamma_1$, then for any $k\in \rN$, we have
\begin{equation}\label{conv-lim}
\lim_{n\rightarrow\infty}y_n(k)=\ell_k,
\end{equation}
where $\ell_k$ is determined recursively by
\begin{equation}\label{eq:ell_delta_ini}
    \ell_0=\gamma_1,\quad \ell_{k+1}=1-\alpha^{-(\ell_{k}+\ell_{k+1})}.
\end{equation}
Moreover, we have
\begin{equation}\label{cts-front}
\lim_{\min\{m,n\}\rightarrow\infty}\DI_{m+n}(m,n)=\ell(\theta).
\end{equation}
\end{prop}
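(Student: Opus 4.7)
The plan has three parts.

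\textbf{Part 1 (frontier recursion).} Since $(n-k,k)$ with $0\leq k\leq n$ is on the time-$n$ frontier, $\DS_{n-1}(n-k,k)=1$ and its only infected $\ell_1$-neighbors at time $n-1$ are $(n-1-k,k)$ and $(n-k,k-1)$. With $\psi$ from \eqref{eq:psi_function}, this yields
\[
 y_n(k) = \psi\bigl(y_{n-1}(k) + y_{n-1}(k-1)\bigr), \qquad y_0(0)=\gamma,\ y_n(-1)\equiv 0,
\]
and $y_n(k)=y_n(n-k)$ by the $D_4$-symmetry of the initial condition.

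\textbf{Part 2 (pointwise limit).} I prove $y_{n+1}(k)\geq y_n(k)$ for all $(n,k)$ by induction on $n$. The base case $n=0$ reduces to $\psi(\gamma)\geq\gamma$, valid since $\gamma\leq\gamma_1$ and $\psi\geq\mathrm{id}$ on $[0,\gamma_1]$; the inductive step follows from monotonicity of $\psi$ applied to the recursion. Since $y_n(k)\in[0,1]$, $\ell_k := \lim_n y_n(k)$ exists, and passing to the limit gives $\ell_k = \psi(\ell_k+\ell_{k-1})$ for $k\geq 1$ and $\ell_0 = \psi(\ell_0)$; with $\ell_0\geq\gamma>0$, this forces $\ell_0 = \gamma_1$. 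Uniqueness of $\ell_k$ in $(0,1)$ holds because $T_c(x):=\psi(x+c)$ is strictly concave, increasing, with $T_c(0)>0$ and $T_c(1)<1$, so $T_c - \mathrm{id}$ has a unique zero in $(0,1)$.

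\textbf{Part 3a (monotonicity $\ell_k\uparrow\ell(\theta)$).} The fixed point of $T_c$ is strictly increasing in $c$, and $\ell_1>\ell_0$ follows from $T_{\gamma_1}(\gamma_1) = \psi(2\gamma_1) > \psi(\gamma_1) = \gamma_1$, giving strict monotonicity of $(\ell_k)$. For the upper bound, let $z_n$ denote the line-initial-condition frontier value from \eqref{eq:det_ini_Upper} with parameter $\gamma$: it satisfies $z_{n+1}=\psi(2z_n)$ and $z_n\to\ell(\theta)$ by Lemma~\ref{lem:conv_front_const_IC} (applicable since $\theta>4>1.5$). Induction on $n$ gives $y_n(k)\leq z_n$ (since $y_{n-1}(k)+y_{n-1}(k-1)\leq 2z_{n-1}$), so $\ell_k\leq\ell(\theta)$. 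Taking $k\to\infty$ in the recursion forces the limit $L$ to satisfy $L=\psi(2L)$, so $L=\ell(\theta)$ by uniqueness of the positive solution.

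\textbf{Part 3b (unimodality and the diagonal limit).} I prove the \emph{unimodality} property that $y_n(\cdot)$ is nondecreasing on $[0,\lfloor n/2\rfloor]$, by induction on $n$, using the equivalence $y_{n+1}(k)\geq y_{n+1}(k-1)\Leftrightarrow y_n(k)\geq y_n(k-2)$ combined with the reflection symmetry $y_n(k)=y_n(n-k)$ to handle the near-center case (odd and even $n$ treated separately). By symmetry we may assume $m\leq n$ and write $\DI_{m+n}(m,n)=y_{m+n}(m)$ with $m\leq(m+n)/2$. Given $\eps>0$, choose $K$ with $\ell_K>\ell(\theta)-\eps$. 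For $\min\{m,n\}\geq K$, unimodality yields $y_{m+n}(m)\geq y_{m+n}(K)\to\ell_K$ as $\min\{m,n\}\to\infty$, while Part~3a gives $y_{m+n}(m)\leq\ell_m\leq\ell(\theta)$, closing the argument. The \textbf{main obstacle} is the unimodality step, which is precisely what lifts the fixed-$k$ convergence of Part~2 to the uniform diagonal conclusion; handling the parity of $n$ cleanly via the symmetry $y_n(k)=y_n(n-k)$ is the key technical move.
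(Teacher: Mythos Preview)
Your proof is correct and follows essentially the same approach as the paper's: the same frontier recursion, the same monotonicity $Y_{n+1}\geq Y_n$ from $\gamma\leq\gamma_1$, the same identification of the limits $\ell_k$ via the recursion, the observation $\ell_k\nearrow\ell(\theta)$, and the same unimodality of $y_n(\cdot)$ on $[0,\lfloor n/2\rfloor]$ to lift pointwise convergence to the diagonal limit. You supply more detail (uniqueness of $\ell_k$, the explicit comparison with the line-IC sequence $z_n$ for the upper bound, and the parity bookkeeping in the unimodality induction) where the paper simply asserts the facts, but the architecture is identical.
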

\begin{remark}
Note that when $\theta>4$ and $a\geq0$, the equation $x=1-\alpha^{-(x+a)}$ has a unique solution in $(0,1)$.
\end{remark}

\begin{proof}
Write
\[
Y_n=(y_n(0),y_n(1),\ldots).
\]
Then $Y_0=({\gamma},0,\dots,)$ and
\begin{equation}\label{eq_Y}
 y_{n+1}(k)=1-\alpha^{-(y_{n}(k)+y_n(k-1))}.
\end{equation}

Because $\gamma\leq \gamma_1$, we have $y_1(0)\geq y_0(0)$. Hence, $Y_1\geq Y_0$. By monotonicity  and induction, we get
$$
Y_{n+1}\geq Y_n.
$$
Hence, we can define
$$
 Y_\infty:=\lim_{n\rightarrow \infty}Y_n=:(\ell_0,\ell_1,\ldots).
$$
Letting $n\rightarrow\infty$ in \eqref{eq_Y}, we get that $\ell_n$ satisfies the recursive equation \eqref{eq:ell_delta_ini}. Note that $\ell_n$  increases to $\ell(\theta)$.

On the other hand, by induction, one can show  that $\DI_n(i,n-i)$ is increasing for $i\in[0,\dots,\lfloor n/2 \rfloor]$. Combining this with $\ell_n\nearrow \ell(\theta)$, we get the last assertion.
\end{proof}

\begin{remark}
By monotonicity, conclusion \eqref{cts-front} holds for all initial conditions $\DI_0=\gamma\bm{\delta}_{(0,0)}$ with $\gamma\in(\gamma_1,1]$.
\end{remark}

Now we address the case when $\theta \in (1.5,4]$.  Recall that $\kappa$ is defined in \eqref{eq:kappa}.
\begin{prop}\label{prop:middle_conv_front_origin_IC} Assume $\theta\in(1.5,4]$. Under the initial condition \eqref{eq:det_ini_Lower},
for any $\eps>0$, we have
\begin{equation}\label{eq:middle_outside_cone}
    \lim_{n\rightarrow\infty}\sup_{m:\,m<(\kappa-\eps)n\,\mathrm{or}\,m>(1-\kappa+\eps)n}\DI_{n}(m,n-m)=0;
\end{equation}
and
\begin{equation}\label{eq:middle_inside_cone}
    \lim_{n\rightarrow\infty}\sup_{m:\,(\kappa+\eps)n<m<(1-\kappa-\eps)n}|\DI_{n}(m,n-m)-\ell(\theta)|=0.
\end{equation}
\end{prop}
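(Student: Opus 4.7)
The plan is to reduce Proposition~\ref{prop:middle_conv_front_origin_IC} to a one-dimensional KPP-type recursion on the frontier. Writing $u_t(m)=\DI_t(m,t-m)$ and noting that the $\ell_1$-spread is at most one per step, every site on the outer frontier $\{x+y=t\}$ has $\DS_{t-1}=1$, and among its five neighbors only the two inner neighbors $(m-1,t-m)$ and $(m,t-1-m)$ contribute to $\widetilde{\DI}_{t-1}$. Substituting into \eqref{eq:lim_X} yields
\begin{equation*}
u_t(m) = 1 - \exp\!\Bigl(-\tfrac{1+\theta}{5}\bigl(u_{t-1}(m-1)+u_{t-1}(m)\bigr)\Bigr),\qquad 0\leq m\leq t,
\end{equation*}
with the convention $u_t(m)=0$ for $m\notin[0,t]$ and initial data $u_0(0)=\gamma$. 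Write $c=(1+\theta)/5$. This scalar map is coordinatewise nondecreasing, its trivial fixed point is unstable whenever $\theta>3/2$ (since $2c>1$), and its positive fixed point is $\ell(\theta)$; Proposition~\ref{prop:middle_conv_front_origin_IC} becomes the statement that two KPP-type waves invade from $m=0$ and $m=t$ at speeds $\kappa$ and $1-\kappa$ respectively, leaving the plateau $\ell(\theta)$ behind.

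For \eqref{eq:middle_outside_cone} I would iterate $1-e^{-x}\leq x$ to obtain the linearized upper bound $u_t(m)\leq \gamma\binom{t}{m}c^t$, which by Stirling equals $\gamma\exp(t[\log c-h(m/t)]+o(t))$. Since by definition $h(\kappa)=\log c$ and $h$ is strictly monotone on each side of $1/2$, the exponent is uniformly negative for $m/t\notin[\kappa-\eps,1-\kappa+\eps]$, yielding exponential decay.

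The upper half of \eqref{eq:middle_inside_cone} follows from coordinatewise monotonicity: comparing with the translation-invariant initial condition $v_0\equiv\gamma$, whose iterates $y_t$ from Lemma~\ref{lem:conv_front_const_IC} are constant in $m$, gives $u_t(m)\leq y_t\to\ell(\theta)$. The matching lower bound is the heart of the argument, and I plan to run a two-stage bootstrap. In Stage~1 I would apply the refined bound $1-e^{-x}\geq \tfrac{1-e^{-2c\eta}}{2c\eta}x$ for $x\in[0,2c\eta]$: for small $\eta$ the effective rate $c_\eta$ is arbitrarily close to $c$, so $2c_\eta>1$. As long as $\max_m u_s(m)\leq\eta$, induction yields $u_t(m)\geq \gamma\binom{t}{m}c_\eta^t$, which by the same Stirling calculation grows exponentially in $t$ uniformly for $m/t$ in a cone $[\kappa_\eta,1-\kappa_\eta]$ with $\kappa_\eta\downarrow\kappa$ as $\eta\downarrow 0$. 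Stopping at the first time $T_1$ at which this lower bound attains $\eta/2$ at some interior site would produce an interval $[a,b]\subset [(\kappa+\eps/2)T_1,(1-\kappa-\eps/2)T_1]$ of length proportional to $T_1$ on which $u_{T_1}(m)\geq \eta/2$. In Stage~2 I would dominate from below by the plateau $(\eta/2)\mathbf{1}_{[a,b]}$ and from above by the constant-$\eta/2$ initial condition on all of $\ze$: the constant iterate $y_s^{\eta/2}$ converges to $\ell(\theta)$ by Lemma~\ref{lem:conv_front_const_IC}, while the boundary erosion of the plateau travels inward at speed at most $\kappa$ by a reflected application of \eqref{eq:middle_outside_cone}. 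Taking $s=s(\eps)$ fixed so that $y_s^{\eta/2}\geq \ell(\theta)-\eps/2$ and then $T_1$ large with $\kappa s\ll T_1$ would deliver the lower bound at time $n=T_1+s$ for all $m$ with $m/n\in[\kappa+\eps,1-\kappa-\eps]$.

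The main obstacle will be Stage~1. Forcing $\kappa_\eta<\kappa+\eps/3$ requires $\eta$ small at the outset, but $\eta$ cannot be arbitrarily small because the linearized regime $u_s\leq\eta$ must persist long enough for $T_1$ to produce an interval of length proportional to $T_1$ (not merely $\sqrt{T_1}$) on which $u_{T_1}\geq\eta/2$. Since the upper bound $\gamma\binom{s}{m}c^s$ first exceeds $\eta$ near $m=s/2$, the linearization first breaks in the middle of the cone; one therefore has to control \emph{middle saturation} and \emph{boundary growth} simultaneously, and pick $T_1=T_1(\eta)$ so that the lower bound at the edge of the $(\kappa+\eps)$-cone reaches $\eta/2$ before saturation at the middle invalidates the induction. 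A secondary delicacy is justifying the $h$-rate large-deviation estimate for the boundary erosion in Stage~2, which is a mirror image of \eqref{eq:middle_outside_cone} applied to a reflected/translated version of the process; this follows from the same coupling used for the outside-cone bound, but it does require rederiving the Stirling estimate with the two endpoints of $[a,b]$ playing the role the origin plays in \eqref{eq:middle_outside_cone}.
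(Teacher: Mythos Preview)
Your treatment of \eqref{eq:middle_outside_cone} and of the upper half of \eqref{eq:middle_inside_cone} coincides with the paper's: both linearize via $1-e^{-x}\le x$ to obtain $u_t(m)\le \gamma\,c^t\binom{t}{m}$ and invoke Stirling, and both get the upper bound $\ell(\theta)$ by comparison with the constant initial datum of Lemma~\ref{lem:conv_front_const_IC}.

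For the lower bound the paper takes a genuinely different route. Its key device is a translation-monotonicity claim: writing $Y(m,n)=\DI_{m+n}(m,n)$, if some $\mathbf{a}\in\rN\times\rN$ satisfies $Y(\mathbf{a})>\gamma=Y(\0)$, then $Y(\x+\mathbf{a})>Y(\x)$ for every $\x\in\rN\times\rN$ (proved by a one-line induction from the recursion). One then chooses, near the edge of the $(\kappa+\eps)$-cone, two vectors $\mathbf{b}_1,\mathbf{b}_2$ generating $\ze^2$ with $\mathbf{b}=\mathbf{b}_1+\mathbf{b}_2$ and $g(\mathbf{b}_i)>1$; taking $\gamma$ small guarantees $Y(\mathbf{b}_i)>\gamma$. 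The claim makes $k\mapsto Y(\x+k\mathbf{b})$ increasing, so the limit $h(\x)$ exists; the sandwich $Y(\x)\le Y(\x+\mathbf{b}_i)\le Y(\x+\mathbf{b})$ forces $h$ to be $\mathbf{b}_i$-periodic, hence constant, and passing to the limit in the recursion identifies that constant as $\ell(\theta)$. Combined with the elementary fact that $Y(n-m,m)$ is nondecreasing for $m\le n/2$, this yields the lower bound throughout the cone.

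Your two-stage bootstrap is a different and in principle viable strategy, but the obstacle you single out in Stage~1 is a genuine gap, not just a technicality. The inductive lower bound $u_t(m)\ge\gamma\binom{t}{m}c_\eta^t$ requires $u_s(j)\le\eta$ for \emph{every} $s<t$ and \emph{every} $j$, yet the true profile at $j\approx s/2$ (controlled above only by $\gamma\binom{s}{j}c^s$) exceeds $\eta$ at a time $T_*\sim\log(\eta/\gamma)/\log(2c)$ that is strictly smaller than the time $T_1$ needed for your lower bound to reach $\eta/2$ out near $m/T_1\approx\kappa+\eps/3$. So the global induction breaks long before it produces the interval $[a,b]$ filling the cone; at time $T_*$ you only get an interval of width $O(\sqrt{T_*})$ around the center. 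Repairing this would require an additional front-propagation step pushing the saturated center outward at speed $1-\kappa$, i.e.\ essentially a Weinberger-type spreading-speed argument on top of what you have. The paper's translation-monotonicity claim sidesteps this entirely: it replaces ``does the linearized lower bound survive until it reaches the edge of the cone?'' with ``is there a single lattice point $\mathbf{a}$ near that edge with $Y(\mathbf{a})>\gamma$?'', which needs only $g(\mathbf{a})>1$ and $\gamma$ small, and requires no control whatsoever of the nonlinear saturation in the interior.
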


\begin{proof}
Write $Y:\rN\times\rN\rightarrow [0,1]$ be the value when the frontier arrives, i.e.,
\begin{equation}\label{eq:frontier_value}
    Y(m,n)=\DI_{m+n}(m,n).
\end{equation}
The key observation is the following, which can be easily proved by induction.

\ul{Claim~1}: For any $\mathbf{a}\in\rN\times\rN$ with
\[
Y(\mathbf{a})>Y(0,0)=\gamma,
\]
then, for any $\x\in \rN\times\rN$,
\[
Y(\x+\mathbf{a})>Y(\x).
\]

Inspired by this, we define
\[
g(\x)=\left.\frac{\partial Y}{\partial \gamma}\right|_{\gamma=0}(\x).
\]
Since $Y(m,n)=1-\alpha^{-(Y(m-1,n)+Y(m,n-1))}$, a simple application of the chain rule yields
\[
g(m,n)=(\log \alpha)(g(m-1,n)+g(m,n-1)).
\]
Now, by induction, one can easily show

\ul{Claim~2}:
\[
g(m,n)=(\log \alpha)^{m+n}\binom{m+n}{m}.
\]

By Stirling's formula, we get
\begin{equation}\label{eq_g_esti}
g(m,n)={\sqrt{\frac{m+n+1}{2\pi(m+1)(n+1)}}}\left(\frac{\log\alpha}{s^s(1-s)^{1-s}}\right)^{m+n}\left(1+O\left(\frac{1}{m+1}+\frac{1}{n+1}\right)\right),
\end{equation}
where $s=m/(m+n)$.

Now we have finished the preparation and are ready to show \eqref{eq:middle_outside_cone} and \eqref{eq:middle_inside_cone}.

For \eqref{eq:middle_outside_cone}, note that $1-\alpha^{-x}\leq x\log \alpha $. We can get (by induction)
\[
Y(
\x)\leq g(\x)\gamma.
\]
When $\x=(m,n)$ is outside the cone (meaning the condition inside the supreme in \eqref{eq:middle_outside_cone}), $s^s(1-s)^{1-s}>\kappa^\kappa(1-\kappa)^{1-\kappa}$ ($s=m/m+n$ as before). Therefore, $\log\alpha/s^s(1-s)^{1-s}<1-\eps_0$, for some $\eps_0$ depending on $\eps$. We see that $g(\x)$ converges to zero uniformly. Hence, $Y(\x)\leq g(\x)\gamma\rightarrow 0$ uniformly. The proof of  \eqref{eq:middle_outside_cone} is complete.

For \eqref{eq:middle_inside_cone}, we need more work. Since for the upper extreme case ($\DI_0(n,-n)\equiv 1$), \eqref{eq:middle_inside_cone} holds, we just need to show the lower extreme case, i.e., we can assume that $\gamma$ is small enough.  {For any $\eps>0$, by {\eqref{eq_g_esti}}, we can find some large $\mathbf{b},\mathbf{b}_1,\mathbf{b}_2\in\rN\times\rN$ in the cone $\{(b_1,b_2):{\kappa+0.5\eps<b_2/(b_1+b_2)<\kappa+\eps}\}$ such that $\mathbf{b}=\mathbf{b}_1+\mathbf{b}_2$, $\min\{g(\mathbf{b}),g(\mathbf{b}_1),g(\mathbf{b}_2)\}>1$, and $\{\mathbf{b}_1,\mathbf{b}_2\}$ generates $\ze^2$ in the sense that any $(x,y)\in\ze\times\ze$ can be written as $m\mathbf{b}_1+n\mathbf{b}_2$ for some $m,n\in\ze$. Letting~$\gamma$ be small enough, we can assume $\min\{Y(\mathbf{b}),Y(\mathbf{b}_1),Y(\mathbf{b}_2)\}>\gamma$.} We will show that
\begin{equation}\label{eq:side_1}
\lim_{k\rightarrow\infty}Y((x,y)+k\mathbf{b})=\ell(\theta),\;\mbox{ for all } x\in\{0,\ldots,b_1\}, y\in\{0,\ldots,b_2\}.
\end{equation}

Note that once we get \eqref{eq:side_1}, we have \eqref{eq:middle_inside_cone}. The reason is as follows. By induction, one can easily show that for any fixed $n$, $Y_n(n-m,m)$ is increasing in $m\in\{0,\dots,\lfloor n/2\rfloor\}$. Hence, for any $(m,n-m)$ with $(\kappa+\eps)n<m<(1-\kappa-\eps)n$, we can find some $(x,y)$ in the rectangle in \eqref{eq:side_1},$k\in\rN$, such that $Y(n-m,m)\geq Y((x,y)+k\mathbf{b})$. If the latter term goes to $\ell(\theta)$, then so does the former term (note that all $Y$'s are less than $\ell(\theta)$).

We are remaining to prove \eqref{eq:side_1}. By \ul{Claim~1}, we know that for any $(x,y)\in \rN\times \rN$,  $Y((x,y)+k\mathbf{b})$ is increasing in $k$ and hence that the limit does exist. We denote the limit by $h(x,y)$. Obviously,
\[
h(x,y)=h((x,y)+\mathbf{b}).
\]

By \ul{Claim~1}, we have $Y((x,y)+\mathbf{b}_i)>Y(x,y)$. Hence,
\[
h(x,y)\leq h((x,y)+\mathbf{b}_i)\leq h((x,y)+\mathbf{b}_1+\mathbf{b}_2)=h((x,y)+\mathbf{b})=h(x,y).
\]
Therefore, $h(x,y)=h((x,y)+\mathbf{b}_i)$. Since $\{\mathbf{b}_1,\mathbf{b}_2\}$ generates $\ze\times \ze$, we obtain that all $h(x,y)$'s are equal. Moreover since $Y(m,n)=1-\alpha^{-(Y(m-1,n)+Y(m,n-1))}$, by taking the limit, we get that $h(x,y)$ satisfies the same equation of $\ell(\theta)$. Finally, since $h(x,y)\geq \gamma>0$, we exclude the case $h(x,y)=0$ and reach the conclusion that $h(x,y)=\ell(\theta)$.
\end{proof}

\subsection{Convergence on all layers near the frontier}
We consider the proportion of infection in all layers near the frontier. As in the previous subsection,
we focus on  the initial conditions supported by the line $x+y=0$. By the $k$-th layer at time $n$ near the frontier,
we mean {$\{\DI_n(m,n+1-k-m)\}_{m\in\ze}$}. By symmetry, we only consider the frontier on the positive direction.
%%%%We know that the first layer converges to $\ell(\theta)$ and $\ell(\theta)>0$ if and only if $\theta>1.5$. We will show later that $\theta=1.5$ is also the critical value for all layers in the frontier.

We start with the  initial condition \eqref{eq:det_ini_Upper}. Denote
$$
y_{n}^{(i)}=\DI_{n}(n+1-i,0) (=\DI_{n}(n+1-i-k,k),\, \mbox{for all }k), i=1,2,\ldots, n={i-1,i,\ldots}.
$$
When $i=1$, $(y_n^{(1)})$ satisfies the recursive equation \eqref{eq:y_1}.
For $i=2$, by \eqref{eq:lim_X} we have
\begin{equation}\label{eq:y_2}
\aligned
y_{1}^{(2)} &=(1-y^{(1)}_0)(1-\alpha^{-y^{(1)}_0}),\\
 y_{n+1}^{(2)}&= (1-y_{n}^{(1)}) (1-\alpha^{-(2 y_{n}^{(2)} + y_{n}^{(1)})}), \mbox{ for all } n\geq 1.
\endaligned
\end{equation}
In general, for $i\geq 3$,  we have
\begin{equation}\label{eq:y_i}
\aligned
y_{i-1}^{(i)}=&\left(1-\sum_{1\leq j<i} y_{j-1}^{(j)}\right)(1-\alpha^{-({2y_{i-2}^{(i-2)} + y_{i-2}^{(i-1)}+ 2y_{i-2}^{(i-2)}})});\\
y_{n+1}^{(i)}=& \left(1-\sum_{1\leq j<i} y_{n+1-i+j}^{(j)}\right) (1-\alpha^{-(2 y_{n}^{(i)} + y_{n}^{(i-1)}+ 2y_n^{(i-2)})}), \,\mbox{ for all } n\geq i-1.
\endaligned
\end{equation}

Let us now define the limiting constants. Recall the sequence $\ell^{(i)}=\ell^{(i)}(\theta)$ defined in \eqref{eq:ell_i}. {It is easy to verify that when $\theta\leq 1.5$, \eqref{eq:ell_i} has no solution in $(0,1]$, in which case we define $\ell^{(i)}\equiv 0$.} On the other hand, when $\theta > 1.5$, \eqref{eq:ell_i} has a unique solution in $(0,1)$, which is defined as $\ell^{(i)}$.

\begin{prop}\label{prop:conv_front_layers_const_IC}
Under the  initial condition \eqref{eq:det_ini_Upper}, for any $i\geq 1$, $y_n^{(i)}$ converges to $\ell^{(i)}=\ell^{(i)}(\theta)$.
\end{prop}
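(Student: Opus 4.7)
The plan is to proceed by induction on $i$; the base case $i=1$ is Lemma~\ref{lem:conv_front_const_IC}. Assuming $y_n^{(j)}\to\ell^{(j)}$ for $1\le j<i$, I will view the recursion \eqref{eq:y_i} as a one-dimensional, asymptotically autonomous iteration
$$u_{n+1}=F_n(u_n),\qquad F_n(u):=c_n\bigl(1-\alpha^{-(2u+a_n)}\bigr),$$
where $u_n:=y_n^{(i)}$, $c_n:=1-\sum_{j=1}^{i-1}y_{n+1-i+j}^{(j)}$, and $a_n:=y_n^{(i-1)}+2y_n^{(i-2)}$. The inductive hypothesis forces $c_n\to c:=1-\sum_{j=1}^{i-1}\ell^{(j)}$ and $a_n\to a:=\ell^{(i-1)}+2\ell^{(i-2)}$, so $F_n$ is a small perturbation of the autonomous map $g(u):=c(1-\alpha^{-(2u+a)})$, whose fixed-point equation is precisely \eqref{eq:ell_i}. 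The positivity $c>0$ follows from the bound $\sum_{j\ge1}\ell^{(j)}\le\iota<1$ (Lemma~\ref{lm:sum_ell}).

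Because $\psi(\cdot)=1-\alpha^{-(\cdot)}$ is strictly increasing and strictly concave, so is $g$ on $[0,c]$, with $g(0)=c(1-\alpha^{-a})\ge0$ and $g(c)<c$. Strict concavity of $g(u)-u$, together with the boundary signs, implies that $g$ has a unique fixed point $\ell^{(i)}\in[0,c)$, equal to $0$ precisely in the $\theta\le1.5$ regime (where $a=0$) and strictly positive otherwise; moreover, $g(u)>u$ on $[0,\ell^{(i)})$ and $g(u)<u$ on $(\ell^{(i)},1]$. Consequently, iterating $g$ from any point in $[0,1]$ yields a monotone sequence that converges to $\ell^{(i)}$.

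To handle the time-dependence in $F_n$, fix a small $\epsilon>0$ and define
$$g_\epsilon^\pm(u):=c^\pm\bigl(1-\alpha^{-(2u+a^\pm)}\bigr),$$
with $c^+:=\min(1,c+\epsilon)$, $c^-:=c-\epsilon$, $a^+:=a+\epsilon$, $a^-:=\max(0,a-\epsilon)$. For $n$ large enough we have $g_\epsilon^-(u)\le F_n(u)\le g_\epsilon^+(u)$ for all $u\in[0,1]$. Each $g_\epsilon^\pm$ satisfies the same hypotheses as $g$, so it has a unique attracting fixed point $\ell_\epsilon^\pm\in[0,1]$. Since $g_\epsilon^\pm$ are monotone increasing, a straightforward induction from some $n_0$ onward shows that $(u_n)_{n\ge n_0}$ is sandwiched between the $g_\epsilon^\pm$-iterates started at $u_{n_0}$, yielding $\ell_\epsilon^-\le\liminf_n u_n\le\limsup_n u_n\le\ell_\epsilon^+$. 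A soft compactness argument---any subsequential limit of $\ell_\epsilon^\pm$ as $\epsilon\downarrow0$ is a fixed point of $g$ in $[0,1]$ and must therefore equal $\ell^{(i)}$---gives $\ell_\epsilon^\pm\to\ell^{(i)}$, closing the induction.

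The main obstacle is the critical regime $\theta=1.5$, in which $\ell^{(i)}=0$ for every $i$ and $g'(0)=1$: the limit fixed point is non-hyperbolic, linearization fails, and the unperturbed iteration $u_{n+1}=g(u_n)$ converges to $0$ only sub-exponentially. The sandwich bypasses this because for each fixed $\epsilon>0$ the perturbed maps $g_\epsilon^\pm$ are hyperbolic at their isolated, strictly positive fixed points $\ell_\epsilon^\pm$, and the compactness argument above forces $\ell_\epsilon^+\downarrow0$ as $\epsilon\downarrow0$. A secondary technical point is verifying $c>0$, which, as noted above, reduces to the convergence of $\sum_j\ell^{(j)}$ established in Lemma~\ref{lm:sum_ell}.
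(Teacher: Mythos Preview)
Your argument is correct and takes a genuinely different route from the paper's. The paper proceeds by a direct contraction estimate: writing $f(x,y)=(1-y)(1-\alpha^{-(2x+y)})$, it bounds $|y_{n+1}^{(2)}-\ell^{(2)}|$ by a linear combination of $|y_n^{(1)}-\ell^{(1)}|$ and $|y_n^{(2)}-\ell^{(2)}|$, and then invokes the key inequality $2(1-\ell^{(1)})\log\alpha<1$ (valid for $\theta\neq1.5$) to make the coefficient of $|y_n^{(2)}-\ell^{(2)}|$ strictly below~$1$. The borderline case $\theta=1.5$, where this contraction constant degenerates to~$1$, is handled separately by an ad~hoc estimate. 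Your sandwich by the autonomous maps $g_\epsilon^\pm$ sidesteps both the explicit contraction constant and the case split: global attractivity of the fixed point of $g_\epsilon^\pm$ comes purely from monotonicity and concavity, and the critical regime is absorbed uniformly because $g_\epsilon^+$ has a \emph{hyperbolic} positive fixed point for every $\epsilon>0$ even when the limiting map does not. The paper's approach is more quantitative (it yields an exponential rate when $\theta\neq1.5$); yours is cleaner and more robust.

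One genuine issue: your appeal to Lemma~\ref{lm:sum_ell} for $c>0$ is circular, since that lemma's proof in the paper relies on the very proposition you are proving. Fortunately you do not need it. The inequality $\sum_{j=1}^{i-1}\ell^{(j)}<1$ follows directly from the defining equation~\eqref{eq:ell_i}: since $\ell^{(j)}=(1-\sum_{k<j}\ell^{(k)})(1-\alpha^{-(\cdots)})<1-\sum_{k<j}\ell^{(k)}$, a one-line induction gives $\sum_{j=1}^m\ell^{(j)}<1$ for every~$m$ (this is also the opening observation in the proof of Lemma~\ref{lm:bd_sum_ell}, which does not depend on the proposition). Replace the citation accordingly. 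A minor cosmetic point: in the critical regime your $\ell_\epsilon^-$ equals~$0$, not a strictly positive number as stated, but this does not affect the sandwich.
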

\begin{proof}
The case $i=1$ has been proved in Lemma \ref{lem:conv_front_const_IC}.
We shall only prove for the case when $i=2$; the case when $i\geq 3$ can be proved similarly.

We first consider the case when $\theta\neq 1.5$.
Define
\[
\aligned
f(x,y)&=(1-y) (1-\alpha^{-(2 x + y)}), \q x,y\in [0,1].
%%%f(x)&=(1-\ell^1) (1-\alpha^{-(2 x + \ell^1)}),
\endaligned
\]
We have $y_{n+1}^{(2)} = f(y_{n}^{(2)},y_{n}^{(1)})$, and $\ell^{(2)} = f(\ell^{(2)},\ell^{(1)})$.
Elementary calculus yields that
\begin{equation}\label{ineq_f}
  \aligned  |f(x_1,y_1)-&f(x_2,y_2)|\leq\\
 &|y_1-y_2|+(1-\min\{y_1,y_2\})\log(\alpha)(2|x_1-x_2|+|y_1-y_2|).
 \endaligned
\end{equation}
Therefore,
\[\aligned
  |y_{n+1}^{(2)}& - \ell^{(2)}| \leq \\ &|y_{n}^{(1)} - \ell^{(1)}| + 2(1-\min\{y_n^{(1)},\ell^{(1)}\})\log(\alpha)\cdot (\frac{1}{2}|y_{n}^{(1)} - \ell^{(1)}| + |y_{n}^{(2)} - \ell^{(2)}|).
\endaligned
\]
When $\theta\neq 1.5$, it is easy to show using equation \eqref{eq:ell}  that $2(1-\ell^{(1)})\log(\alpha) < 1$. It follows that
\begin{equation}\label{eq:y_2_recur}
 \aligned
 |y_{n+1}^{(2)} - \ell^{(2)}| \leq&(1.5-\frac{1}{2}\delta+\log(\alpha)|y_{n}^{(1)}- \ell^{(1)}|)|y_{n}^{(1)} - \ell^{(1)}|\\
 & + (1-\delta+2\log(\alpha)|y_{n}^{(1)}- \ell^{(1)}|) |y_{n}^{(2)} - \ell^{(2)}|,
 \endaligned
\end{equation}
where $\delta>0$ is a fixed constant. Because $y_{n}^{(1)} \to \ell^{(1)}$, taking limsup on both sides of~\eqref{eq:y_2_recur} yields that $y_{n}^{(2)} \to \ell^{(2)}$.

We now turn to the case when {$\theta=1.5$}. In this case, $\alpha=\sqrt{e}$ and
\[
f(x,y)=(1-y) (1-e^{-(2 x + y)/2}), \q x,y\in [0,1].
\]
It is easy to show that for any $\eps\in(0,1)$, there exist $\eps_0\in(0,\eps)$ and $\delta\in(0,1)$ such that for any $y\in[0,\delta]$, we have,
\[
f(x,y)<\left\{\begin{array}{cc}
     \eps-\eps_0, & \mathrm{~when~}x\in [0,\eps]; \\
     f(\eps,y)+\exp(-\eps)(x-\eps)<x-\eps_0, & \mathrm{~when~}x \in (\eps,1]. \\
\end{array}\right.
\]
Using such a fact and a similar argument to above, we get that $y_{n}^{(2)} \to 0$.
\end{proof}

Simple modifications of the proof of Proposition \ref{prop:conv_front_layers_const_IC} yield the following generalization.
\begin{cor}\label{cor:conv_front_layers_general_IC}
For any initial condition $\{\DI_0(m,-m)\}_{m\in \ze}$ such that
\begin{equation}\label{eq:assump_conv_front_layers_general_IC}
\lim_{n\rightarrow \infty}\sup_{m\in \ze}|\DI_n(m,n-m)-\ell(\theta)|=0,
\end{equation}
we have,
\[
%%%\begin{equation}\label{eq:conv_front_layers_general_IC}
\lim_{n\rightarrow \infty}\sup_{m\in \ze}|\DI_n(m,n+1-k-m)-\ell^{(k)}(\theta)|=0, \q \q \mbox{for all }k\geq 1.
\]
%%%%\end{equation}
\end{cor}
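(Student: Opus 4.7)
The plan is induction on the layer index $k \geq 1$; the base case $k = 1$ is precisely the hypothesis \eqref{eq:assump_conv_front_layers_general_IC}. For the inductive step, I first derive the general recursion. Setting $y_n^{(i)}(m) := \DI_n(n+1-i-m, m)$ for $i \geq 1$ (with $y_n^{(0)} \equiv y_n^{(-1)} \equiv 0$ by convention), and listing the five neighbours of $(n+2-k-m, m)$ at time $n$, classified by which layer at time $n$ each lies on, the dynamics \eqref{eq:lim_X} give
\begin{equation*}
y_{n+1}^{(k)}(m) = \Bigl(1 - \sum_{j=1}^{k-1} y_{n+1-k+j}^{(j)}(m)\Bigr)\Bigl(1 - \alpha^{-\bigl(y_n^{(k)}(m) + y_n^{(k)}(m-1) + y_n^{(k-1)}(m) + y_n^{(k-2)}(m) + y_n^{(k-2)}(m+1)\bigr)}\Bigr),
\end{equation*}
which collapses to \eqref{eq:y_i} under the symmetric initial condition \eqref{eq:det_ini_Upper}. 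The $\DS$-factor is derived exactly as in the computation leading to \eqref{eq:y_i}, by summing the past $\DI$-values at $(n+2-k-m, m)$, which correspond to layers $1,2,\ldots,k-1$ at earlier times.

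Assume the inductive hypothesis holds for all $j < k$ and set $E_n := \sup_m |y_n^{(k)}(m) - \ell^{(k)}|$. Replacing each lower-layer term $y_\cdot^{(j)}(\cdot)$ appearing in the recursion by its limit $\ell^{(j)}$ introduces an error $\eta_n \to 0$ that is uniform in $m$, and reduces the recursion to $y_{n+1}^{(k)}(m) = g\bigl(y_n^{(k)}(m),\, y_n^{(k)}(m-1)\bigr) + O(\eta_n)$, where
\begin{equation*}
g(x_1, x_2) := \Bigl(1 - \sum_{j<k}\ell^{(j)}\Bigr)\Bigl(1 - \alpha^{-(x_1 + x_2 + \ell^{(k-1)} + 2\ell^{(k-2)})}\Bigr).
\end{equation*}
By \eqref{eq:ell_i}, $g(\ell^{(k)}, \ell^{(k)}) = \ell^{(k)}$, and using this identity to simplify $\alpha^{-(2\ell^{(k)} + \ell^{(k-1)} + 2\ell^{(k-2)})} = 1 - \ell^{(k)}/(1 - \sum_{j<k}\ell^{(j)})$, one obtains $\partial_1 g(\ell^{(k)},\ell^{(k)}) = \partial_2 g(\ell^{(k)},\ell^{(k)}) = \log(\alpha)\bigl(1 - \sum_{j \leq k}\ell^{(j)}\bigr)$. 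A global Lipschitz estimate for $g$ on $[0,1]^2$ of the form \eqref{ineq_f}, combined with the inductive hypothesis, then yields $E_{n+1} \leq \rho_n E_n + O(\eta_n)$ with $\limsup_n \rho_n \leq \rho := 2\log(\alpha)\bigl(1 - \sum_{j \leq k}\ell^{(j)}\bigr)$. The shifts $m \mapsto m \pm 1$ in the recursion cause no difficulty because the Lipschitz constant is $m$-independent, so taking the supremum over $m$ preserves the contractive inequality, and $E_n \to 0$ follows by a standard iteration argument once $\rho < 1$.

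It remains to verify $\rho < 1$. For $\theta < 1.5$ all $\ell^{(j)}$ vanish and $\rho = 2(1+\theta)/5 < 1$. For $\theta > 1.5$, writing $a := 2\log(\alpha) > 1$ and using the defining equation $1 - \ell^{(1)} = e^{-a\ell^{(1)}}$, the stability of $\ell^{(1)}$ as the attracting fixed point of $x \mapsto 1 - e^{-ax}$ yields $a(1 - \ell^{(1)}) < 1$; since $\sum_{j \leq k}\ell^{(j)} \geq \ell^{(1)}$, this gives $\rho \leq a(1 - \ell^{(1)}) < 1$ for every $k \geq 1$. The main obstacle is therefore the borderline case $\theta = 1.5$, where $\rho = 1$ and the contraction fails; here $\ell^{(k)} = 0$ for all $k$ and one must adapt the refinement used at the end of the proof of Proposition~\ref{prop:conv_front_layers_const_IC}. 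Specifically, the strict concavity of $x \mapsto 1 - \alpha^{-x}$ at the origin furnishes, for every $\epsilon > 0$, constants $\epsilon_0 > 0$ and $\delta > 0$ such that $g(x_1, x_2) \leq \max(x_1, x_2) - \epsilon_0$ whenever $\max(x_1, x_2) > \epsilon$ and all lower-layer inputs are at most $\delta$; combined with the inductive hypothesis (which forces the lower-layer inputs below $\delta$ eventually), this uniform downward drift survives the supremum over $m$ and drives $E_n$ below every $\epsilon > 0$ in finitely many steps.
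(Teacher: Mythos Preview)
Your proof is correct and follows exactly the approach the paper intends: the paper merely states that the corollary follows by ``simple modifications'' of the proof of Proposition~\ref{prop:conv_front_layers_const_IC}, and your induction on $k$ with the uniform-in-$m$ contraction is precisely such a modification. One minor slip: the global Lipschitz bound of the form~\eqref{ineq_f} actually yields $\limsup_n \rho_n \leq 2\log(\alpha)\bigl(1 - \sum_{j<k}\ell^{(j)}\bigr)$ rather than $\sum_{j\leq k}$ (the sharper constant you wrote is the derivative at the fixed point, which you cannot use before establishing convergence), but since this cruder constant is still bounded by $2\log(\alpha)(1-\ell^{(1)}) < 1$ for $\theta > 1.5$, your argument goes through unchanged.
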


\begin{remark}
Thanks to the results in Section~\ref{ssec:1st_det}, we can get some sufficient conditions to guarantee \eqref{eq:assump_conv_front_layers_general_IC}. For example, if there exists  $\eps>0$ such that $\DI_0(m,-m)>\eps$ for all $m\in\ze$, then \eqref{eq:assump_conv_front_layers_general_IC} holds. The  condition can be further weakened to be the following: If there exist  $\eps>0$ and $k\in \zz{N}$ such that for any $m\in \ze$, there exists $n\in \ze$ such that $|n-m|<k$ and $\DI_0(n,-n)>\eps$, then the frontier at time $2k$ is uniformly bounded from below and hence \eqref{eq:assump_conv_front_layers_general_IC} holds.
\end{remark}

Next result describes some asymptotics about the sequence $(\ell^{(i)})$.
\begin{lemma}\label{lm:bd_sum_ell}
If $\theta>1.5$, then,
\begin{equation}\label{eq:bd_sum_ell}
    \sum_{i=1}^\infty \ell^{(i)}>\frac{\theta}{1+\theta}.
\end{equation}
Moreover, there exist $A>0$ and $b\in(0,1)$ depending on $\theta$ such that
\begin{equation}\label{eq:decay_ell}
\ell^{(n)}\leq A b^n, \q \mbox{for all }n\geq 1.
\end{equation}
\end{lemma}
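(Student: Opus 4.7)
For part (a), my plan is to exploit a telescoping identity. Setting $\sigma_i := \sum_{j=1}^i \ell^{(j)}$, I rewrite~\eqref{eq:ell_i} as
\[
1-\sigma_i = (1-\sigma_{i-1})\exp\!\left(-\tfrac{1+\theta}{5}\bigl(2\ell^{(i)} + \ell^{(i-1)} + 2\ell^{(i-2)}\bigr)\right).
\]
A short induction gives $0 \le \sigma_i \le 1$ and $\sigma_i$ nondecreasing, so $\sigma_i \nearrow L$ for some $L\in[0,1]$. Taking logs and summing in $i$, and using $\ell^{(-1)}=\ell^{(0)}=0$, each of the three sums $\sum_{i\ge 1}\ell^{(i)},\,\sum_{i\ge 1}\ell^{(i-1)},\,\sum_{i\ge 1}\ell^{(i-2)}$ equals $L$, yielding $\log(1-L) = -(1+\theta)L$. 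Thus $L$ satisfies~\eqref{eq:iota}, whose only roots in $[0,1]$ are $0$ and $\iota$. The hypothesis $\theta>1.5$ forces $2(1+\theta)/5>1$, so the $i=1$ equation $\ell=1-\exp(-2\ell(1+\theta)/5)$ has a positive root, hence $\ell^{(1)}>0$, and therefore $L=\iota$. To close (a), the map $g(x):=1-e^{-(1+\theta)x}$ is strictly concave with $g(0)=0$, $g'(0)>1$, and positive fixed point $\iota$, so $g(x)>x$ on $(0,\iota)$; plugging in $x=\theta/(1+\theta)$ reduces $\iota>\theta/(1+\theta)$ to $e^\theta>1+\theta$, which holds for all $\theta>0$.

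For part (b), the strategy is to linearize the recurrence once $\sigma_{i-1}$ is close to $\iota$. Writing $\tau_i := 1-\sigma_i$ and using $1-e^{-x}\le x$ in~\eqref{eq:ell_i} gives, with $c:=(1+\theta)/5$,
\[
\ell^{(i)}\bigl(1-2c\tau_{i-1}\bigr) \le c\tau_{i-1}\bigl(\ell^{(i-1)} + 2\ell^{(i-2)}\bigr).
\]
By (a), $\tau_{i-1}\to 1-\iota$ and, crucially, $(1+\theta)(1-\iota)<1$, so $1-2c\tau_{i-1}$ is bounded below by a positive constant for large $i$ and
\[
\ell^{(i)} \le a_i\bigl(\ell^{(i-1)} + 2\ell^{(i-2)}\bigr),\qquad a_i\longrightarrow \frac{(1+\theta)(1-\iota)/5}{1-2(1+\theta)(1-\iota)/5}=:a_\infty.
\]
A direct calculation shows $a_\infty<1/3$ is equivalent to $(1+\theta)(1-\iota)<1$, which we just established. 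Fixing any $a\in(a_\infty,1/3)$, one has $a_i\le a$ for all $i\ge i_0$; comparing with the linear recurrence $u_i = a(u_{i-1}+2u_{i-2})$, whose characteristic equation $x^2-ax-2a=0$ has larger root $b=(a+\sqrt{a^2+8a})/2<1$ exactly when $a<1/3$, yields $\ell^{(i)}\le Ab^i$ for a suitable $A>0$.

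The main obstacle is the tight interlocking of (a) and (b): the exponential decay in (b) requires both the positivity of $1-2c\tau_{i-1}$ (for the rearrangement to be legitimate) and the spectral bound $a_\infty<1/3$, and each of these is precisely the content of the sharp inequality $(1+\theta)(1-\iota)<1$ from~(a). This is why the proof must proceed in the order stated, and why the identification $L=\iota$ — rather than a merely qualitative fact that $L$ is positive — is essential to drive the geometric decay.
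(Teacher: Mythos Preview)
Your proof is correct, and your argument for part~(a) is in fact cleaner than the paper's. The paper argues by contradiction: assuming $s_\infty:=\sum_i\ell^{(i)}\le\theta/(1+\theta)$, it treats the strict-inequality case via the linearized recursion and the boundary case $s_\infty=\theta/(1+\theta)$ separately, the latter requiring a second-order Taylor bound~\eqref{eq:Taylor_2nd} and a somewhat delicate estimate on tail sums. Your telescoping identity $1-\sigma_i=(1-\sigma_{i-1})\exp(-c(2\ell^{(i)}+\ell^{(i-1)}+2\ell^{(i-2)}))$ sidesteps all of this: summing the logarithms gives $\log(1-L)=-(1+\theta)L$ directly, so $L\in\{0,\iota\}$, and $\ell^{(1)}>0$ forces $L=\iota$. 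This not only proves the lemma but simultaneously recovers the identity $\sum_i\ell^{(i)}=\iota$ that the paper establishes only later (Lemma~\ref{lm:sum_ell}) by a different route. For part~(b) the two arguments are essentially the same---linearize via $1-e^{-x}\le x$, use $(1+\theta)(1-\iota)<1$ to get a contraction, and compare with the linear recurrence $x^2=a(x+2)$ whose dominant root is strictly below~$1$ precisely when $a<1/3$.
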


%%%%We relegate the proof for the equality $\iota=\sum_{i=1}^\infty \ell^{(i)}$ to  Section~6 (after the proof of Lemma~\ref{lm:IC_extreme}) and prove the remaining conclusions here.
\begin{proof}
Write $s_n=\sum_{i=1}^n \ell^{(i)}$. By \eqref{eq:ell_i},   $\ell^{(i)}<1-s_{i-1}$. Therefore, $s_n\leq 1$ and $\ell^{(i)}\rightarrow 0$. Note that $1-\alpha^{-t}\sim t\log \alpha$ as $t\to 0$. Hence as $n\to\infty$,
\begin{equation}\label{eq:ell_r_asym}
\ell^{(n+1)}\sim (1-s_n)(2\ell^{(n+1)}+\ell^{(n)}+2\ell^{(n-1)}) \log \alpha .
\end{equation}

We will prove \eqref{eq:bd_sum_ell} by contradiction. Suppose that $s_\infty=\sum_{i=1}^\infty \ell^{(i)}\leq\frac{\theta}{1+\theta}$.

If $\sum_{i=1}^\infty \ell^{(i)}<\frac{\theta}{1+\theta}$, then by \eqref{eq:ell_r_asym}, we can find some $\delta>0 $ and $n_0$ such that when $n\geq n_0$, we have
\[
\ell^{(n+1)}\geq \left(\frac{1}{5}+\delta\right) (2\ell^{(n+1)}+\ell^{(n)}+2\ell^{(n-1)}).
\]
Then, one can use induction to show that $\ell^{(n)}\geq \min\{\ell^{(n_0)},\ell^{(n_0-1)}\}$. This is contradictory to that $\ell^{(n)}\rightarrow 0$.

Now assume $\sum_{i=1}^\infty \ell^{(i)}=\frac{\theta}{1+\theta}$. By \eqref{eq:ell_r_asym}, for any $\delta>0$, we can find some $n_0$ such that when $n\geq n_0$, we have
\[
\ell^{(n+2)}\geq (1-\delta)\frac{\ell^{(n+1)}+2\ell^{(n)}}{3}.
\]
By induction, one can show that when $n\geq n_0$ and $k\geq 2$,
\[
\ell^{(n+k)}\geq (1-\delta)^{k-1}\frac{3\ell^{(n+1)}+\ell^{(n)}}{9}.
\]
By summation, we get
\begin{equation}\label{eq_t1}
 \sum_{k\geq2}\ell^{(n+k)}\geq \frac{1-\delta}{\delta}\frac{3\ell^{(n+1)}+\ell^{(n)}}{9}.
\end{equation}
On the other hand, by Taylor expansion we have
\begin{equation}\label{eq:Taylor_2nd}
 1-\alpha^{-t}\geq t\log \alpha-\frac{t^2(\log \alpha)^2}{2}, \mathrm{~for~}t\in[0,5].
\end{equation}
Using the above estimates, we get that with $A_n=2\ell^{(n+1)}+\ell^{(n)}+2\ell^{(n-1)}$,
\begin{equation}\label{eq:ell_lb}
\aligned
\ell^{(n+1)}&=(1-s_n)(1-\alpha^{-A_n})
\geq (1-s_n)A_n\log \alpha -\frac{1}{2}A_n^2 (\log \alpha)^2\\
&=(1-s_{\infty})A_n\log \alpha+A_n\log \alpha\ \left(\sum_{k\geq 1}\ell^{(n+k)}-\frac{1}{2}A_n\log \alpha\right).
\endaligned
\end{equation}
Using \eqref{eq_t1} twice, we  get
\[
 \sum_{k\geq1}\ell^{(n+k)}\geq \frac{1-\delta}{2\delta}\frac{3\ell^{(n+1)}+\ell^{(n)}+3\ell^{(n)}+\ell^{(n-1)}}{9}.
\]
It follows that by choosing $\delta$ to be small enough, we have $\sum_{k\geq 1}\ell^{(n+k)}>{1}/{2}\cdot A_n\log \alpha$ for all $n\geq n_0(\delta)$. Therefore, by \eqref{eq:ell_lb}, we get that when $n\geq n_0$,
\[
\ell^{(n+1)}\geq(1-s_\infty)A_n\log \alpha=\frac{2\ell^{(n+1)}+\ell^{(n)}+2\ell^{(n-1)}}{5}.
\]
Therefore,
\[\inf_{n\geq n_0}\ell^{(n)}\geq\min\{\ell^{(n_0)},\ell^{(n_0+1)}\},
\]
which again contradicts to that $\ell^{(n)}\rightarrow 0$.

We now prove \eqref{eq:decay_ell}. By \eqref{eq:ell_r_asym} and \eqref{eq:bd_sum_ell}, there exists $c>0$ such that when $n$ is large enough, we have
\[
\ell^{(n+1)}\leq \left(\frac{1}{5}-c\right) (2\ell^{(n+1)}+\ell^{(n)}+2\ell^{(n-1)}).
\]
It follows that for a $c'>0$, when $n$ is large enough, {say, $n\geq n_1$},
\[
\ell^{(n+1)}\leq (1-c') \frac{\ell^{(n)}+2\ell^{(n-1)}}{3}.
\]
Let $b$ be the unique solution in $(0,1)$ to $x^2=(1-c')(x+2)/3$. Then one can show that if \eqref{eq:decay_ell} holds for $n=k$ and $k+1$, then it holds for $n=k+2$ as well. Pick an $A$ large enough such that \eqref{eq:decay_ell} holds for all $n\leq n_1+1$. The conclusion follows.
\end{proof}

\section{Distribution Near the Frontier: the Stochastic Case}\label{Sec:Front_SIR}
In this section, we prove analogous results for the SIR process. Again, we  focus on the first quadrant.
%%%As before, the default setting is that $R_0\equiv 0$.

%%%\subsection{Convergence of the first layer in the frontier}\label{ssec:1st_stoc}

\subsection{When $\theta \leq 1.5$}

This case is relatively easy because the expectation is decreasing.
\begin{prop}\label{prop:front_theta_small}
Suppose that $\theta \leq 1.5$ and $I^N_0(\x)=0$ for all $\x\not\in\{(m,-m): m\in\zz{Z}\}$.
\begin{enumerate}[(i)]
\item If $\theta<1.5$, then for all $N$,
\[
%%%\begin{equation}\label{eq:front_theta_small}
  \sum_{n=0}^\infty \sup_{m\in\zz{Z}} E(I^N_{n}(n-m,m)) <\infty.
\]
%%%\end{equation}
Consequently, for any sequence $(m_n)\subseteq\zz{Z}$,
\[
I^N_{n}(n-m_n,m_n)\to 0  \mbox{   a.s.}.
\]
\item If $\theta=1.5$, then for any sequence $(m_n)\subseteq\zz{Z}$,
\[
I^N_{n}(n-m_n,m_n)\toop 0  \mbox{ }.
\]
\item Suppose further that $I^N_0(m,-m)$ is constant and $\theta < 1.5$, then for all $k\in\rN$,
\[
%%%\begin{equation}\label{eq:front_k_theta_small}
  \sum_{n=0}^\infty  E(I^N_{n}(0,n-k)) <\infty.
\]
%%%\end{equation}
Consequently, for any sequence $(m_n)\subseteq\zz{Z}$,
\[
I^N_{n}(n-m_n-k,m_n)\to 0  \mbox{   a.s.}.
\]
\end{enumerate}
\end{prop}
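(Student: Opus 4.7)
The plan is to handle (i) and (iii) by a direct first-moment computation, and the critical case (ii) by a Galton--Watson reduction combining a survival bound with a spatial-spread bound. The common starting point is the inequality $E[I_{t+1}(\x)\mid\mathcal F_t]\le\tfrac{1+\theta}{5}\wt I_t(\x)$, which follows from $1-(1-p)^k\le kp$ and $S_t(\x)\le N$ applied to \eqref{eqn:X_R}. Iterating yields
\[
E[I_n^N(\x)]\le(1+\theta)^n\sum_\y p_n(\x-\y)\,I_0^N(\y),
\]
where $p_n$ is the $n$-step kernel of the lazy random walk on $\zz{Z}^2$ whose step is uniform on $\{\y:\|\y\|_1\le 1\}$. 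The geometric fact I would exploit is that when $\y=(k,-k)$ lies on the antidiagonal and $\x$ lies on the line $x+y=n$, the $\ell_1$-distance from $\y$ to $\x$ equals the number of available steps, so every contributing walk must take $n$ \emph{upward} steps (each $(1,0)$ or $(0,1)$, each of kernel-probability $1/5$); hence $p_n((n-m-k,m+k))=\binom{n}{n-m-k}/5^n$ when $-m\le k\le n-m$ and $0$ otherwise.

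For (i), summing over $k$ and using $I_0^N\le N$ gives $E[I_n^N(n-m,m)]\le N\bigl(2(1+\theta)/5\bigr)^n$ uniformly in $m$. For $\theta<1.5$ this is geometrically summable, and the almost-sure conclusion follows at once since $I_n^N$ is nonnegative integer-valued. For (iii) with the constant IC $I_0^N(m,-m)\equiv c$, applied at $\x=(0,n-k)$, the bound reduces to $E[I_n^N(0,n-k)]\le c(1+\theta)^nP(S_n=n-k)$, where $S_n$ is the 1D lazy walk with step distribution $(2/5,1/5,2/5)$ on $\{-1,0,1\}$. Expanding $P(S_n=n-k)$ multinomially over the feasible allocations $(n-k+j,j,k-2j)$ with $0\le j\le k/2$, each summand factors as $(2/5)^n$ times a prefactor polynomial in $n$ of degree at most $k$, so $P(S_n=n-k)\le C_kn^k(2/5)^n$. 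Therefore $E[I_n^N(0,n-k)]\le cC_kn^k\bigl(2(1+\theta)/5\bigr)^n$, which is summable for $\theta<1.5$.

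For (ii) at $\theta=1.5$ the factor $2(1+\theta)/5$ equals $1$ and the first-moment bound degenerates to $E[I_n^N(n-m,m)]\le N$, which is insufficient. I would monotonically couple the SIR with the forward branching random walk $\wt I^+$ in which each particle at $\x$ independently produces $\mathrm{Bin}(N,P_N^\theta)$ offspring at each of $\x+(1,0)$ and $\x+(0,1)$; since frontier infections can only propagate strictly northeast, a routine induction using the total-attempts domination $\mathrm{Bin}(N,1-(1-p)^k)\preceq\mathrm{Bin}(Nk,p)$ gives $I_n(\x)\le\wt I_n^+(\x)$ for all $\x$ on line $x+y=n$. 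For a single initial ancestor the total descendant count $Z_n$ on that line is a critical Galton--Watson process with offspring $\mathrm{Bin}(2N,P_N^\theta)$, mean $1$ and variance $\sigma_N^2=1-1/(2N)\in[1/2,1)$; Kolmogorov's theorem therefore yields $P(Z_n\ge 1)\le C(N)/n$. Conditional on the tree, each descendant's position is its ancestor's plus a sum of $n$ iid $\mathrm{Unif}\{(1,0),(0,1)\}$ steps, so the expected number of descendants of the ancestor at $(k,-k)$ arriving at $(n-m_n,m_n)$ equals $E[Z_n]\cdot\binom{n}{n-m_n-k}/2^n=\binom{n}{n-m_n-k}/2^n$. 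Combining these two bounds, for each initial individual the probability of contributing such a descendant is at most $\min\!\bigl(C(N)/n,\binom{n}{n-m_n-k}/2^n\bigr)$; summing over the at most $N(n+1)$ relevant initial individuals and setting $j=n-m_n-k$,
\[
P(I_n^N(n-m_n,m_n)\ge 1)\le N\sum_{j=0}^n\min\!\left(\frac{C(N)}{n},\,\frac{1}{2^n}\binom{n}{j}\right)=O\!\left(N\sqrt{\frac{\log n}{n}}\right),
\]
where the last estimate splits the sum at $|j-n/2|=\tfrac12\sqrt{n\log n}$: the central window contributes $O(\sqrt{n\log n}\cdot 1/n)$, while the remaining $\mathrm{Bin}(n,1/2)$ tail is controlled by Hoeffding's inequality by $O(1/\sqrt n)$.

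The main obstacle is precisely this critical case: the expected count $E[I_n^N(n-m,m)]$ is genuinely of order $N$ so Markov alone gives nothing, and the $o(1)$ conclusion is forced only by synthesizing the $C/n$ Galton--Watson survival bound with the $O(1/\sqrt n)$ spatial-spread bound coming from the binomial distribution of the position of a single BRW descendant; once this synthesis is in place, the remaining estimate is routine bookkeeping.
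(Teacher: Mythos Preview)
Your arguments for (i) and (iii) are correct and essentially coincide with the paper's: both reduce to the linear first-moment bound $E[I^N_{n+1}(\x)]\le\frac{1+\theta}{5}\,E[\wt I^N_n(\x)]$ and exploit that on the frontier only the two ``northeast'' neighbours contribute, so the effective ratio is $\zeta=2(1+\theta)/5<1$. The paper phrases this as a direct recursion on $y^N_n:=E[I^N_n(n,0)]$ (after passing by monotonicity to the maximal initial condition), while you phrase it via the lazy-walk kernel; the content is the same.

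For (ii) at $\theta=1.5$ your proof is correct but takes a genuinely different route from the paper. The paper stays with the recursion $y^N_{n+1}=N\,E[g_N(\wt I^N_n(n+1,0))]$ and extracts a second-order correction from $g_N(x)=1-(1-\tfrac{1}{2N})^x$: one checks by induction that $g_N(x)\le \tfrac{x}{2N}-\tfrac{x-1}{4N^2}$ for $x\ge 1$, which yields $y^N_{n+1}\le y^N_n-\tfrac{1}{4N}P(I^N_n(n,0)\ge 2)$; since $y^N_n$ is bounded and nonincreasing, $P(I^N_n(n,0)\ge 2)\to 0$, and then the elementary lower bound $P(I^N_n(n,0)\ge 2)\ge P(I^N_{n-1}(n-1,0)\ge 1)\cdot P(\mathrm{Bin}(N,P^\theta_N)\ge 2)$ forces $P(I^N_n(n,0)\ge 1)\to 0$. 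Your approach instead dominates the frontier by a critical branching random walk (offspring $\mathrm{Bin}(2N,\tfrac{1}{2N})$), invokes Kolmogorov's estimate $P(Z_n>0)\le C(N)/n$, and combines it with the binomial spatial spread $\binom{n}{j}/2^n$ of a single descendant, splitting the sum over ancestor positions into a central window of width $\sqrt{n\log n}$ and Hoeffding tails. This buys you an explicit quantitative rate $P(I^N_n(n-m_n,m_n)\ge 1)=O(N\sqrt{\log n/n})$ uniform in $m_n$, at the cost of importing the Kolmogorov bound and the BRW coupling; the paper's argument is more self-contained and shorter but does not yield a rate.
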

\begin{proof}
We first prove Part (i).
For any $n$ and $m$, it is easy to see that  $I^N_{n}(n-m,m)$ is increasing in the initial condition as long as the initial condition is supported by the diagonal line $\{(m,-m): m\in\zz{Z}\}$. Therefore, for Part (i) and~(ii), it suffices to prove the conclusions under the initial condition that
\begin{equation}\label{eq:lgest_IC}
I^N_0(m,-m)=N,\mbox{ and } I^N_0(\x)=0 \mbox{ for all } \x\not\in\{(m,-m): m\in\zz{Z}\}.
\end{equation}
Under such an initial condition, $E(I^N_{n}(n-m,m))$ is constant in $m$. Write  $E(I^N_{n}(n-m,m)) = y^N_n$ for any $m$. By \eqref{eqn:X_R}, we have
\[
y^N_{n+1} = N\cdot E\left(1 - \left(1-\frac{1+\theta}{5N}\right)^{\tilde{I}_n^N(n+1,0)}\right) =:N E(g_N(\tilde{I}_n^N(n+1,0))),
\]
where $g_N(x)=1-(1-(1+\theta)/(5N))^{x}$ and $\tilde{I}_n^N(n+1,0) = I_n^N(n,0) + I_n^N(n+1,-1) $.
 Elementary calculus yields
\begin{equation}\label{eq:gN_contract}
g_N(x)\leq \frac{1+\theta}{5N}x, \q\mbox{ for all } x\in\zz{N}.
\end{equation}
Hence, $y^N_{n+1}\leq 2(1+\theta)/5\cdot y^N_{n}$. When $\theta<1.5$, we see that $y^N_{n}$ decays to zero exponentially and the conclusion follows.

We turn to Part (ii). When $\theta=1.5$, we only have $y^N_{n+1}\leq y^N_{n}$. This guarantees that $\delta:=\lim_{t\to\infty}y^N_t$ exists. We want to show that $\delta=0$. By induction, it is easy to show that
\[
g_N(x)\leq \frac{1}{2N}x-\frac{1}{4N^2}(x-1),\q\mbox{ for all } x\in \zz{N}.
\]
It follows that
\[
y^N_{n+1}\leq y^N_{n}-\frac{1}{4N}P(I_n^N(n,0)\geq 2).
\]
Therefore, $P(I_n^N(n,0)\geq 2)\rightarrow 0$. On the other hand,
\[\aligned
 &P(I_n^N(n,0)\geq 2)\\
\geq &P(I_{n-1}^N(n-1,0)\geq 1)\cdot P(I_n^N(n,0)\geq 2| I_{n-1}^N(n-1,0) + I_{n-1}^N(n,-1)=1).
\endaligned
\]
The latter probability is constant in $n$, and so  $P(I_{n-1}^N(n-1,0)\geq 1)\to 0$.

Finally, we prove Part (iii). For $k\geq 1$, $E(I^N_{n}(n-k,0))$ is no longer increasing in the  initial condition. However, if $I^N_0(m,-m)$ is constant in $m$, then  $E(I^N_{n}(n-m-k,m))$ is constant in $m$ as well. Write  $E(I^N_{n}(n-m-k,m)) =  y^{N,k}_n$ for any~$m$. By \eqref{eqn:X_R} and \eqref{eq:gN_contract},
\[
y^{N,k}_{n+1} \leq \zeta (y^{N,k}_n + 1/2\cdot y^{N,k-1}_n+y^{N,k-2}_n ),
\]
where $\zeta=2(1+\theta)/{5}<1$.
Therefore,
\[
(1-\zeta)\sum_{n\geq 0}y^{N,k}_{n+1} \leq y^{N,k}_{0} +   1/2\sum_{n} y^{N,k-1}_n+ \sum_{n} y^{N,k-2}_n.
\]
It follows by induction that  $\sum_{n\geq 0}y^{N,k}_{n} <\infty $ for all $k$.
\end{proof}

\subsection{When $\theta > 1.5$}\label{ssec:conv_front_det}

\subsubsection{Convergence on the frontier}\label{ssec:1st_stoc}

\begin{thm}\label{thm:conv_SIR_front_const_IC}
Under the initial condition IC2, the  process $\{I^N_n(m,n-m)\}$ behaves differently depending on the value of $\theta$:
\begin{enumerate}[(i)]
    \item If $\theta\in(1.5,4]$, then for any $\eps>0$, for all $N$ sufficiently large,
    \begin{equation}\label{eq:conv_SIR_front_middle_outside}
     \lim_{n\rightarrow\infty}P\left(\sum_{m: m<(\kappa-\eps)n\,\mathrm{or}\,m>(1-\kappa+\eps)n}I_n^N(m,n-m)>0\right)=0,
    \end{equation}
    and
    \begin{equation}\label{eq:conv_SIR_front_middle_inside}
     \limsup_{n\rightarrow\infty}\max_{m:(\kappa+\eps)n<m<(1-\kappa-\eps)n}P\left(\left|\frac{I_n^N(m,n-m)}{N}-\ell(\theta)\right|>\eps\right)<\eps,
    \end{equation}
   where $\ell(\theta)$ and $\kappa=\kappa(\theta)$ are defined in \eqref{eq:ell} and \eqref{eq:kappa}, respectively.

    \item If $\theta\in(4,\infty)$, then for any $\eps>0$, for all $N$  sufficiently large, we have
    \begin{equation}\label{eq:conv_SIR_front_large}
     \limsup_{n\rightarrow\infty}\max_{m: \eps n<m<(1-\eps)n}P\left(\left|\frac{I_n^N(m,n-m)}{N}-\ell(\theta)\right|>\eps\right)<\eps.
    \end{equation}
\end{enumerate}
\end{thm}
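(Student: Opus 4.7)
The plan splits the theorem into the outside-cone statement \eqref{eq:conv_SIR_front_middle_outside}, which I will treat by stochastic domination, and the inside-cone statements \eqref{eq:conv_SIR_front_middle_inside} and \eqref{eq:conv_SIR_front_large}, which I will treat by transferring the deterministic convergence of Section~\ref{Sec:Front_det} to the SIR process via the law of large numbers (Proposition~\ref{prop:lim_N}) and the Markov property.

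For \eqref{eq:conv_SIR_front_middle_outside}, I would extend the domination of Remark~\ref{rmk_kappa} to IC2 by letting each of the $\sim\gamma N$ initial infected particles spawn an independent BRW with offspring $\mathrm{Bin}(2N,(1+\theta)/(5N))$ and step $(1,0)$ or $(0,1)$ with equal probability. A direct computation gives the expected number of descendants of one such BRW at position $(m, n-m)$ at generation $n$ to be $\binom{n}{m}\bigl((1+\theta)/5\bigr)^n$. Stirling's formula combined with the definition \eqref{eq:kappa} of $\kappa$ yields the bound $\exp\bigl(n(H(m/n)-H(\kappa))\bigr)$, where $H(s)=-s\log s-(1-s)\log(1-s)$. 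Strict monotonicity of $H$ on $[0,1/2]$ gives exponential decay at a rate $c(\eps)>0$ whenever $m/n\in[0,\kappa-\eps)\cup(1-\kappa+\eps,1]$. Markov's inequality, a union bound over such $m$, and a further union bound over the $\sim\gamma N$ initial particles produce a probability bound of order $\gamma\cdot N\cdot n\cdot\exp(-c(\eps)\,n)$, which vanishes as $n\to\infty$ for every fixed $N$.

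For \eqref{eq:conv_SIR_front_middle_inside} and \eqref{eq:conv_SIR_front_large}, fix $\eps>0$. By Propositions~\ref{prop:middle_conv_front_origin_IC} and \ref{prop:conv_front_origin_IC} applied to the deterministic process started from $\DI_0=\gamma\mathbf{1}_{\{\mathbf{0}\}}$, there exists $T_0=T_0(\eps)$ such that $|\DI_n(m, n-m)-\ell(\theta)|<\eps/4$ for all $n\geq T_0$ and all $(m, n-m)$ inside the relevant cone. Then Proposition~\ref{prop:lim_N} applied at the fixed time $T_0$, combined with a union bound over the finite set $\{\x:\|\x\|_1\leq T_0\}$, gives, for $N$ large enough and any prescribed $\delta>0$,
\[
P\!\left(\sup_{\|\x\|_1\leq T_0}\left|\tfrac{1}{N}I^N_{T_0}(\x)-\DI_{T_0}(\x)\right|>\delta\right)<\eps/4.
\]
It then remains to propagate this approximation from time $T_0$ to an arbitrary time $n$ on the relevant frontier sites.

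The main obstacle is precisely this propagation: the theorem's quantifier order ($N$ is fixed large \emph{before} $n\to\infty$) forbids cumulative error bounds of the form $O(n/\sqrt{N})$ that arise from naive one-step Binomial concentration. I would address this by one of two routes. Route (a): exploit the stochastic monotonicity of $R^N_t$ in the initial condition (Section~\ref{ssec:conn_perc}) to sandwich the quantity $I^N_n(m,n-m)=R^N_{n+1}(m,n-m)-R^N_n(m,n-m)$ between two processes having ``constant-on-a-line'' initial data as in \eqref{eq:det_ini_Upper}, whose frontier behaviour is governed by Lemma~\ref{lem:conv_front_const_IC} and its stochastic counterpart (whose proof is a minor adaptation of the deterministic argument, since when the initial data is constant along diagonals the frontier inherits translation invariance and the one-site marginals are just iterated Binomials). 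Route (b): work directly in the percolation picture of Section~\ref{ssec:conn_perc}, controlling connection probabilities inside the dependence cone of $(m, n-m)$ by comparison with the forthcoming convergence of $R^N_\infty$ (Theorem~\ref{thm:ult_inf_prop_1}) applied to restrictions of the initial seed. Either way, combining the propagated approximation with the deterministic bound $|\DI_n(m,n-m)-\ell(\theta)|<\eps/4$ produces \eqref{eq:conv_SIR_front_middle_inside} and \eqref{eq:conv_SIR_front_large}.
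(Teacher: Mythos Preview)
Your treatment of \eqref{eq:conv_SIR_front_middle_outside} via BRW domination is correct and matches the paper's use of Remark~\ref{rmk_kappa}. Likewise, the upper bound half of \eqref{eq:conv_SIR_front_middle_inside} and \eqref{eq:conv_SIR_front_large} can indeed be obtained essentially as you indicate: dominate by the constant-on-a-line initial condition $N\bm{\delta}_{x+y=0}$, use Proposition~\ref{prop:lim_N} at a fixed time $n_0$ to get $W^N(n_0,0)/N<\ell(\theta)+\eps$ with high probability, and then use monotonicity of the oriented $N$-percolation to pass from $n_0$ to all $n\geq n_0$. This is the content of the paper's \eqref{eq_l1}.

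The genuine gap is the \emph{lower} bound, and neither of your two routes closes it. Route~(a) fails because monotonicity in the initial condition only helps from above: IC2 has support $\{\mathbf{0}\}$, so there is no nontrivial constant-on-a-line initial condition that it dominates, and hence no sandwich from below. Your parenthetical claim that a ``stochastic counterpart'' of Lemma~\ref{lem:conv_front_const_IC} is a minor adaptation is precisely where the difficulty hides: for fixed $N$ and $n\to\infty$, showing that $W^N(n,0)/N$ stays above $\ell(\theta)-\eps$ with probability $>1-\eps$ is not a one-dimensional recursion argument, because the frontier at a site depends on its two diagonal predecessors, and fluctuations can accumulate. Route~(b) is circular: the proof of Theorem~\ref{thm:ult_inf_prop_1} (and more generally the results of Section~\ref{Sec:prop_recover}) relies on the frontier estimates of the present section.

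What the paper does for the lower bound is a block renormalization: it builds an auxiliary $k$-dependent oriented site (or bond) percolation on a coarse lattice, where a block is declared open if $\lceil\gamma N\rceil$ ``red'' particles at one corner succeed in infecting $\lceil\gamma N\rceil$ particles at the next corners using only edges inside the block. Lemma~\ref{lm_rect} (via the deterministic derivative $g$ and Stirling) shows each block is open with probability tending to $1$ as $N\to\infty$; standard results on $k$-dependent oriented percolation (\cite{Du84}, \cite{LSS97}) then give percolation in the block system, which transfers to a uniform-in-$n$ lower bound $W^N(m,n-m)\geq\gamma N$ inside the cone. This renormalization step is the missing idea in your proposal; Proposition~\ref{prop:lim_N} at a single time $T_0$ cannot be propagated to all $n$ without it.
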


The proof of Theorem \ref{thm:conv_SIR_front_const_IC} is divided into several parts. To simplify the notation, we write
\[
W(m,l)=W^N(m,l)=I_{m+l}^N(m,l).
\]
Note that similarly to the limiting  process, $W$ is monotone {in the initial condition as long as the initial condition is supported by the line $\{(x,y)|x+y=0\}$}, namely, if $\{I'_0(m,-m)\}_{m\in \ze}$ stochastically dominates $\{I''_0(m,-m)\}_{m\in \ze}$, then the corresponding $\{W'(n+m,-m)\}$ stochastically dominates $\{W''(n+m,-m)\}$.

{As we mentioned in Section \ref{ssec:conn_perc}, the SIR process has an equivalent description in terms of the  $N$-percolation.
Similarly, the ``frontier'' process $W$ can be described in terms of an  oriented percolation defined as follows, which we call the oriented $N$-percolation.
Consider the graph with vertex set $\{(x,y)\in\ze^2: x + y\geq 0\}\times [N]$. The edge set consists of all oriented edges pointing up or right, i.e., of the form $\{(\x,i)\rightarrow(\y,j): \y\in\{\x+(0,1),\x+(1,0)\}\}$.  Each directed edge is open with probability $P_N^\theta$. For any initial condition $I^N_0$ supported by the line $x+y=0$, let $A$ be any subset of $\ze^2\times [N]$ such that $\# (A\cap \x\times [N])=I^N_0(\x)$ for any $\x\in\ze^2$. Then,
for any site $\x=(x,y)$ satisfying $x+y\geq 0$, $W(\x)$ equals the number of vertices at site $\x$ that can be reached via open directed edges from~$A$.}

The conclusion in \eqref{eq:conv_SIR_front_middle_outside} follows from Remark~\ref{rmk_kappa}. To show the other assertions, we first give two lemmas, which are implied by the corresponding results of the limiting process.

\begin{lemma}
Let the initial condition be $I^N_0(m,-m)= N$ for all $m\in \ze$. Then, for any $\eps>0$, for all $N$ sufficiently large,
\begin{equation}\label{eq_l1}
    \lim_{n\rightarrow\infty}P\left(\frac{W^N(n,0)}{N}>\ell(\theta)+\eps\right) < \eps.
\end{equation}
\end{lemma}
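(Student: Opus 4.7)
The proof combines a Jensen-type expectation bound (which inherits the deterministic decay $y_n\searrow \ell(\theta)$ from Lemma~\ref{lem:conv_front_const_IC}) with a Markov restart and binomial concentration.

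First I would show that $a_n := E[W^N(n,0)]/N$ satisfies $a_n \le y_n + O(N^{-1})$ uniformly in $n$. By translation invariance along the diagonal $\{x+y=0\}$, the law of $I^N_n(n-m,m)$ is independent of $m$, and at the fresh frontier site $(n+1,0)$ we have $S^N_n(n+1,0)=N$. The recursion \eqref{eqn:X_R} together with Jensen's inequality applied to the concave map $x\mapsto 1-(1-P_N^\theta)^x$ gives
\[
a_{n+1} \,\le\, 1-(1-P_N^\theta)^{2Na_n} \,=\, f(a_n) + O(N^{-1}),
\]
where $f(x):=1-\alpha^{-2x}$. Since $f$ is concave and strictly contractive at its fixed point $\ell(\theta)$ --- a direct calculation using $\alpha^{-2\ell}=1-\ell$ gives $f'(\ell)=-(1-\ell)\log(1-\ell)/\ell<1$ for every $\theta>1.5$ --- the perturbed iteration converges as $n\to\infty$ to $\ell+O(N^{-1})$. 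In particular, for any $\eta>0$, $\limsup_n a_n \le \ell+\eta/2$ once $N$ is large.

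To promote this expectation bound into a probability bound, I would restart the process at a large time $n_0$. By Proposition~\ref{prop:lim_N}, for any fixed window $[-L,L]$ the event $E_L:=\{W^N(n_0-m,m)\le N(\ell+\eta): |m|\le L\}$ has probability at least $1-\eta/2$ once $N$ is large. Conditional on $E_L$, the Markov property together with the monotonicity of the frontier $W^N$ in a diagonal-supported initial condition (as described just before the lemma) dominates $W^N(n,0)$, for $n\in[n_0,n_0+L/2]$, by the frontier value $\wh W^N(n-n_0,0)$ of a fresh SIR process started with $\lceil N(\ell+\eta)\rceil$ on the entire shifted diagonal. Applying the Step~1 bound to $\wh W^N$ then yields $E[\wh W^N(k,0)]/N \le \ell+2\eta$ for all $k\ge 0$ once $N$ is large.

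The final step, and the main obstacle, is converting the expectation bound on $\wh W^N(k,0)/N$ into the desired probability bound uniformly in $k$. Conditional on the $\sigma$-algebra up to time $k-1$, $\wh W^N(k,0)$ is a $\mathrm{Bin}(N,p_k)$ random variable with $p_k\le f\bigl(\tfrac12(\wh z^N_{k-1}(k-1,0)+\wh z^N_{k-1}(k,-1))\bigr)$, so Hoeffding's inequality gives an $O(N^{-1/2})$ deviation from the conditional mean. An induction on $k$ maintaining the invariant that all frontier sites in the lightcone of $(n,0)$ satisfy $\wh W^N(\cdot)\le N(\ell+2\eta)$ propagates the bound forward: the contractivity $f'(\ell)<1$ absorbs the $O(N^{-1/2})$ Hoeffding fluctuation produced at each step, and a union bound over the $O(L^2)$ lightcone sites controls the cumulative failure probability. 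Extending the control from the window $n\in[n_0,n_0+L/2]$ to $n\to\infty$ is achieved by iterating the restart argument at a sequence of widely spaced times $n_0<n_1<\cdots$, with $N$ chosen large enough at the outset (depending on $\eta$ and the target $\eps$) so that the Hoeffding tail $e^{-cN\eta^2}$ beats the number of visited sites.
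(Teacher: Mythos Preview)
Your Step~4 has a genuine gap. The restart-and-induct scheme controls $W^N(n,0)$ only on a window $n\in[n_0,n_0+L/2]$, because the light-cone domination in Step~2 is valid only where the time-$n_0$ diagonal is controlled on $[-L,L]$. When you ``iterate the restart at widely spaced times $n_0<n_1<\cdots$'', each new restart either (a) appeals to Proposition~\ref{prop:lim_N} again, which would force $N$ to depend on the restart time, or (b) relies on Step~3's invariant to furnish a new window at time $n_1$ --- but Step~3 only propagates control inside a shrinking light-cone, so at time $n_0+L/2$ you control a single site, not a window. Either way, the union bound you invoke is over $\Theta(n^2)$ sites while the Hoeffding tail $e^{-cN\eta^2}$ is a fixed constant once $N$ is fixed; the product diverges as $n\to\infty$, so the bound you need does not follow.

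The missing idea is one you already have in hand but do not exploit: because the initial condition $I_0^N(m,-m)=N$ is \emph{maximal}, the monotonicity of the frontier in a diagonal-supported initial condition gives directly that $W^N(n,0)$ is stochastically decreasing in $n$. Indeed, at time~1 the new diagonal satisfies $W^N(1-m,m)\le N$ for all $m$, so restarting dominates the original law shifted by one step, hence $W^N(n+1,0)\le_{\mathrm{st}}W^N(n,0)$. This collapses the whole problem to a single fixed time: pick $n_0$ with $y_{n_0}<\ell(\theta)+\eps/2$ (Lemma~\ref{lem:conv_front_const_IC}), use Proposition~\ref{prop:lim_N} to get $P(W^N(n_0,0)/N>\ell(\theta)+\eps)<\eps$ for large $N$, and then stochastic monotonicity gives the same bound for all $n\ge n_0$. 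No Jensen bound, no Hoeffding induction, and no infinite iteration are needed.
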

\begin{proof}
By Lemma~\ref{lem:conv_front_const_IC},  for the limiting process of $W^N/N$, denoted by $\DI'$, we can find  $n_0$ such that $\DI'(n_0,0)<\ell(\theta)+\eps/2$. Because $W^N(\x)/N\toop\DI'(\x)$ for any $\x\in\ze^2$,  we can find an $N_0$ such that  $P(W^N(n_0,0)/N>\ell(\theta)+\eps)<\eps$ for $N>N_0$. By the monotonicity of $W$ with regard to the initial condition, we have that $W(n,0)$ is stochastically bounded by $W(n_0,0)$ for $n\geq n_0$. The conclusion follows.
\end{proof}

Similarly, one can show
\begin{lemma}\label{lem:front_lowbd}
For any $\eps>0 $ and $ a\in(0,1]$, there exist $n_0$ and $N_0$ such that when $N>N_0$ and the initial condition $I_0^N(0,0)\geq a N$, we have,
for any $m\in [(\kappa+\eps)n_0, (1-\kappa-\eps)n_0]$,
%%%(when $\theta>4$ just set $\kappa=0$),
\begin{equation}\label{eq_l2}
    P\left(\frac{W(m,n_0-m)}{N}<\ell(\theta)-\eps\right)<\eps.
\end{equation}
\end{lemma}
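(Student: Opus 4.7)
The plan is to mirror the argument for the preceding upper-bound lemma, with two modifications: feed into the limit the deterministic singleton-initial-condition results of Section~\ref{ssec:1st_det} rather than Lemma~\ref{lem:conv_front_const_IC}, and use stochastic monotonicity of $W^N$ in the initial condition to reduce to the minimal admissible initial profile $I_0^N(0,0)=\lceil aN\rceil$.

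First I would handle the deterministic side. Consider the limiting process $\DI$ with $\DI_0(0,0)=a$ and $\DI_0(\x)=0$ for $\x\neq\0$. When $\theta\in(1.5,4]$, Proposition~\ref{prop:middle_conv_front_origin_IC} gives uniform convergence of $\DI_n(m,n-m)$ to $\ell(\theta)$ on the set $\{m:(\kappa+\eps)n<m<(1-\kappa-\eps)n\}$ as $n\to\infty$. When $\theta>4$ (so $\kappa=0$), Proposition~\ref{prop:conv_front_origin_IC} establishes $\lim_{\min(m,n-m)\to\infty}\DI_n(m,n-m)=\ell(\theta)$ for $a\leq\gamma_1$, and the remark following that proposition extends the conclusion to $a\in(\gamma_1,1]$ by monotonicity in the initial condition. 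In either case one obtains an $n_0=n_0(\theta,\eps,a)$ with
\[
\min_{m:(\kappa+\eps)n_0<m<(1-\kappa-\eps)n_0}\DI_{n_0}(m,n_0-m)>\ell(\theta)-\eps/2.
\]

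Next I would transfer to the stochastic process. An integer-valued initial condition satisfies $I_0^N(0,0)\geq aN$ if and only if $I_0^N(0,0)\geq \lceil aN\rceil$, and stochastic monotonicity of $W^N$ in initial profiles supported on $\{x+y=0\}$ (noted just above the previous lemma) implies that it suffices to treat $I_0^N(0,0)=\lceil aN\rceil$ (with zeros elsewhere), since this yields the smallest $W^N$ among all admissible initial conditions. For this choice $I_0^N(\x)/N\to\DI_0(\x)$ for every $\x\in\ze^2$, so Proposition~\ref{prop:lim_N} gives
\[
\frac{I_{n_0}^N(\x)}{N}\toop\DI_{n_0}(\x),\qquad\x\in\ze^2.
\]
The set $\{m:(\kappa+\eps)n_0<m<(1-\kappa-\eps)n_0\}$ contains only finitely many integers, so for each such $m$ the displayed convergence together with the deterministic lower bound gives $P(W^N(m,n_0-m)/N<\ell(\theta)-\eps)\to 0$ as $N\to\infty$. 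Choosing $N_0$ larger than the finitely many resulting thresholds yields the claimed uniform bound.

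No single step is intrinsically hard; the main issue to guard against is that the deterministic input must cover every $a\in(0,1]$. The only slightly delicate case is $\theta>4$ with $a>\gamma_1$, where one relies on the monotonicity remark following Proposition~\ref{prop:conv_front_origin_IC} to reduce to the already-treated $a\leq\gamma_1$ regime; everything else reduces to a straightforward combination of deterministic convergence, finite-dimensional stochastic approximation, and a union bound over finitely many sites.
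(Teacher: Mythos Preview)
Your proposal is correct and follows exactly the route the paper intends: the paper gives no separate proof for this lemma, merely prefacing it with ``Similarly, one can show,'' referring back to the argument for \eqref{eq_l1}. Your write-up supplies precisely those details---invoking the deterministic results of Section~\ref{ssec:1st_det} with initial condition~\eqref{eq:det_ini_Lower}, reducing via stochastic monotonicity to $I_0^N(0,0)=\lceil aN\rceil$, and applying Proposition~\ref{prop:lim_N} together with a finite union bound---and even handles the $\theta>4$ case via the remark after Proposition~\ref{prop:conv_front_origin_IC}, which the paper leaves implicit.
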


The key step to prove Theorem \ref{thm:conv_SIR_front_const_IC} is to show that the survival probability for the oriented $N$-percolation is positive when $\theta> 1.5$. Our idea is to introduce constructions which will allow us to reduce questions about our oriented $N$-percolation to corresponding questions about the oriented $1$-percolation, a situation in which much is known.

\begin{proof}[Proof of \eqref{eq:conv_SIR_front_large}]
The upper bound is easy: by \eqref{eq_l1},  when $N$ is large enough,
$$
 \limsup_{n} \sup_{m}P\left(\frac{W(m,n-m)}{N}>\ell(\theta)+\eps\right)<\eps.
$$

We need to show the lower bound: when $N$ is large enough,
\begin{equation}\label{eq_eq2}
 \limsup_{n} \sup_{m:\eps n<m<(1-\eps n)}P\left(\frac{W(m,n-m)}{N}<\ell(\theta)-\eps\right)<\eps.
\end{equation}

We will define an oriented bond percolation process and describe its relationship to our original SIR process. Without loss of generality, assume that $\gamma <\ell(\theta)$. For any oriented edge $\vec{e}=(m,n)\rightarrow(m,n+1)\,\mathrm{or}\,(m,n)\rightarrow(m+1,n)$, define a $\{0,1\}$-valued random variable $\eta(\vec{e})$  as follows. Initially at the origin, there are $\lceil \gamma N\rceil$ infected particles, which we call red particles.
In general, suppose at time $t$ and at site $(m,n)$, there are $\lceil \gamma N\rceil$ red particles.
If at time $t+1$, there are at least $\lceil \gamma N\rceil$ in the endpoint of $\vec{e}$ infected by those red particles, then we declare $\eta(\vec{e})=1$, otherwise $\eta(\vec{e})=0$. For a site $(m,n)\neq(0,0)$, if there is an edge $\vec{e}$ pointing to $(m,n)$ with  $\eta(\vec{e})=1$, then there are at least $\lceil \gamma N\rceil$ particles infected by the red particles. In this case, we randomly choose $\lceil \gamma N\rceil$ infected particles and declare them red. On the other hand, if $\eta(\vec{e})=0$ for both edges  pointing to $(m,n)$, then we randomly choose  $\lceil \gamma N\rceil$ particles on the site $(m,n)$ and call them red particles.

%%%(this does not influence the connected cluster of the origin).

Note that when $\theta>4$, $\psi(\gamma)=1-\exp(-\frac{1+\theta}{5}\gamma)>\gamma$ for $\gamma<\ell(\theta)$. It follows from \eqref{eqn:X_R} and Hoeffding's inequality that as $N$ gets large, $P(\eta(\vec{e})=1)$ can be arbitrarily close to $1$. Therefore, using the standard results on $2$-dimensional oriented percolation (see, e.g., (\cite{Du84})), we conclude that {for any fixed $\eps>0$, when $N$ is sufficiently large,}  the probability that a distant point  with its argument in $(\eps, \pi/2-\eps)$  is connected to the origin is close to $1$. In other words, for any $\eps>0$, when~$N$ is sufficiently large, we have
\begin{equation}\label{eq_l3}
    \limsup_{n} \sup_{m:\eps n<m<(1-\eps n)}P\left(\frac{W(m,n-m)}{N}<\gamma\right)<\eps.
\end{equation}

We are ready to prove \eqref{eq_eq2}.
By Lemma \ref{lem:front_lowbd}, letting $a=\gamma$, if we can find some $n_0$ such that
\begin{equation}\label{eq_eq1}
   W\left(m-\lfloor\frac{n_0}{2}\rfloor,l-n_0+\lfloor\frac{n_0}{2}\rfloor\right)\geq \gamma N,
\end{equation}
then we have
$$
P\left(\frac{W(m,l)}{N}<\ell(\theta)-\eps\right)<\eps.
$$
The event in \eqref{eq_eq1} holds with a high probability by \eqref{eq_l3}, therefore \eqref{eq_eq2} follows.
\end{proof}

\begin{proof}[Proof of \eqref{eq:conv_SIR_front_middle_inside}]
In order to make the description more inline with the usual oriented percolation, we will rotate the first quadrant counterclockwise by $\pi/4$ and work with the grid
$$
\cL = \{(m,n)\in \mathbb{Z}^{2} \, :\, m+n \textrm{ is even and } \, n\geq 0\}.
$$
After rotation, the cone $\{(m,l)\in\ze ^2:\kappa(m+l)<m<(1-\kappa)(m+l)\}$ in the first quadrant becomes $\{(x,y)\in\cL:|x|<(1-2\kappa)y\}$, and the edges in the oriented $N$-percolation point to either upper-right or upper-left . Write $\chi=(1-2\kappa)$.

We will define an oriented site percolation process as follows. For each $\mathbf{b} \in \cL$,
%%we will define a $\{0,1\}$-valued random variable $\eta(\mathbf{b})$. To define $\eta(\x)$, we will
pick a $\beta < \chi$, a small $\delta<\beta/10$, a large $L\in \zz{N}$ satisfying $\beta L\in \rN$, and define for each $\mathbf{b}=(b_1,b_2)\in \cL$,
$$
C(\mathbf{b})=(\beta Lb_1,Lb_2){\in \cL}, \quad A(\mathbf{b})=C(\mathbf{b})+(-(\beta+\delta)L,(\beta+\delta)L)\times[0,L].
$$
Assume that there are $\lceil \gamma N\rceil$ red particles at $C(\mathbf{b})$. We set $\eta(\mathbf{b})=1$ if these red particles
are connected to
at least $\lceil \gamma N\rceil$ particles on both sites $C(b_1+1,b_2+1)$ and $C(b_1-1,b_2+1)$ via only the oriented edges inside $A(\mathbf{b})$, and $\eta(\mathbf{b})=0$ otherwise.

From this construction, it is easy to see that if  percolation occurs in the $\eta$-system, then percolation also occurs in the oriented $N$-percolation percolation. To see when the oriented site percolation occurs, we need the following lemma whose proof will be given later.
\begin{lemma}\label{lm_rect}
For any positive and rational $\beta<\chi$, $\delta<\beta/10$, we can find some small~$\gamma$ and large $L$ such that
$$
\lim_{N\rightarrow \infty}P(\eta(\mathbf{b})=1)=1.
$$
\end{lemma}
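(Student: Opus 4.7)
The plan is to combine the convergence of the $N$-scaled SIR process to its deterministic limit (Proposition~\ref{prop:lim_N}) with the frontier-density results of Section~\ref{Sec:Front_det} (Proposition~\ref{prop:middle_conv_front_origin_IC} when $\theta\in(1.5,4]$, Proposition~\ref{prop:conv_front_origin_IC} when $\theta>4$). By translation invariance assume $C(\mathbf{b})=\mathbf{0}$; the two targets $C(b_1\pm 1,b_2+1)$ then become $\mathbf{p}_\pm=(\pm\beta L,L)$ in the rotated frame, corresponding in the original lattice to sites $\x^\pm$ with $\|\x^\pm\|_1=L$ and first coordinate $(1\mp\beta)L/2$. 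Since $\beta<\chi=1-2\kappa$, one can fix $\eps>0$ with $(1\mp\beta)/2\in(\kappa+\eps,1-\kappa-\eps)$, placing both $\x^+$ and $\x^-$ strictly inside the speed-one cone.

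Let $(\DI_n)$ denote the deterministic limit process of Section~\ref{sec:D_process} with initial condition $\DI_0=\gamma\mathbf{1}_{\{\mathbf{0}\}}$ (for $\gamma>0$ to be chosen below), and let $(\DI^A_n)$ denote the analogous process on $A(\mathbf{b})$ in which $\wt{\DI}^A_n(\x)=\sum_{\|\y-\x\|_1\leq 1,\,\y\in A(\mathbf{b})}\DI^A_n(\y)$ (i.e., sites outside $A(\mathbf{b})$ are deleted from the dynamics). By \eqref{eq:middle_inside_cone} of Proposition~\ref{prop:middle_conv_front_origin_IC} (or by Proposition~\ref{prop:conv_front_origin_IC} when $\theta>4$), $\DI_L(\x^\pm)\to\ell(\theta)$ as $L\to\infty$ for any fixed $\gamma\in(0,\ell(\theta)]$. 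The crucial step is to establish the matching lower bound for the restricted process:
\[
\DI^A_L(\x^\pm)\geq\DI_L(\x^\pm)-o_L(1)\qquad\text{as } L\to\infty.
\]

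Writing $D_n:=\DI_n-\DI^A_n\geq 0$ on $A$, the inequalities $\DS^A_n\geq\DS_n$ and $e^{-a}-e^{-b}\leq b-a$ (for $0\leq a\leq b$) yield the linear domination
\[
D_{n+1}(\x)\leq(\log\alpha)\sum_{\y\in A,\,\|\y-\x\|_1\leq 1}D_n(\y)+(\log\alpha)\sum_{\y\notin A,\,\|\y-\x\|_1=1}\DI_n(\y),
\]
whose source term is supported on the boundary of $A$. Iterating this recursion and using the pointwise bound $\DI_n(\y)\leq\gamma g(\y)=\gamma(\log\alpha)^{\|\y\|_1}\binom{\|\y\|_1}{\y_1}$ from Claim~2 of the proof of Proposition~\ref{prop:middle_conv_front_origin_IC}, together with Stirling's formula, the contribution to $D_L(\x^\pm)$ from each individual source at $\y\in\partial A$ is exponentially small in $L$: every length-$L$ oriented path from $\mathbf{0}$ through such $\y$ and on to $\x^\pm$ must deviate farther than the straight-line deviation $\beta L$, so its path-counting exponent $s^s(1-s)^{1-s}$ is strictly larger than $\log\alpha$ on the detour. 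Summing the geometrically decaying contributions over the $O(L^2)$ boundary sources yields $D_L(\x^\pm)=o(1)$.

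With the deterministic lower bound established, the rest is immediate. The proof of Proposition~\ref{prop:lim_N} applies verbatim to the $N$-percolation restricted to $A$ and gives $W^N_A(\x^\pm)/N\toop\DI^A_L(\x^\pm)$ for each fixed $L$, where $W^N_A(\x)$ denotes the number of particles at $\x$ reachable from the initial set via open edges inside $A$. Choosing $\gamma<\ell(\theta)/2$ and $L$ large enough that $\DI^A_L(\x^\pm)>\ell(\theta)/2$, one obtains $P(W^N_A(\x^\pm)\geq\lceil\gamma N\rceil)\to 1$; a union bound over the two signs yields $P(\eta(\mathbf{b})=1)\to 1$, as required. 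The main obstacle is the boundary-leakage estimate: heuristically, oriented paths from $\mathbf{0}$ to $\x^\pm$ exhibit random-walk fluctuations of order $\sqrt{L}$ and thus stay well inside the rectangle of half-width $(\beta+\delta)L\gg\beta L$ for large $L$, but rigorously controlling the non-linear deterministic recursion requires the careful linearization and Stirling-type bounds above.
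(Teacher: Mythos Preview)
Your overall strategy---reduce to the deterministic limit via Proposition~\ref{prop:lim_N} and show that the \emph{restricted} frontier value at $\x^\pm$ exceeds $\gamma$---is the same as the paper's. The difference is in how you establish that last inequality, and your version contains a genuine gap.

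The paper linearises at $\gamma=0$: it studies the derivative $g^A(m,n)=\partial Y^A/\partial\gamma|_{\gamma=0}$ of the restricted process, uses the reflection principle to show that restricting paths to $A$ costs only a subexponential factor, and concludes $g^A(\beta L,L)\to\infty$. It then fixes a large $L$ with $g^A(\beta L,L)>1$ and \emph{afterwards} chooses $\gamma$ small enough that $Y^A(\beta L,L)>\gamma$. The order is crucial: $L$ first, then $\gamma$.

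You instead fix $\gamma<\ell(\theta)/2$ first and try to show $D_L(\x^\pm)=Y_L(\x^\pm)-Y^A_L(\x^\pm)\to 0$ as $L\to\infty$. Your bound for this is wrong. Iterating your linear domination and using $Y(\y)\le\gamma g(\y)$ gives, after summing over boundary sources and applying the reflection principle,
\[
D_L(\x^+)\;\lesssim\;\gamma\,(\log\alpha)^L\binom{L}{(1+\beta+2\delta)L/2}.
\]
With $s=(1+\beta+2\delta)/2$, this is of order $(\log\alpha/s^s(1-s)^{1-s})^L$, and by the definition of $\kappa$ this ratio exceeds $1$ precisely when $\beta+2\delta<\chi=1-2\kappa$. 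Since the hypothesis only gives $\beta<\chi$ and $\delta<\beta/10$, the case $\beta+2\delta<\chi$ is generic (take any $\beta<5\chi/6$), and your bound then diverges. Equivalently, your assertion that ``the path-counting exponent $s^s(1-s)^{1-s}$ is strictly larger than $\log\alpha$ on the detour'' fails exactly here: the reflected endpoint $(\beta+2\delta)L$ is still inside the cone. The underlying reason is that for fixed $\gamma$ the upper bound $Y(\y)\le\gamma g(\y)$ is exponentially loose at points $\y$ inside the cone, where $Y(\y)\approx\ell(\theta)$ while $\gamma g(\y)\to\infty$.

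The claim $D_L(\x^\pm)=o(1)$ is plausible, but proving it for fixed $\gamma$ would require a contraction argument near the nonzero fixed point $\ell(\theta)$ (using $2(1-\ell^{(1)})\log\alpha<1$), not the linearisation at $0$ you invoke. The paper's derivative-at-zero approach sidesteps this entirely by reversing the order in which $L$ and $\gamma$ are chosen.
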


Note  that the rectangle $A(\mathbf{b})$ does not intersect with $A(\mathbf{b}+(0,2))$ or $A(\mathbf{b}+(4,0))$. This guarantees that the $\eta$-system is $4$-dependent, that is, when the (graph) distance between $B_1$ and $B_2$ is bigger than $4$, then $\{\eta(\mathbf{b}):b\in B_1\}$ and $\{\eta(\mathbf{b}):b\in B_2\}$ are independent. It follows from Lemma \ref{lm_rect} that the $\eta$-system percolates with a high probability for all $N$ sufficiently large; see, e.g., see \cite{LSS97}. Moreover, by  standard results in the two dimensional oriented percolation (\cite{Du84}), for any $\eps>0$,
$$
\lim_{N\rightarrow \infty}\max_{(x,y)\in\cL:|x|<(1-\eps)y}P_\eta((0,0)\rightarrow(x,y))=1,
$$
where $P_\eta$ is the law of the $\eta$-system and here $\x\rightarrow \y$ means the event that $\y$ is reached from $\x$ by open oriented edges.

Returning to {the oriented $N$-percolation, we get}
\begin{equation}\label{eq_l4}
    \lim_{N\rightarrow\infty} \max_{(x,y)\in\cL:|x|<(1-\eps)y}P\left(\frac{W(\beta Lx,Ly)}{N}\geq \gamma\right)=1.
\end{equation}

Similarly to the argument that \eqref{eq_l1}, \eqref{eq_l2} and \eqref{eq_l3} imply \eqref{eq:conv_SIR_front_large}, one can deduce~\eqref{eq:conv_SIR_front_middle_inside} from \eqref{eq_l1}, \eqref{eq_l2} and \eqref{eq_l4}.
\end{proof}

We now prove Lemma~\ref{lm_rect}.
\begin{proof}[Proof of Lemma~\ref{lm_rect}]
We will deduce the lemma from its limiting process. Consider the following deterministic system on $\cL$. Write $A=(-(\beta+\delta)L,(\beta+\delta)L)\times[0,L]$. Let $Y(0,0)=\gamma$ and define $Y:\cL\rightarrow \re $ recursively:
%%%\begin{equation}
\[
\aligned
&Y(m,n)\\
=&\left\{ \begin{array}{cc}
    1-\exp(-\frac{1+\theta}{5}(Y(m-1,n-1)+Y(m+1,n-1))), & \mathrm{if}\,(m,n)\in A;\\
   0,  & \mbox{otherwise}.
\end{array}\right.
\endaligned
\]
%%%%\end{equation}

As in the proof of Proposition~\ref{prop:middle_conv_front_origin_IC}, we will analyze
\[
g(m,n)=\left.\frac{\partial Y}{\partial \gamma}\right|_{\gamma=0}(m,n).
\]
As before, one can get $g(0,0)=1$ and for $(m,n)\in A$,
\[
g(m,n)=\frac{1+\theta}{5}(g(m-1,n-1)+g(m+1,n-1)).
\]
By induction, one can show that

\ul{Claim~1}:\, $g(m,n)=(\frac{1+\theta}{5})^n\times\#\mathrm{oriented~paths~from~}(0,0)\rightarrow(m,n)\mathrm{~in~}A$.

Moreover, by the reflection principle, we can do exact counting and get

\ul{Claim~2}:\, The number of oriented paths from $(0,0)$ to $(m,n)$ inside $(-k,k)\times[0,n]$ is
$$
\binom{n}{\frac{n+m}{2}}-\binom{n}{\frac{n+2k+m}{2}}-\binom{n}{\frac{n+2k-m}{2}}+\binom{n}{\frac{n+4k+m}{2}}+\binom{n}{\frac{n+4k-m}{2}}.
$$

Using Stirling's formula, one can show that the last four corresponding terms for $g(\beta L,L)$ are negligible compared with the first one, in other words,  the restriction that the path is  inside $A$ can be ignored. Therefore, we have that
$$
g(\beta L,L)\sim \sqrt{\frac{1}{2L\pi}} \left(\frac{1+\theta}{5}\right)^L h\left(\frac{1+\beta}{2}\right),
$$
where
\[
h(t)=\left(\frac{1}{t^t(1-t)^{(1-t)}}\right)^L\frac{1}{\sqrt{t(1-t)}}.
\]

By  assumption, $\beta<\chi=1-2\kappa$, hence $(1+\beta)/{2}<1-\kappa$. Therefore, $g(\beta L, L)\rightarrow\infty$ exponentially. In particular, we can find a large $L$  such that $g(\beta L,L)>1$. For such an $L$, we can find some small $\gamma$ satisfying $Y(\beta L,L)>\gamma$. Because $Y$ is the limit of $W/N$ as $N\rightarrow \infty$, the conclusion follows.
\end{proof}

\subsubsection{Convergence on all layers {near} the frontier}

We have shown in Theorem \ref{thm:conv_SIR_front_const_IC} that when $\theta>1.5$, the infection proportion on the first layer inside the cone is roughly $\ell(\theta)=\ell^{(1)}(\theta)$. In this subsection, we show that for the $i$-th layer in the cone, the infection proportion is roughly $\ell^{(i)}=\ell^{(i)}(\theta)$ defined in \eqref{eq:ell_i}.

\begin{thm}\label{thm:all_layer_SIR_2}
Suppose that $\theta>1.5$.  Consider the SIR process with initial condition IC2. For any $\vep>0$ and $i\in\zz{N}$, when $N$ is sufficiently large, we have
\[
     \limsup_{n\rightarrow\infty}\max_{m:(\kappa+\eps)n<m<(1-\kappa-\eps)n}P\left(\left|\frac{I_{n+i-1}^N(m,n-m)}{N}-\ell^{(i)}\right|>\eps\right)<\eps.
\]
\end{thm}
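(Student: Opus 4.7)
I proceed by induction on $i$, with the base case $i=1$ being Theorem~\ref{thm:conv_SIR_front_const_IC}. Assume the statement holds for all layers $1,\ldots,i$, and write $J_n^{(j)}(m):=I_{n+j-1}^N(m,n-m)$ (the $j$-th layer at line $n$); adopt the convention $\ell^{(0)}:=0$. The inductive hypothesis states that for every $j\le i$ and every $\eps>0$, once $N$ is large enough, $\limsup_n\max_m P(|J_n^{(j)}(m)/N-\ell^{(j)}|>\eps)<\eps$, with the max taken over $m$ in the cone $(\kappa+\eps)n<m<(1-\kappa-\eps)n$.

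Applying the SIR update \eqref{eqn:X_R} at time $n+i-1$ at site $(m,n-m)$ gives
\[
J_n^{(i+1)}(m)\stackrel{\mathrm{d}}{=}\mathrm{Bin}\bigl(S,\,1-(1-P_N^\theta)^{\wt{I}}\bigr),
\]
where $S=N-\sum_{k=1}^{i}J_n^{(k)}(m)$ (under IC2 the first infection at $(m,n-m)$ with $m,n-m\ge 0$ arrives at time $n$, so $R_{n+i-1}^N(m,n-m)=\sum_{k=1}^{i-1}J_n^{(k)}(m)$), and, identifying each spatial neighbor of $(m,n-m)$ with its line and its layer at time $n+i-1$,
\[
\wt{I}=J_n^{(i)}(m)+J_{n-1}^{(i+1)}(m-1)+J_{n-1}^{(i+1)}(m)+J_{n+1}^{(i-1)}(m+1)+J_{n+1}^{(i-1)}(m).
\]
By the inductive hypothesis and Hoeffding's inequality, every term above \emph{except} the two $J_{n-1}^{(i+1)}$ contributions concentrates at $\ell^{(k)}N$ with the appropriate $k\in\{i-1,i\}$, with high probability and uniformly in $m$ on the cone.

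Set
\[
F(y):=\Bigl(1-\sum_{k=1}^{i}\ell^{(k)}\Bigr)\Bigl(1-\exp\bigl(-\tfrac{1+\theta}{5}(\ell^{(i)}+2\ell^{(i-1)}+2y)\bigr)\Bigr).
\]
Comparison with \eqref{eq:ell_i} at index $i+1$ shows that $\ell^{(i+1)}$ is the unique fixed point of $F$ in $[0,1]$, and a direct differentiation combined with the fixed-point equation yields $F'(\ell^{(i+1)})=\tfrac{2(1+\theta)}{5}(1-\sum_{k=1}^{i+1}\ell^{(k)})<\tfrac{2(1+\theta)}{5}(1-\ell^{(1)})<1$, the last inequality coming from the defining relation \eqref{eq:ell} for $\ell^{(1)}$ via the elementary bound $-u\log u<1-u$ for $u\in(0,1)$ (cf.\ the proof of Proposition~\ref{prop:conv_front_layers_const_IC}). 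Hence $F$ is monotone increasing and a strict contraction near $\ell^{(i+1)}$ with some rate $\lambda<1$. Setting $a_n:=\sup_m J_n^{(i+1)}(m)/N$ and $b_n:=\inf_m J_n^{(i+1)}(m)/N$, where the sup and inf are taken over $m$ on a slightly shrinking subcone chosen so that the inductive hypothesis applies simultaneously on the lines $n\pm 1,\ldots,n\pm k$ for the finitely many iterations $k$ to follow, the analysis above yields, with high probability,
\[
a_n\le F(a_{n-1})+\eps,\qquad b_n\ge F(b_{n-1})-\eps,
\]
for arbitrarily small $\eps$ once $N$ is large. Iterating from the trivial bounds $a_{n_0}\le 1$, $b_{n_0}\ge 0$ brings $a_n,b_n$ within any prescribed tolerance of $\ell^{(i+1)}$ after a finite number of steps depending only on $\lambda$, completing the induction.

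The main obstacle is the uniformity bookkeeping: the nested subcones must be chosen so that the inductive hypothesis applies simultaneously on all relevant neighbouring lines $n\pm j$ for $j\le k$, and the Hoeffding errors across the finitely many sites involved must be absorbed by union bounds. Since the number $k$ of required iterations depends only on the fixed contraction rate $\lambda$, these union bounds remain harmless once $N$ is sufficiently large.
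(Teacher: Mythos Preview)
Your strategy---induction on the layer index, one-step binomial update, contraction of the map $F$ about its fixed point $\ell^{(i+1)}$, and a finite number $k$ of iterations determined by the contraction rate---is exactly the paper's strategy. The computation $F'(\ell^{(i+1)})=\tfrac{2(1+\theta)}{5}\bigl(1-\sum_{k\le i+1}\ell^{(k)}\bigr)<\tfrac{2(1+\theta)}{5}(1-\ell^{(1)})<1$ is correct and is the same inequality the paper uses (written there as $2(1-\ell^{(1)})\log\alpha<1$).

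There is, however, a genuine gap in the union-bound bookkeeping. You define $a_n,b_n$ as the supremum and infimum of $J_n^{(i+1)}(m)/N$ over \emph{all} $m$ in the subcone at line $n$, a set of order $n$ sites. For the step $a_n\le F(a_{n-1})+\eps$ to hold with high probability you need two things simultaneously at every one of these $O(n)$ sites: the binomial concentration, and the lower-layer inputs $J_{n'}^{(j)}(m')/N$ (for $j\le i$) to be within $\eps'$ of $\ell^{(j)}$. Your inductive hypothesis only furnishes the latter with probability $\ge 1-\eps'$ \emph{per site}, so a union bound over $O(n)$ sites leaves you with failure probability $O(n\eps')$, which is not small as $n\to\infty$ for any fixed $N$. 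The phrase ``finitely many sites involved'' in your last paragraph is therefore not accurate as written, and the conclusion that the union bounds are harmless once $N$ is large (uniformly in $n$) is unjustified.

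The paper avoids this by \emph{localizing}: for a fixed target site $\x=(m,n-m)$ it works inside the triangle $T_k(\x)=\{(x,y):x\le m,\,y\le n-m,\,x+y\ge n-k\}$ and sets $M_j=\sup_{\y\in L_k^j(\x)}|Z(\y)-\ell^{(i+1)}|$, where $L_k^j(\x)$ is the $j$-th diagonal of the triangle and contains at most $k+1$ points. The same contraction gives $M_j\le C\delta+(1-\vep_0)M_{j-1}$ with failure probability $O(k)a^N$ per step, hence $P(|Z(\x)-\ell^{(i+1)}|>\vep)\le O(k^2)a^N$, which is small for large $N$ \emph{uniformly in $n$} because only $O(k^2)$ sites are involved. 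Your argument becomes correct if you replace the cone-wide $a_n,b_n$ by these triangle-local quantities; the contraction and concentration steps you wrote then go through without change.
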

\begin{remark}\label{rmk:all_layer_SIR_2}
For future use, we note that the above conclusion also holds under the initial condition
\begin{equation}\label{eq_IC_adj}
    I_0^N(x,y)=\lfloor \gamma N\rfloor\bm{\delta}_{(0,0)}, \mbox{ and } R_0^N(x,y)=(N-I^N_0)\bm{\delta}_{x+y\leq 0}.
\end{equation}
\end{remark}

\begin{proof}
We shall only prove for the case when $i=2$; the case when $i\geq 3$ can be proved similarly. The idea is similar to the one used in the limiting process. Note that conditionally on the process  $(I_{n}^N(m,n-m))_{n\geq 0, m\in\zz{Z}}$, the process  $(I_{n+1}^N(m,n-m))_{n\geq 0, m\in\zz{Z}}$ is a Markov chain.

For notational ease, we write
\[\aligned
W(m,l)&=W^N(m,l)=\frac{I^N_{m+l}(m,l)}{N}, \mbox{ and}\\
Z(m,l)&=Z^N(m,l)=\frac{I^N_{m+l+1}(m,l)}{N},
\endaligned
\]
which represent the infection proportion on the first and second layers, respectively. For any $(m,l)\in\rN\times \rN$ and $j\leq k\in \zz{N}$, denote $T_k(m,l)$ and $L_k^j(m,l)$ by
\[\aligned
T_k(m,l)&=\{(x,y)\in \ze^2:x\leq m,\,y\leq l,\,x+y\geq m+l-k\};\\
L_k^j(m,l)&=\{(x,y)\in T_k(m,l):x+y=m+l-k+j\}.\\
\endaligned
\]
Note that the distribution of $Z(m,l)$ is determined by $\{W(x,y):(x,y)\in T_k(m,l)\}$ and $\{Z(x,y):(x,y)\in L_k^0(m,l)\}$. Note that because the ``frontier'' processes $W$ under the initial conditions IC2 and \eqref{eq_IC_adj} are the same, Theorem~\ref{thm:conv_SIR_front_const_IC} also holds for the initial condition \eqref{eq_IC_adj}. Therefore, to prove the theorem, it suffices to prove the following lemma.
\begin{lemma} \label{lm:SIR_second_layer}
For any $\vep>0$, there exist $k\in \zz{N}$ and $\delta>0$ such that as long as
\[
|W(\y)-\ell(\theta)|<\delta,\quad \mbox{for all } \y\in T_k(\x),
\]
 we have, when $N$ is sufficiently large,
\[
    P(|Z(\x)-\ell^{(2)}|>\vep)<\vep.
\]
\end{lemma}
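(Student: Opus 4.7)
The plan is to couple $Z(\x)$ with a deterministic shadow process $\hat Z$ on $T_k(\x)$ driven by the conditioned (random but close to $\ell(\theta)$) values of $W$, to show via contraction-in-$k$ combined with stability-in-$W$ that $\hat Z(\x)$ is forced close to $\ell^{(2)}$, and then to apply concentration to pass from $\hat Z(\x)$ to $Z(\x)$. Specifically, define $\hat Z$ on $T_k(\x)$ by setting $\hat Z(\y)=Z(\y)$ for $\y\in L_k^0(\x)$ and then, layer by layer toward $\x$,
\[
\hat Z(m,l) \;=\; (1-W(m,l))\Bigl(1-\exp\bigl(-\tfrac{1+\theta}{5}[W(m,l)+\hat Z(m-1,l)+\hat Z(m,l-1)]\bigr)\Bigr).
\]
The binomial update in \eqref{eqn:X_R} says that, conditional on $W$ on $T_k(\x)$ and on $\{Z(\y):\y\in L_k^0(\x)\}$, computing $Z(\x)$ reduces to a sequence of binomial draws whose per-step means agree with the corresponding deterministic step up to $O(1/N)$.

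For the deterministic bound, first replace $W$ by the constant $\ell(\theta)$ throughout the recursion defining $\hat Z$, keeping the same boundary on $L_k^0(\x)$, and call the result $\tilde Z$. By monotonicity of the update, the minimum and maximum of $\tilde Z$ on successive diagonals each satisfy exactly the one-dimensional recursion analyzed in \eqref{eq:y_2_recur} from the proof of Proposition~\ref{prop:conv_front_layers_const_IC}, so both envelopes converge geometrically to $\ell^{(2)}$ as the distance from $L_k^0(\x)$ increases. One can therefore choose $k=k(\varepsilon)$ such that $|\tilde Z(\x)-\ell^{(2)}|<\varepsilon/3$ uniformly over initial boundary values in $[0,1]$. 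With $k$ fixed, $\hat Z$ is compared to $\tilde Z$ diagonal by diagonal via the Lipschitz bound \eqref{ineq_f}: at every site the error picks up a term of order $\delta$ from the local perturbation of $W$ plus at most $2(1-\ell(\theta))\log\alpha$ times the maximal error on the previous diagonal. Because $2(1-\ell(\theta))\log\alpha<1$ for all $\theta>1.5$ (easily verified from \eqref{eq:ell}), this linear recurrence yields $|\hat Z(\x)-\tilde Z(\x)|\leq C(\theta)\,\delta$ uniformly in $k$, so a sufficiently small $\delta$ gives the second $\varepsilon/3$ bound.

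Finally, to pass from $\hat Z(\x)$ to the random $Z(\x)$, proceed inductively from $L_k^0(\x)$ forward through $T_k(\x)$. On each layer, conditional on the previous layer and on $W$, every $Z(\y)$ is a binomial proportion based on $N(1-W(\y))$ trials, so Hoeffding's inequality together with the Lipschitz constant of $\psi$ controls $|Z(\y)-\hat Z(\y)|$ by $O(N^{-1/2})$ plus the propagated error from the previous layer, itself contractive by the same coefficient as above. A union bound over the $O(k^2)$ sites in $T_k(\x)$ shows that for $N$ large the accumulated error is below $\varepsilon/3$ with probability at least $1-\varepsilon$; combining with the two deterministic estimates yields $|Z(\x)-\ell^{(2)}|<\varepsilon$ with the required probability. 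The main obstacle is the two-dimensional structure of the recursion — each update reads two previous values and has two descendants — so a pointwise contraction argument is not enough and one must work with diagonal-level maxima; the condition $\theta>1.5$ is exactly what furnishes the strict contraction coefficient $2(1-\ell(\theta))\log\alpha<1$ that makes this approach succeed.
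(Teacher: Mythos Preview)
Your argument is correct and is essentially the same as the paper's: both proofs propagate a diagonal-wise supremum through $T_k(\x)$ using the strict contraction $2(1-\ell(\theta))\log\alpha<1$ (valid for $\theta>1.5$), absorb the $O(\delta)$ perturbation from $W$ via the Lipschitz bound \eqref{ineq_f}, and control the stochastic fluctuation with an exponential concentration inequality plus a union bound over the $O(k^2)$ sites. The only cosmetic difference is that you split the error into three pieces $Z\to\hat Z\to\tilde Z\to\ell^{(2)}$, whereas the paper tracks $M_i=\sup_{\y\in L_k^i(\x)}|Z(\y)-\ell^{(2)}|$ directly and obtains the single recursion $M_i\le 4\delta+(1-\vep_0)M_{i-1}$; your geometric-series bound $C(\theta)\delta$ for the $W$-perturbation is slightly sharper than the paper's cruder $4k\delta$, but both suffice after choosing $k$ large and then $\delta$ small.
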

\end{proof}

\begin{proof}[Proof of Lemma \ref{lm:SIR_second_layer}]
Write $M_i=\sup_{\y\in L_k^i(\x)}|Z(\y)-\ell^{(2)}|$. Obviously $M_0\leq 1$. We will use induction to show that with high probability, $M_i\leq 4\delta+(1-\vep_0)M_{i-1}$, for some $\vep_0$ depending only on $\theta$. Then we can get that $|Z(\x)-\ell^{(2)}|=M_k<4k\delta+(1-\vep_0)^k$ holds with a high probability, and the lemma follows.

By \eqref{eqn:X_R}, we have that, conditionally on $(W(\y),Z(\y-(0,1)), Z(\y-(1,0)))$,
\[
Z(\y)\stackrel{\mathrm{d}}{=}\frac{1}{N}\mathrm{Bin}\left(N(1-W(\y)),g_N\left(Z(\y-(0,1))+Z(\y-(1,0))+W(\y\right))\right),
\]
where $g_N(t)=1-(1-\frac{1+\theta}{5N})^{Nt}$. Note that $\ell(\theta) <1$, hence by letting $\delta$ be small enough, we have $1-W(\y)\geq \vep_1 $. Recall the  Chernoff bound for the binomial distribution: for $X \stackrel{\mathrm{d}}{=} \mbox{Bin}(n,p)$, we have $P(|X/n - p|\geq \delta) \leq \exp(-n\delta/(3p))\leq  \exp(- n\delta/3)$. It follows that
\begin{equation}\label{eq:CB}
 P(|Z(\y)-E(Z(\y))|>\delta_1)<a^{N},
\end{equation}
where $a\in (0,1)$ depends only on $\vep_1$ and $\delta_1$.

We need to estimate $E(Z(\y))$(conditioned on $(W(\y),Z(\y-(0,1)), Z(\y-(1,0)))$). Similarly to \eqref{eq:y_2_recur}, we can get that when $\delta$ is small enough, $N$ is sufficiently large,  and $|W(\y)-\ell^{(1)}|<\delta$,
$$
|E(Z(\y))-\ell^{(2)}|<3\delta+(1-\vep_0)\left|\frac{Z(\y-(0,1))+Z(\y-(1,0))}{2}-\ell^{(2)}\right|,
$$
where $\vep_0$ depends only on $\theta$.
When $\y\in L_k^i(\x)$, we get
$$
|E(Z(\y))-\ell^{(2)}|<3\delta+ (1-\vep_0)M_{i-1}.
$$
By induction and using \eqref{eq:CB}, we get that, with probability at least $1-k^2a^N$, $M_k<4k\delta+(1-\vep_0)^k$. Letting $k$ be large enough first and then $\delta$ be small enough, we finish the proof of the lemma.
\end{proof}

Similarly, one can show
\begin{prop}\label{prop:all_layer_SIR_IC_line}
Suppose that $\theta>1.5$. Consider the SIR process with initial condition $\lfloor\gamma N\rfloor \bm{\delta}_{x+y=0}$ for some fixed $\gamma\in(0,1]$. For any $\vep>0$ and $i\in \zz{N}$, when~$N$ is sufficiently large, we have
\[
%%%\begin{equation}\label{eq:conv_SIR_all_layer_IC_line}
     \limsup_{n\rightarrow\infty}P\left(\left|\frac{I_{n+i-1}^N(0,n)}{N}-\ell^{(i)}\right|>\eps\right)<\eps.
\]
%%%%    \end{equation}
\end{prop}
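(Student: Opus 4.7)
The plan is to mirror the proof of Theorem~\ref{thm:all_layer_SIR_2}, exploiting the translation and reflection symmetries of the line initial condition to replace the cone hypothesis that was required in the IC2 setting.

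\emph{Step~1 (base case $i=1$).} The initial condition $\lfloor\gamma N\rfloor\bm{\delta}_{x+y=0}$ is invariant under every translation $(x,y)\mapsto(x+s,y-s)$ and under the reflection $(x,y)\mapsto(y,x)$, so $I_n^N(x,y)$ has a law that depends only on $x+y$. For the upper bound, the pointwise inequality $\lfloor\gamma N\rfloor\bm{\delta}_{x+y=0}\leq N\bm{\delta}_{x+y=0}$ together with the monotonicity of the frontier $W$ in the initial condition (via the oriented $N$-percolation coupling of Section~\ref{ssec:conn_perc}) shows that $W(0,n)$ is stochastically dominated by the frontier of the process started from the fully loaded line initial condition, for which estimate \eqref{eq_l1} and translation invariance yield $P(W(0,n)/N>\ell(\theta)+\eps)<\eps$ once $n,N$ are large. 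For the lower bound, the same monotonicity shows that $W(0,n)$ stochastically dominates the frontier $W^{\mathrm{low}}$ of the process started from the single-point initial condition $\lfloor\gamma N\rfloor\bm{\delta}_{(x_0,-x_0)}$, for any fixed $x_0\in[-n,0]$; translating $(x_0,-x_0)$ to the origin sends $(0,n)$ to $(-x_0,n+x_0)$, and the choice $x_0=-\lfloor n/2\rfloor$ places this image at the central angle of the cone $\{\kappa+\eps<m/(m+l)<1-\kappa-\eps\}$ in which Theorem~\ref{thm:conv_SIR_front_const_IC} gives $P(W^{\mathrm{low}}(0,n)/N<\ell(\theta)-\eps)<\eps$ for $n,N$ large. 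The cone is non-empty precisely because $\kappa<1/2$ when $\theta>1.5$, so combining the two sides settles the $i=1$ case.

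\emph{Step~2 (induction on $i$).} For $i=2$, pick $k\in\zz{N}$ and $\delta>0$ as in Lemma~\ref{lm:SIR_second_layer} applied at $\x=(0,n)$. By Step~1 and translation invariance, for each fixed $\mathbf{y}\in T_k(0,n)$ the probability $P(|W(\mathbf{y})-\ell(\theta)|<\delta)$ can be pushed arbitrarily close to $1$ for $n,N$ large, so a union bound over the (at most order $k^2$) sites in $T_k(0,n)$ produces a high-probability event on which the hypothesis of Lemma~\ref{lm:SIR_second_layer} is satisfied at $(0,n)$; the lemma then yields $P(|I_{n+1}^N(0,n)/N-\ell^{(2)}|>\eps)<\eps$. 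For general $i\geq 3$, I would iterate: assuming inductively that the proposition holds for $1,2,\ldots,i-1$, translation invariance plus a union bound gives uniform concentration of layers $1,\ldots,i-1$ on $T_k(0,n)$ around $\ell^{(1)},\ldots,\ell^{(i-1)}$, and the layer-$i$ analogue of Lemma~\ref{lm:SIR_second_layer} then follows from the same Markov contraction/Chernoff argument applied to the recursion \eqref{eq:y_i}, with linearized coefficient $(1-\sum_{j\leq i}\ell^{(j)})\cdot 2\log\alpha$ staying below $1$ since this quantity is bounded by $(1-\ell^{(1)})\cdot 2\log\alpha<1$.

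\emph{Main obstacle.} The calculations in Step~1 are essentially packaging: monotonicity plus translation invariance plus the existing Theorem~\ref{thm:conv_SIR_front_const_IC}. The technical heart lies in verifying the strict contractivity of the layer-$i$ recursion at its fixed point $\ell^{(i)}$, so that the telescoping bound $M_i\leq 4\delta+(1-\vep_0)M_{i-1}$ of Lemma~\ref{lm:SIR_second_layer} can be carried through layer by layer. This reduces to $(1-\ell^{(1)})\cdot 2\log\alpha<1$, which holds for all $\theta>1.5$ by the standard stability of the supercritical Poisson branching survival probability, exactly as invoked in the proof of Proposition~\ref{prop:conv_front_layers_const_IC} right after \eqref{eq:y_2_recur}. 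Once this is in hand, the rest of the proof is a direct adaptation of Lemma~\ref{lm:SIR_second_layer}, with the uniform first-layer control in $T_k(0,n)$ supplied for free by translation invariance.
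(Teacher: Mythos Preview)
Your proposal is correct and follows essentially the same approach the paper intends: the paper simply records ``Similarly, one can show'' and leaves the proof to the reader, and what you have written is precisely the natural adaptation of the Theorem~\ref{thm:all_layer_SIR_2} argument, using translation invariance of the line initial condition to feed Theorem~\ref{thm:conv_SIR_front_const_IC} into Lemma~\ref{lm:SIR_second_layer} (and its higher-layer analogues) with the same contractivity constant $2(1-\ell^{(1)})\log\alpha<1$. One cosmetic slip: the linearized coefficient in layer $i$ should read $(1-\sum_{j<i}\ell^{(j)})\cdot 2\log\alpha$ rather than $(1-\sum_{j\leq i}\ell^{(j)})\cdot 2\log\alpha$, but your bound by $(1-\ell^{(1)})\cdot 2\log\alpha<1$ is unaffected.
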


\subsection{The one-infected-particle initial condition}\label{ssec:front_1_infect}

In this subsection, we always assume that $\theta>1.5$ and the initial condition is IC1. Recall that
\[
\K(n)=\K^N(n)=\inf\left\{k:\sum_{(x,y)\in\ze^2:x+y=n}I^N_{n+k}(x,y)>0\right\}\mbox{ and }\K^N=\lim_{n\rightarrow \infty}\K^N(n).
\]
Note that the event $\K^N\leq k$ is equal to $\cap_n\{k: (\0,1)\stackrel{n+k}{\longleftrightarrow}\mathcal{L}(n)\}$, where $\mathcal{L}(n)=\mathcal{L}^N(n)=\{(x,y):x+y=n\}\times[N]$, and $(\0,1)\stackrel{n+k}{\longleftrightarrow}\mathcal{L}(n)$ denotes the event that $(\0,1)$ is connected to $\mathcal{L}(n)$ via $\leq n+k$ open edges in the $N$-percolation.

\begin{thm}
\begin{enumerate}[(i)]
    \item For any $i\in\rN$,
        \begin{equation}\label{eq:Delay_time}
         \lim_{N\rightarrow \infty }P(\K^N=i)=\ell^{(i+1)}.
        \end{equation}

    \item For any $k,j\in \rN$, $\vep>0$, when $N$ is sufficiently large, we have
        \begin{equation}\label{eq:layer_IC_0}
         \aligned
         &\limsup_{n\rightarrow \infty} \sup_{m:(\kappa+\vep)n<m<(1-\kappa-\vep)n}
         P\left(\left|\frac{I^N_{n+k+j}(m,n-m)}{N}-\ell^{(k+1)}\right|>\vep|\K^N=j\right)\\
         <&\vep.
         \endaligned
        \end{equation}
\end{enumerate}
\end{thm}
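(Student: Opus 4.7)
The theorem has two parts. The plan is to establish Part~(i) via a branching random walk (BRW) approximation of the early phase of the IC1 epidemic, and then deduce Part~(ii) by coupling the IC1 process conditioned on $\{\mathcal{K}^N = j\}$ with a line-initial-condition process shifted by $j$ generations.

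\textbf{Part~(i).} As $N \to \infty$, the early phase of the IC1 SIR process converges in finite-dimensional distribution to the BRW in which each particle at $\mathbf{y}$ independently spawns $\mathrm{Poisson}((1+\theta)/5)$ offspring at each of the five sites $\{\mathbf{z} : |\mathbf{z} - \mathbf{y}|_1 \leq 1\}$. A structural observation is that the delay $d := t - |X_t|_1$ of a BRW particle is nondecreasing along any lineage (each step changes delay by $0$, $1$, or $2$), so the ``delay-$\leq k$ subtree'' is well defined, and the event $\{\mathcal{K}^N \leq k\}$ corresponds in the BRW limit to the event that this subtree has a particle on some first-quadrant site of line $\{x+y=n\}$ for all large~$n$. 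I would set up a multi-type generating-function system for the survival probability $p_d^{(k)}$ of the delay-$\leq k$ subtree from a single delay-$d$ particle, classifying offspring by whether they move forward (delay $d$, 2 sites), stay put (delay $d+1$, 1 site), or move backward (delay $d+2$, 2 sites), each independently $\mathrm{Poisson}((1+\theta)/5)$; a monotone sandwich argument, based on the nondecreasing-delay property, confirms that boundary sites (axes and origin) do not change the limiting value, leaving the fixed-point system
\[
p_d^{(k)} = 1 - \exp\!\Bigl(-\tfrac{1+\theta}{5}\bigl(2p_d^{(k)} + p_{d+1}^{(k)} + 2p_{d+2}^{(k)}\bigr)\Bigr), \qquad 0 \leq d \leq k,
\]
with $p_{k+1}^{(k)} = p_{k+2}^{(k)} = 0$. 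An induction on $k$, using the defining recursion \eqref{eq:ell_i} for $\ell^{(j)}$, then verifies $p_d^{(k)} = \sum_{j=1}^{k-d+1}\ell^{(j)}$, so $P(\mathcal{K}^N \leq k) \to \sum_{j=1}^{k+1}\ell^{(j)}$ and \eqref{eq:Delay_time} follows by subtraction.

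\textbf{Part~(ii).} Fix $j$ and pick an intermediate generation $n_0$ to be sent to infinity. Conditional on $\{\mathcal{K}^N = j\}$, I would first establish that with high probability the infected proportion $I_{n_0+j}^N(m, n_0 - m)/N$ is close to $\ell^{(1)}$ uniformly for $m$ inside the cone $\{(\kappa+\varepsilon)n_0 < m < (1-\kappa-\varepsilon)n_0\}$---this is the first-layer-at-delay-$j$ analog of Theorem~\ref{thm:conv_SIR_front_const_IC}, which follows from the BRW analysis of Part~(i) combined with the oriented-percolation comparison used in the proof of~\eqref{eq:conv_SIR_front_middle_inside}. By the Markov property and the monotonicity of $R^N$ in the initial condition (Section~\ref{ssec:conn_perc}), the SIR dynamics from time $n_0+j$ onward, restricted to lines $\{x+y \geq n_0\}$, can be sandwiched between two line-initial-condition processes of the type treated in Remark~\ref{rmk:all_layer_SIR_2}. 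Applying Theorem~\ref{thm:all_layer_SIR_2} to these comparison processes and letting $n_0 \to \infty$ then yields~\eqref{eq:layer_IC_0}.

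\textbf{Main obstacle.} The hardest part will be making the BRW-to-SIR comparison quantitative enough for $\mathcal{K}^N$---an infinite-time limit of $\mathcal{K}^N(n)$---to converge in distribution to its BRW analog $\mathcal{K}^\infty$. The BRW approximation is accurate only while infection counts are $o(N)$, so one must show that $\mathcal{K}^N$ stabilizes inside the branching regime, before the take-off time at which the epidemic becomes macroscopic and susceptibility depletion takes over. A related subtlety, controlled by the monotone sandwich together with the nondecreasing-delay property, is that BRW lineages reaching a first-quadrant site on $\{x+y=n\}$ may detour through other quadrants; the ``interior'' fixed-point system above must then be shown to capture all contributions in the limit.
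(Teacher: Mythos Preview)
Your Part~(i) strategy is genuinely different from the paper's, and the algebraic core is correct: the fixed-point system you wrote down does have solution $p_d^{(k)}=\sum_{j=1}^{k-d+1}\ell^{(j)}$ (this follows from the telescoping identity $1-s_m=\prod_{i\le m}\exp\bigl(-\tfrac{1+\theta}{5}(2\ell^{(i)}+\ell^{(i-1)}+2\ell^{(i-2)})\bigr)$, which collapses to $\exp(-\tfrac{1+\theta}{5}(2s_m+s_{m-1}+2s_{m-2}))$). The upper bound $\limsup_N P(\mathcal K^N\le k)\le s_{k+1}$ also follows from your approach, since the SIR process is stochastically dominated by a branching random walk with $\mathrm{Bin}(5N,P_N^\theta)$ offspring.

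However, the obstacle you flag is a genuine gap, not a technicality. For the lower bound you need that survival of the delay-$\le k$ subtree in the BRW implies, with high probability as $N\to\infty$, that the SIR itself has $\mathcal K^N\le k$. The BRW coupling is only faithful while every site has $o(N)$ infections, and on that time scale the surviving subtree carries $O(1)$ or at best polylogarithmic-in-$N$ particles---far below the macroscopic scale at which the IC2 results (Theorem~\ref{thm:conv_SIR_front_const_IC}, Remark~\ref{rmk:all_layer_SIR_2}) apply. Bridging from ``a few surviving particles at delay $\le k$'' to ``the SIR front stays at delay $\le k$ forever'' is precisely the content you are missing, and nothing in your outline supplies it.

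The paper sidesteps this entirely by a percolation duality. Since $\{\mathcal K^N\le k-1\}=\bigcap_n\{(\mathbf 0,1)\stackrel{n+k-1}{\longleftrightarrow}\mathcal L(n)\}$, one has $P(\mathcal K^N\le k-1)=\lim_n P((\mathbf 0,1)\stackrel{n+k-1}{\longleftrightarrow}\mathcal L(n))$; by symmetry of the $N$-percolation this probability equals the expected fraction of particles at $(n,0)$ connected to line $0$ within $n+k-1$ edges, i.e.\ $E[\sum_{i=0}^{k-1}I^N_{n+i}(n,0)]/N$ under the line initial condition $I_0^N=N\bm\delta_{x+y=0}$. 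This expectation is then controlled directly by Propositions~\ref{prop:lim_N} and~\ref{prop:conv_front_layers_const_IC} (upper bound) and Theorem~\ref{thm:all_layer_SIR_2} plus monotonicity (lower bound). The point is that the duality converts a one-particle survival question into a density question under a macroscopic initial condition, where the deterministic limit applies immediately.

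For Part~(ii) your plan is close in spirit to the paper's, but the paper does not first prove a conditional first-layer statement. Instead it proves a reduction lemma (Lemma~\ref{lm:reduction}): on $\{\mathcal K^N=j\}$, with probability tending to~$1$ there is some generation $n_0\le n<2n_0$ and some site on line $n$ carrying $\ge\gamma_0 N$ infected at time $n+j$. This lemma is proved by a simple geometric-trials argument (each generation has a uniformly positive chance of ``blowing up'' to $\gamma_0 N$ given at least one infected particle on the line). Once this macroscopic seed is located, the sandwich between a single-site IC2 process (lower bound, Remark~\ref{rmk:all_layer_SIR_2}) and a full-line process (upper bound, Proposition~\ref{prop:all_layer_SIR_IC_line}) gives~\eqref{eq:layer_IC_0} directly. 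Your route through a preliminary first-layer result would work but is more circuitous and itself requires a version of the same reduction lemma.
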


\begin{proof}
We start with \eqref{eq:Delay_time}.
It suffices to prove that for any $k\in\zz{N}$,
\begin{equation}\label{eq:Delay_time_k}
\lim_{N\rightarrow \infty}P(\K^N\leq k-1)= \sum_{i=1}^k\ell^{(i)}.
\end{equation}

{We will work mainly with $\sum_{i=0}^{k}I^N_{n+i}(\cdot,n-\cdot)$ instead of $I^N_{k}(\cdot,n-\cdot)$, because the former is increasing in the initial condition while the latter is not.} We first consider the upper bound. By
Proposition \ref{prop:conv_front_layers_const_IC}, for the deterministic limiting process $(\DI_n(\cdot))$, under the initial condition $\DI_0(x,y)=\bm{\delta}_{x+y=0}$,
for any $\vep>0$, we can find some $n\in\zz{N}$ such that
\[
\sum_{i=0}^{k-1}\DI_{n+i}(n,0)\leq \sum_{i=1}^k\ell^{(i)}+\vep/2.
\]
It follows from Proposition~\ref{prop:lim_N} that when $N$ is sufficiently large, under the initial condition $I_0(x,y)=N\bm{\delta}_{x+y=0}$,
\[
P\left(\sum_{i=0}^{k-1}I^N_{n+i}(n,0)/N\leq \sum_{i=1}^k\ell^{(i)}+\vep\right){>} 1-\vep.
\]
By symmetry, we get that
\[
P\left(\frac{\sum_{i=1}^N\mathbf{1}_{(\0,i)\stackrel{n+k-1}{\longleftrightarrow}\mathcal{L}(n)}}{N} \leq \sum_{i=1}^k\ell^{(i)}+\vep\right)>1-\vep.
\]
Therefore,
\[
E\left(\frac{\sum_{i=1}^N\mathbf{1}_{(\0,i)\stackrel{n+k-1}{\longleftrightarrow}\mathcal{L}(n)}}{N}\right)\leq \sum_{i=1}^k\ell^{(i)}+2\vep.
\]
In other words, $P((\0,{1})\stackrel{n+k-1}{\longleftrightarrow}\mathcal{L}(n))\leq \sum_{i=1}^k\ell^{(i)}+2\vep$, and we get the upper bound.

Now we turn to the lower bound. By Theorem~\ref{thm:all_layer_SIR} {for IC2}, under the initial condition $I_0^N=N\bm{\delta}_{(0,0)}$, for any $\vep>0$, when $N$ is sufficiently large, we have
\[
\limsup_{n\rightarrow \infty} \max_{m:(\kappa+\vep)n<m<(1-\kappa-\vep)n}P\left(\sum_{i=1}^k\frac{I_{n+i-1}^N(m,n-m)}{N}\leq \sum_{i=1}^k\ell^{(i)}-\vep\right)<\vep.
\]
Hence,
\[
\liminf_{n\rightarrow \infty}\min_{m:(\kappa+\vep)n<m<(1-\kappa-\vep)n}E\left(\sum_{i=1}^k\frac{I_{n+i-1}^N(m,n-m)}{N}\right)\geq(1-\vep)\left(\sum_{i=1}^k\ell^{(i)}-\vep\right).
\]
By monotonicity, we get that under the initial condition $I_0^N=N\bm{\delta}_{x+y=0}$, we also have
\[
\liminf_{n\rightarrow \infty}E\left(\sum_{i=1}^k\frac{I_{n+i-1}^N(n,0)}{N}\right)\geq(1-\vep)\left(\sum_{i=1}^k\ell^{(i)}-\vep\right).
\]
In other words,
\[
\liminf_{n\rightarrow \infty}P((\0,1)\stackrel{n+k-1}{\longleftrightarrow}\mathcal{L}(n))\geq (1-\vep)\left(\sum_{i=1}^k\ell^{(i)}-\vep\right).
\]
This is the lower bound we need.

We now prove \eqref{eq:layer_IC_0}. It suffices to show the following upper and lower bounds: For any $k,j\in\rN,\vep>0$, and $N$ sufficiently large,
\begin{equation}\label{eq:up_layer_IC_0}
\aligned
   &\limsup_{n,m\rightarrow \infty:(\kappa+\vep)n<m<(1-\kappa-\vep)n}P\left(\frac{\sum_{i=0}^{k-1}I^N_{n+i+j}(m,n-m)}{N}>\sum_{i=1}^k\ell^{(i)}+\vep,\K=j\right)\\
   <&\vep;
\endaligned
\end{equation}
and
\begin{equation}\label{eq:low_layer_IC_0}
\aligned
    &\limsup_{n,m\rightarrow \infty:(\kappa+\vep)n<m<(1-\kappa-\vep)n}P\left(\frac{\sum_{i=0}^{k-1}I^N_{n+i+j}(m,n-m)}{N}<\sum_{i=1}^k\ell^{(i)}-\vep,\K=j\right)\\
<&\vep.
\endaligned
\end{equation}

With Theorem~\ref{thm:all_layer_SIR} {for IC2} in mind, we aim to reduce our case to {IC2}. Introduce the following events:  fix a $\gamma_0\in(0,1]$, say, $0.1$, and define
\[
\aligned
B_n&:= \{\K^N(n+1)\geq j,\mbox{ and there exists }  m\in\ze \mbox{ such that } I^N_{n+j}(m,n-m)\geq \gamma_0 N\},\\
A_n&:= \cup_{k=n}^{2n-1}B_k.
\endaligned
\]
We will prove \eqref{eq:up_layer_IC_0} and \eqref{eq:low_layer_IC_0} based on the following lemma, whose proof will be given later.
\begin{lemma}\label{lm:reduction}
For any $N\in\zz{N}$ and $j\in\rN$, we have
\[
%%%\begin{equation}\label{eq:reduction}
    \lim_{n\rightarrow\infty}P(A^{\mathrm{c}}_n, \K^N=j)=0.
%%%\end{equation}
\]
\end{lemma}

Now we prove \eqref{eq:up_layer_IC_0} and \eqref{eq:low_layer_IC_0}. For notational ease, we write $\Box^+$ ($\Box^-$, respectively) for the event $\{\sum_{i=0}^{k-1}I^N_{n+i+j}(m,n-m)/N-\sum_{i=1}^k\ell^{(i)}>\vep\}$ ($<-\vep$, respectively).
By Lemma \ref{lm:reduction}, we can find $n_0$ large enough such that
\[
\aligned
P(\Box^+, \K^N=j)&\leq  P(\Box^+, \K^N=j, A_{n_0})+P(\Box^+, \K^N=j, A^{\mathrm{c}}_{n_0})\\
&\leq P(\Box^+,A_{n_0})+P(\K^N=j,A^{\mathrm{c}}_{n_0})\\
&\leq P(\Box^+|A_{n_0})+\vep/2.
\endaligned
\]
By monotonicity, we have
\[
P(\Box^+|A_{n_0})\leq P(\Box^+|B'(2n_0-1)),
\]
where $B'(n)=\{\K^N(n+1)\geq j, \, I^N_{n+j}(m,n-m)= N \mbox{ for all } m\in\ze\}$.
By Proposition~\ref{prop:all_layer_SIR_IC_line}, we see that $P(\Box^+|B'(2n_0-1))$ is  small when both $N$ and $n-2n_0$ are large. The upper bound  \eqref{eq:up_layer_IC_0} follows.

For the lower bound \eqref{eq:low_layer_IC_0}, by Lemma \ref{lm:reduction} again, we can find $n_0$ large enough such that
\[
\aligned
P(\Box^-, \K^N=j)&\leq  P(\Box^-, \K=j, A_{n_0})+P(\Box^-, \K^N=j, A^{\mathrm{c}}_{n_0})\\
&\leq P(\Box^-,A_{n_0})+P(\K^N=j,A^{\mathrm{c}}_{n_0})\\
&\leq P(\Box^-|A_{n_0})+\vep/2.
\endaligned
\]
By monotonicity, we have
\[
P(\Box^-|A_{n_0})\leq \max_{\x\in T_{n_0}}P(\Box^-|B(\x)),
\]
where $T_{n_0}=\{(x,y)\in\ze^2: x,y\geq -j \mbox{ and } n_0\leq x+y\leq 2n_0-1 \}$,
%%%%and we write \CB{$P(\cdot| B(m,l))$ for the probability law that the initial time is $t_0=m+l+j$ and the initial condition is $I_{t_0}=\lceil\gamma_0 N\rceil\bm{\delta}_{(m,l)}, R_{t_0}= (N-I_{t_0})\bm{\delta}_{x+y\leq m+l}$.}
and  $B(m,l)$ stands for that event that $I_{t_0}=\lceil\gamma_0 N\rceil\bm{\delta}_{(m,l)}$ and $R_{t_0}= (N-I_{t_0})\bm{\delta}_{x+y\leq m+l}$ {with $t_0=m+l+j$}.
By Remark~\ref{rmk:all_layer_SIR_2}, we see that $\max_{\x\in T_{n_0}}P(\Box^-|B(\x))$ is sufficiently small when both $N$ and $n-2n_0$ are large. This finishes the proof of \eqref{eq:low_layer_IC_0}.
\end{proof}

\begin{proof}[Proof of Lemma~\ref{lm:reduction}]
Note that $\K^N(n)\nearrow \K^N$. It follows that
\begin{equation}\label{eq_K_1}
 \lim_{n\rightarrow \infty}P(\K^N=j,\K^N(n)<j)=0.
\end{equation}

Write
\[
K_n=\{\K^N(n+1)\geq j,\mbox{ and there exists } m\in\ze  \mbox{ such that } I^N_{n+j}(m,n-m)\geq 1\}.
\]
It is easy to see that there exists $a=a(N,\theta)>0$ such that
\begin{multline} \label{eq_restart}
  \inf_{\vec{b}\in {\{0,\ldots,N\}}^\ze,\vec{b}\not\equiv\vec{0}}P(B_{n+1}|K_n\cap \{I^N_{n+j}(m,n-m)=\vec{b}(m)\mbox{ for all }m\})\\
  = P(B_{n+1}|K_n\cap \{I^N_{n+j}(m,n-m)=\mathbf{1}_{m=0} \mbox{ for all }m\})\geq a.
\end{multline}

Note that conditionally on the event $K_n$, the laws of $(I^N_{{n+1+j}}(m,n+1-m))_{m\in\ze}, \mathbf{1}_{K_{n+1}}$ and $\mathbf{1}_{B_{n+1}}$ depend only on $(I^N_{{n+j}}(m,n-m))_{m\in\ze}$. Therefore,
\[
\aligned
P&(\K^N=j,\K^N(n)=j,  A_n^\mathrm{c})\leq P(\cap _{k=n}^{2n-1}K_k,\cap _{k=n}^{2n-1}B_k^\mathrm{c})\\
&\leq \prod_{k=n}^{2n-2}\max_{\vec{b}\in{\{0,\ldots,N\}}^\ze,\vec{b}\not\equiv\vec{0}}P(K_{k+1}\cap B_{k+1}^\mathrm{c}|K_k\cap \{I^N_{{k}+j}(m,{k}-m)=\vec{b}(m),\forall m\} )\\
&\leq \prod_{k=n}^{2n-2}(1-a)=(1-a)^{n-1},
\endaligned
\]
where in the last inequality we used \eqref{eq_restart}.
Combining the result above with~\eqref{eq_K_1}, we prove the lemma.

\end{proof}

\section{Spreading Speed}\label{Sec:spread_speed}
Recall the limiting (deterministic) system $(\DI_n(\x),\DR_n(\x))_{n\in \rN,\x\in\ze^2}$ from Section~2. As before, we always assume that $\DR_0\equiv 0$ and focus only on the first quadrant. In this section, we assume further that $\DI_0$ is supported on the origin. To simplify notation, we define $\DD_n(\x)=\DR_{n+1}(\x)=\DI_n(\x)+\DR_n(\x)$, and  $\Dd_n(\x)=\Dd^N_n(\x)=R^N_{n+1}(\x)=I^N_n(\x)+R^N_n(\x)$. Note that $\Dd_n$ represents the number of particles that are infected before or at time $n$ and is increasing in the initial condition $I_0$. We show a weaker version of Theorem~\ref{thm:speed} in this section.

\begin{thm}\label{thm_speed}
For any fixed $\theta\in(0,\infty)$,  the following results hold:
\begin{enumerate}[(i)]
    \item Consider the deterministic system with the initial condition $\DI_0=\gamma\bm{\delta}_{\0}$ for some $\gamma\in (0,1]$.
 For any $\vep>0$, along any sequence $(i_k,j_k)_k \subset \rN\times \rN$ satisfying $i_k+j_k\rightarrow \infty$ and $\arg(i_k,j_k)\rightarrow \phi$ for some $\phi\in[0,\pi/2]$, we have
\begin{equation}\label{eq_speed_D_upper}
    \lim_{k\rightarrow\infty}\DD_{\lfloor (i_k+j_k)(\upsilon(\theta,\phi)^{-1}-\vep)\rfloor}(i_k,j_k)=0;
\end{equation}
 Moreover,  there exists  $\delta=\delta(\theta,\gamma,\vep)>0$ such that
 \begin{equation}\label{eq_speed_D_lower}
    \liminf_{k\rightarrow\infty}\DD_{\lfloor (i_k+j_k)(\upsilon(\theta,\phi)^{-1}+\vep)\rfloor}(i_k,j_k)>\delta.
\end{equation}

    \item Consider the SIR process with the initial condition IC2 \eqref{eq:IC_1}. For any $\vep>0$ and $N$, along any sequence $(i_k,j_k)_k \subset \rN\times \rN$ satisfying $i_k+j_k\rightarrow \infty$ and $\arg(i_k,j_k)\rightarrow \phi$ for some $\phi\in[0,\pi/2]$, we have,
 \begin{equation}\label{eq_speed_R_upper}
 \lim_{k\rightarrow\infty}P(\Dd^N_{\lfloor (i_k+j_k)(\upsilon(\theta,\phi)^{-1}-\vep)\rfloor}(i_k,j_k)>0)=0;   \end{equation}
 Moreover, for any $\vep>0$, there exists  $\delta=\delta(\theta,\vep,\gamma)>0$ such that when $N$ is sufficiently large,
 \begin{equation}\label{eq_speed_R_lower}
    \liminf_{k\rightarrow\infty}P(\Dd^N_{\lfloor (i_k+j_k)(\upsilon(\theta,\phi)^{-1}+\vep)\rfloor}(i_k,j_k)/N\geq \delta)>1-\vep.
\end{equation}
\end{enumerate}
\end{thm}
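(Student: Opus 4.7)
The plan is to recognize the iteration \eqref{eq:lim_X} as a nonlinear perturbation of a scaled random walk. Writing $P(\x,\y) = \frac{1}{5}\mathbf{1}_{\|\x-\y\|_1\leq 1}$ for the $5$-point lazy nearest-neighbor kernel on $\ze^2$, the function $\psi(x)=1-e^{-x}$ satisfies $\psi(x)\leq x$ for $x\geq 0$ and $\psi(x)\geq (1-\eta)x$ for $x\in [0,x_0(\eta)]$. Using the first inequality and $\DS \leq 1$ in \eqref{eq:lim_X}, induction yields the linear upper bound $\DI_n(\x)\leq \gamma(1+\theta)^n P^n(\0,\x)$. A direct $5$-point multinomial computation with Stirling's formula identifies the large-deviation rate of $P^n$ with $G(\cdot,\phi)$ from \eqref{eq_def_G}:
\[
P^n(\0,\x)\leq Cn^{-1}\exp\bigl(-n[G(\|\x\|_1/n,\phi)+\log 5]\bigr).
\]
The strict monotonicity of $G$ in $v$ together with \eqref{eq_def_v} then gives an exponentially small upper bound on $\DI_n(\x)$ whenever $\|\x\|_1/n>\upsilon(\theta,\phi)$; summing the geometric tail $\DD_n=\sum_{t<n}\DI_t$ proves \eqref{eq_speed_D_upper}. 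The stochastic upper bound \eqref{eq_speed_R_upper} follows from the coupling $I_t^N\leq Y_t$ with a branching random walk $(Y_t)$ in which each particle independently spawns offspring at its $5$ neighbors via $\mathrm{Bin}(N,P_N^\theta)$ (dropping the susceptible cap), so that $E[Y_t(\x)] = \lceil \gamma N\rceil (1+\theta)^t P^t(\0,\x)$; Markov's inequality with the same Stirling estimate and a union bound over $t\leq n_k$ closes the argument.

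For the deterministic lower bound \eqref{eq_speed_D_lower}, I would upgrade the linearization to a two-sided estimate in the sub-threshold regime. Combining $\psi(x) \geq (1-\eta)x$ on $[0,x_0]$ with $\DS_t(\y)\geq 1-\DD_t(\y)$ yields
\[
\DI_{t+1}(\y)\geq (1-\DD_t(\y))\,\frac{(1-\eta)(1+\theta)}{5}\,\wt{\DI}_t(\y),
\]
so long as the argument of $\psi$ remains in $[0,x_0]$. Treating this as a linearized iteration with reduced multiplier $(1-\eta)(1+\theta)$, the associated critical speed $\upsilon_\eta(\theta,\phi)$ satisfies $\upsilon_\eta\to \upsilon(\theta,\phi)$ as $\eta\to 0$, by continuity in $\theta$ of the defining equation \eqref{eq_def_v}. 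Pick $\eta$ so small that $\upsilon_\eta^{-1}<\upsilon^{-1}+\epsilon/2$. If the sub-threshold regime persisted throughout $[0,n_k]$ with $n_k=\lfloor (|i_k|+|j_k|)(\upsilon^{-1}+\epsilon)\rfloor$, the lower iterate at $\x_k$ would grow exponentially --- contradicting $\DI\leq 1$. Hence at some $t^*\leq n_k$, either $\wt{\DI}_{t^*}(\x_k)>5x_0/(1+\theta)$ or $\DD_{t^*}(\x_k)>1/2$; in either case one direct step of \eqref{eq:lim_X} gives $\DI_{t^*+1}(\x_k)\geq \min\{(1-e^{-x_0})/2,\,1/2\}=:\delta$, and monotonicity of $\DD$ in $n$ yields \eqref{eq_speed_D_lower}.

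The stochastic lower bound \eqref{eq_speed_R_lower} is the main obstacle: Proposition~\ref{prop:lim_N} supplies only finite-dimensional convergence, while the sequence $(\x_k)$ is unbounded. The plan is a local-to-global reduction: (a) by the deterministic lower bound \eqref{eq_speed_D_lower}, fix $T_0=T_0(\epsilon,\theta,\gamma)$ and a site $\y_0$ with $\DI_{T_0}(\y_0)\geq\delta_1>0$; (b) Proposition~\ref{prop:lim_N} then gives $I^N_{T_0}(\y_0)\geq \delta_1 N/2$ with probability $\geq 1-\epsilon/2$ for $N$ large; (c) by stochastic monotonicity, the SIR process for $t\geq T_0$ dominates a restart from $\lceil\delta_1 N/2\rceil\bm{\delta}_{\y_0}$; (d) propagate this to the distant site $\x_k$ by combining the branching-random-walk expectation from the first paragraph with a second-moment / Chernoff argument on the collision deficit $Y_t-I_t^N$, which remains a small fraction of $Y_t$ along the ``frontier'' path to $\x_k$ (the density of $R^N$ near $\x_k$ being small, this collision term is controllable). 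Step~(d) is the hardest, and the available second-moment control is only strong enough to produce some positive $\delta$ in \eqref{eq_speed_R_lower} rather than the sharp value $\iota$; the stronger conclusion of Theorem~\ref{thm:speed}(ii) awaits the refined percolation analysis of Section~\ref{Sec:prop_recover}.
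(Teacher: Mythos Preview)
Your upper-bound arguments for \eqref{eq_speed_D_upper} and \eqref{eq_speed_R_upper} are correct and match the paper's: the paper likewise bounds $\DI_n(\x)\le g_n(\x)\gamma$ and $E[I^N_n(\x)/N]\le g_n(\x)$ with $g_n(\x)=((1+\theta)/5)^n\sharp_{\mathrm L}(\0,\x;n)$, then invokes the Stirling asymptotics encoded in $G$ and sums.

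The deterministic lower bound \eqref{eq_speed_D_lower} has a genuine gap. Your local inequality $\DI_{t+1}(\y)\ge(1-\DD_t(\y))\frac{(1-\eta)(1+\theta)}{5}\wt\DI_t(\y)$ is correct, but when you iterate it back to time $0$ you pick up a product $\prod_{s<t}(1-\DD_{s}(\pi_s))$ along every lazy-walk path $\pi$ from $\0$ to $\x_k$. These factors are \emph{not} close to $1$: near the origin $\DD_s$ is macroscopic (eventually $\ge\iota$), and many contributing paths linger there. Hence ``treating this as a linearized iteration with reduced multiplier $(1-\eta)(1+\theta)$'' is unjustified; the effective multiplier is further cut by an uncontrolled product. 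Your contradiction therefore does not localize at $\x_k$: even if sub-threshold fails \emph{somewhere}, you have not shown that $\wt\DI_{t^*}(\x_k)$ or $\DD_{t^*}(\x_k)$ is large at the target site. The paper resolves exactly this by a block (renormalization) construction: it restricts the dynamics to a rectangle $A(\b)$ of side $\asymp L$, proves (Lemma~\ref{lm_rect2}, via the derivative $g'_n$ and path counts \emph{inside} $A$) that for suitable $L$ and small $\delta_0$ the restricted deterministic system carries $\delta_0$ at the center to $\ge\delta_0$ at both top corners within time $\lfloor(\upsilon^{-1}+\eps)L\rfloor$, and then iterates block-to-block along a coarse grid. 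Confining to $A(\b)$ is precisely the device that keeps the $(1-\DD_t)$ factors harmless.

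For the stochastic lower bound \eqref{eq_speed_R_lower}, your steps (a)--(c) are fine, but step~(d) is where all the content lies and your sketch is not a proof. A second-moment/Chernoff bound on the collision deficit $Y_t-I^N_t$ at a distant site cannot be extracted from moments of $Y_t$ alone: the deficit is driven by $R^N_s(\cdot)/N$ along the entire trajectory, which is exactly the unknown. The paper instead layers the same block construction onto the SIR process: it defines a $k$-dependent oriented site percolation $\eta_t$ on the coarse lattice, declares a block open if the restricted SIR pushes $\ge\lceil\delta_0 N\rceil$ particles to both upper corners, and uses Proposition~\ref{prop:lim_N} on the \emph{fixed-size} block together with Lemma~\ref{lm_rect2} to make $P(\eta_t(\b)=1)\to1$ as $N\to\infty$. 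Standard $k$-dependent oriented-percolation results then give connectivity in $\eta_t$, hence the lower bound in the original SIR. This finite-block reduction is the missing replacement for your step~(d).
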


\begin{remark}\label{rk:speed_strong}
We will show a little bit more than \eqref{eq_speed_R_lower}: we can replace $D^N(i,j)$ by the corresponding number of particles that are infected via only those edges lying in the region $\{(x,y):\ 0< x+y< i+j\}$ except possibly with one endpoint at either $(0,0)$ or $(i,j)$.  We need this stronger version when proving Theorem~\ref{thm:speed}(ii). Note that Theorem \ref{thm:speed}(i) for IC2 is just \eqref{eq_speed_R_upper}, which, by the monotonicity of the recovered process with respect to the initial condition, also implies the conclusion for IC1 conditional on survival.
\end{remark}

\begin{proof}[Proof of Theorem~\ref{thm_speed}]

Similarly to what we did in Section \ref{ssec:conv_front_det}, we will analyze the asymptotics of
\[
g_n(\x)=\left.\frac{\partial \DI_n}{\partial \gamma}\right|_{\gamma=0}(\x).
\]
From the recursive equation, a simple application of chain rule yields
\[
g_{n+1}(m,l)=\frac{1+\theta}{5}\wt{g}_n(m,l).
\]

Similarly as before, by induction, one can show

\ul{Claim~1}:\, \[g_n(m,l)=\left(\frac{1+\theta}{5}\right)^n\times\#\mathrm{~LRW~paths~from~}(0,0)\rightarrow(m,l)\mathrm{~with~}n\mathrm{~steps}.\]
Here ``LRW'' stands for lazy random walk, namely, at  each time, the walker either stays at the same position or moves to one of the four neighbouring site with equal probability $1/5$.

As before, we need to analyze the asymptotic behavior of $g_n(m,l)$. Recall that Simple Random Walk (SRW) is such that at each time, the walker moves to one of the four neighbouring site with equal probability $1/4$. Write $\sharp_{\mathrm{S}}(m,l;n)$($\sharp_{\mathrm{L}}(m,l;n)$, respectively) for the number of SRW (LRW, respectively) paths from $(0,0)$ to $(m,l)$ with $n$ steps. We have
%%%\ul{Claim~2}:
\begin{equation}\label{eq:nbr_srw}
\sharp_{\mathrm{S}}(m,l;n)
=\binom{n}{\frac{n-(m+l)}{2}}\binom{n}{\frac{n-(m-l)}{2}}.
\end{equation}
By considering the number of times that the walker does not move before time~$n$, we get
\begin{equation}\label{eq:L_to_S}
\sharp_{\mathrm{L}}(m,l;n)=\sum_{i:m+l\leq i\leq n, 2|i-m-l}\binom{n}{n-i}\sharp_{\mathrm{S}}(m,l;i).
\end{equation}
By Stirling's formula, we have
\[
\binom{n}{k}\approx \left(\frac{1}{u^u(1-u)^{1-u}}\right)^n, \mathrm{~where~}u=\frac{k}{n},
\]
where for any two sequences $(a_n)$ and $(b_n)$, $a_n\approx b_n$ means that
\[
\lim_{n\rightarrow\infty}\log a_n /\log b_n=1.
\]
It follows that when both $(m+l)/n$ and $m/(m+l)$ converge, we have
\begin{equation}\label{eq:nbr_lrw_asym}
\aligned
\sharp_{\mathrm{L}}(m,l;n)
\approx&\max_{i}\left(\frac{1}{t^t(1-t)^{1-t}}\right)^n\left(\frac{1}{r^r(1-r)^{1-r}}\right)^i\left(\frac{1}{s^s(1-s)^{1-s}}\right)^i\\
\approx&\left(\sup_{t\in(\frac{m+l}{n},1]}\frac{1}{t^t(1-t)^{1-t}r^{tr}(1-r)^{t(1-r)}s^{ts}(1-s)^{t(1-s)}}\right)^n\\
\approx&\left(\frac{1}{\exp(G(\frac{m+l}{n},\frac{m}{m+l}))}\right)^n,
\endaligned
\end{equation}
where $t=i/n,r={(t-(m+l)/n)}/{(2t)},s=(t-(m-l))/n/(2t)$,
\[
\aligned
  h(t)&=t\log t+(1-t)\log(1-t),\\
  g(t,v,a) &= h(t)+t\left(h\left(\frac{1}{2}-\frac{v}{2t}\right)+h\left(\frac{1}{2}-\frac{(1-2a)v}{2t}\right)\right),
\endaligned
\]
and
\begin{equation}\label{eq_def_G}
 G(v,a)=\inf_{t\in[v,1]}g(t,v,a).
\end{equation}
Note that $g(t,v,a)$ is negative, continuous on all variables,  strictly increasing in $v\in(0,1]$ and decreasing in $a\in[0,1/2]$. Hence, $G(v,a)$ has the same property and
$$
G(1,a)=h(a), \q \lim_{v\rightarrow 0}G(v,a)=G(0,a)=\min_{t\in(0,1)}h(t)+2t\, h({1}/{2})=\log (1/5).
$$

Combining the estimates above yields
\[
\sharp_{\mathrm{L}}(m,l;n)\approx \left(\frac{1}{\exp(G(\frac{m+l}{n},\frac{m}{m+l}))}\right)^n,\,
g_n(m,l)\approx \left(\frac{(1+\theta)/5}{\exp(G(\frac{m+l}{n},\frac{m}{m+l}))}\right)^n.
\]
Using the continuity of $g(t,v,a)$, one  gets
\begin{lemma}\label{lm:g_asym}
There exists sequence $c(n)\rightarrow 0$ such that for any integer-valued functions $m=m(n), l=l(n)$ with $|m|+|l|\leq n$, we have
\[
%%%\begin{equation}\label{eq_g_asym}
\sharp_{\mathrm{L}}(m,l;n)\in\left[\left(\frac{1}{\exp(G(\frac{|m|+|l|}{n},\frac{|m|}{|m|+|l|}))}-c(n)\right)^n ,\left(\frac{1}{\exp(G(\frac{|m|+|l|}{n},\frac{|m|}{|m|+|l|}))}+c(n)\right)^n\right].
\]
%%%\end{equation}
\end{lemma}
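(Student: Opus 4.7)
My plan is to prove the uniform asymptotic $\sharp_{\mathrm{L}}(m,l;n)^{1/n} \to e^{-G(v,a)}$ (where $v=(|m|+|l|)/n$ and $a=|m|/(|m|+|l|)$) by combining the exact decomposition formula \eqref{eq:L_to_S} with quantitative Stirling estimates. First I would use the $D_4$ symmetry of the LRW to reduce to $m,l\geq 0$, noting that $g(t,v,a)=g(t,v,1-a)$ by symmetry of $h$ around $1/2$, so one may even assume $m\geq l$. The main idea is to treat the sum
\[
\sharp_{\mathrm{L}}(m,l;n) = \sum_{\substack{i:\,m+l\leq i\leq n \\ i\equiv m+l\,(\mathrm{mod}\,2)}} \binom{n}{n-i}\binom{i}{(i-m-l)/2}\binom{i}{(i-m+l)/2}
\]
termwise, using the uniform Stirling-type bounds
\[
\frac{1}{n+1}\exp(-nh(k/n))\leq \binom{n}{k}\leq \exp(-nh(k/n)),\qquad 0\leq k\leq n,
\]
with the convention $0\log 0 =0$ (so the lower bound holds whenever the binomial is positive).

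For the \emph{upper bound}, the triple product is bounded termwise by $\exp(-n g(i/n,v,a))$. Since there are at most $n+1$ nonzero summands, $\sharp_{\mathrm{L}}(m,l;n)\leq (n+1)\exp(-nG(v,a))$, giving $\sharp_{\mathrm{L}}^{1/n}\leq (n+1)^{1/n} e^{-G(v,a)}$. Since $e^{-G(v,a)}\in[1,5]$ is uniformly bounded, this yields the upper half of the statement with $c(n):=5((n+1)^{1/n}-1)\to 0$.

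For the \emph{lower bound}, let $t^*=t^*(v,a)\in[v,1]$ be a minimizer of $g(\cdot,v,a)$; existence follows from continuity and compactness. Select $i^*$ in the admissible set $\{i:\,m+l\leq i\leq n,\,i\equiv m+l\pmod 2\}$ at distance $\leq 2$ from $nt^*$. The single term $T_{i^*}$ is bounded below, via the Stirling lower bound, by $(n+1)^{-3}\exp(-n g(i^*/n,v,a))$. By uniform continuity of $g$ on the compact set $\{(t,v,a):0\leq v\leq t\leq 1,\,0\leq a\leq 1/2\}$, the difference $|g(i^*/n,v,a)-g(t^*,v,a)|$ is bounded by a modulus of continuity $\omega(2/n)\to 0$, uniformly in $(m,l)$. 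Therefore
\[
\sharp_{\mathrm{L}}(m,l;n) \;\geq\; T_{i^*} \;\geq\; (n+1)^{-3}\exp\bigl(-nG(v,a)-n\,\omega(2/n)\bigr),
\]
and taking $n$-th roots produces a lower bound of the form $e^{-G(v,a)}-c(n)$ with $c(n)\to 0$ uniformly. Taking the maximum of the two error sequences gives a single $c(n)$ that works in both directions.

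The hard part is ensuring uniformity at the boundary of the parameter region. Three regimes require separate attention: (a) $v=0$, i.e.\ $m=l=0$, where $a$ is undefined but $G(0,a)\equiv\log(1/5)$, so one checks directly that $\sharp_{\mathrm{L}}(0,0;n)^{1/n}\to 5$ via the local central limit theorem; (b) $t^*\in\{v,1\}$, where the chosen $i^*$ forces some binomial to equal $\binom{j}{0}=1$ (matching $e^{-jh(0)}=1$ exactly) and the lower Stirling bound degenerates in a harmless way; (c) the case when $m+l$ is of order $1$ (so $v=O(1/n)$), where the $(n+1)^{-3}$ prefactor must be large enough relative to the modulus of continuity, which is easily checked since $G(v,a)\to\log(1/5)$ continuously as $v\to 0$. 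The cleanest route through all three is to invoke the uniform continuity of $g$ on the closed simplex explicitly, and to handle $m=l=0$ as a separate computation; the rest of the argument then folds into the uniform continuity-plus-polynomial-factor scheme above.
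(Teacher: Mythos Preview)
Your proof is correct and follows essentially the same approach as the paper: the paper derives the pointwise asymptotic $\sharp_{\mathrm{L}}(m,l;n)\approx e^{-nG(v,a)}$ from \eqref{eq:L_to_S}, \eqref{eq:nbr_srw} and Stirling, and then states the lemma with the single remark ``Using the continuity of $g(t,v,a)$, one gets\ldots''; your argument is exactly a fleshed-out version of that remark, using the uniform Stirling bounds $\tfrac{1}{n+1}e^{-nh(k/n)}\le\binom{n}{k}\le e^{-nh(k/n)}$, picking the term near the minimizer $t^*$, and invoking uniform continuity of $g$ on the compact simplex $\{0\le v\le t\le 1,\,0\le a\le 1/2\}$. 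Your treatment of the boundary cases ($v=0$, $t^*\in\{v,1\}$, $|m|+|l|=O(1)$) is more careful than anything the paper writes down, but it is the natural expansion of the same idea.
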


When $h(a)\geq\log((1+\theta)/5)$, we can find a unique $\upsilon=\upsilon(\theta,a)\in(0,1]$ such that
\begin{equation}\label{eq_def_v}
    G(\upsilon,a)=\log ((1+\theta)/5).
\end{equation}
When $h(a)<\log((1+\theta)/5)$, we define $\upsilon(\theta,\phi) = 1.$

We now continue the proof of the theorem. We will mainly work with $a=a(\phi)=|\cos(\phi)|/(|\cos(\phi)|+|\sin(\phi)|)$ instead of $\phi$. Note that $\arg(i_k,j_k)\rightarrow \phi$ {implies that $|i_k|/(|i_k|+|j_k|)\rightarrow a(\phi)$}. Without loss of generality, we assume $a\in [0, 1/2]$.

We first show the upper bounds, \eqref{eq_speed_D_upper} and \eqref{eq_speed_R_upper}, which are relatively easier. Note that when $\upsilon(\theta,\phi) =1$, \eqref{eq_speed_D_upper} and \eqref{eq_speed_R_upper} hold trivially because $R_n(m,l)=R^N_n(m,l)=0$ for $n\leq m+l$. Now assume that $\upsilon(\theta,\phi) <1$. By Claim~1, the lemma above and the definition of $\upsilon$,  when $m+l$ is large enough and $|m/(m+l)-a|$ is small enough, we have
$$
g_{k}(m,l)\leq b^{k}, \mathrm{~when~} m+l\le k\leq (m+l)(\upsilon(\theta,a)^{-1}-\vep),
$$
where $b<1$ depends only on $\theta,a(\phi)$ and $\vep$.
It is easy to use induction to show that $\DI_n(\x)\leq g_n(\x)$ and $E(I^N_n(\x)/N)\leq g_n(\x)$. Therefore,
\[\aligned
&\DD_{\lfloor (m+l)(\upsilon^{-1}-\vep)\rfloor}(m,l)=\sum_{k=m+l}^{\lfloor (m+l)(\upsilon^{-1}-\vep)\rfloor}\DI_k(m,l)\\
&\,\leq \sum_{k=m+l}^{\lfloor (m+l)(\upsilon^{-1}-\vep)\rfloor}g_k(m,l)\leq \sum_{k=m+l}^{\lfloor (m+l)(\upsilon^{-1}-\vep)\rfloor} b^k\leq b^{m+l}/(1-b)\stackrel{\mathrm{as~}m+l\rightarrow\infty }{\longrightarrow} 0.
\endaligned\]
The proof of \eqref{eq_speed_D_upper} is complete.
Similarly,
\[\aligned
E(\Dd^N_{\lfloor (m+l)(\upsilon^{-1}-\vep)\rfloor}&(m,l)/N)
=\sum_{k=m+l}^{\lfloor (m+l)(\upsilon^{-1}-\vep)\rfloor}E\left(I_k(m,l)/N\right)\\
&\leq \sum_{k=m+l}^{\lfloor (m+l)(\upsilon^{-1}-\vep)\rfloor}g_k(m,l)\leq b^{m+l}/(1-b)\stackrel{\mathrm{as~}m+l\rightarrow\infty }{\longrightarrow} 0.
\endaligned\]
The proof of \eqref{eq_speed_R_upper} is complete.

Next, we prove \eqref{eq_speed_D_lower} and \eqref{eq_speed_R_lower}. Our method is similar to the proof of \eqref{eq:conv_SIR_front_middle_inside}.  As in the proof of \eqref{eq:conv_SIR_front_middle_inside}, it would be more convenient to rotate the space counterclockwise by $\pi/4$  and work with
$$
\cL = \{(m,n)\in \mathbb{Z}^{2} \, :\, m+n \textrm{ is even } \, n\geq 0\}.
$$
Note that the cone $\{(m,l)\in \rN\times \rN: a(m+l)<m<(1-a)(m+l)\}$ in the first quadrant corresponds to $\{(x,y)\in\cL:|x|<\lambda y\}$ in $\cL$, with $\lambda=(1-2a)$.

We will define an oriented site percolation process, called $\eta_t$-system, as follows. For $t>0$, $\beta_1, \beta_2>0$, $L\in \rN$, define for each $\mathbf{b}=(b_1,b_2)\in \cL$,
\[
\aligned
C(\mathbf{b})&=(b_1\lfloor(\lambda+\beta_1) L\rfloor,b_2L)\in \cL,\\ A(\mathbf{b})&=C(\mathbf{b})+(-\lfloor(\lambda+\beta_1+\beta_2)L\rfloor,\lfloor(\lambda+\beta_1+\beta_2)L\rfloor)\times[0,L].
\endaligned
\]
For some small $\delta_0\in(0,\gamma)$ to be chosen below, assume that there are $\lceil \delta_0 N\rceil$ infected particles at $C(\mathbf{b})$ at time $0$. Consider the SIR process restricted to $A(\mathbf{b})$, namely, a particle inside $A(\mathbf{b})$ cannot infect particles outside. For notational ease, we use the same notation for this restricted SIR process as the original SIR.
We set $\eta_t(\mathbf{b})=1$ if
\begin{equation}\label{cDu84}
 \min\{\Dd_{\lfloor tL\rfloor}(C(\mathbf{b}+(1,1)),\Dd_{\lfloor tL\rfloor}(C(\mathbf{b}+(-1,1)))\}\geq \lceil {\delta_0 N}\rceil,
\end{equation}
and $\eta_t(\mathbf{b})=0$ otherwise.

From this construction, it is easy to see that if $(b_1,b_2)$ is connected to the origin in the $\eta_t$-system, then in the SIR process with initial condition $I_0^N(0,0)=\lceil \delta_0 N\rceil$, we have $\Dd^N_{\lfloor tL\rfloor b_2}(b_1\lfloor(\lambda+\beta_1) L\rfloor, Lb_2) \geq \delta_0 N$. Similarly to Lemma~\ref{lm_rect}, we have the following lemma whose proof will be given later.
\begin{lemma}\label{lm_rect2}
For any $\vep>0$, there exist  $\beta_1,\beta_2>0$ sufficiently small,  $L\in \zz{N}$ sufficiently large and $\delta_0>0$ sufficiently small such that
$$
\lim_{N\rightarrow \infty}\inf_{\lambda\in[0,1]}P(\eta_{\upsilon(\theta,(1-\lambda)/2)^{-1}+\vep}(\b)=1)=1.
$$
\end{lemma}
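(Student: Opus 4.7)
The plan is to adapt the proof of Lemma~\ref{lm_rect} to our setting: analyze the deterministic large-$N$ limit of the SIR process restricted to $A(\b)$, establish the required lower bound on the recovered density in that limit, and transfer to the stochastic setting via Proposition~\ref{prop:lim_N}. By symmetry it suffices to treat the single target $C(\b+(1,1))$; in the original $\ze^{2}$ coordinates this target corresponds to a point $\x^{*}$ with $\|\x^{*}\|_{1}=L$ and angle $\phi^{*}$ satisfying $a(\phi^{*})=(1-\lambda-\beta_{1})/2+O(1/L)$, while the lateral sides of $A(\b)$ sit at distance at least $\beta_{2}L$ from $\x^{*}$.

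Let $(\wt\DI_n,\wt\DD_n)$ denote the deterministic limit of the SIR restricted to $A(\b)$ with initial condition $\delta_{0}\bm{\delta}_{C(\b)}$, and set $\wt g_{n}(\x):=\partial\wt\DI_{n}(\x)/\partial\delta_{0}|_{\delta_{0}=0}$. As in Claim~1 of the proof of Theorem~\ref{thm_speed}(i), a chain-rule induction gives
\[
\wt g_{n}(\x)=\left(\tfrac{1+\theta}{5}\right)^{n}\cdot\#\{\text{length-}n\text{ LRW paths }C(\b)\to\x\text{ that stay inside }A(\b)\}.
\]
A reflection-principle argument parallel to Claim~2 in the proof of Proposition~\ref{prop:middle_conv_front_origin_IC}, combined with the decomposition \eqref{eq:L_to_S} of LRW into SRW conditional on the number of non-lazy steps, shows that for any fixed $\beta_{2}>0$ the restricted path count equals $\sharp_{\mathrm{L}}(\x^{*};n)(1-o_{L}(1))$, because any path touching a lateral side of $A(\b)$ and returning to $\x^{*}$ is counted by reflected terms whose exponential rate is strictly smaller. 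Combining with Lemma~\ref{lm:g_asym} yields, for $n=\lfloor tL\rfloor$ with $t=\upsilon(\theta,(1-\lambda)/2)^{-1}+\vep$,
\[
\wt g_{\lfloor tL\rfloor}(\x^{*})\geq\left(\tfrac{(1+\theta)/5}{\exp G(1/t,\,a(\phi^{*}))}-o_{L}(1)\right)^{\lfloor tL\rfloor}.
\]
By continuity of $\upsilon(\theta,\cdot)$, for $\beta_{1}$ small enough we have $t>\upsilon(\theta,a(\phi^{*}))^{-1}$ by at least $\vep/2$, so the base exceeds $1$ by a uniform amount and $\wt g_{\lfloor tL\rfloor}(\x^{*})$ grows exponentially in $L$. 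Fix $L$ large enough that $\wt g_{\lfloor tL\rfloor}(\x^{*})\geq 5$; the Taylor expansion $\wt\DI_{n}(\x^{*})=\wt g_{n}(\x^{*})\delta_{0}+o(\delta_{0})$ as $\delta_0\to 0$ then provides $\delta_{0}>0$ small enough so that $\wt\DI_{\lfloor tL\rfloor}(\x^{*})\geq 2\delta_{0}$, and hence $\wt\DD_{\lfloor tL\rfloor}(\x^{*})\geq 2\delta_{0}$.

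Uniformity in $\lambda\in[0,1]$ follows by a finite-cover argument using the continuity of $\upsilon(\theta,\cdot)$ on $[0,1/2]$. Proposition~\ref{prop:lim_N}, applied to the restricted SIR dynamics on $A(\b)$ (which evolves via \eqref{eqn:X_R} with infections across $\partial A(\b)$ suppressed), then gives $\Dd^{N}_{\lfloor tL\rfloor}(\x^{*})/N\toop\wt\DD_{\lfloor tL\rfloor}(\x^{*})\geq 2\delta_{0}$, so $P(\Dd^{N}_{\lfloor tL\rfloor}(\x^{*})\geq\lceil\delta_{0}N\rceil)\to 1$; combined with the symmetric statement for $C(\b-(1,1))$, this gives $P(\eta_{t}(\b)=1)\to 1$. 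The main technical obstacle is the reflection-principle bookkeeping for the lazy walk: one must first condition on the number of non-lazy steps, apply the standard SRW reflection to the resulting walk, and then re-sum so as to recover the exponential rate $G(1/t,a(\phi^{*}))$ uniformly in $\lambda$ and $L$. A secondary subtlety is the boundary case $\upsilon(\theta,a(\phi))=1$ (axial directions for $\theta\geq 1.5$), where the required speed equals the LRW's maximum; this is handled by the strict monotonicity of $G(\cdot,a)$, which ensures $G(1/t,a)<\log((1+\theta)/5)$ as soon as $t>1$.
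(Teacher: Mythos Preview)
Your overall architecture---pass to the deterministic limit restricted to $A(\b)$, differentiate at $\delta_0=0$ to obtain an LRW path-counting formula, show exponential growth via Lemma~\ref{lm:g_asym}, and transfer via Proposition~\ref{prop:lim_N}---is exactly the paper's route. The gap is in your treatment of the box restriction. You only address the lateral sides of $A(\b)$ (those at distance $\beta_2 L$ from the target), and for those the reflected endpoints are indeed at macroscopic distance and hence contribute exponentially smaller terms. But the strip constraint in the other direction---in the rotated $\cL$ coordinates, the restriction $y\in[0,L]$---is nontrivial here and you do not address it. This is precisely where the analogy with Lemma~\ref{lm_rect} breaks down: there the frontier process is \emph{oriented}, so any length-$L$ path from height $0$ to height $L$ stays automatically in $[0,L]$; here the underlying walk is a lazy simple random walk running for $n=\lfloor tL\rfloor>L$ steps, and it can overshoot either horizontal side and return. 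Reflecting across $y=0$ or $y=L$ shifts the endpoint by only $O(1)$, so those reflected terms share the \emph{same} exponential rate as the main term, and the cancellation is delicate. In particular your claim that the restricted count equals $\sharp_{\mathrm L}(\x^*;n)(1-o_L(1))$ is false as written.

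The paper's proof handles this with a two-step reflection: first restrict only to the horizontal strip $A_1=\ze\times[0,L]$, where the SRW reflection formula yields a second difference that produces a \emph{polynomial} loss factor $(L/n)^2-O(1/L)$ (equation~\eqref{eq_L_to_R}), and only then impose the lateral restriction, for which the reflected endpoints lie at distance $\Theta(\beta_2 L)$ and are genuinely exponentially negligible by the strict monotonicity of $\sharp_{\mathrm L}(m,L;n)$ in $m$. The polynomial loss is harmless because one only needs $g'_n(\x^*)>1$, and the exponential base exceeds $1$ by a margin depending on $\vep$ once $\beta_1,\beta_2$ are small enough (via continuity of $G$ and $\upsilon$). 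So your conclusion is recoverable, but the strip restriction requires this extra second-difference argument rather than the blanket claim of strictly smaller exponential rate; your acknowledged ``main technical obstacle'' is real, and the resolution is not the one you sketch.
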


Note that the $\eta_{t}$-system with $t=\upsilon(\theta,(1-\lambda)/2)^{-1}+\vep$ is  $k$-dependent with $k=2\lceil 2(\beta_1+\beta_2)/\beta_1)\rceil~+~2$ (when $L$ is large). It follows from Lemma \ref{lm_rect2} that the $\eta_{t}$-system percolates with  high probability for all~$N$ sufficiently large. Moreover, similarly to the proof of Lemma~\ref{lm_rect}, from the standard result in two dimensional oriented percolation (\cite{Du84}), we know that when $N$ is sufficiently large,  for any fixed $\vep_1>0$, as long as $|b_1|<(1-\vep_1)b_2$, $\mathbf{b}=(b_1,b_2)$ is connected to the origin in the $\eta_t$-system for $t={\upsilon(\theta, (1-\lambda)/2)^{-1}+\vep}$  with high probability. In other words,  $\Dd^N_{\lfloor (\upsilon^{-1}+\vep)L\rfloor b_2}(b_1\lfloor(\lambda+\beta_1) L\rfloor,Lb_2)\geq \delta_0 N$ with high probability. From this, we see that \eqref{eq_speed_R_lower} holds for $\delta=\delta_0$ when $(i_k,j_k)_k\subset \bigcup\{C(\mathbf{b})\}$.

When $(i_k,j_k)_k\not\subset \bigcup\{C(\mathbf{b})\}$, we use an argument similar to the one in the proof of \eqref{eq:conv_SIR_front_large}. From Proposition~\ref{prop:lim_N}, it is easy to see that when $L$ is fixed, we can find some $\delta_1\leq \delta_0$ such that for the SIR process restricted to $\ze\times[0,2L]$ with initial condition $\lceil\delta_0 N\rceil\bm{\delta}_{\0}$, one has
\begin{equation}\label{eq_l7}
\lim_{N\rightarrow\infty}\sup_{\x\in[-2L,2L]\times[0,L]}P(\Dd_{5L}(\x)<\delta_1 N)\rightarrow 0.
\end{equation}
Similarly to the argument that \eqref{eq_l1}, \eqref{eq_l2} and \eqref{eq_l3} imply \eqref{eq:conv_SIR_front_large}, we have that \eqref{eq_speed_R_lower} for $\delta=\delta_0,(i_k,j_k)_k\subset \bigcup\{C(\mathbf{b})\}$ and \eqref{eq_l7} imply \eqref{eq_speed_R_lower} for $\delta=\delta_1$ and $(i_k,j_k)_k\not\subset \bigcup\{C(\mathbf{b})\}$.

Finally, we prove  the stronger version of  \eqref{eq_speed_R_lower} stated in Remark~\ref{rk:speed_strong}. In order to get the desired result, by a similar argument to \eqref{eq_l7}, we have the following:
\begin{equation}\label{eq_l8}
\lim_{N\rightarrow\infty}\sup_{\x\in[-2L,2L]\times[L,2L]}P(\Dd^-_{5L}(\x)<\delta_1 N)\rightarrow 0,
\end{equation}
where $D_n^-(i,j)$ stands for the number of particles that are infected before or at time $n$ via only those edges in the region {$\{(x,y)\in \cL:\ 0< y < j\}$}
except possibly with one endpoint at either $(0,0)$ or $(i,j)$. To prove the stronger version of  \eqref{eq_speed_R_lower}, for any $(i_k,j_k)\in\cL$, find an $\x\in \bigcup\{C(\mathbf{b})\}$ such that $(i_k,j_k) \in \x+[-2L,2L]\times [L,2L]$.
%%% We first connect $\0$ to $\x$ and then $\x$ to $(i_k,j_k)$ as in the argument for \eqref{eq_speed_R_lower}.
By \eqref{eq_l8} and the proof above for \eqref{eq_speed_R_lower}, and translating the result back to original orientation, we get the stronger version of \eqref{eq_speed_R_lower} with all edges lying in $\{(x,y)\in \ze^2:\, 0\leq x+y<i_k+j_k\}$
except possibly with one endpoint at $(i,j)$.
We need to further rule out those edges on the line $\{(x,y)\in \ze^2:\ x+y=0\}$. We can do so by changing the starting point to $(1,0)$. Indeed, with a high probability, $I_1(1,0)/N\geq \gamma_0$ for some $\gamma_0$ depending only on~$\gamma$. Then regarding $(1,0)$ as the starting point and using the argument above, we get the desired stronger version.

\end{proof}

\begin{proof}[Proof of Lemma~\ref{lm_rect2}]
The proof is similar to that of Lemma~\ref{lm_rect}. Write $A=A(\0)=(-\lfloor(\lambda+\beta_1+\beta_2)L\rfloor,\lfloor(\lambda+\beta_1+\beta_2)L\rfloor)\times[0,L]$. Let $Y_0(\x)=\gamma \bm{\delta}_{(0,0)}(\x)$ and define $Y_n:\cL\rightarrow \re $ recursively:
\[
\aligned
&Y_n(m,l)\\
=&\left\{ \begin{array}{cc}
    (1-\sum_{0}^{n-1}Y_i(m,l))(1-\exp(-\frac{1+\theta}{5}(\wt{Y}_{n-1}(m,l))) , & \mathrm{if}\,(m,l)\in A;\\
   0,  & \mbox{otherwise}.
\end{array}\right.
\endaligned
\]
Note that in $\cL$, $\wt{f}(m,l)=f(m+1,l+1)+f(m-1,l+1)+f(m+1,l-1)+f(m-1,l-1)$.

As before, we will analyze
\[
g'_n(m,l):=\left.\frac{\partial Y_n}{\partial \gamma}\right|_{\gamma=0}(m,l).
\]
We have {$g{_0}'(0,0)=1$}, and for $(m,l)\in A$,
\[
g'_n(m,l)=\frac{1+\theta}{5}\wt{g'_{n-1}}(m,l).
\]
By induction, one gets

\ul{Claim~1}:
\[
g'_n(m,l)=\left(\frac{1+\theta}{5}\right)^n\times
\sharp_{\mathrm{L}}(m,l;n;A),
\]
where $\sharp_{\mathrm{L}}(m,l;n;A)$ is the number of LRW paths from $(0,0)$ to $(m,l)$ with $n$ steps inside $A$.
Write $\sharp_{\mathrm{{\mathrm{S}}}}(m,l;n;A)$ as the corresponding number for SRW.

Similarly to the proof of Lemma~\ref{lm_rect}, it suffices to show that when $L$ is large, $n=\lfloor(\upsilon^{-1}+\vep)L\rfloor$, for any ${\lambda}\in[0,1]$,
\begin{equation}\label{eq_g_prime}
 g'_n(m,L)>1, \,\mathrm{~with~}m=\lfloor (\lambda+\beta_1)L\rfloor.
\end{equation}

We aim to show $\sharp_{\mathrm{L}}(m,L;n;A)\approx \sharp_{\mathrm{L}}(m,L;n)$ for some small and fixed $\beta_1,\beta_2$. If so, then by the asymptotics of $\sharp_{\mathrm{L}}(m,L;n)$, namely, Lemma~\ref{lm:g_asym}, the above inequality holds for large $L$ and we finish the proof. Obviously, $\sharp_{\mathrm{L}}(m,l;n;A)\leq \sharp_{\mathrm{L}}(m,l;n)$. We need to show the other direction. In principal, $\sharp_{\mathrm{L}}(m,l;n;A)$ can be written as the sum of $\sharp_{\mathrm{L}}$'s without restriction by using the reflection principle. However the process will be too involved. Here we separate it into two steps. We will first consider the number of paths in $A_1=\ze\times [0,L]$.

Using the reflection principle, we have
\[\aligned
\sharp_{\mathrm{{S}}}&(m,l;n;A_1)=\sharp_{\mathrm{S}}(m,l;n)-2\sharp_{\mathrm{S}}(m,l+2;n)+\sharp_{\mathrm{S}}(m,l+4;n)+\sharp_{\mathrm{S}}(m,3l+4;n)\\
&\geq \sharp_{\mathrm{S}}(m,l;n)-2\sharp_{\mathrm{S}}(m,l+2;n)+\sharp_{\mathrm{S}}(m,l+4;n).
\endaligned\]
Note that for the lattice $\cL$, \eqref{eq:nbr_srw} and the aysmptotics in Lemma~\ref{lm:g_asym} become (when $m,l\geq0$)
\[ \sharp_{\mathrm{S}}(m,l;n)=\binom{n}{\frac{n-l}{2}}\binom{n}{\frac{n-m}{2}};
\]
\begin{equation}\label{eq_num_LRW_L}
  \sharp_{\mathrm{L}}(m,l;n) \approx \left(\exp^{-1}(G(\frac{m \vee l}{n}, \frac{1-\frac{m\wedge l}{m\vee l}}{2}))\right)^{n},
\end{equation}
where we write $m\vee l=\max\{m,l\}, m\wedge l=\min\{m,l\}$.

Using this,  we  get
\[\aligned
&\frac{\sharp_{\mathrm{S}}(m,l;n)-2\sharp_{\mathrm{S}}(m,l+2;n)+\sharp_{\mathrm{S}}(m,l+4;n)}{\sharp_{\mathrm{S}}(m,l;n)}\\
&=1-2\frac{n-l}{n+l+2}+\frac{(n-l)(n-l-2)}{(n+l+2)(n+l+4)}\geq \left(\frac{l}{n}\right)^2-O({1}/{n}).
\endaligned\]
Hence,
\[\frac{\sharp_{\mathrm{S}}(m,l;n;A_1)}{\sharp_{\mathrm{S}}(m,l;n)}\geq \left(\frac{l}{n}\right)^2-O({1}/{n}).
\]
Using the relation between the number of LRW and that of SRW, namely, \eqref{eq:L_to_S}
 %%%(note that the analogous formula is also true for $\sharp_{\mathrm{L}}(m,l;n;A_1)$),
 one  gets
\begin{equation}\label{eq_L_to_R}
   \frac{\sharp_{\mathrm{L}}(m,l;n;A_1)}{\sharp_{\mathrm{L}}(m,l;n)}\geq \left(\frac{l}{n}\right)^2-O({1}/{{n}})= \left(\frac{l}{n}\right)^2-O({1}/{{l}}).
\end{equation}

Note that $\sharp_{\mathrm{L}}(m,l;n)$ is decreasing in $m$ in $[0, n]$. Therefore, with $k=\lfloor(\lambda+\beta_1+\beta_2)L\rfloor, n=\lfloor (\upsilon^{-1}+\vep)L\rfloor$, when $m=\lfloor (\lambda+\beta_1)L\rfloor$,
\[\aligned
&\sharp_{\mathrm{L}}(m,L;n;A)=\sharp_{\mathrm{L}}(m,L;n;A_1)-\sum_{i=\pm m}\sharp_{\mathrm{L}}(2k+i,L;n;A_1)\\
&\quad\quad\quad\quad\quad\quad+\sum_{i=\pm m}\sharp_{\mathrm{L}}(4k+i,L;n;A_1)\\
&\geq \left(\left(\frac{L}{n}\right)^2-O({1}/{L})\right)\sharp_{\mathrm{L}}(m,L;n)-2\sharp_{\mathrm{L}}(2k- m,L;n)\\
&\geq \left(\left(\frac{L}{n}\right)^2-O({1}/{L})\right)\sharp_{\mathrm{L}}(\lfloor (\lambda+\beta_1)L\rfloor,L;n)-2\sharp_{\mathrm{L}}(\lfloor (\lambda+\beta_1+2\beta_2)L\rfloor-2,L;n)\\
&\geq C\left(\exp\left(- G\left(\frac{(\lambda+\beta_1)\vee 1}{\upsilon^{-1}+{\vep}},\frac{1-\frac{(\lambda+\beta_1)\wedge 1}{(\lambda+\beta_1)\vee 1}}{2}\right)\right)-c(n)\right)^n\\
  &\quad\quad\quad-\left( \exp\left(- G\left(\frac{(\lambda+\beta_1+2\beta_2)\vee 1}{\upsilon^{-1}+{\vep}},\frac{1-\frac{(\lambda+\beta_1+2\beta_2)\wedge 1}{(\lambda+\beta_1+2\beta_2)\vee 1}}{2}\right)\right)+c(n)\right)^n,
\endaligned
\]
where in the last step we use Lemma~\ref{lm:g_asym} and \eqref{eq_num_LRW_L}. Note that $G(\upsilon, a)=G(\upsilon, (1-\lambda)/2)\leq \log((1+\theta)/5)$ and $G(v,\cdot)$ is strictly decreasing in $v$. By the continuity of $G(\cdot,\cdot)$ and $\upsilon(\theta, a)$, for any $\eps>0$, we can find $\beta_1,\beta_2$ small enough such that for some $\eps_1>0$ and when $L$ is large,  the above term is bigger than $\left({5}/{(1+\theta)}+\eps_1\right)^{n}$ for all ${\lambda}\in[0,1]$. Therefore, by Claim 1, \eqref{eq_g_prime} holds.
\end{proof}

\section{Proportion of Recovery}\label{Sec:prop_recover}

We prove Theorems \ref{thm:surv_prob}, \ref{thm:speed}(ii), \ref{thm:ult_inf_prop_2} and \ref{thm:ult_inf_prop_1} in this section.

We consider the SIR process and the limiting process simultaneously. The default setting is that their initial conditions are consistent, namely,
\begin{equation}\label{eq:ini_conv_2}
\frac{I^N_0(\x)}{N}\stackrel{}{\longrightarrow} \DI_0(\x),\q\mbox{ for all } {\x}\in\ze^2 \mbox{ as } N\rightarrow\infty.
\end{equation}
We also assume that the initial condition is nontrivial, i.e., $\DI_0\not\equiv 0$. Recall that we write $\DD_n=\DR_{n+1}$ and $\Dd_n=R_{n+1}$, $\DR_\infty(\x)=\lim_{n\rightarrow\infty}\DR_n(\x)$, and $R_\infty(\x)=\lim_{n\rightarrow\infty}R_n(\x)$.

\begin{thm}\label{thm:final_prop}
\begin{enumerate}[(i)]
    \item Consider the deterministic system. For any nontrivial initial condition, $\{\DR_\infty(\x)\}_{\x\in\ze^2}$ satisfies that
    \begin{equation}\label{eq:final_bd}
     \frac{\DR_\infty(\x)-\DI_0(\x)}{1-\DI_0(\x)}>\iota, \mbox{ for all } \x \mbox{ such that }\DI_0(\x)<1;
     \end{equation}
     and
    \begin{equation}\label{eq:final_S}
    \aligned
     &\frac{\DR_\infty(\x)-\DI_0(\x)}{1-\DI_0(\x)}\\
     =&1-\exp\left(-\frac{1+\theta}{5}\wt{\DR}_\infty(\x)\right), \mbox{ for all } \x\mbox{ such that }\DI_0(\x)<1.
     \endaligned
     \end{equation}
Moreover, if the initial condition is either $\DI_0(m,l)=\gamma\cdot\bm{\delta}_{m+l=0}$ or $\DI_0(m,l)=\gamma\cdot\bm{\delta}_{(0,0)}$ for some $\gamma\in(0,1]$, then we have
 \begin{equation}\label{eq:lim_S}
 \lim_{(m,l)\rightarrow \infty:m,l\geq 0}\DR_\infty(m,l)=\iota.
 \end{equation}
    \item Consider the SIR system. For any initial condition satisfying \eqref{eq:ini_conv_2} with $\DI_0\not\equiv 0$, we have
    \begin{equation}\label{eq:D_to_R}
   \frac{R^N_\infty(\x)}{N}\toop  \DR_\infty(\x),  \quad \mbox{ for all } \x\in \ze^2  \mbox{ as } N\rightarrow\infty.
   \end{equation}
     Moreover, if the initial condition is either $I^N_0(m,l)=\lfloor\gamma N\rfloor\bm{\delta}_{m+l=0}$ or $I^N_0(m,l)=\lfloor\gamma N\rfloor\bm{\delta}_{(0,0)}$ for some $\gamma\in(0,1]$, then for any $\vep>0$, there exists~$N_0$ such that for all $N\geq N_0$,
    \begin{equation}\label{eq:R_final}
    \limsup_{(m,l)\rightarrow \infty:m,l\geq 0}P\left(\left|\frac{R^N_\infty(m,l)}{N}-\iota\right|>\vep\right)<\vep.
    \end{equation}

\end{enumerate}
\end{thm}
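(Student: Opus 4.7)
The plan is to prove the five assertions in the order \eqref{eq:final_S}, \eqref{eq:final_bd}, \eqref{eq:lim_S}, \eqref{eq:D_to_R}, \eqref{eq:R_final}, each building on the previous. For \eqref{eq:final_S}, the recursion \eqref{eq:lim_X} rewrites as $\DS_{t+1}(\x) = \DS_t(\x)\exp(-\frac{1+\theta}{5}\wt{\DI}_t(\x))$; telescoping in $t$ gives $\DS_\infty(\x) = \DS_0(\x)\exp(-\frac{1+\theta}{5}\wt{\DR}_\infty(\x))$, and substituting $\DS_0 = 1-\DI_0$ and $\DS_\infty = 1-\DR_\infty$ (valid since $\DI_t\to 0$ as $\DR_t$ is bounded and nondecreasing) and rearranging produces \eqref{eq:final_S}. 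For \eqref{eq:final_bd}, set $f(t) := 1 - e^{-(1+\theta)t}$ and $m_\ast := \inf_{\x\in\ze^2}\DR_\infty(\x)$; using monotonicity of $\DR_\infty$ in the initial condition together with Theorem~\ref{thm_speed} applied to a single-site sub-initial-condition, I get $m_\ast > 0$. Then \eqref{eq:final_S} gives $\DR_\infty(\x) \geq \DI_0(\x) + (1-\DI_0(\x))f(m_\ast) \geq f(m_\ast)$, so $m_\ast\geq f(m_\ast)$, which together with $m_\ast>0$ forces $m_\ast\geq\iota$. For the strict inequality at $\x_0$ with $\DI_0(\x_0)<1$, I would argue by contradiction: if $\DR_\infty(\x_0)=\iota$, then \eqref{eq:final_S} forces $f(\wt{\DR}_\infty(\x_0)/5) = (\iota - \DI_0(\x_0))/(1-\DI_0(\x_0)) \leq \iota$, so $\wt{\DR}_\infty(\x_0)/5 \leq \iota$; since each summand is $\geq \iota$, all five neighbors of $\x_0$ have $\DR_\infty = \iota$, and iterating this propagation reaches a site in $\supp(\DI_0)$, where either $\DI_0\in(0,1)$ makes the corresponding inequality strict, or $\DI_0 = 1$ forces $\DR_\infty = 1 > \iota$, giving a contradiction in both cases.

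For \eqref{eq:lim_S}, I begin with the line IC $\gamma\delta_{m+l=0}$. Translation invariance along the line makes $\DR_\infty(m,n-m) = v_n$ depend only on $n$, and \eqref{eq:final_S} becomes $v_n = f((3v_n + v_{n-1} + v_{n+1})/5)$ for $n\neq 0$. Setting $g(n) := v_n - \iota \geq 0$ and using concavity of $f$ together with $f'(\iota) = (1+\theta)(1-\iota) < 1$ (a direct calculation from the defining equation of $\iota$), I obtain $g(n) \leq (f'(\iota)/(5 - 3f'(\iota)))(g(n-1) + g(n+1))$ with coefficient strictly less than $1/2$, so that $g(n+1) - g(n)$ is strictly increasing in $n$. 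Boundedness of $g$ forces this increment to converge to $0$ from below; hence $g$ is eventually decreasing and converges to some $g_\infty \geq 0$, and passing to the limit in the fixed-point equation, $v_\infty := g_\infty + \iota$ satisfies $v_\infty = f(v_\infty)$, which together with $v_\infty \geq \iota$ forces $v_\infty = \iota$, i.e., $v_n\to\iota$. For the $\delta_{(0,0)}$ IC, monotonicity in the IC gives $\DR_\infty^\delta(m,l) \leq v_{m+l} \to \iota$, and combined with the lower bound $\DR_\infty^\delta(m,l) > \iota$ from \eqref{eq:final_bd}, I conclude $\DR_\infty^\delta(m,l) \to \iota$.

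For the stochastic assertions, \eqref{eq:D_to_R} splits into two bounds. The lower bound $\liminf_N R_\infty^N(\x)/N \geq \DR_\infty(\x)$ in probability is immediate from $R_\infty^N \geq R_T^N$, Proposition~\ref{prop:lim_N} at each fixed $T$, and $\DR_T(\x)\nearrow\DR_\infty(\x)$. For the matching upper bound I would pass through the susceptibles, writing $R_\infty^N = N - S_\infty^N$ (since the process dies out in every village, $I_\infty^N \equiv 0$), and prove $S_\infty^N(\x)/N \toop \DS_\infty(\x) = 1 - \DR_\infty(\x)$ via the conditional identity $E[S_{t+1}^N(\x)/N \mid \mathcal{F}_t] = (S_t^N(\x)/N)(1-P_N^\theta)^{\wt{I}_t^N(\x)}$, iteration in $t$, and an exchange of $N\to\infty$ with $t\to\infty$ through monotone/dominated convergence. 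For \eqref{eq:R_final}, combining \eqref{eq:lim_S} with \eqref{eq:D_to_R} gives the pointwise convergence $R_\infty^N(m,l)/N \toop \DR_\infty(m,l) \to \iota$; the uniformity in $(m,l)$ required for fixed large $N$ I would obtain by a local stationarity argument: for $(m,l)$ deep in the first quadrant, Theorem~\ref{thm:all_layer_SIR} describes the approximately deterministic incoming infection profile at $(m,l)$, which serves as a near-stationary local IC for a ``fresh'' SIR run started at $(m,l)$, and applying the first part of \eqref{eq:D_to_R} to this localized process yields quantitative concentration of $R_\infty^N(m,l)/N$ around $\iota$. The main technical obstacle will be this last step, namely constructing the coupling between the original process and its localized version at $(m,l)$ with an error that is uniform in $\|(m,l)\|$, which is considerably more subtle than the pointwise convergence statement.
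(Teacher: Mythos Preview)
Your treatment of part (i) is largely correct and in places cleaner than the paper's. The telescoping derivation of \eqref{eq:final_S} via $\DS_{t+1}=\DS_t\exp(-\tfrac{1+\theta}{5}\wt{\DI}_t)$ is more direct than the paper's induction/percolation argument. Your bootstrap for \eqref{eq:final_bd} (set $m_\ast=\inf_\x\DR_\infty(\x)$, use Theorem~\ref{thm_speed} for $m_\ast>0$, then $m_\ast\ge f(m_\ast)$ forces $m_\ast\ge\iota$) avoids the paper's detour through Lemma~\ref{lm:IC_extreme}. One correction for \eqref{eq:lim_S}: with the line IC the five neighbours of $(m,n-m)$ have $x+y$ values $n,n\pm1,n\pm1$, so the recursion is $v_n=f\bigl((v_n+2v_{n-1}+2v_{n+1})/5\bigr)$, not $f\bigl((3v_n+v_{n-1}+v_{n+1})/5\bigr)$. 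Your convexity argument still goes through with the right coefficients (the key inequality becomes $g(n)\le \tfrac{2f'(\iota)}{5-f'(\iota)}(g(n-1)+g(n+1))$ with coefficient $<1/2$), but note that the paper gets this much more cheaply: by induction on the recursion for $\DD_n$ one has $\DD_n(m,0)\ge\DD_n(m+1,0)$, hence $\DR_\infty(m,0)$ is monotone, so it converges, and the limit solves $s=1-e^{-(1+\theta)s}$.

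There is a genuine gap in your plan for \eqref{eq:D_to_R}. The lower bound $\liminf_N R_\infty^N/N\ge\DR_\infty$ is indeed free, but the matching upper bound is the entire content of the statement, and your ``exchange of $N\to\infty$ with $t\to\infty$ through monotone/dominated convergence'' does not provide it: you need $\sup_N E\bigl((R_\infty^N-R_T^N)/N\bigr)\to 0$ as $T\to\infty$, and no abstract limit-swap gives this. The paper's mechanism is the following. By \eqref{eq:final_bd} one has $\DR_\infty(\x)>\iota>\theta/(1+\theta)$ everywhere, so for some finite $n_0$ and any ball $\mathcal C_K(\x)$ one has $\DD_{n_0}(\y)>(\theta+\delta)/(1+\theta)$ on $\mathcal C_K(\x)$; by Proposition~\ref{prop:lim_N} this transfers to $D^N_{n_0}/N$ with high probability. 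But then the remaining susceptible fraction is below $1/(1+\theta)$, so for $k\ge n_0$ the conditional expectation satisfies $E[I^N_{k+1}(\y)/N]\le \tfrac{1-\delta}{5}E[\wt I^N_k(\y)/N]$, i.e.\ the process is locally subcritical. Iterating this inside $\mathcal C_K(\x)$ and summing gives an explicit bound on $E\bigl((R_\infty^N-D^N_{n_0+n_2}(\x))/N\bigr)$ of the form $CK^2(1-\delta)^{n_2}+CK(1-\delta)^K$, uniform in $N$. This subcriticality-after-saturation idea is the missing ingredient in your sketch.

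Your plan for \eqref{eq:R_final} also does not work as stated: Theorem~\ref{thm:all_layer_SIR} requires $\theta>1.5$, so the ``approximately deterministic incoming infection profile'' you want to feed into a localized run is simply unavailable for $\theta\in(0,1.5]$. The paper's argument is different and covers all $\theta>0$. The upper bound comes from monotonicity in the IC (compare with the line IC, where $\DR_\infty(n,0)\searrow\iota$) plus \eqref{eq:D_to_R}. For the lower bound the paper splits space along the line $x+y=m+l$: first, by the strengthened \eqref{eq_speed_R_lower} of Remark~\ref{rk:speed_strong}, with high probability at least $\delta N$ particles at $(m,l)$ are reached from the origin through edges strictly below this line; second, a half-space analogue of \eqref{eq:lim_S} (Proposition~\ref{prop:conv_SHS}) shows that from any seed of size $\delta N$ at a boundary site the half-space process produces $\ge(\iota-\vep)N$ recovered particles at a fixed nearby site. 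The two events live on disjoint edge sets, hence are independent, and chaining them gives the uniform lower bound. This half-space decomposition is what replaces the coupling you identified as the main obstacle.
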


%%%
%\begin{remark}
%In fact, \eqref{eq:final_S} uniquely determines $\DR_\infty$ and \eqref{eq:final_bd} is implied by \eqref{eq:final_S}, see Proposition~\ref{prop:uniq_sol_eq}.
%\end{remark}
Note that Theorem~\ref{thm:ult_inf_prop_1} is contained in Theorem~\ref{thm:final_prop}(ii).
The proof of this theorem is divided into several parts. The original recursive formula \eqref{eq:lim_X} is for~$\DI_n$, which is not convenient when addressing $\DR_n$ or $\DD_n$. We start by giving a recursive formula for $\DD_n$.
\begin{lemma}\label{lm:S_recur}
The process $\{\DD_n(\x)\}_{n\in \rN}$ satisfies the following recursive equation:
\begin{equation}\label{eq:S_recur}
    \aligned
     &\frac{\DD_{n+1}(\x)-\DI_0(\x)}{1-\DI_0(\x)}\\
    =&1-\exp\left(-\frac{1+\theta}{5}\wt{\DD}_n(\x)\right), \mbox{ for all } \x \mbox{ satisfying } \DI_0(\x)<1.
    \endaligned
\end{equation}
Letting $n$ go to infinity yields \eqref{eq:final_S}.
\end{lemma}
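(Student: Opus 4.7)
The plan is to derive the recursion for $\DD_n$ from a much cleaner telescoping identity for the susceptible proportion $\DS_n = 1-\DI_n-\DR_n = 1-\DD_n$. The key observation is that although the original dynamics \eqref{eq:lim_X} describes $\DI_{n+1}$ in a multiplicative fashion, it actually tells us that $\DS$ itself multiplies by an exponential factor at each step.

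The first step will be to show that, for every $\x\in\ze^2$ and every $n\geq 0$,
\begin{equation}\label{eq:plan_S_one_step}
\DS_{n+1}(\x)=\DS_n(\x)\exp\!\left(-\frac{1+\theta}{5}\wt{\DI}_n(\x)\right).
\end{equation}
This is immediate from \eqref{eq:lim_X}: since $\DR_{n+1}=\DI_n+\DR_n$, we have $\DS_{n+1}=\DS_n-\DI_{n+1}$, and substituting $\DI_{n+1}=\DS_n(1-\exp(-\tfrac{1+\theta}{5}\wt{\DI}_n))$ gives \eqref{eq:plan_S_one_step}.

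Second, I will iterate \eqref{eq:plan_S_one_step} to obtain the telescoped form
\begin{equation}\label{eq:plan_S_tele}
\DS_{n+1}(\x)=\DS_0(\x)\exp\!\left(-\frac{1+\theta}{5}\sum_{k=0}^{n}\wt{\DI}_k(\x)\right).
\end{equation}
Using that the default setting is $\DR_0\equiv 0$, the recursion $\DR_{n+1}=\DR_n+\DI_n$ telescopes to $\DR_{n+1}(\x)=\sum_{k=0}^{n}\DI_k(\x)$, i.e. $\DD_n(\x)=\sum_{k=0}^{n}\DI_k(\x)$. Since the operator $\phi\mapsto\wt\phi$ is linear, this yields $\sum_{k=0}^{n}\wt\DI_k(\x)=\wt{\DD}_n(\x)$, so \eqref{eq:plan_S_tele} becomes
\[
1-\DD_{n+1}(\x)=(1-\DI_0(\x))\exp\!\left(-\frac{1+\theta}{5}\wt{\DD}_n(\x)\right).
\]
When $\DI_0(\x)<1$, dividing by $1-\DI_0(\x)$ and rearranging is exactly \eqref{eq:S_recur}.

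Finally, I will pass to the limit $n\to\infty$ to establish \eqref{eq:final_S}. Since $\DD_n(\x)$ is nondecreasing in $n$ and bounded by $1$, the limit $\DR_\infty(\x)=\lim_n\DD_n(\x)$ exists for every $\x$, and $\wt{\DD}_n(\x)\uparrow\wt{\DR}_\infty(\x)$ by monotone convergence on the finite sum defining $\wt{\cdot}$. Continuity of $\exp$ then gives \eqref{eq:final_S} directly from \eqref{eq:S_recur}. The argument is entirely elementary; no step is truly an obstacle, but the only point requiring care is remembering to use the default initial condition $\DR_0\equiv 0$, without which the identity $\DD_n=\sum_{k=0}^{n}\DI_k$ would pick up an additive $\DR_0(\x)$ term and the clean form of the recursion would be lost.
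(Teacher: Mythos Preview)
Your proof is correct. The telescoping identity $\DS_{n+1}(\x)=\DS_n(\x)\exp(-\tfrac{1+\theta}{5}\wt{\DI}_n(\x))$ is indeed immediate from \eqref{eq:lim_X}, and iterating it together with $\DD_n=\sum_{k=0}^n\DI_k$ (under $\DR_0\equiv 0$) yields \eqref{eq:S_recur} cleanly. The passage to the limit is justified exactly as you say.

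Your route differs from the paper's. The paper notes that \eqref{eq:S_recur} can be shown by induction, but chooses instead to give a probabilistic argument: it introduces an adjusted percolation model with one extra vertex $V$ at $\x$, computes the conditional probability that $V$ is connected to the initial infected set within $n+1$ steps in terms of the numbers $Y_0,\dots,Y_4$ of already-connected particles at $\x$ and its neighbors, and then passes to the $N\to\infty$ limit via Proposition~\ref{prop:lim_N}. What the paper's argument buys is intuition --- it explains \emph{why} the formula has the shape it does, as a percolation connection probability --- at the cost of invoking the stochastic approximation machinery. Your telescoping argument is purely deterministic and arguably more transparent as a proof: it pinpoints that the dynamics \eqref{eq:lim_X} are really a multiplicative update on $\DS$, and makes explicit where the hypothesis $\DR_0\equiv 0$ enters.
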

\begin{proof}
One can show \eqref{eq:S_recur} by induction. Here we give a probabilistic proof, which is more intuitive.

For any fixed $\mathbf{x}$ satisfying $\DI_0(\x)<1$, consider the following adjusted percolation model. There are $N$ particles at each site  $\y\in \ze^2$ except that at $\x$, there are $N+1$ particles. Other settings are the same as before: any two particles have an edge if and only if they are in the same or in the neighbor site(s); each edge is open with probability $P_N=(1+\theta)/(5N)$. It is easy to see that this adjusted percolation model shares the same limiting process as the original one, and Proposition~\ref{prop:lim_N} also holds for the adjusted model.

Note that the underlying graph of the original SIR model can be embedded into the adjusted one, and we can couple the two models in a same probability space. Write $V$ for the extra vertex at $\x$. Write $A_N$ for the set of initial infected particles, $Y_0, Y_1,\dots,Y_4$ for the numbers of particles at  $\x$ and the four neighboring sites that are connected to $A_N$ within $n$ edges, without passing through $V$. Conditionally on the values of $\{Y_i\}$, we have
$$
P(V\mathrm{~is~ connected~to~}A_N\mathrm{~within~}n+1\mathrm{~steps})=1-(1-P_N)^{\sum_{i=0}^4Y_i}.
$$
Letting $N\rightarrow\infty$, by Proposition~\ref{prop:lim_N}, we get the assertion.
\end{proof}

\begin{lemma}\label{lm:IC_extreme}
If the initial condition is $\DI_0(m,l)=\bm{\delta}_{m+l=0}$,
then
\[
\DR_\infty(m,0)%%%%=\DR_\infty(m-k,k)
:= \DR_\infty(m)\searrow \iota.
\]
Moreover, if the initial condition is $\DI_0=\gamma\bm{\delta}_{\0}$ for some $\gamma\in{(0,1]}$, then
\[
\lim_{\x\rightarrow \infty}\DR_\infty(\x)=\iota.
\]
\end{lemma}
\begin{proof}
First assume that the initial condition is $\bm{\delta}_{m+l=0}$. Using \eqref{eq:S_recur}, by induction, one can see $\DD_n(m,0)\geq \DD_{n}(m+1,0)$. Letting $n$ go to infinity, we see that~$\DR_\infty(m)$ is decreasing in $m$. Hence, $\lim_{m\rightarrow\infty} \DR_\infty(m)$ exists. We denote it by $s$. Letting $\x\rightarrow \infty$ in \eqref{eq:final_S}, we see that $s$ satisfies the same equation as~\eqref{eq:iota}, the equation for $\iota$. Moreover, by Theorem~\ref{thm_speed}, $s\geq \delta$ for some positive $\delta$. Therefore, $s=\iota$.

The second assertion can be proved in a similar way. Using the recursive equation \eqref{eq:S_recur} and induction, we can show that $\DD_n(m,l)\geq \max\{\DD_n(m+1,l),\DD_n(m,l+1)\}$ when $m,l\geq 0$. Therefore,
\[
\DR_\infty(m,l)\geq \max\{\DR_\infty(m+1,l),\DR_\infty(m,l+1)\}, \mbox{ for all } m,l\geq 0.
\]
Hence, for any $(m,l)\in\ze^2$, $\lim_{k\rightarrow \infty} \DR_\infty(m+k,l+k)$ exists, denoted by $s(m,l)$. Because $\DR_\infty(m+k,l+k)\geq \DR_\infty(m+k+1,l+k)\geq \DR_\infty(m+k+1,l+k+1)$, we get that $s(m,l)=s(m+1,l)$. Similarly, one can get that $s(m,l)=s(m,l+1)$. Therefore, all $s(m,l)$'s are equal, say, to $s$. By the recursive equation, we see that $s$ satisfies the same equation as \eqref{eq:iota}. Theorem~\ref{thm_speed} guarantees that $s\neq 0$,  hence $s$ must be $\iota$. Combining the conclusions above,  we see that $\DR_\infty(m,l)\geq \iota$. On the other hand, $\DR_\infty(m,l)$ is not more than the corresponding $\DR_\infty(m,l)$ when the initial condition is $\bm{\delta}_{x+y=0}$. By the first assertion, we get the upper bound and finish the proof of the second assertion.
\end{proof}

We now prove \eqref{eq:final_bd}.
\begin{proof}[Proof of \eqref{eq:final_bd}]
For any nontrivial initial condition,
by the previous lemma and monotonicity, we have that $\DR_\infty(\x)\geq \iota$ for all $\x\in \ze^2$. Plugging this into \eqref{eq:final_S} yields
\begin{equation}\label{eq:bd_R}
     \frac{\DR_\infty(\x)-\DI_0(\x)}{1-\DI_0(\x)}\geq\iota, \mbox{ for all } \x \mbox{ such that } \DI_0(\x)< 1.
\end{equation}

We need to further show that ``='' cannot hold in \eqref{eq:bd_R}. Assume otherwise  that for some $\x\in \{\x:\DI_0(\x)\neq 1\}$, the above inequality becomes equality. Then by \eqref{eq:final_S} and \eqref{eq:bd_R}, we get that for any {$\y$ with $||\y-\x||_1=1$}, we have
\[
 \DR_\infty(\y)=\iota ,\mbox{ and }\DI_0(\y)=0.
\]
Repeating this argument yields that the above equalities hold for all $\z\in \ze^2$, which contradicts to the assumption that $\DI_0$ is nontrivial. Therefore, \eqref{eq:bd_R} can not take ``$=$'' for any $\x\in\ze^2$.
\end{proof}

\begin{lemma}\label{prop:D_to_R}
If the initial condition satisfies that
\begin{equation}\label{eq:cd_final}
 \inf_{{\x}\in\ze^2}\DR_\infty(\x)>\theta/(1+\theta),
 \end{equation}
then the conclusion \eqref{eq:D_to_R} holds.
\end{lemma}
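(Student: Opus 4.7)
The plan is to prove matching lower and upper bounds on $R^N_\infty(\x)/N$.

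\textbf{Lower bound.} Fix $\eps>0$. Since $\DR_n(\x)\nearrow\DR_\infty(\x)$, pick $n_1$ with $\DR_{n_1}(\x)>\DR_\infty(\x)-\eps/2$. Proposition~\ref{prop:lim_N} gives $R^N_{n_1}(\x)/N\toop\DR_{n_1}(\x)$ as $N\to\infty$, and combined with the monotonicity $R^N_\infty(\x)\geq R^N_{n_1}(\x)$ this yields $P(R^N_\infty(\x)/N<\DR_\infty(\x)-\eps)\to 0$.

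\textbf{Upper bound.} The hypothesis $\inf_\y\DR_\infty(\y)>\theta/(1+\theta)$ is designed precisely to force the residual infection dynamics to be strictly subcritical. Pick $\delta>0$ with $\sup_\y(1-\DR_\infty(\y))(1+\theta)<1-\delta$. For a large $L$ to be chosen, the finiteness of $B_L(\x):=\{\y:||\y-\x||_1\leq L\}$ together with $\DR_n(\y)\nearrow\DR_\infty(\y)$ provides $n_0=n_0(L,\eps)$ such that $\DR_{n_0}(\y)>\theta/(1+\theta)+\delta/(2(1+\theta))$ uniformly in $\y\in B_L(\x)$ and $\DR_{n_0}(\x)\geq\DR_\infty(\x)-\eps/3$. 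A joint application of Proposition~\ref{prop:lim_N} over the finite set $B_L(\x)$ produces an event $\Omega_0$ with $P(\Omega_0)\geq 1-\eps/3$ on which
\begin{equation*}
R^N_{n_0}(\x)/N\leq\DR_\infty(\x)+\eps/3,\qquad R^N_{n_0}(\y)/N>\theta/(1+\theta)+\delta/(4(1+\theta))\text{ for all }\y\in B_L(\x).
\end{equation*}
Since $S^N_t$ is monotone non-increasing, on $\Omega_0$ we have $S^N_t(\y)/N<1/(1+\theta)-\delta'$ for all $t\geq n_0$ and $\y\in B_L(\x)$, for some $\delta'>0$. This yields $E[I^N_{t+1}(\y)\mid\mathcal{F}_t]\leq\rho'\cdot\wt I^N_t(\y)/5$ for some $\rho'<1$ throughout $B_L(\x)$. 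Iterating this subcritical recursion and applying Markov's inequality to $\sum_{t>n_0}I^N_t(\x)/N$ will give $P(\{R^N_\infty(\x)/N>\DR_\infty(\x)+\eps\}\cap\Omega_0)<\eps/3$, provided we can confine the iteration to infection lineages remaining in $B_L(\x)$.

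\textbf{Main obstacle.} The principal technical difficulty is controlling infection lineages that temporarily exit $B_L(\x)$ before eventually reaching $\x$. Outside $B_L(\x)$ the subcriticality bound fails: by the spreading-speed result (Theorem~\ref{thm_speed}), the infection at time $n_0$ has not yet reached sites far beyond the wave front, so $R^N_{n_0}(\y)/N$ may be $\approx 0$ there, and ``unlimited'' branching could in principle reseed the infection near $\x$. To close this gap, I would take $L$ very large before fixing $n_0$: any such excursion must cross a macroscopic annulus, and one can dominate the external paths by a branching random walk with per-generation mean $\leq 1+\theta$ and invoke the exponential decay in Lemma~\ref{lm:g_asym} over the distance $L$ to show the probability of a returning excursion is $O(e^{-cL})$. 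The careful interchange of the limits $L\to\infty$, $n_0\to\infty$, and $N\to\infty$, while preserving the subcriticality bound on $B_L(\x)$, is the main hurdle.
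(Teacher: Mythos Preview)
Your overall architecture matches the paper's: lower bound via finite-time convergence, upper bound via subcriticality of the infection dynamics inside a large ball $B_L(\x)$ once enough recovery has accumulated. The gap is precisely where you flag it, and your proposed fix is not right. Dominating the external excursions by a branching random walk with mean $1+\theta>1$ gives you nothing: such a walk is supercritical, so its population grows exponentially, and Lemma~\ref{lm:g_asym} is an asymptotic for path counts, not an exponential-decay estimate. A lineage that exits $B_L(\x)$ can sit outside indefinitely, branch freely, and re-inject mass through $\partial B_L(\x)$ at arbitrary later times; there is no useful $O(e^{-cL})$ bound on this from the outside.

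The paper sidesteps the external dynamics entirely. It writes, by iterating the one-step bound $E[I^N_{k+1}(\y)/N\mid\mathcal{F}_k]\leq \frac{1-\delta}{5}\,\wt I^N_k(\y)/N$ for $\y\in\mathcal{C}_K:=B_K(\x)$,
\[
E\Bigl[\frac{I^N_{n_0+m}(\x)}{N}\Bigr]\;\leq\;\sum_{\z\in\mathcal{C}_K}\Bigl(\frac{1-\delta}{5}\Bigr)^{m}\sharp_L(\x,\z;m)\;+\;\sum_{\z\in\partial\mathcal{C}_K}\sum_{i=0}^{m}E\Bigl[\frac{I^N_{n_0+i}(\z)}{N}\Bigr]\Bigl(\frac{1-\delta}{5}\Bigr)^{m-i}\sharp_L(\x,\z;m-i;\mathcal{C}_K),
\]
where $\sharp_L$ counts lazy-random-walk paths. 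The boundary term is handled by two observations: (a) any such path from $\partial\mathcal{C}_K$ to $\x$ has length $\geq K$, so the factor $(1-\delta)^{m-i}$ is at most $(1-\delta)^K$; and (b) for each fixed $\z\in\partial\mathcal{C}_K$, the trivial bound $\sum_{i\geq 0} E[I^N_{n_0+i}(\z)/N]\leq E[R^N_\infty(\z)/N]\leq 1$ holds regardless of what the process does outside. Summing over $m$ then gives a boundary contribution of order $K(1-\delta)^K/\delta$, which vanishes as $K\to\infty$. So the order of limits is: choose $K$ large to kill the boundary term, then choose $n_0=n_0(K)$ for the deterministic convergence on $\mathcal{C}_K$, then $n_2$ large to kill the interior term $K^2(1-\delta)^{n_2}$, then $N$ large. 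The point is that the \emph{return trip} through the subcritical ball supplies the decay; you never need to control the supercritical exterior.
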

\begin{remark}
Note that $\iota>\theta/(1+\theta)$. One can see this by showing that $t<1-\exp(-(1+\theta)t)$ for $t=\theta/(1+\theta)$. From this, the proof of \eqref{eq:D_to_R} is complete once we prove the lemma above.
\end{remark}

\begin{proof}[Proof of Lemma~\ref{prop:D_to_R}]
It suffices to show that for any fixed $\x\in \ze^2$, $\vep>0$ and $n_1\in\zz{N}$, there is an $n>n_1$ such that when $N$ is sufficiently large, we have
\begin{equation}\label{eq_P7_1}
    E\left( \frac{R_\infty^N(\x)-\Dd^N_n(\x)}{N}\right) <\vep.
\end{equation}

For any  $K\in\zz{N}$, write $\mathcal{C}_K(\x)=\{\z\in\ze^2:||\x-\z||_1\leq K\}$ and $\partial\mathcal{C}_K(\x)=\{\z\in\ze^2:||\x-\z||_1= K+1\}$.
By \eqref{eq:cd_final}, we can find a $\delta>0$ such that $\inf_{{\x}\in\ze^2}\DR_\infty(\x)>(\theta+\delta)/(1+\theta)$. For any $K$, we can find an $n_0$ such that
\[
\DD_{n_0}(\y)>\frac{\theta+\delta}{1+\theta}, \mbox{ for all } \y\in \mathcal{C}_K(\x).
\]
Therefore, as $N\to\infty$, with a high probability we have that
\[
\frac{\Dd_{n_0}^N(\y)}{N}>\frac{\theta+\delta}{1+\theta},\mbox{ for all } \y\in \mathcal{C}_K(\x).
\]
By \eqref{eqn:X_R}, when $k\geq n_0$ and $ y\in\mathcal{C}_K{(\x)}$,
\[\aligned
E \left(\frac{I^N_{k+1}(\y)}{N}\right)
&=\left(1-\frac{\Dd^N_k(\y)}{N}\right)E\left(1-\left(1-\frac{1+\theta}{5N}\right)^{\wt{I}^N_k(\y)}\right)\\
&\leq \frac{1-\delta}{1+\theta}\frac{1+\theta}{5}\frac{E\wt{I}_k^N(\y)}{N}= \frac{1-\delta}{5}\frac{E\wt{I}_k^N(\y)}{N}.
\endaligned
\]
By induction, we get that when $y\in\mathcal{C}_K{(\x)}$,
\[\aligned
E\left(\frac{I^N_{n_0+m}(\y)}{N}\right)
&\leq\sum_{\mathbf{z}\in\mathcal{C}_K{(\x})}E\left(\frac{I^N_{n_0}(\mathbf{z})}{N}\right)\sharp_{\mathrm{L}}(\y,\z;m;\mathcal{C})\left(\frac{1-\delta}{5}\right)^m\\
&+\sum_{\mathbf{z}\in\partial\mathcal{C}_K{(\x})}\sum_{i=0}^{m}E\left(\frac{I^N_{n_0+i}(\mathbf{z})}{N}\right)\sharp_\mathrm{L}(\y,\z;m-i;\mathcal{C})\left(\frac{1-\delta}{5}\right)^{m-i},
\endaligned
\]
where we write $\sharp_{\mathrm{L}}(\y,\z;j;\mathcal{C})$ for the number of  LRW paths of length $j$ from $\y$ to $\z$ inside $\mathcal{C}_K{(\x)}$  (except the last step when $\z\in\partial\mathcal{C}_K{(\x})$).

Obviously, $\sharp_{\mathrm{L}}(\y,\z;m;\mathcal{C})\leq 5^m$ and for $i\leq K, \mathbf{z}\in\partial\mathcal{C}_K{(\x})$, $\sharp_{\mathrm{L}}(\x,\z;i;\mathcal{C})=0$. Therefore,
\[
\aligned
E\left(\frac{I^N_{n_0+m}(\x)}{N}\right)
&\leq \sum_{\mathbf{z}\in\mathcal{C}_K{(\x})}(1-\delta)^m+\sum_{\mathbf{z}\in\partial\mathcal{C}_K{(\x})}\sum_{i=0}^{m-K} E\left(\frac{I^N_{n_0+i}(\mathbf{z})}{N}\right)(1-\delta)^{m-i}\\
&\leq (5K^2)(1-\delta)^m+\sum_{\mathbf{z}\in\partial\mathcal{C}_K{(\x})}\sum_{i=0}^{m-K} E\left(\frac{I^N_{n_0+i}(\mathbf{z})}{N}\right)(1-\delta)^{m-i}.
\endaligned
\]
It follows that for any $n_2>0,$
\[
\aligned
&E\left(\frac{\sum_{m\geq n_2} I^N_{n_0+m}(\x)}{N}\right)\\
\leq & 5K^2\sum_{m\geq n_2}(1-\delta)^m+\sum_{\mathbf{z}\in\partial\mathcal{C}_K{(\x})}\sum_{m\geq n_2}\sum_{i=0}^{m-K} E\left(\frac{I^N_{n_0+i}(\mathbf{z})}{N}\right)(1-\delta)^{m-i}\\
\leq & 5K^2\frac{(1-\delta)^{n_2}}{\delta}+\sum_{\mathbf{z}\in\partial\mathcal{C}_K{(\x})}\sum_{i\geq0}E\left(\frac{I^N_{n_0+i}(\mathbf{z})}{N}\right)\sum_{m\geq \max(n_2,i+K)}(1-\delta)^{m-i}\\
\leq & 5K^2\frac{(1-\delta)^{n_2}}{\delta}+8K\frac{(1-\delta)^{K}}{\delta}.
\endaligned
\]
Letting $K$ be large enough first and then $n_2$ be large enough, we  get \eqref{eq_P7_1}.
\end{proof}

\begin{proof}[Proof of \eqref{eq:R_final}]
For the upper bound, it is sufficient to consider the initial condition $N\bm{\delta}_{x+y=0}$. By \eqref{eq:lim_S}, we can find an $n_0$ such that $\DR_\infty(n_0,0)<\iota+\vep/2$. It follows from \eqref{eq:D_to_R} that there is an $N_0$ such that when $N\geq N_0$,
\[
P\left(\frac{R_\infty^N(n_0,0)}{N}>\iota+\vep\right)<\vep.
\]
Note that for any $(m,l)$ with $m+l>n_0$, $R_\infty^N(m,l)$ is stochastically dominated by $R_\infty^N(n_0,0)$. Therefore, we have
\[
P\left(\frac{R_\infty^N(m,l)}{N}>\iota+\vep\right)\leq P\left(\frac{R_\infty^N(n_0,0)}{N}>\iota+\vep\right)<\vep.
\]

Next, we prove the lower bound of \eqref{eq:R_final}. For this,  it suffices to consider the initial condition $\lfloor\gamma N\rfloor\bm{\delta}_{\0}$. We consider the corresponding system in the half space $\HS=\{(m,l)\in\ze^2:m+l\geq 0\}$. Let $\SHS_0(\x)=\gamma\bm{\delta}_{(0,0)}(\x)$ and define $\SHS_n:\ze^2\rightarrow \re$ recursively as follows:
\begin{equation}\label{eq_Q_hs}
\SHS_n(m,l)=\left\{ \begin{array}{cc}
    1-\exp(-\frac{1+\theta}{5}\wt{\SHS}_{n-1}(m,l)) , & \mathrm{if}\,(m,l)\in \HS\setminus\{(0,0)\};\\
    \gamma+(1-\gamma)(1-\exp(-\frac{1+\theta}{5}\wt{\SHS}_{n-1}(m,l)) ), & \mathrm{if}\,(m,l)=(0,0);\\
   0,  & \mbox{otherwise}.
\end{array}\right.
\end{equation}
Let $\SHS(m,l)=\lim_{n\rightarrow\infty}\SHS_n(m,l)$.
Analogously to \eqref{eq:middle_inside_cone}, we have
\begin{prop}\label{prop:conv_SHS}
For any $b>1,
\gamma\in (0,1]$,
\begin{equation}\label{eq_Q_SHS}
\lim_{n\rightarrow \infty} \sup_{m:\,-bn\leq m\leq bn}|\SHS(m,n-m)-\iota|=0.
\end{equation}
\end{prop}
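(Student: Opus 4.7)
My plan is to obtain matching upper and lower bounds for $\SHS(m,n-m)$ about $\iota$, by comparing $\SHS$ with the full-plane limit from Lemma~\ref{lm:IC_extreme} and then combining a spreading-speed estimate with a cascade iteration of the fixed-point equation \eqref{eq_Q_hs}.

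For the upper bound, setting $\SHS_n\equiv 0$ outside $\HS$ only removes nonnegative contributions from $\wt\SHS_n$, so by induction $\SHS_n$ is pointwise dominated by the corresponding full-plane iterate of \eqref{eq:final_S} starting from $\gamma\bm\delta_{\0}$; hence $\SHS\le\DR_\infty$, the full-plane fixed point from Lemma~\ref{lm:IC_extreme}. That lemma, together with the rotation/reflection symmetries of the full-plane dynamics, yields $\DR_\infty(\x)\to\iota$ as $\|\x\|_1\to\infty$ in every direction; since $\|(m,n-m)\|_1\ge n$, the convergence is uniform in $m\in[-bn,bn]$, and so $\limsup_n\sup_m\SHS(m,n-m)\le\iota$.

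For the lower bound, I would first adapt the proof of the lower bound \eqref{eq_speed_D_lower} of Theorem~\ref{thm_speed}(i) to the half-plane: the $\eta_t$-system oriented-site percolation construction in Section~\ref{Sec:spread_speed} uses rectangular tiles $A(\mathbf{b})$, and whenever the tiles needed to connect the origin to a target $\y\in\HS$ lie wholly inside $\HS$, the LRW-counting estimate of Lemma~\ref{lm_rect2} and the standard two-dimensional oriented-percolation argument apply unchanged, yielding constants $R_0>0$ and $\delta=\delta(\theta,\gamma)>0$ such that $\SHS(\y)\ge\delta$ for every $\y\in\HS$ with both $\|\y\|_1\ge R_0$ and $\ell_1$-distance from $\y$ to $\partial\HS$ at least $R_0$. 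I would then cascade using the fixed-point equation: set $\delta_0=\delta$ and $\delta_{k+1}=1-\exp(-(1+\theta)\delta_k)$; since $\iota>\theta/(1+\theta)$, the map $x\mapsto 1-\exp(-(1+\theta)x)$ has $\iota$ as an attracting fixed point with $\delta_k\nearrow\iota$. If $\SHS(\z)\ge\delta_k$ for every $\z$ in the closed $\ell_1$-unit ball about some $\x\in\HS\setminus\{\0\}$, the equation $\SHS(\x)=1-\exp(-\tfrac{1+\theta}{5}\wt\SHS(\x))$ gives $\SHS(\x)\ge 1-\exp(-(1+\theta)\delta_k)=\delta_{k+1}$; iterating $K$ times yields $\SHS\ge\delta_K$ on the set of $\x\in\HS$ with $\|\x\|_1\ge R_0+K$ and $\ell_1$-distance to $\partial\HS$ at least $R_0+K$. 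Given $\epsilon>0$, choose $K$ with $\delta_K\ge\iota-\epsilon$; for $\x=(m,n-m)$ with $n\ge R_0+K$ we have $\|\x\|_1\ge n$ and $\ell_1$-distance from $\x$ to $\partial\HS$ equal to $n$, so $\SHS(m,n-m)\ge\iota-\epsilon$, matching the upper bound.

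The hard part will be rigorously verifying the half-plane adaptation of the speed lower bound of Section~\ref{Sec:spread_speed}: the argument is geometrically clear for target points well inside $\HS$, but it requires careful tile-placement bookkeeping, especially when the direction from the origin to the target point is close to $\partial\HS$, to ensure all the $A(\mathbf{b})$ rectangles used in the $\eta_t$-system stay inside $\HS$ so that the LRW and percolation estimates really do apply without change.
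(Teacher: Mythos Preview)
Your upper bound agrees with the paper's (comparison with the full-plane $\DR_\infty$ via Lemma~\ref{lm:IC_extreme}). Your lower-bound route is genuinely different: the paper proves a translational monotonicity claim --- if $\SHS_n(\mathbf{a})>\gamma$ and $1-\exp(-\tfrac{1+\theta}{5}\SHS_n(\mathbf{a}-(1,0)))\ge\gamma$, then $\SHS_{n+k}(\x+\mathbf{a})\ge\SHS_k(\x)$ for all $\x,k$ --- and then does a direct half-plane LRW/reflection derivative analysis to produce a set $A$ of such $\mathbf{a}$'s generating $\ze^2$, from which $h(\x):=\lim_k\SHS(\x+k\mathbf{a})$ is shown to be constant and equal to $\iota$. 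Your cascade $\delta_{k+1}=1-e^{-(1+\theta)\delta_k}$ is a clean alternative to that limit-along-translates argument, and the inductive step is correct.

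There is, however, a real gap in your initial lower bound. You assert that the $\eta_t$-tile construction of Section~\ref{Sec:spread_speed} gives $\SHS(\y)\ge\delta$ for \emph{every} $\y$ with $\|\y\|_1\ge R_0$ and $y_1+y_2\ge R_0$. That set contains points such as $(M,R_0-M)$ with $M$ arbitrarily large, i.e.\ points at fixed $\ell_1$-distance $R_0$ from $\partial\HS$ but arbitrarily far along it; in the rotated $\cL$-coordinates these have $|x|/y\to\infty$, and no single $\eta_t$-system (whose tiles move up by $L$ and sideways by at most $(\lambda+\beta_1)L$ per step) can reach them from the origin. More to the point, even restricting to what you actually need --- the $K$-neighbourhood of the target points $(m,n-m)$, $-bn\le m\le bn$ --- already forces $|x|/y$ up to roughly $2b+1>1$, whereas Lemma~\ref{lm_rect2} is stated and proved only for $\lambda\in[0,1]$, which corresponds to the first quadrant $|x|/y\le 1$.

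This is fixable: one can redo the tile analysis for $\lambda$ up to $2b+1$ (the LRW-count/reflection argument still gives $g'_n>1$ for suitably large $n$ and $L$), or equivalently run the paper's half-plane derivative estimate $\partial_\gamma\SHS_n|_{\gamma=0}(m,l)\ge C(b)(\tfrac{1+\theta}{5})^n\sharp_{\mathrm L}(m,l;n)$, valid precisely on the cone $|m|\le b(m+l)$. But either fix is essentially the paper's own machinery, so the ``hard part'' you flag is not just bookkeeping: it is where the actual analytic content (half-plane LRW counting with reflection) lives, and your proposal as written does not yet supply it.
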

The proof of the above proposition is  similar to the one of \eqref{eq:middle_inside_cone} and will be given {after the current proof}.

Consider the $N$-percolation model. In the proof of \eqref{eq_speed_R_lower}, we have shown that under the initial condition  $I^N_0\sim \gamma N  \bm{\delta}_{\0} $, for any $\vep>0$, there exist positive constants $\delta$ and $a$ such that when $N$ is large,
\begin{equation}\label{eq_v1}
  \limsup_{(m,l)\rightarrow\infty:m,l\geq 0}P(\Dd^N_{a(m+l)}(m,l)/N<\delta)<\vep.
\end{equation}
In fact, we have shown the above inequality holds when we replace $\Dd^N_{a(m+l)}(m,l)$ with  $\Dd^{N,-}_{a(m+l)}(m,l)$, the number of particles that are connected to the $\lfloor\gamma N\rfloor$ initial infected particles at the origin via no more than $a(m+l)$ edges inside the lower half space $\{(x,y):x+y < l+m\}$.

On the other hand, by Proposition \ref{prop:conv_SHS} and Proposition \ref{prop:lim_N},  for any $\vep>0$, there exist $n_0$ and $n_1$ such that when $N$ is large enough,
\begin{equation}\label{eq_v2}
P\left(\frac{\Dd^{N,+}_{n_0}({n_1,0})}{N}<\iota-\vep\right)<\vep,
\end{equation}
where $\Dd^{N,+}_{n_0}({n_1,0})$ stands for the number of particles at $(n_1,0)$ that are connected to the $\lceil\delta N\rceil$ initial infected particles at the origin via no more than $n_0$ edges in $\{(x,y):x+y\geq 0\}$.

Combining the results in the last two paragraphs, by connecting paths in the $N$-percolation model, one can get the lower bound of \eqref{eq:R_final}. By symmetry, we assume $m\geq l$. In fact, \eqref{eq_v1} for $\Dd^{N,-}_{a{(m_0+l_0)}}(m_0,l_0)$ says that, with high probability the number of particles at $(m_0,l_0)$ that are connected to the $\lfloor\gamma N\rfloor$  infected particles at the origin via  open edges inside the lower half space $\{(x,y):x+y< m_0+l_0\}$ is at least $\lceil\delta N\rceil$. On the other hand, it follows \eqref{eq_v2} that from  those $\lceil\delta N\rceil$ infected particles at $(m_0,l_0)$, with high probability, there are at least $\lceil(\iota-\vep) N\rceil$ particles at site $(m_0+n_1,l_0)$ that are are connected to the $\lceil\delta N\rceil$ infected particles at $(m_0,n_0)$ via  open edges inside the upper half space $\{(x,y):x+y\geq m_0+l_0\}$. Hence, with high probability,  the total number of the infected particles at site $(m_0+n_1,l_0)$ is at least $\lceil(\iota-\vep) N\rceil$. Note that the two events are independent because they depend on the edges that are lying on different sides of $x+y=m_0+{l_0}$.
\end{proof}

\begin{proof}[Proof of Proposition~\ref{prop:conv_SHS}]
The proof is very similar to the one of \eqref{eq:middle_inside_cone}. Note that by \eqref{eq:lim_S}, we only need to show the lower bound. Moreover, by monotonicity, we can assume that $\gamma$ is sufficiently small. Analogous to Claim~1 in the proof of Proposition \ref{prop:middle_conv_front_origin_IC}, we have

\ul{Claim}: For any $\mathbf{a}\in\HS$ and $n\in \rN$ with
\begin{equation}\label{eq:good_pt_1}
\SHS_n(\mathbf{a})>\SHS_0(0,0)=\gamma \mathrm{~and~}
\end{equation}
\begin{equation}\label{eq:good_pt_2}
1-\exp\left(-\frac{1+\theta}{5}\SHS_n(\mathbf{a}-(1,0))\right)\geq \gamma,
\end{equation}
we have
\begin{equation}\label{eq:good_rs}
 \SHS_{n+k}(\x+\mathbf{a})>\SHS_k(\x), \mbox{ for any }\x\in \HS \mbox{ and }k\in\rN.
\end{equation}

%%%%Note that the second condition, \eqref{eq:good_pt_2}, is due to the fact that $\SHS$ has a different expression at the origin. We skip the proof of this claim. It is easy to verify by induction on $k$.

As before, in order to find some $\mathbf{a}\in\HS$ and $n\in \rN$ satisfying the  conditions in the claim, we need to analyze the derivative respect to $\gamma$.  Note that
\[
\left.\frac{\partial }{\partial \gamma}\right|_{\gamma=0}\left(1-\exp(-\frac{1+\theta}{5}\SHS_n((i,m-i))\right)=\frac{1+\theta}{5}\left.\frac{\partial\SHS_n(i,m-i)}{\partial\gamma}\right|_{\gamma=0}.
\]
Hence, it suffices to analyze $\partial\SHS_n/\partial\gamma$.  It turns out that it has a similar recursive formula as $g'_n$ in the proof of Lemma~\ref{lm_rect2}:
\[
\left.\frac{\partial \SHS_n}{\partial\gamma}\right|_{\gamma=0}(m,l)=\left\{ \begin{array}{cc}
    \frac{1+\theta}{5}\left.\frac{\partial \wt{\SHS_{n-1}}}{\partial\gamma}\right|_{\gamma=0}(m,l) , & \mathrm{if}\,(m,l)\in \HS\setminus\{(0,0)\};\\
    1+\frac{1+\theta}{5}\left.\frac{\partial \wt{\SHS_{n-1}}}{\partial\gamma}\right|_{\gamma=0}(m,l), & \mathrm{if}\,(m,l)=(0,0);\\
   0,  & \mbox{otherwise}.
\end{array}\right.
\]
Therefore, we get that
\[\aligned
\left.\frac{\partial}{\partial\gamma}\right|_{\gamma=0}&\SHS_n(m,l)\geq\left(\frac{1+\theta}{5}\right)^n\sharp_\mathrm{L}(m,l;n;\HS)\\
&=\left(\frac{1+\theta}{5}\right)^n(\sharp_\mathrm{L}(m,l;n)-\sharp_\mathrm{L}(m+1,l+1;n))\\
&\geq C(b)\left(\frac{1+\theta}{5}\right)^n\sharp_\mathrm{L}(m,l;n),
\endaligned
\]
{where we} write $\sharp_\mathrm{L}(m,l;n;\HS)$ for the number of LRW paths from $(0,0)$ to $(m,l)$ with $n$ steps inside $\HS$,  in the second line we use the reflection principle,
and~$C(b)$ is a constant that is independent of $(m,l)$ as long as  $|m|\leq b|m+l|$ {and $1\leq n/(m+l)$ is bounded from above. The last inequality can be proved in a very similar way to \eqref{eq_L_to_R}}.
%%%Note that in the last step, we need the assumption $m\leq b(m+l)$ (and $n\geq |m|+|l|$).
%%%The above inequality means that the restriction on $\HS$ does not change the asymptotics.

By Lemma~\ref{lm:g_asym}, we can find  $n_0$ and $m_0$ large enough such that when $n=n_0$ and $m\in[m_0,3m_0]$,
\[
\inf_{i:|i|\leq bm}\left.\frac{\partial\SHS_n(i,m-i)}{\partial\gamma}\right|_{\gamma=0}\gg 1, \mathrm{~and~}\inf_{i:|i|\leq bm}\left.\frac{\partial\SHS_n(i-1,m-i)}{\partial\gamma}\right|_{\gamma=0}\gg1.
\]
Therefore, there exists a positive $\gamma$ such that \eqref{eq:good_pt_1} and \eqref{eq:good_pt_2} hold for all $\mathbf{a}\in \{(i,j)||i|\leq b(i+j),\,m_0\leq i+j\leq 2m_0+1\}:= A$ and $n=n_0$. By the Claim,~\eqref{eq:good_rs} holds for such $\mathbf{a}$ and $n_0$. Letting $k\rightarrow\infty$ in \eqref{eq:good_rs} we get
\begin{equation}\label{eq_SHS_mono}
  \SHS(\x+\mathbf{a})\geq \SHS(\x), \,\mbox{ for all } \x\in\HS, \mathbf{a}\in A.
\end{equation}
Hence, we can define,  for any $\x\in\HS,\mathbf{a}\in A$,
\[
\lim_{n\rightarrow\infty}\SHS(\x+n\mathbf{a}):= h(\x,\mathbf{a}).
\]
For $\mathbf{a}_1,\mathbf{a}_2\in A$ with $\mathbf{a}:=\mathbf{a}_1+\mathbf{a}_2\in A$,  \eqref{eq_SHS_mono} implies
\[
h(\x,\mathbf{a})\leq h(\x+\mathbf{a}_1,\mathbf{a})\leq h(\x+\mathbf{a}_1+\mathbf{a}_2,\mathbf{a})= h(\x,\mathbf{a}).
\]
Hence,
\[
h(\x,\mathbf{a})=h(\x+\mathbf{a}_1,\mathbf{a}).
\]
Similarly we can get that
\[
h(\x,\mathbf{a})=h(\x+\mathbf{a}_2,\mathbf{a}).
\]
By choosing $\{\mathbf{a}_1,\mathbf{a}_2\}$ so that they generate $\ze^2$, we see that for $h(\x,\mathbf{a})$ is independent of $\x$. Letting $n$ in {\eqref{eq_Q_hs}} go to infinity first and then {$\x=(m,l)$} go to infinity, we get that $h(\x,\mathbf{a})=\iota$. From this, one can get the lower bound, in a similar way that \eqref{eq:side_1} implies the lower bound of \eqref{eq:middle_inside_cone}.
\end{proof}

We give the following proposition about the uniqueness of the solution to equation \eqref{eq:final_S}.
\begin{prop}\label{prop:uniq_sol_eq}
For  any nontrivial $\DI_0:\ze^2\rightarrow[0,1]$, there exists a unique solution $f:\ze^2\rightarrow [0, 1]$ to the following difference equation
\begin{equation}\label{eq:f_recur}
     f(\x)=\DI_0(\x)+(1-\DI_0(\x))\left(1-\exp\left(-\frac{1+\theta}{5}\wt{f}_n(\x)\right)\right), \mbox{for all }  \x\in \ze^2.
\end{equation}
Moreover, for such solution $f$, we have
\begin{equation}\label{eq:relation_solu_iota}
    \frac{f(\x)-\DI_0(\x)}{1-\DI_0(\x)}>\iota,\, \mbox{for all } \x\in\{\x:\DI_0(\x)\neq 1\}, \mbox{ and }\lim_{\x:\mathbf{d}(\x,\mathrm{Supp}(\DI_0))\rightarrow \infty}f(\x)=\iota,
\end{equation}
where $\mathrm{Supp}(\DI_0)$ represents the support of $\DI_0$, namely, the set $\{\x:\DI_0(\x)\neq 0\}$, and for any $\x\in\ze^2$ and $A\subseteq \ze^2,$ $\mathbf{d}(\x,A):=\inf_{\y\in A}||\x-\y||_1$.
\end{prop}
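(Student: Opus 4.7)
I would take $f=\DR_\infty$, the ultimate recovered proportion of the deterministic process of Section~\ref{sec:D_process} started from the prescribed $\DI_0$ with $\DR_0\equiv 0$, as the candidate solution. Passing $n\to\infty$ in Lemma~\ref{lm:S_recur} shows that this $f$ solves \eqref{eq:f_recur}, and the first half of \eqref{eq:relation_solu_iota} is then precisely \eqref{eq:final_bd}.

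For uniqueness, let $T$ denote the right-hand side of \eqref{eq:f_recur}, regarded as an operator on $[0,1]^{\ze^2}$; since $x\mapsto 1-e^{-x}$ is increasing, $T$ is monotone. Any solution satisfies $f\geq \DI_0=\DD_0$, hence by induction $f=T^n f\geq T^n\DI_0=\DD_n\nearrow \DR_\infty$, giving $f\geq \DR_\infty$. By \eqref{eq:final_bd} we have $\DR_\infty>\iota$ pointwise, so $f>\iota$ as well. Given two solutions $f_1,f_2$, writing $h:=f_1-f_2$ and applying the mean value theorem to the exponential yields
\begin{equation*}
h(\x)\;=\;(1-\DI_0(\x))\cdot\tfrac{1+\theta}{5}\cdot e^{-(1+\theta)\wt{f_0}(\x)/5}\cdot\wt{h}(\x)
\end{equation*}
for some $f_0$ pointwise between $f_1$ and $f_2$. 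Since $f_0>\iota$, $e^{-(1+\theta)\wt{f_0}/5}<e^{-(1+\theta)\iota}=1-\iota$. The decisive inequality $(1+\theta)(1-\iota)<1$ comes from the fact that the derivative of $i\mapsto 1-i-e^{-(1+\theta)i}$ at its zero $i=\iota$ is $-1+(1+\theta)(1-\iota)$, which must be strictly negative. Setting $c:=(1+\theta)(1-\iota)<1$, we get $|h(\x)|\leq (c/5)|\wt h(\x)|$, and since $|h|\leq 1$ is bounded, taking supremum gives $\sup|h|\leq c\sup|h|$, forcing $h\equiv 0$.

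For the limit $f\to\iota$, the lower bound comes from monotonicity: fixing $\y_0\in\mathrm{Supp}(\DI_0)$ with $\DI_0(\y_0)\geq \gamma_0>0$ and using translation invariance, $f(\x)\geq \DR_\infty^{\gamma_0\bm{\delta}_{\0}}(\x-\y_0)$, which tends to $\iota$ by Lemma~\ref{lm:IC_extreme}. For the upper bound, along any sequence $(\x_n)$ with $\mathbf{d}(\x_n,\mathrm{Supp}(\DI_0))\to\infty$ I would use a diagonal extraction to obtain pointwise limits $f_\infty(\z):=\lim_k f(\x_{n_k}+\z)$. Because $\DI_0(\x_{n_k}+\z)=0$ eventually, $f_\infty$ solves the $\DI_0\equiv 0$ equation $f_\infty=1-\exp(-\tfrac{1+\theta}{5}\wt{f_\infty})$ and has $f_\infty\geq \iota$. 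The same MVT-contraction argument applied to this equation on the class of solutions $\geq\iota$ shows that $\iota\cdot\mathbf{1}$ is the unique such solution, so $f_\infty\equiv\iota$; as this is the only possible subsequential limit, $f(\x_n)\to\iota$.

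\emph{Main obstacle.} The essential mechanism throughout is the sharp inequality $(1+\theta)(1-\iota)<1$, which turns \eqref{eq:f_recur} into an $\ell^\infty$-contraction on the class of bounded solutions strictly dominating $\iota$. The key preliminary---establishing, via monotone iteration from $\DI_0$, that every solution dominates $\iota$ so that the contraction applies---relies crucially on the nontriviality of $\DI_0$; indeed the argument correctly breaks down when $\DI_0\equiv 0$, where both $f\equiv 0$ and $f\equiv \iota$ are solutions.
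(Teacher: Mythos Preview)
Your argument is correct and, for the uniqueness step, sharper and more elementary than the paper's. The paper proceeds as you do for existence and for the inequality $(f-\DI_0)/(1-\DI_0)>\iota$ (identifying the minimal solution with $\DR_\infty$ and invoking \eqref{eq:final_bd}), but for uniqueness it only extracts the \emph{weak} Lipschitz bound $v-u\le(\wt v-\wt u)/5$, i.e., that $v-u$ is discrete subharmonic and bounded, and then appeals to a discrete Liouville theorem on $\ze^2$ (\cite{RSV97}) to force $v-u$ constant, concluding via the equality case. You instead exploit the \emph{strict} inequality $(1+\theta)(1-\iota)<1$ to get a genuine $\ell^\infty$-contraction, which kills the difference directly without any Liouville-type input; this is both shorter and self-contained. (A minor wording point: the mean-value theorem gives an intermediate value $\xi$ between $\wt{f_1}(\x)$ and $\wt{f_2}(\x)$, not literally $\wt{f_0}(\x)$ for an $f_0$ between $f_1$ and $f_2$; but since both $\wt{f_i}(\x)\ge 5\iota$ the bound $e^{-(1+\theta)\xi/5}\le 1-\iota$ holds regardless.) For the limit $f\to\iota$, the paper takes the decreasing family of solutions $w_n$ for $\DI_0=\bm{\delta}_{|x|+|y|\ge n}$ and shows $w_n(\0)\searrow\iota$ by analyzing the maximal solution of the homogeneous ($\DI_0\equiv 0$) equation; your compactness/diagonal-extraction route lands on the same homogeneous equation and then reuses your contraction to pin the limit at $\iota$. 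Both work; yours has the virtue of recycling a single mechanism throughout.
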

\begin{proof}
Define $u_0=\DI_0, v_0\equiv 1$ and recursively,
\[
\aligned
u_{n+1}(\x)&=\DI_0(\x)+(1-\DI_0(\x))\left(1-\exp(-\frac{1+\theta}{5}\wt{u_n}(\x))\right);\\
v_{n+1}(\x)&=\DI_0(\x)+(1-\DI_0(\x))\left(1-\exp(-\frac{1+\theta}{5}\wt{v_n}(\x))\right).
\endaligned
\]
By induction, it is easy to show that $u_n$  increases  to the minimal  solution to~\eqref{eq:f_recur}, denoted by $u$. Similarly, $v_n$ decreases to the maximal solution to \eqref{eq:f_recur}, denoted by $v$.

We first argue that
\[
(u(\x)-\DI_0(\x))/(1-\DI_0(\x))>\iota.
\]
From \eqref{eq:S_recur}, we see that $\DR_n=u_n$ and hence $\DR_\infty=u$. By \eqref{eq:final_bd}, we get the above assertion.

Note that when $a>b\in[\iota,1]$, we have
\[
(1-\exp(-(1+\theta)a))-(1-\exp(-(1+\theta)b))\leq (a-b).
\]
Therefore, we have
\[
\aligned
v(\x)-u(\x)&\leq \frac{v(\x)-\DI_0(x)}{1-\DI_0(\x)}-\frac{u(\x)-\DI_0(x)}{1-\DI_0(\x)}\\
&= \left(1-\exp(-\frac{1+\theta}{5}\wt{v}(\x))\right)-\left(1-\exp(-\frac{1+\theta}{5}\wt{u}(\x))\right)\\
&\leq \frac{\wt{v}(x)}{5}-\frac{\wt{u}(x)}{5}.
\endaligned
\]
This implies that $v-u$ is a (discrete) subharmonic function. On the other hand, $v-u$ is bounded. The classical result in subharmonic functions states  that a (two dimensional continuous) subharmonic function bounded from above must be constant. This result also holds for discrete subharmonic functions in $\ze^2$; see, e.g., \cite{RSV97}. Hence, $v-u$ is constant, say~$c$, and the above display must take equality.  In order for the above display to take equality, $c$ must be $0$. Now we finish the proof of the proposition except for the last assertion in \eqref{eq:relation_solu_iota}.
We proceed to prove the last assertion in \eqref{eq:relation_solu_iota}. Write~$w_n$  for the solution $f$ when $\DI_0(x,y)=\bm{\delta}_{|x|+|y|\geq n}$. By monotonicity, we get that when $\mathbf{d}(\x,\mathrm{Supp}(\DI_0))\geq n$, $f(\x)\leq w_n(0)$. It suffices to show that
\begin{equation}\label{eq_l_3}
    \liminf_{n\rightarrow\infty}w_n(\0)\leq \iota.
\end{equation}
By monotonicity, $w_n(\x)$ decreases in $n$.  Hence, we define
\[
w(\x)=\lim_{n\rightarrow\infty}w_n(\x).
\]
Note that $w_n$ satisfies \eqref{eq:f_recur}. Letting $n$ go to infinity, we get that $w$ satisfies~\eqref{eq:f_recur} for all $\x\in\ze^2$ with $\DI_0\equiv0$. Now for this trivial initial condition, we analyze the maximal solution. As before,  define $v_0\equiv1$,
\[
v_{n+1}(\x)=\DI_0(\x)+(1-\DI_0(\x))\left(1-\exp(-\frac{1+\theta}{5}\wt{v_n}(\x))\right).
\]
One can see that $v_n$ is constant and so is the maximal solution $v:=\lim_{n\rightarrow\infty}v_n$. The constant solution to \eqref{eq:f_recur} is either $\iota$ or $0$. Hence we get that $v\equiv \iota$ and therefore \eqref{eq_l_3}.
\end{proof}

{We now prove the result about $\sum_{i=1}^\infty \ell^{(i)}$ mentioned in the Introduction.}
\begin{lemma}\label{lm:sum_ell} {When $\theta>1.5$,}  the $\ell^{(i)}$'s defined in \eqref{eq:ell_i} satisfy that
\[
\iota=\sum_{i=1}^\infty \ell^{(i)}.
\]
\end{lemma}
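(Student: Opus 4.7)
The plan is to turn the recursion \eqref{eq:ell_i} into a telescoping identity for the partial sums $S_k := \sum_{i=1}^k \ell^{(i)}$, and then pass to the limit. Writing $\alpha = \exp((1+\theta)/5)$ and $A_i = 2\ell^{(i)} + \ell^{(i-1)} + 2\ell^{(i-2)}$, equation \eqref{eq:ell_i} rearranges to
\[
\frac{\ell^{(i)}}{1 - S_{i-1}} = 1 - \alpha^{-A_i},
\]
so
\[
\frac{1 - S_i}{1 - S_{i-1}} = \alpha^{-A_i}, \qquad \log(1 - S_i) - \log(1 - S_{i-1}) = -\frac{1+\theta}{5}\,A_i.
\]

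Next I would sum this identity from $i = 1$ to $N$. Since $\ell^{(-1)} = \ell^{(0)} = 0$, direct bookkeeping gives $\sum_{i=1}^N A_i = 2 S_N + S_{N-1} + 2 S_{N-2}$, so telescoping yields
\[
\log(1 - S_N) = -\frac{1+\theta}{5}\bigl(2 S_N + S_{N-1} + 2 S_{N-2}\bigr).
\]
By Lemma \ref{lm:bd_sum_ell} the sequence $(\ell^{(i)})$ decays exponentially, so $S_N$ converges to a finite limit $S_\infty \in (\theta/(1+\theta),\,1]$. The limit cannot equal $1$: if $S_\infty = 1$ the left-hand side would diverge to $-\infty$ while the right-hand side tends to the finite value $-(1+\theta)$. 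Hence $S_\infty \in (0,1)$, and passing to the limit gives
\[
\log(1 - S_\infty) = -(1+\theta)\, S_\infty, \qquad \text{i.e.,}\qquad 1 - S_\infty = \exp\bigl(-(1+\theta) S_\infty\bigr).
\]

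This is exactly equation \eqref{eq:iota}. Since that equation has a unique solution in $(0,1)$, namely $\iota$, and since $S_\infty > 0$ by Lemma \ref{lm:bd_sum_ell}, we conclude $S_\infty = \iota$, as required. The one place where care is needed is the justification that $S_\infty < 1$ (so that $\log(1-S_\infty)$ is finite); this is the only non-routine step, and the argument above handles it by a direct contradiction. No exchange-of-limits for convergence in the layer-wise deterministic process is required, which makes the proof purely algebraic given the exponential tail bound already established.
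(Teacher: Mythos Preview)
Your proof is correct, and the route is genuinely different from the paper's. The paper connects $s_k=\sum_{i\le k}\ell^{(i)}$ to the deterministic process: it identifies $s_k$ as $\lim_{n\to\infty}\DD_{\,\cdot}(n,0)$ via Proposition~\ref{prop:conv_front_layers_const_IC}, and then passes to the $n\to\infty$ limit in the recursion \eqref{eq:S_recur} for $\DD_n$ to obtain the identity $s_{k+1}=1-\exp\bigl(-\tfrac{1+\theta}{5}(2s_{k+1}+s_k+2s_{k-1})\bigr)$. You reach exactly the same identity, but by a purely algebraic telescoping of \eqref{eq:ell_i}: rewriting $\ell^{(i)}/(1-S_{i-1})=1-\alpha^{-A_i}$ as $\log(1-S_i)-\log(1-S_{i-1})=-\tfrac{1+\theta}{5}A_i$ and summing. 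Your argument is more self-contained, since it avoids invoking the layer-wise convergence of the deterministic system; the paper's route, on the other hand, makes transparent why $\sum_i\ell^{(i)}$ should coincide with the ultimate recovery proportion $\iota$. A minor remark: you do not actually need the exponential decay from Lemma~\ref{lm:bd_sum_ell} to get convergence of $S_N$, since $(S_N)$ is increasing and bounded above by $1$ directly from the recursion (each $\ell^{(i)}<1-S_{i-1}$); the only place Lemma~\ref{lm:bd_sum_ell} is genuinely used is to ensure $S_\infty>0$, and for that $\ell^{(1)}>0$ already suffices.
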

\begin{proof}
Consider the deterministic system with initial condition $\DI_0=\bm{\delta}_{x+y=0}$. By Proposition \ref{prop:conv_front_layers_const_IC}, we have that $\lim_{n\rightarrow\infty}\DD_k(n,0)=\sum_{i=1}^{k}\ell^{(i)}$. Letting $\x=(n,0)$ with $n\rightarrow\infty$ in \eqref{eq:S_recur}, we get that $s_k=\sum_{i=1}^{k}\ell^{(i)}$ satisfies
\[
s_{k+1}=1-\exp\left(-\frac{1+\theta}{5}(2s_{k+1}+s_k+2s_{k-1})\right).
\]
Letting $k\rightarrow \infty$, we get that $s=\lim_{k\rightarrow\infty}s_k=\sum_{i=1}^{\infty}\ell^{(i)}$ satisfies
\[
s=1-\exp(-(1+\theta)s),
\]
which is the same equation as \eqref{eq:iota}, the equation for $\iota$. Note that because $\theta>1.5$, $s\geq \ell^{(1)}>0$. Hence, $s$ must equal $\iota$.
\end{proof}

We are ready to prove Theorems \ref{thm:surv_prob}, \ref{thm:speed}(ii) and \ref{thm:ult_inf_prop_2}, starting with Theorem~\ref{thm:surv_prob}.

\begin{proof}[Proof of Theorem~\ref{thm:surv_prob}]
Note that Theorem~\ref{thm:speed}(ii) guarantees that $q_N\rightarrow 1$ for IC2. Hence we assume that the initial condition is IC1.

Clearly, $q_N$ is not more than the survival probability of the branching process with offspring distribution $\mathrm{Bin}(5N,(1+\theta)/(5N))$. It is easy to show that the last survival probability converges to $\iota$. This gives the upper bound.

We now show the lower bound. Write $\mathcal{L}(k)=\{(m,l)\in \ze^2:m+l=k\}\times [N]$. We first consider the SIR process with initial condition $N\bm{\delta}_{m+l=0}$. For any $m,l\in\ze$,  we have
\[
E\left(\frac{R_\infty^N(m,l)}{N}\right)=P^N(((m,l),1)\mathrm{~is~connected~to~}\mathcal{L}(0)),
\]
where $((m,l),1)$ represents the first particle at site $(m,l)$ and we use superscript $N$ to indicate that the village size is $N$.
By \eqref{eq:R_final}, for any $\vep>0$, there is an $N_0$ such that when $N>N_0$, we have
\[
\limsup_{n\rightarrow \infty} P^N(((n,0),1)\mathrm{~is~connected~to~}\mathcal{L}(0))>\iota-\vep.
\]
Note that the probability above is decreasing in $n$, hence we get
\[
 \inf_{n\in\rN}P^N(((n,0),1)\mathrm{~is~connected~to~}\mathcal{L}(0))>\iota-\vep.
\]
By symmetry, we get
\[
\inf_{n\in\rN}P^N(({(0,0)},1))\mathrm{~is~connected~to~}\mathcal{L}(n))>\iota-\vep.
\]
Therefore, we have $q_N>\iota-\vep$.
\end{proof}

Next, we show Theorem~\ref{thm:speed}(ii).
\begin{proof}[Proof of Theorem~\ref{thm:speed}(ii)]
By symmetry, we assume all $(i_k,j_k)$ satisfy $i_k\geq j_k\geq0$. We first show the upper bound for IC1 conditional on survival and IC2:
\[
\limsup_{k\to\infty}P\left(\frac{R_{\lfloor (i_k+j_k)(\upsilon(\theta,\phi)^{-1}+\eps)\rfloor}(i_k,j_k)}{N}>\iota+\eps\right)<\eps.
\]
Note that in order to show the inequality above for IC1 conditional on survival, it suffices to show the inequality unconditionally, and by monotonicity, it suffices to do so for IC2. This follows from  \eqref{eq:R_final} because $R_n\leq R_\infty$ for any $n$.

Now we turn to the lower bound:
\begin{equation}\label{eq:speed_lower}
 \limsup_{k\to\infty}P\left(\frac{R_{\lfloor (i_k+j_k)(\upsilon(\theta,\phi)^{-1}+\eps)\rfloor}(i_k,j_k)}{N}<\iota-\eps\right)<\eps.
\end{equation}
We first consider the initial condition IC2. We will show a stronger version:
\begin{equation}\label{eq:speed_lower_strong}
 \limsup_{k\to\infty}P\left(\frac{R^+_{\lfloor (i_k+j_k)(\upsilon(\theta,\phi)^{-1}+\eps)\rfloor}(i_k,j_k)}{N}<\iota-\eps\right)<\eps,
\end{equation}
where we write $R^+(i,j)$ for the number of particles that are infected via edges in the upper half plane $\{(x,y)|x+y > 0\}$.

Our method is similar to {the one used in the proof of \eqref{eq:lim_S}}. By Remark \ref{rk:speed_strong}, there exists $\delta_1>0$ such that when $N$ is sufficiently large, for any $(\y_k)$ satisfying $||\y_k||_1\to\infty$ and $\arg(\y_k)\to \phi$,
\begin{equation}\label{eq_speed_v1}
 \limsup_{k\to\infty}P\left(\frac{R'_{\lfloor ||\y_k||_1(\upsilon(\theta,\phi)^{-1}+0.5\eps)\rfloor}(\y_k)}{N}<\delta_1\right)<\eps/2,
\end{equation}
where we write $R'(i,j)$ for the  number of particles that are infected via  edges  in the region $\{(x,y):\ 0< x+y < i+j\}$.

On the other hand, similarly to \eqref{eq_v2}, there exist $n_0$ and $n_1$ such that with the initial condition $I_0(\x)=\lceil\delta_1 N\rceil\bm{\delta}_{\0}$, when $N$ is large enough,
\begin{equation}\label{eq_speed_v2}
P\left(\frac{\Dd^{N,+}_{n_0}({n_1,0})}{N}<\iota-\vep\right)<\vep/2,
\end{equation}
where $\Dd^{N,+}_{n_0}({n_1,0})$ stands for the number of particles at $(n_1,0)$ that are connected to the $\lceil\delta_1 N\rceil$ initial infected particles at the origin via no more than $n_0$ edges in the upper half space $\{(x,y):x+y\geq 0\}$.

Combining the last two displays, we can show \eqref{eq:speed_lower}. Indeed, by \eqref{eq_speed_v1}, when~$k$ is large, with probability at least $ 1-\eps/2$, $R'_{\lfloor ||\y_k||_1(\upsilon(\theta,\phi)^{-1}+0.5\eps)\rfloor}(\y_k) \geq {\lceil\delta_1 N\rceil}$ for $\y_k=(i_k-n_1,j_k)$. On the other hand, it follows from \eqref{eq_speed_v2} that starting from  those $\lceil\delta_1 N\rceil$ infected particles at $\y_k$, with  probability at least $1-\eps/2$, after $n_0$ unit of time, there are at least $\lceil(\iota-\vep) N\rceil$ particles at site $\y_k+(n_1,0)=(i_k,j_k)$ that are are connected to the $\lceil\delta_1 N\rceil$ infected particles at $\y_k$ via  open edges inside the upper half space {$\{(x,y):x+y\geq||\y_k||_1\}$}.  Note that the two events above are independent because they depend on the edges that are on different sides of $x+y=||\y_k||_1$. Therefore, with probability at least $1-\eps$,  the total number of recovered particles at site $(i_k,j_k)$ is at least $\lceil(\iota-\vep_0) N\rceil$ at time $||\y_k||_1(\upsilon(\theta,\phi)^{-1}+0.5\eps)+n_0\leq (i_k+j_k)(\upsilon(\theta,\phi)^{-1}+\eps)$. This proves \eqref{eq:speed_lower_strong}.

It remains to prove \eqref{eq:speed_lower} for IC1 conditional on survival. As before, it suffices {to} show that under IC1,
\begin{equation}\label{eq_speed_lower_IC1}
 \limsup_{k\to\infty}P\left(\frac{R_{\lfloor (i_k+j_k)(\upsilon(\theta,\phi)^{-1}+\eps)\rfloor}(i_k,j_k)}{N}<\iota-\eps, (\0,1)\leftrightarrow \infty\right)<\eps,
\end{equation}
where we write $(\0,1)\leftrightarrow\infty$ for the event that $(\0,1)$ belongs to the infinite cluster, or equivalently, the epidemic lasts forever under IC1.

Our method is similar to that used in Section \ref{ssec:front_1_infect}, namely, to reduce IC1 to IC2. We fixed some $\gamma_0\in(0,1]$, say, $0.1$. Consider the $N$-percolation. For any $n,l\in\rN,m\in\ze$, define the event
\[
B(n,l,m)=
%%%%\{(\0,1)\rightarrow \mathcal{L}(n), R^-_{n+l}(m,n-m)\geq \gamma_0N\}=
\{ R^-_{n+l}(m,n-m)\geq \gamma_0N\},
\]
where for any $i,j\in\zz{Z},$  $R^-_{n}(i,j)$ stands for the number of particles on site $(i,j)$ that are connected to $(\0,1)$ via at
most $n$ open edges in the lower half plane $\{(x,y):x+y\leq i+j\}$.
%%%We write $\mathcal{L}(n)$ for $\bm{\delta}_{x+y=n}\times[N]$, $(\0,1)\rightarrow \mathcal{L}(n)$ for the event that $(\0,1)$(the first particle in village $\0$) is connected to $\mathcal{L}(n)$. By $R^-_{n}(m,l)$, we mean the number of particles in village $(m,l)$ that are connected to $(\0,1)$ via at most $n$ open edges lying in the lower half plane $\bm{\delta}_{x+y\leq m+l}$.
Define
\[
B(n,l)=\cup_{m\in\ze} B(n,l,m),\,
B(n)=\cup_{l\in\rN}B(n,l),\mbox{ and } A(k)=\cup_{n=k}^{2k-1}B(n).
\]
We omit the proof of the following lemma because it can be proved in a very similar way to  Lemma~\ref{lm:reduction}.
\begin{lemma}\label{lm:reduction2}For any $N\in\zz{N}$,
\[
    \lim_{n\rightarrow\infty}P^N(A^{\mathrm{c}}(n), (\0,1)\leftrightarrow\infty)=0.
\]
\end{lemma}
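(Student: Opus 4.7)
The plan is to adapt the iteration argument of Lemma~\ref{lm:reduction} to the weaker conditioning event $\{(\0,1)\leftrightarrow\infty\}$. Let $\mathcal{F}_k$ denote the $\sigma$-algebra generated by all edges with both endpoints in $\{x+y\le k\}$, and set $K_k := \{\exists m:\ R^-_\infty(m,k-m)\geq 1\}\in \mathcal{F}_k$. The first task is to show that on survival, $K_k$ holds for every $k\geq 0$ a.s., together with the monotonicity $K_{k+1}\subseteq K_k$. Our $N$-percolation is supercritical (expected open degree $(5N-1)P_N^\theta>1$) on the vertex-transitive graph $\zz{Z}^2\times[N]$, so by Burton--Keane uniqueness and ergodicity the infinite cluster is a.s.\ unique and has positive density; conditional on $\{(\0,1)\leftrightarrow\infty\}$ the vertex $(\0,1)$ belongs to it, and the cluster therefore a.s.\ contains vertices with arbitrarily large $x+y$. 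Taking any path in the cluster from $(\0,1)$ to such a far vertex and looking at its first crossing of layer $k$ yields the $K_k$ witness (the prefix stays in $\{x+y\le k\}$) and also gives the monotonicity.

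The key growth step, mirroring \eqref{eq_restart} from the proof of Lemma~\ref{lm:reduction}, is to show that for each fixed $N$ there exists $p_N=p_N(N,\theta,\gamma_0)>0$ such that $P(B(k+1)\mid\mathcal{F}_k)\geq p_N$ on $K_k$. On $K_k$, fix any site $(m_0,k-m_0)$ hosting an LHP-reached particle. The $N$ edges from that particle to the $N$ particles at the layer-$(k+1)$ neighbor $(m_0,k-m_0+1)$ each have an endpoint at layer $k+1$ and are therefore independent of $\mathcal{F}_k$; the probability that at least $\lceil\gamma_0 N\rceil$ of them are open is bounded below by $(P_N^\theta)^{\lceil\gamma_0 N\rceil}>0$, and on that event $B(k+1)$ holds (all the affected layer-$(k+1)$ particles are then LHP-reached via a path that extends the $LH_k$-path to the witness by one edge).

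Finally, I would telescope over $k=n-1,n,\ldots,2n-2$. Conditioning successively on $\mathcal{F}_{2n-2},\mathcal{F}_{2n-3},\ldots,\mathcal{F}_{n-1}$ and using $K_{k+1}\subseteq K_k$, each of the $n$ factors $\mathbf{1}_{B^c(k+1)}\mathbf{1}_{K_k}$ contributes at most $1-p_N$, giving
\[
P\Big(K_{2n-2}\cap \bigcap_{k=n}^{2n-1} B^c(k)\Big)\leq (1-p_N)^{n}\longrightarrow 0.
\]
Combined with $P(\{(\0,1)\leftrightarrow\infty\}\setminus K_{2n-2})=0$ from the first paragraph, this yields the conclusion. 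The principal obstacle is the first step: one must transfer the supercritical-percolation folklore (Burton--Keane uniqueness and positive density of the unique infinite cluster in every half-space) to our particular graph $\zz{Z}^2\times[N]$; this is standard given bounded degree and vertex-transitivity, but the invocation needs care since it is what separates the present lemma from Lemma~\ref{lm:reduction}.
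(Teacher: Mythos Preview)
Your argument is correct and is precisely the adaptation of Lemma~\ref{lm:reduction} that the paper has in mind (the paper omits the proof, saying only that it is ``very similar''): the filtration/restart iteration is the same, and the new ingredient---replacing the monotone convergence $\K^N(n)\nearrow\K^N$ by the statement that on $\{(\0,1)\leftrightarrow\infty\}$ every layer is reached---is exactly what must be supplied, and your route via uniqueness of the infinite cluster is the natural one (the paper itself invokes this uniqueness, citing Grimmett, in the proof of Theorem~\ref{thm:ult_inf_prop_2}). One small correction: ``expected open degree $(5N-1)P_N^\theta>1$'' does not by itself imply supercriticality on a general graph; simply observe instead that if $q_N=0$ the lemma is vacuous, while if $q_N>0$ uniqueness applies and your argument goes through unchanged.
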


For simplicity, we write $\Box$ for the event in \eqref{eq:speed_lower}. We need to show that $\Box\cap\{(\0,1)\leftrightarrow\infty\}$ has a small probability when $N,k$ are large. For any $\vep>0$ and any fixed $N$, by Lemma \ref{lm:reduction2}, we can find a $n_0$ large enough such that
\begin{equation}\label{eq_s1}
P(A^{\mathrm{c}}(n_0), (\0,1)\leftrightarrow\infty)<\vep.
\end{equation}
By the definition of $A(n_0)$, we can find an $l_0$ such that
\begin{equation} \label{eq_s3}
   P[A(n_0)\setminus\left(\cup_{n_0\leq n\leq 2n_0-1}\cup_{l\leq l_0}B(n,l)\right)]<\vep.
\end{equation}
We then get that
\[
\aligned
P&(\Box,(\0,1)\leftrightarrow\infty)\leq P(\Box, A(n_0))+P(\Box, A^{\mathrm{c}}(n_0),(\0,1)\leftrightarrow\infty)\\
&\leq P(\Box, \cup_{n_0\leq n\leq 2n_0-1}\cup_{l\leq l_0}B(n,l))+2\vep.
\endaligned
\]
Note that $B(n,l)=\cup_{m\in\ze}B(n,l,m)=\cup_{-l\leq m\leq n+l}B(n,l,m)$. By monotonicity, one can get that when $\min\{i_k+j_k, (i_k+j_k)(\upsilon(\theta,\phi)^{-1}+\eps) \}> 2n_0+l_0$,
\begin{equation}\label{eq_s2}
   \aligned
P&(\Box, \cup_{n_0\leq n\leq 2n_0-1}\cup_{l\leq l_0}B(n,l))\leq\\
&\max_{n,l,m:n_0\leq n\leq 2n_0-1,-l_0\leq m\leq 2n_0+l_0,l\leq l_0}
P(\Box|B'(n,l,m)),
\endaligned
\end{equation}
where we write $P(\cdot| B'(n,l,m))$ for the probability law that the initial time is $t_0=n+l$ and the initial condition is $I_{t_0}(\x)=\lceil\gamma_0 N\rceil\bm{\delta}_{(m,n-m)}, R_{t_0}= (N-I_{t_0})\bm{\delta}_{x+y\leq n}$.
%%%% (which means the edges on the lower half plane are not allowed to use after time $t_0$).

Combining the  results above and \eqref{eq:speed_lower_strong}, we can show  \eqref{eq_speed_lower_IC1} as follows. For any $\vep_0>0$, by \eqref{eq:speed_lower_strong}, we can find a $\delta>0$ such that when $N$ is large enough, say $N\geq N_0$, \eqref{eq:speed_lower_strong} holds with $\vep=0.1\vep_0$. For any $N\geq N_0$ fixed, we find some $n_0,l_0$ such that \eqref{eq_s1}, \eqref{eq_s3} and \eqref{eq_s2} hold with $\vep=0.1\vep_0$. By  monotonicity and \eqref{eq:speed_lower_strong}, we obtain that when $k$ is large enough, the right hand side of \eqref{eq_s2} is no more than $0.2\vep_0$. Summing up, we see that \eqref{eq_speed_lower_IC1} is true with $\vep=\vep_0$.
\end{proof}

Finally, we show Theorem~\ref{thm:ult_inf_prop_2}.
\begin{proof}[Proof of Theorem~\ref{thm:ult_inf_prop_2}]
In the proof below, we use $P^N$ to denote the corresponding probability distribution of the $N$-percolation model, and $E^N$ for the expectation. We note that it suffices to prove the following:
\begin{equation}\label{eq_V1_V2}
\aligned
    \liminf_{N\rightarrow\infty}\inf_{\x\in \ze^2}P^N((\x,1)\leftrightarrow\infty,(\mathbf{0},2)\leftrightarrow\infty)\geq \iota^2,\\ \limsup_{N\rightarrow\infty}\sup_{\x\in\ze^2}P^N((\x,1)\leftrightarrow\infty,(\x,2)\leftrightarrow\infty, (\mathbf{0},3)\leftrightarrow\infty)\leq\iota^3,
\endaligned
\end{equation}
where, recall that, $(\x,i)$ represents the $i$-th particle at site $\x$ and we use
$V\leftrightarrow \infty$  for the event that $V$
belongs to the infinite cluster. By standard results in percolation theory, there exists almost surely a unique infinite cluster when $P^N((\0,1)\leftrightarrow\infty)>0$; see, e.g., Section~8.2 of \cite{Grimmett99}. We first show that \eqref{eq_V1_V2} implies \eqref{eq:R_final_IC1}. In fact, by symmetry, we have
\[
E^N\left(\frac{\sum_{i=1}^N1_{(\x,i)\leftrightarrow\infty}}{N}|(\mathbf{0},1)\leftrightarrow\infty\right)=\frac{\sum_{i=1}^NP^N((\x,i),(\mathbf{0},1)\leftrightarrow\infty)}{NP^N((\mathbf{0},1)\leftrightarrow\infty)}\geq \iota-o(1);
\]
and
\[
\aligned
&E^N\left(\left(\frac{\sum_{i=1}^N1_{(\x,i)\leftrightarrow\infty}}{N}\right)^2|(\mathbf{0},1)\leftrightarrow\infty\right)=\frac{\sum_{i,j=1}^NP^N((\x,i),(\x,j),(\mathbf{0},1)\leftrightarrow\infty)}{N^2P^N((\mathbf{0},1)\leftrightarrow\infty)}\\
\leq&\frac{N+N(N-1)P^N((\x,2)\leftrightarrow\infty,(\x,3)\leftrightarrow\infty,(\mathbf{0},1)\leftrightarrow\infty)+2N1_{\x=\mathbf{0}}}{N^2P^N((\mathbf{0},1)\leftrightarrow\infty)}\\
\leq&\iota^2+o(1).
\endaligned
\]
Therefore,
\[
\aligned
&\sup_{\x\in\ze^2} E\left(\left(\left.\frac{R^N_\infty(\x)}{N}\right|(\mathbf{0},1)\leftrightarrow\infty\right)-\iota\right)^2\\
\leq & \sup_{\x\in\ze^2}E\left(\frac{R^N_\infty(\x)}{N}|(\mathbf{0},1)\leftrightarrow\infty\right)^2
-2\iota \inf_{\x\in\ze^2}E\left(\left.\frac{R^N_\infty(\x)}{N}\right|(\mathbf{0},1)\leftrightarrow\infty\right)+\iota^2\\
\leq &(\iota^2+o(1))-2\iota(\iota-o(1))+\iota^2=o(1).
\endaligned
\]
The conclusion \eqref{eq:R_final_IC1} follows.

It remains to prove \eqref{eq_V1_V2}. By the FKG inequality (Theorem~2.4 of \cite{Grimmett99}) and Theorem \ref{thm:surv_prob}, we have
\[
P^N((\x,1)\leftrightarrow\infty,(\mathbf{0},2)\leftrightarrow\infty)\geq P^N((\x,1)\leftrightarrow\infty)P^N((\mathbf{0},2)\leftrightarrow\infty)\rightarrow \iota^2.
\]
On the other hand, for any fixed $\vep>0$, by Theorem~\ref{thm:final_prop}, we can find some $k\geq 1$ such that when the initial condition is $\DI_0=\bm{\delta}_{x+y=-k}$, we have
\[
\DR_\infty(i,0)<\iota+\vep,\quad \mbox{ for all } i\in\{0,\dots,4k\}.
\]
By \eqref{eq:D_to_R}, we can find some $N_0$ such that when $N\geq N_0$, under the initial condition  $I_0=N\bm{\delta}_{x+y=-k}$,
\begin{equation}\label{eq_v3}
P\left(\frac{R_\infty^N(\y)}{N}\leq \iota+\vep, \mbox{ for all } \y \in\{\y\in \ze^2:||\y-(k,k)||_1\leq 2k\}\right)>1-\vep.
\end{equation}
Write ${B}=\{(x,y)\in\ze^2:x+y=-k\}\times[N]$ and $A=\{\y\in \ze^2:||\y-(k,k)||_1\leq 2k\}$. Note that by symmetry, conditionally on the event inside the probability in \eqref{eq_v3}, for any $\y_1,\y_2\in A$, we have
\[\aligned
P^N((\y_1,1)\leftrightarrow B)&\leq \iota+\vep, \quad P^N((\y_1,1),(\y_1,2)\leftrightarrow B)\leq ({\iota}+\vep)^2;\\
&P^N((\y_1,1),(\y_1,2),(\y_2,3)\leftrightarrow B)\leq (\iota+\vep)^3.
\endaligned
\]
Therefore, by \eqref{eq_v3}, unconditionally,  when $\y_1,\y_2\in A$, we have
\begin{equation}\label{eq_V1_V2_L}
\aligned
P^N((\y_1,1)\leftrightarrow B)&\leq \iota+2\vep, \quad P^N((\y_1,1),(\y_1,2)\leftrightarrow B)\leq (\iota+\vep)^2+\vep;\\
&P^N((\y_1,1),(\y_1,2),(\y_2,3)\leftrightarrow B)\leq (\iota+\vep)^3+\vep.
\endaligned
\end{equation}
By symmetry and monotonicity, we get that for all $\y_1\in\{(x,y):x+y\geq0\}$,
\begin{equation}\label{eq_V_L}
   P^N((\y_1,1)\leftrightarrow B)\leq \iota+2\vep, P^N((\y_1,1), (\y_1,2)\leftrightarrow B)\leq (\iota+\vep)^2+\vep.
\end{equation}

Now we can show the second assertion of \eqref{eq_V1_V2}. Without loss of generality,  we replace $\mathbf{0}$ by $\x_0=(k,k)$. When $\x\in A$, by \eqref{eq_V1_V2_L},
\[
P^N((\x,1),(\x,2), (\x_0,3)\leftrightarrow\infty)\leq P^N((\x,1),(\x,2),(\x_0,3)\leftrightarrow B)\leq (\iota+\vep)^3+\vep.
\]
Note that because there is a unique infinite cluster and this cluster intersects~$B$, $V\leftrightarrow\infty$ implies  $V\leftrightarrow B$.

On the other hand, when $\x\notin A$, we can find a line $l$ with the form $\bm{\delta}_{x+y=c}$ or $\bm{\delta}_{x-y=c}$ such that both $\x$ and $\x_0$ are on different sides of $l$ and that $\x$ and $\x_0$ are at  distance at least $k$ away from $l$. Let $L=l\times[N]$. We have
\[\aligned
P^N&((\x,1),(\x,2), (\x_0,3)\leftrightarrow\infty)\leq P^N((\x,1),(\x,2),(\x_0,3)\leftrightarrow L)\\
&=P^N((\x,1),(\x,2)\leftrightarrow L)P((\x_0,3)\leftrightarrow L)\leq((\iota+\vep)^2+\vep)(\iota+2\vep),
\endaligned
\]
where the middle equality follows from the fact that the events are independent because $\x$ and $\x_0$ lie on  different sides of $L$, and the last inequality is due to~\eqref{eq_V_L}.
Letting $\vep$ go to zero, we finish the proof.
\end{proof}

\section*{Acknowledgements}
We thank Eyal Neuman for many helpful discussions. Research is partially supported by the HKUST IAS Postdoctoral Fellowship and RGC grant GRF 16304019 of the HKSAR.

%\bibliographystyle{G:/Dropbox/GroupMeeting/BibFile/asa}
%\bibliography{G:/Dropbox/GroupMeeting/BibFile/mainbib}
%%\bibliographystyle{asa}
%%%\bibliography{mainbib}

\begin{thebibliography}{15}
\newcommand{\enquote}[1]{``#1''}
\expandafter\ifx\csname natexlab\endcsname\relax\def\natexlab#1{#1}\fi

\bibitem[{Athreya and Ney(1972)}]{athreya72}
Athreya, K.~B. and Ney, P.~E. (1972), \textit{Branching processes}, New York:
  Springer-Verlag, die Grundlehren der mathematischen Wissenschaften, Band 196.

\bibitem[{Biggins(1976)}]{Biggins76}
Biggins, J.~D. (1976), \enquote{The first- and last-birth problems for a
  multitype age-dependent branching process,} \textit{Advances in Appl.
  Probability}, 8, 446--459.

\bibitem[{Bramson(1978)}]{Bramson78}
Bramson, M.~D. (1978), \enquote{Minimal displacement of branching random walk,}
  \textit{Z. Wahrsch. Verw. Gebiete}, 45, 89--108.

\bibitem[{Cox and Durrett(1988)}]{CD88}
Cox, J.~T. and Durrett, R. (1988), \enquote{Limit theorems for the spread of
  epidemics and forest fires,} \textit{Stochastic Process. Appl.}, 30,
  171--191.

\bibitem[{Durrett(1984)}]{Du84}
Durrett, R. (1984), \enquote{Oriented percolation in two dimensions,}
  \textit{Ann. Probab.}, 12, 999--1040.

\bibitem[{Durrett and Liggett(1981)}]{DL81}
Durrett, R. and Liggett, T.~M. (1981), \enquote{The shape of the limit set in
  {R}ichardson's growth model,} \textit{Ann. Probab.}, 9, 186--193.

\bibitem[{Grimmett(1999)}]{Grimmett99}
Grimmett, G. (1999), \textit{Percolation}, vol. 321 of \textit{Grundlehren der
  mathematischen Wissenschaften [Fundamental Principles of Mathematical
  Sciences]}, Springer-Verlag, Berlin, 2nd ed.

\bibitem[{Hammersley(1974)}]{Hammersley74}
Hammersley, J.~M. (1974), \enquote{Postulates for subadditive processes,}
  \textit{Ann. Probability}, 2, 652--680.

\bibitem[{Kermack and McKendrick(1927)}]{KM27}
Kermack, W. O.; McKendrick, A. G. (1927), \enquote{A Contribution to the Mathematical Theory of Epidemics,}
  \textit{Proceedings of the Royal Society of London. Series A, Containing Papers of a Mathematical and Physical Character}, 115 (772), 700--721.


\bibitem[{Kingman(1975)}]{Kingman75}
Kingman, J. F.~C. (1975), \enquote{The first birth problem for an age-dependent
  branching process,} \textit{Ann. Probability}, 3, 790--801.

\bibitem[{Lalley(2003)}]{Lalley03}
Lalley, S.~P. (2003), \enquote{Strict convexity of the limit shape in
  first-passage percolation,} \textit{Electron. Comm. Probab.}, 8, 135--141.

\bibitem[{Lalley(2009)}]{lalley09}
--- (2009), \enquote{Spatial epidemics: critical behavior in one dimension,}
  \textit{Probab. Theory Related Fields}, 144, 429--469.

\bibitem[{Lalley, Perkins, and Zheng(2014)}]{LPZ14}
Lalley, S.~P., Perkins, E.~A., and Zheng, X. (2014), \enquote{A phase
  transition for measure-valued {SIR} epidemic processes,} \textit{Ann.
  Probab.}, 42, 237--310.

\bibitem[{Lalley and Zheng(2010)}]{lz10}
Lalley, S.~P. and Zheng, X. (2010), \enquote{Spatial epidemics and local times
  for critical branching random walks in dimensions 2 and 3,} \textit{Probab.
  Theory Related Fields}, 148, 527--566.

\bibitem[{Liggett, Schonmann, and Stacey(1997)}]{LSS97}
Liggett, T. M. and Schonmann, R. H. and Stacey, A. M. (1997), \enquote{Domination by product measures}, \textit{Ann. Probab.}, 25,  71--95.



\bibitem[{Richardson(1973)}]{Richardson73}
Richardson, D. (1973), \enquote{Random growth in a tessellation,} \textit{Proc. Cambridge Philos. Soc.}, 74, 515--528.


\bibitem[{Rigoli, Salvatori, and Vignati(1997)}]{RSV97}
Rigoli, M., Salvatori, M., and Vignati, M. (1997), \enquote{Subharmonic
  functions on graphs,} \textit{Israel J. Math.}, 99, 1--27.

\bibitem[{Zhang(1993)}]{ZY93}
Zhang, Y. (1993), \enquote{A shape theorem for epidemics and forest fires with
  finite range interactions,} \textit{Ann. Probab.}, 21, 1755--1781.

\end{thebibliography}

\end{document}